\setlist[enumerate,1]  {label={\rm (\roman*)}, leftmargin=1.5em}   
\setlist{noitemsep} 
\patchcmd{\section}{\normalfont}{\bfseries}{}{}
\renewcommand{\@secnumfont}{\bfseries}
\newcommand{\defhighlighter}[3][]{%
  \tikzset{every highlighter/.style={color=#2, fill opacity=#3, #1}}%
}
\newcommand{\highlight@DoHighlight}{
  \fill [ decoration = {random steps, amplitude=1pt, segment length=15pt}
        , outer sep = -15pt, inner sep = 0pt, decorate
        , every highlighter, this highlighter ]
        ($(begin highlight)+(0,8pt)$) rectangle ($(end highlight)+(0,-3pt)$) ;
}
\newcommand{\highlight@BeginHighlight}{
  \coordinate (begin highlight) at (0,0) ;
}
\newcommand{\highlight@EndHighlight}{
  \coordinate (end highlight) at (0,0) ;
}
\newdimen\highlight@previous
\newdimen\highlight@current
\DeclareRobustCommand*\highlight[1][]{%
  \tikzset{this highlighter/.style={#1}}%
  \SOUL@setup
  \def\SOUL@preamble{%
    \begin{tikzpicture}[overlay, remember picture]
      \highlight@BeginHighlight
      \highlight@EndHighlight
    \end{tikzpicture}%
  }%
  \def\SOUL@postamble{%
    \begin{tikzpicture}[overlay, remember picture]
      \highlight@EndHighlight
      \highlight@DoHighlight
    \end{tikzpicture}%
  }%
  \def\SOUL@everyhyphen{%
    \discretionary{%
      \SOUL@setkern\SOUL@hyphkern
      \SOUL@sethyphenchar
      \tikz[overlay, remember picture] \highlight@EndHighlight ;%
    }{%
    }{%
      \SOUL@setkern\SOUL@charkern
    }%
  }%
  \def\SOUL@everyexhyphen##1{%
    \SOUL@setkern\SOUL@hyphkern
    \hbox{##1}%
    \discretionary{%
      \tikz[overlay, remember picture] \highlight@EndHighlight ;%
    }{%
    }{%
      \SOUL@setkern\SOUL@charkern
    }%
  }%
  \def\SOUL@everysyllable{%
    \begin{tikzpicture}[overlay, remember picture]
      \path let \p0 = (begin highlight), \p1 = (0,0) in \pgfextra
        \global\highlight@previous=\y0
        \global\highlight@current =\y1
      \endpgfextra (0,0) ;
      \ifdim\highlight@current < \highlight@previous
        \highlight@DoHighlight
        \highlight@BeginHighlight
      \fi
    \end{tikzpicture}%
    \the\SOUL@syllable
    \tikz[overlay, remember picture] \highlight@EndHighlight ;%
  }%
  \SOUL@
}
\newcommand{\R}{{\mathbb R}}
\newcommand{\N}{{\mathbb N}}
\newcommand{\C}{{\mathbb C}}
\newcommand{\eps}{\varepsilon}
\newcommand{\myeq}[1]{\ensuremath{\stackrel{\text{#1}}{=}}}
\newcommand{\mydef}{\ensuremath{\stackrel{\text{def}}{:=}}}
\newcommand{\Norm}[1]{|||#1|||}
\newtheorem{theorem}{Theorem}[section]
\newtheorem{remark}[theorem]{Remark}
\newtheorem{lemma}[theorem]{Lemma}
\newtheorem{definition}[theorem]{Definition}
\newtheorem{proposition}[theorem]{Proposition}
\newtheorem{corollary}[theorem]{Corollary}
\numberwithin{equation}{section}
\renewenvironment{proof}[1][Proof]{\noindent \textbf{#1.} }
{\  \rule{0.5em}{0.5em}\par \medskip}
\title{
On linear Schrödinger parabolic  problems in Morrey spaces
}
\thanks{Partially supported by Projects PID2019-103860GB-I00 and PID2022-137074NB-I00, MINECO, Spain.
\newline{$\phantom{ai}$ Key words and phrases: initial value problems for higher-order parabolic
equations, fractional partial differential equations}.
\newline{$\phantom{ai}$ Mathematical Subject Classification
2020:\ 35K30, 35R11}.
}
\thanks{${}^{*}$Partially supported by Severo
Ochoa Grant CEX2019-000904-S funded by MCIN/AEI/10.13039/ 501100011033.}
\begin{document}

\date{\bf \today}
\maketitle

\begin{center}
{\sc Jan W. Cholewa}${}^1\ $  \ {\sc and} \ {\sc Anibal Rodriguez-Bernal}${}^\text{2}$
\end{center}

\makeatletter
\begin{center}
${}^{1}$Institute of
Mathematics\\
University of Silesia in Katowice\\ 40-007 Katowice, Poland\\ {E-mail:
jan.cholewa@us.edu.pl}
\\ \mbox{}
\\
${}^{2}$Departamento de Análisis Matemático y  Matem\'atica Aplicada\\ Universidad
  Complutense de Madrid\\ 28040 Madrid, Spain\\ and \\
  Instituto de Ciencias Matem\'aticas \\
CSIC-UAM-UC3M-UCM${}^{*}$, Spain  \\ {E-mail:
arober@ucm.es}
\end{center}
\makeatother

\setcounter{tocdepth}{3}
\makeatletter
\def\l@subsection{\@tocline{2}{0pt}{2.5pc}{5pc}{}}
\def\l@subsubsection{\@tocline{2}{0pt}{5pc}{7.5pc}{}}
\makeatother

\begin{abstract}
We consider parabolic Schrödinger type equations associated to
fractional powers of uniformly elliptic $2m$-order operators with
constant coefficients. Potentials and initial data are considered in
suitable Morrey spaces.
By means of perturbation techniques we prove that several properties
of the problem with no potential are preserved. We also prove
continuous dependence of solutions with respect to perturbations. To
carry out  the analysis a general abstract perturbation approach is developed,
which broadens the results known in the literature.

\end{abstract}

\section{Introduction}

In this paper we  consider parabolic Schrödinger type   evolution problems of the form
\begin{equation}\label{eq:intro-linear-e-eq-with-potential}
  \begin{cases}
    u_{t} + A_0^\mu u = V(x)u , & t>0, \ x\in \R^N, \\
    u(0,x)=u_0(x),  & x\in \R^N,
  \end{cases}
\end{equation}
where  $0<\mu\leq 1$ gives a fractional power of a uniformly  elliptic $2m$ order
operator of the form
\begin{equation}
\label{eq:operator-A0}
    A_0 = \sum_{|\zeta|= 2m} a_\zeta D^\zeta
    \quad
\text{ with constant real coefficients } \ a_{\zeta} ,
\end{equation}
and  we want to consider potentials $V$ and initial data $u_{0}$ in
suitable Morrey spaces to be introduced below.
This includes the case $A_{0}= (-\Delta)^{m}$ and, in particular when
$m=1$, fractional Schrödinger equations.

In order to solve problems like (\ref{eq:intro-linear-e-eq-with-potential}) in any
given function space there are usually two different, although
related, strategies. One is to prove suitable
\emph{resolvent estimates} on the  elliptic  operator in the
equation, $A_0^\mu -V(x) I$, which allow to prove that
(\ref{eq:intro-linear-e-eq-with-potential}) defines a suitable
semigroup of solutions, see
e.g. \cite{1983_Pazy,1981_Henry,lunardi95:_analy}.
Another one, which we take here, is to exploit the known results for
the unperturbed problem, $V=0$ in
(\ref{eq:intro-linear-e-eq-with-potential}),  and then prove that the
\emph{perturbed} problem
(\ref{eq:intro-linear-e-eq-with-potential}) can be solved for some
class of initial data for which the unperturbed problem can be
solved. This is done by means of Duhamel's principle, or variations of
constants formula
\begin{displaymath}
    u(t) = S_{\mu}(t) u_{0} + \int_{0}^{t}  S_{\mu}(t-s) V u(s) \, ds,
    \quad  t>0,
  \end{displaymath}
  where $ S_{\mu}(t) u_{0}$ represents the solution of the unperturbed
  problem with initial data $u_{0}$. In this way properties of the
  unperturbed problems, e.g. spaces of admissible initial data,
  smoothing properties, exponential growth etc, can be obtained for
  the perturbed problem. Besides the abstract  approach in
\cite{1981_Henry} using the fractional power
  spaces associated to the elliptic operator, this approach has been used in \cite{R-B} for
  second order parabolic problems in Lebesgue, Bessel and uniform
  spaces, \cite{Q-RB} for fourth order problems in the same spaces and
  \cite{C-RB} for general $2m$ order parabolic problems in the same
  scales of spaces. Here we extend this approach to the scale of
  Morrey spaces and fractional operators. It is worth mentioning that
in   all the references  mentioned in the previous paragraph,  the family of spaces one
works in is a one real-parameter scale of spaces a situation that
strongly simplifies the analysis. This will not be the case here as we
 explain below and is one of the main sources of difficulties in our
 analysis.

Morrey spaces, to be described in detail in Section
\ref{sec:morrey_spaces},  are made up of functions, or measures, which
have some more precise mass distribution in space, compared to
functions in standard Lebesgue spaces, see
(\ref{eq:morrey_norm}). So, in a sense they are some sort of
intermediate spaces between $L^{p}(\R^{N})$ and $L^{\infty}(\R^{N})$.
Therefore subtle differences and heavy difficulties appear when
dealing with evolution problems of the type
(\ref{eq:intro-linear-e-eq-with-potential}) in them.

The homogeneous or unperturbed problem, that is $V=0$, has been
studied with initial data in Morrey spaces in several references, see
Section \ref{sec:homogeneous_equation_in_Morrey}. Several of these
results stem from the corresponding problem with initial data in
uniform spaces, which is a setting for
which previous results are also available,  see Section
\ref{sec:homogeneous_equation_in_Uniform}.

Using these results our goal is to solve
(\ref{eq:intro-linear-e-eq-with-potential}) when the potential $V$ is
also in a Morrey space, or is a sum of such potentials. For this we
use perturbation techniques, so we can use in an essential way
properties of the solutions of the unperturbed problem in Morrey
spaces. This technique also requires that the multiplication operator
defined by $V$, transforms some Morrey spaces into some others. This
is the reason to take $V$ in a Morrey spaces itself, see Section
\ref{sec:morrey-potential}.
Previous perturbations
results in \cite{C-RB-linear_morrey} used specific homogeneous
perturbations and the techniques in that reference can not be applied
to the  general potentials we consider here.

Now we describe in some detail the difficulties we face in our
approach. As will be seen in Section \ref{sec:morrey_spaces}, Morrey spaces
$M^{p,\ell}(\R^N)$ depend on two parameters  $0<\ell \leq N$ and  $1\leq
p \leq \infty$, so we have a two parameter scale of spaces. The
unperturbed problem, $V=0$,  defines a semigroup of solutions in this
scale that has suitable smoothing properties between only some of the
spaces of this scale, where both parameters must be chosen in a very
specific way, see (\ref{eq:estimates_Mpl-Mqs}). These estimates are
known to be optimal from \cite{C-RB-scaling1}. For the perturbed problem on the other
hand, if we have the potential in a Morrey
space,  $V\in M^{p_0,\ell_0}(\R^N)$, the corresponding multiplication
operator acts continuously only between quite specific pairs of Morrey
spaces, see (\ref{eq:action-of-PV}). Therefore, to solve
(\ref{eq:intro-linear-e-eq-with-potential}) using Duhamel's principle,
(or  variations of constants formula), which is the main perturbation
tool we use, requires putting all these properties together in a
nontrivial way.

For this, in Section \ref{sec:abstract-approach}, we develop  an
abstract perturbation theory for semigroups defined in general scales
of spaces without any specific assumption in the set of indexes that
label the spaces of the family. These results allow for several simultaneous perturbations
and describe the subset of the scale of spaces for the initial data for which the perturbed
problem can be solved and the spaces of the scale to which the
solutions regularise. These, in turn, determine the part of the scale
in which the perturbed problem defines a semigroup that behaves near
time $t=0$ as the original unperturbed semigroup. The results  also
discuss the exponential growth of the perturbed semigroup in terms of
the sizes of the perturbations, the continuous dependence of solutions with respect to
perturbations and  the analiticity of the
perturbed semigroup.

These results are applied in full detail to the scale of
Morrey spaces in Section \ref{sec:linear-eq-with-Morrey-potential(s)}
but this still requires a nontrivial analysis of this particular
case. Also, the results in Section \ref{sec:abstract-approach} can be
applied to other interesting situations, like two parameter scale of
Sobolev spaces. This will be pursued somewhere else.

For the case of a single perturbation, our main results in Section
\ref{sec:one-perturbation} state that given a
potential $V\in M^{p_0,\ell_0}(\R^N)$, the problem
(\ref{eq:intro-linear-e-eq-with-potential}) can be solved for initial
data $u_{0}$ in Morrey spaces $M^{p,\ell}(\R^N)$ with $1\leq p \leq
\infty$ and $\ell\leq \ell_{0}$ and defines a semigroup of solutions
that satisfy the same smoothing estimates than the unperturbed
semigroup, see Theorem \ref{thm:perturbation-by-a-potential}. That is,
the perturbation preserves part of the scale of spaces and the
smoothing estimates. Also the
perturbed semigroup depends continuously with respect to the
perturbations, see Theorem
\ref{thr:morrey_continuous_dependence_1_perturbation}.  The
corresponding results when the perturbation is the sum of two (or
more)  potentials in different Morrey spaces are stated in Section
\ref{sec:two-perturbations}. This situation adds additional
difficulties to the analysis.

In general in  this paper we denote by $c$ or $C$ generic constants that may
change from line to line, whose value is not important for the
results.

\section{Morrey spaces of functions and measures}
\label{sec:morrey_spaces}

A function $\phi\in
L^p_{loc}(\R^N)$ belongs to the Morrey space $M^{p,\ell}(\R^N)$, $\ell\in[0,N]$, $p\in[1,\infty)$
iff
\begin{equation} \label{eq:morrey_norm}
\| \phi\|_{M^{p,\ell}(\R^N)} = \sup_{x_0\in \R^N\! ,\
  \! R>0} R^{\frac{\ell-N}{p}} \|\phi\|_{L^p(B(x_0,R))}<\infty .
  \end{equation}

If  $\ell=N$ then
$M^{p,N}(\R^N) = L^{p}(\R^N)$ for $p\in[1,\infty)$ (taking $R\to \infty$), whereas if $\ell
=0$ then $M^{p,0}(\R^N)=L^\infty(\R^N)$ (taking $R\to 0$ and using Lebesgue's differentiation theorem). We also set
$M^{\infty,\ell}(\R^N):=L^\infty(\R^N)$, $\ell\in[0,N]$.

Morrey spaces can be characterized in terms of the locally
uniform Lebesgue's spaces $L^p_{U}(\R^N)$, $p\in[1,\infty]$,  which can be
traced back to \cite{K-1975} and
are composed of $\phi \in
L_{loc}^p(\R^N)$ such that
\begin{equation*}
\|\phi\|_{L_U^p(\R^N)}=\sup_{x_0\in\R^N}\|\phi\|_{L^p(B(x_0,1))}<\infty,
\end{equation*}
where $L_U^\infty(\R^{N})=L^\infty(\R^{N})$.
Using dilations defined for functions in $\R^{N}$ by
\begin{displaymath}
\phi_R(x) = \phi(Rx), \quad x\in \R^N, \ R>0
\end{displaymath}
we have that
\begin{displaymath}
\phi \in M^{p,\ell}(\R^N) \ \text{ if and only if }
\sup_{R>0} R^\frac{\ell}p  \|\phi_{R} \|_{L_{U}^p(\R^N)} <\infty
\end{displaymath}
and
$\|\phi \|_{M^{p,\ell}(\R^N)}= \sup_{R>0} R^\frac{\ell}p \| \phi_{R}\|_{L_{U}^p(\R^N)}$
(see \cite[Proposition 2.1]{C-RB-linear_morrey}). Given any $\ell\in[0,N]$ and $p\in[1,\infty)$ we have in particular continuous embedding
\begin{displaymath}
M^{p,\ell}(\R^N)\subset L^p_U(\R^N).
\end{displaymath}

The dotted Morrey spaces $\dot M^{p,\ell}(\R^N)$, $1< p<
\infty$, $\ell\in(0,N]$ denote subspaces of $M^{p,\ell}(\R^N)$ in which translations are continuous, that is
\begin{equation}\label{eq:continuity-of-translations}
\tau_y \phi -\phi \to 0 \ \text{ as } \ y\to 0
\end{equation}
in $M^{p,\ell}(\R^N)$,
where $\tau_y\phi(x)=\phi(x-y)$ for  $x\in\R$.
Given $\ell\in[0,N]$ and $p\in[1,\infty)$, $\dot M^{p,\ell}(\R^N)$ is in particular a subspace of $\dot L^p_U(\R^N)$ consisting of functions from  $L^p_U(\R^N)$ which satisfy (\ref{eq:continuity-of-translations}) in $L^p_U(\R^N)$.

Regarding spaces of Morrey measures (see \cite{1989_Giga_Miyakawa, K}), we consider for $\ell\in[0,N]$
the space $\mathcal{M}^{\ell}(\R^{N})$
which consists of Radon measures
$\mu$ satisfying
\begin{displaymath}
  \|\mu\|_{\mathcal{M}^{\ell}(\R^N)} = \sup_{x_{0} \in \R^N, R>0} \
  R^{\ell-N} \ |\mu|(B(x_{0},R))  <\infty.
\end{displaymath}
Given any $\ell\in (0,N]$,
\begin{displaymath}
M^{1,\ell}(\R^N) \subset
\mathcal{M}^{\ell} (\R^N) \ \text{ isometrically }
\end{displaymath}
where $M^{1,N}(\R^N) = L^{1}(\R^N)$, whereas
$\mathcal{M}^{N} (\R^N)=\mathcal{M}_{BTV} (\R^N)$ is the space of Radon
measures with bounded total variation.

All above mentioned spaces are in particular contained in the space of uniform measures $\mathcal{M}_U(\R^N)$, which consists of Radon measures
$\mu$ satisfying
\begin{displaymath}
  \|\mu\|_{\mathcal{M}_{U}(\R^N)} = \sup_{x_{0} \in \R^N} \
  |\mu|(B(x_{0},1)) < \infty.
\end{displaymath}

\section{The homogeneous linear equation in uniform spaces}
\label{sec:homogeneous_equation_in_Uniform}

In this section, given $A_0$ as in (\ref{eq:operator-A0}),  we consider
the  linear \emph{fractional diffusion} problem
\begin{equation}\label{eq:linear-e-eq-Uniform}
  \begin{cases}
    u_{t} + A_0^{\mu} u =0 ,  & x \in \R^{N}, \quad t>0, \\
    u(0,x)=u_0(x),  & x\in \R^N
  \end{cases}
\end{equation}
where $0<\mu \leq 1$ and $A_0^{\mu}$ is the fractional power of
$A_{0}$. We collect below several known results for
(\ref{eq:linear-e-eq-Uniform}) when the initial data is taken
in  locally uniform spaces. This strategy allows to obtain similar results in
Morrey spaces, see Section \ref{sec:homogeneous_equation_in_Morrey}
and \cite{C-RB-linear_morrey}.

\begin{proposition}
  \label{prop:1st-about-{eq:linear-e-eq-Uniform}}
Let $ 1\leq p \leq \infty$, $0< \mu \leq 1$ and
assume that $A_0$ is as in (\ref{eq:operator-A0}).

\begin{enumerate}
\item
  Then (\ref{eq:linear-e-eq-Uniform}) defines a semigroup of solutions
  $\{S_{\mu}(t)\}_{t\geq0}$ in each one of the spaces
$L^p_U(\R^N)$ and   $\mathcal{M}_U(\R^N)$.

  \item
    The semigroup is analytic and has a sectorial
  generator provided $0<\mu<1$, or $\mu=1$ and  $1< p \leq \infty$.

  \item
  The semigroup has a selfsimilar  kernel
  $k_{\mu}(t,x,y) = \frac{1}{t^\frac{N}{2m\mu}}
  K_{\mu}\left(\frac{x-y}{t^{\frac{1}{2m\mu}}}\right)$, that is,
  \begin{displaymath}
    S_{\mu}(t) u_{0}(x) = \int_{\R^{N}} k_{\mu}(t,x,y) u_{0}(y) \, dy ,
    \quad x\in \R^{N}, \ t>0.
  \end{displaymath}
  Moreover if the semigroup generated by $-A_{0}$, that is,
  $\{S_{1}(t)\}_{t\geq0}$ is order preserving (e.g. if
  $A_{0}= -\Delta$), so is $\{S_{\mu}(t)\}_{t\geq0}$ for $0<\mu <1$ and actually
  $k_{\mu}(t,x,y) \geq 0$ for all $t,x,y$.

\item
  The space $\dot   L^{p}_{U}(\R^N)$ is invariant for the semigroup.

\end{enumerate}
\end{proposition}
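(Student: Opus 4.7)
The plan is to construct the semigroup first in the local case $\mu=1$ and then obtain the general $0<\mu\leq 1$ case by Bochner subordination. For $\mu=1$, uniform ellipticity of $A_0$ with constant real coefficients yields a fundamental solution $k_1(t,x,y)=t^{-N/2m}K_1((x-y)/t^{1/2m})$ satisfying the classical pointwise bounds of Agmon--Douglis--Nirenberg / Eidelman type (exponential decay in the selfsimilar variable). Convolution with $k_1(t,\cdot)$ is bounded on $L^p(\R^N)$, and the uniform local integrability of $k_1(t,\cdot)$ extends this to $L^p_U(\R^N)$ and $\mathcal{M}_U(\R^N)$, giving (i) for $\mu=1$. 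The analogous estimates on $\partial_t k_1$ produce sectoriality of $-A_0$ on $L^p(\R^N)$ for $1<p\leq\infty$ and hence, by the same convolution argument, on $L^p_U(\R^N)$ and $\mathcal{M}_U(\R^N)$; the case $\mu=1$, $p=1$ is well known to fail in general, which explains its exclusion in (ii).

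For $0<\mu<1$, use the subordination formula
\begin{equation*}
S_\mu(t)=\int_0^\infty f_{t,\mu}(s)\,S_1(s)\,ds,
\end{equation*}
where $f_{t,\mu}$ is the positive one-sided $\mu$-stable density. Since $f_{t,\mu}$ is a probability density and $\{S_1(s)\}_{s\geq 0}$ is uniformly bounded on the target space, the integral defines a uniformly bounded semigroup on $L^p_U(\R^N)$ and $\mathcal{M}_U(\R^N)$, completing (i). A standard property of subordination is that, for $0<\mu<1$, the resulting semigroup is always bounded analytic, regardless of whether the subordinated one was, which completes (ii). The kernel of $S_\mu(t)$ is then $k_\mu(t,x,y)=\int_0^\infty f_{t,\mu}(s) k_1(s,x,y)\,ds$, and combining the scaling $f_{t,\mu}(s)=t^{-1/\mu}f_{1,\mu}(st^{-1/\mu})$ with the selfsimilarity of $k_1$, and changing variables $s=t^{1/\mu}\sigma$, produces the selfsimilar form in (iii) with parabolic exponent $t^{1/(2m\mu)}$. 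Order preservation transfers from $S_1$ to $S_\mu$ because $f_{t,\mu}\geq 0$, and nonnegativity of $k_\mu$ then follows directly.

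For (iv), since $S_\mu(t)$ is convolution with $k_\mu(t,\cdot)$ it commutes with translations, so for $u_0\in\dot L^p_U(\R^N)$ we have
\begin{equation*}
\|\tau_y S_\mu(t)u_0-S_\mu(t)u_0\|_{L^p_U}=\|S_\mu(t)(\tau_y u_0-u_0)\|_{L^p_U}\leq \|S_\mu(t)\|_{\mathcal{L}(L^p_U)}\,\|\tau_y u_0-u_0\|_{L^p_U}\to 0
\end{equation*}
as $y\to 0$, proving invariance.

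The main delicate point is the non-reflexive endpoint spaces $L^\infty(\R^N)$ and $\mathcal{M}_U(\R^N)$, where strong continuity at $t=0$ genuinely fails: one must interpret the word \emph{semigroup} in the non-$C_0$ sense, with strong continuity at $t=0$ recovered only on the closed subspace $\dot L^p_U(\R^N)$, while sectoriality of the generator is kept in the sense of bounded analytic semigroups in a sector. Since each ingredient above is available in the cited literature (Pazy, Henry, Lunardi for the sectorial/analytic framework, Giga--Miyakawa for the measure setting, and \cite{C-RB-linear_morrey} for the uniform-space scale together with fractional powers via subordination), the argument largely amounts to assembling and verifying that these pieces apply to the present $A_0^\mu$.
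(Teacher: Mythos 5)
Your argument is correct and is essentially the paper's route: the paper's proof of (i)--(iii) simply cites \cite{C-RB} and \cite{C-RB-scaling3-4} for the $\mu=1$ results in $L^p_U(\R^N)$ and $\mathcal{M}_U(\R^N)$ and for the selfsimilar kernel, and Proposition B.1 of \cite{C-RB-linear_morrey} (Bochner subordination) for the case $0<\mu<1$, which is exactly the kernel-bounds-plus-subordination scheme you sketch, while your proof of (iv) is the same translation-commutation estimate the paper uses. Only be careful with the aside in which you transfer $\mu=1$ sectoriality ``to $L^p_U(\R^N)$ and $\mathcal{M}_U(\R^N)$'' and then declare the case $p=1$ to ``fail'': the proposition asserts analyticity for $\mu=1$ only in $L^p_U(\R^N)$ with $1<p\leq\infty$, and your kernel estimate $\|\partial_t k_1(t,\cdot)\|_{L^1(\R^N)}\leq C/t$ does not by itself distinguish $p=1$ from $p>1$ (nor does it alone produce a sectorial generator in the non-$C_0$, non-densely-defined setting), so neither the overclaim for $\mathcal{M}_U(\R^N)$ nor the asserted failure at $p=1$ is justified by your argument --- but neither is needed for the statement.
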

\begin{proof}
  We start with (i), (ii) and the case $\mu=1$. The results for $L^{p}_{U}(\R^{N})$
  can be found in \cite{C-RB}. The results for $\mathcal{M}_{U}(\R^{N})$
  and the properties of the kernel in (iii) can be found in
  \cite{C-RB-scaling3-4}.

  Then Proposition B.1 in \cite{C-RB-linear_morrey} gives the analyticity results for $0<\mu <1$ in all the spaces above.

Since kernel is selfsimilar, translations $\tau_z$ given in the line
below (\ref{eq:continuity-of-translations}) commute with $S_\mu(t)$,
that is, $\tau_z S_\mu(t)u_0=S_\mu(t)\tau_z u_0$ for $z\in\R^N$,
$t>0$, $u_0\in L^{p}_U(\R^N)$ and $\mu\in(0,1]$. Hence $\|\tau_z
S_\mu(t)u_0 - S_\mu(t)u_0\|_{L^{p}_U(\R^N)} \leq
\|S_\mu(t)\|_{\mathcal{L}(L^{p}_U(\R^N))} \|\tau_z u_0 -
u_0\|_{L^{p}_U(\R^N)}$ and the invariance of $\dot L^{p}_U(\R^N)$ in
(iv) follows.
\end{proof}

The next result collects several estimates for the semigroup above
between the uniform spaces.

\begin{proposition}
Let $\{S_{\mu}(t)\}_{t\geq0}$, $\mu\in (0,1]$, be as in Proposition \ref{prop:1st-about-{eq:linear-e-eq-Uniform}}.

    Given $1\leq p\leq q \leq \infty$ we have for some constant
$c=c_{\mu,p,q}$ that
\begin{equation}\label{eq:estimates_LpU-LqU}
  \|S_{\mu}(t)\|_{{\mathcal L}(L^{p}_U(\R^N), L^{q}_U (\R^N))} \leq
  c(1+\frac{1}{
    t^{\frac{1}{2m\mu}(\frac{N}{p}-\frac{N}q)}}), \quad t>0,
\end{equation}
which remains true if for $p=1$ we replace $L^{1}_U(\R^N)$ by
$\mathcal{M}_U(\R^N)$.
\end{proposition}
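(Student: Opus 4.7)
The plan is to combine the kernel representation from item (iii) of the previous proposition with a uniform-space Young-type convolution inequality and the self-similar scaling of the kernel. Thus $S_\mu(t)u_0 = k_\mu(t,\cdot)*u_0$ with $k_\mu(t,z) = t^{-N/(2m\mu)}K_\mu(t^{-1/(2m\mu)}z)$, and the profile $K_\mu$ belongs to $L^r(\R^N)$ for every $1\leq r\leq\infty$ with adequate pointwise decay at infinity, as a consequence of the bounds on the fundamental solution used in the references cited in the proof of the previous proposition.

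I would first establish the two endpoint estimates $p=q$ and $q=\infty$. For $p=q$, Minkowski's integral inequality together with the scaling identity $\|k_\mu(t,\cdot)\|_{L^1(\R^N)} = \|K_\mu\|_{L^1(\R^N)}$ gives the uniform bound $\|S_\mu(t)\|_{\mathcal{L}(L^p_U(\R^N))} \leq \|K_\mu\|_{L^1(\R^N)}$. For $q=\infty$, decomposing $\R^N$ into a disjoint union of unit cubes $(Q_n)_{n\in\Z^N}$ and applying Hölder cube-by-cube yields
$$|S_\mu(t)u_0(x)| \leq \|u_0\|_{L^p_U(\R^N)}\sum_{n\in\Z^N}\|k_\mu(t,x-\cdot)\|_{L^{p'}(Q_n)},\qquad x\in\R^N;$$
the pointwise decay of $K_\mu$, combined with the scaling $\|k_\mu(t,\cdot)\|_{L^{p'}(\R^N)} = t^{-N/(2m\mu p)}\|K_\mu\|_{L^{p'}(\R^N)}$, controls the cube sum by $c\,(1+t^{-N/(2m\mu p)})$ and hence
$$\|S_\mu(t)\|_{\mathcal{L}(L^p_U(\R^N),L^\infty(\R^N))} \leq c\,(1+t^{-N/(2m\mu p)}),\quad t>0.$$
A Riesz--Thorin interpolation between these endpoints (adapted to the uniform scale by localising to unit cubes) with parameter $\theta = 1-p/q$ then produces the target estimate.

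The replacement of $L^1_U$ by $\mathcal{M}_U$ when $p=1$ follows the same two-endpoint scheme with $|u_0|\,dx$ replaced by $d|\nu|$: by Fubini, the $L^1$-endpoint becomes $\|S_\mu(t)\nu\|_{L^1_U(\R^N)} \leq c\,\|K_\mu\|_{L^1(\R^N)}\|\nu\|_{\mathcal{M}_U(\R^N)}$, while the $L^\infty$-endpoint reads $|S_\mu(t)\nu(x)| \leq \sum_n |\nu|(Q_n)\sup_{y\in Q_n}|k_\mu(t,x-y)|$, which by pointwise decay and self-similar scaling is bounded by $c\,(1+t^{-N/(2m\mu)})\|\nu\|_{\mathcal{M}_U(\R^N)}$. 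The main obstacle in either version is making the cube-summation steps in the $L^\infty$-endpoint rigorous: they require sufficient pointwise decay of $K_\mu$, which for $\mu=1$ is a classical higher-order parabolic fundamental-solution estimate, and for $0<\mu<1$ must be extracted from the subordination of $k_\mu$ against $k_1$ via the $\mu$-stable distribution, as in the references cited in the previous proposition.
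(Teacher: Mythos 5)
Your proposal is correct in outline but follows a genuinely different route from the paper. The paper never argues from the kernel: for $\mu=1$ it simply quotes ready-made operator-norm estimates between the uniform spaces from the scaling references (where a companion result lets one take the exponential rate $a=0$, hence the time-independent constant), and for $0<\mu<1$ it transfers these to $S_\mu(t)$ through an abstract subordination lemma, chaining through intermediate exponents $q_0=p<q_1<\dots<q_J=q$ and Young's inequality when $\frac{1}{2m}\big(\frac Np-\frac Nq\big)\geq 1$; in this way no pointwise information on $K_\mu$ is ever needed. You instead reprove the estimate from scratch: the case $p=q$ by Minkowski and $\|k_\mu(t,\cdot)\|_{L^1(\R^N)}=\|K_\mu\|_{L^1(\R^N)}$, the case $q=\infty$ by a unit-cube decomposition plus H\"older together with the self-similar scaling and the decay of the kernel, and the intermediate cases by interpolation. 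This is a legitimate, self-contained alternative that makes the mechanism (a self-similar kernel with integrable decay) explicit, whereas the paper's route is shorter and deliberately bypasses pointwise bounds on the subordinated kernel.

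Two caveats on your plan. First, the pointwise decay of $K_\mu$ for $0<\mu<1$ is precisely what you cannot take off the shelf from this paper: only the Gaussian-type bound for $K_1$ is recorded here, and for the fractional power you must actually carry out the subordination estimate (the stable density has only a polynomial tail, so one obtains $|K_\mu(z)|\lesssim (1+|z|)^{-(N+2m\mu)}$, which suffices for your cube sums but has to be proved). You flag this yourself; it is the one step of real content in your argument, and it is exactly what the paper's transfer lemma avoids. Second, do not invoke Riesz--Thorin on the uniform scale as such: complex interpolation of the spaces $L^p_U(\R^N)$ is delicate (they are $\ell^\infty$-type amalgams). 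The clean fix is what your parenthetical already hints at: for fixed $t$ and $u_0$ use ball-wise H\"older,
\begin{equation*}
\|S_\mu(t)u_0\|_{L^q(B)}\leq \|S_\mu(t)u_0\|_{L^p(B)}^{p/q}\,\|S_\mu(t)u_0\|_{L^\infty(\R^N)}^{1-p/q},
\end{equation*}
and take the supremum over unit balls; combined with your two endpoint bounds this gives exactly the exponent $\frac{1}{2m\mu}\big(\frac Np-\frac Nq\big)$ with no operator interpolation, and the same localisation works verbatim in the measure case $p=1$.
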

\begin{proof}
With $\mu=1$ this follows from the estimates in \cite[Theorem
3.1]{C-RB-scaling3-4} (since from  \cite[Theorem 6.1]{C-RB-scaling3-4}
we can apply that theorem with the constant  $a=0$).

With $\mu\in(0,1)$ this follows from the estimate for
$\mu=1$ and \cite[Lemma 4.4]{C-RB-scaling1}. This is straightforward
if $\frac{1}{2m}(\frac{N}{p}-\frac{N}{q})<1$. If
$\frac{1}{2m}(\frac{N}{p}-\frac{N}{q})\geq1$, we choose a finite
number of points $q_{j}$, $j=0,\ldots, J$ such that $q_{0}=p <
q_{1}<\ldots < q_{J}=q$ and
$\frac{1}{2m}(\frac{N}{q_{j}}-\frac{N}{q_{j+1}})<1$ to get via
\cite[Lemma 4.4]{C-RB-scaling1} that
$\|S_{\mu}(t)\|_{{\mathcal L}(L^{q_j}_U(\R^N), L^{q_{j+1}}_U (\R^N))}
\leq
c (1+\frac{1}{t^{\frac{1}{2m\mu}(\frac{N}{q_j}-\frac{N}{q_{j+1}})}})$
for $j=1,\ldots, J-1$. From these estimates we get the result using
the semigroup property and Young's inequality.
\end{proof}

The next result states the time continuity properties of the
trajectories of the semigroup.

\begin{proposition}
  Let $\{S_{\mu}(t)\}_{t\geq0}$, $\mu\in (0,1]$, be as in Proposition \ref{prop:1st-about-{eq:linear-e-eq-Uniform}}.

Then for  $1\leq p,q \leq \infty$ we have
\begin{displaymath}
  (0,\infty)\times L^{p}_U(\R^N) \ni (t,u_0) \to S_{\mu}(t) u_0 \in L^{q}_U(\R^{N})
  \ \text{ is continuous},
\end{displaymath}
which remains true if for $p=1$ we replace $L^{1}_U(\R^N)$ by
$\mathcal{M}_U(\R^N)$.
\end{proposition}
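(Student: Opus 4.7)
The plan is to decompose the increment by the triangle inequality
\[
\|S_\mu(t) u_0 - S_\mu(t_0) v_0\|_{L^q_U(\R^N)} \leq \|S_\mu(t)(u_0 - v_0)\|_{L^q_U(\R^N)} + \|S_\mu(t) v_0 - S_\mu(t_0) v_0\|_{L^q_U(\R^N)},
\]
reducing joint continuity at $(t_0, v_0) \in (0,\infty) \times L^p_U(\R^N)$ to (a) continuity in the initial datum, locally uniform in $t$ near $t_0$, and (b) continuity in $t$ of the orbit $t \mapsto S_\mu(t) v_0$ at $t = t_0$. The measure case proceeds identically, with $\mathcal{M}_U(\R^N)$ in place of $L^1_U(\R^N)$, since the smoothing bound (\ref{eq:estimates_LpU-LqU}) is valid there too.

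For (a), if $p \leq q$ the estimate (\ref{eq:estimates_LpU-LqU}) bounds the first term by $c\bigl(1 + t^{-\frac{1}{2m\mu}(\frac{N}{p}-\frac{N}{q})}\bigr)\|u_0 - v_0\|_{L^p_U(\R^N)}$, which is uniform for $t$ in a compact subset of $(0, \infty)$. If instead $p > q$, the continuous embedding $L^p_U(\R^N) \hookrightarrow L^q_U(\R^N)$ (which holds because unit balls in $\R^N$ have finite measure) combined with the $p \to p$ case of (\ref{eq:estimates_LpU-LqU}) gives the same uniform bound. Hence this term tends to zero as $u_0 \to v_0$, uniformly for $t$ near $t_0$.

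For (b), whenever $p$ lies in the analytic range of Proposition \ref{prop:1st-about-{eq:linear-e-eq-Uniform}}(ii), that is, $\mu \in (0,1)$, or $\mu = 1$ and $1 < p \leq \infty$, the orbit $t \mapsto S_\mu(t) v_0$ is analytic, hence continuous, from $(0, \infty)$ into $L^p_U(\R^N)$; fixing $\delta \in (0, t_0/2)$ and using the factorisation
\[
S_\mu(t) v_0 - S_\mu(t_0) v_0 = S_\mu(\delta)\bigl[S_\mu(t - \delta) v_0 - S_\mu(t_0 - \delta) v_0\bigr],
\]
together with the boundedness $S_\mu(\delta) : L^p_U(\R^N) \to L^q_U(\R^N)$ established in (a), transfers this continuity to $L^q_U(\R^N)$.

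The main obstacle is dealing with the two remaining non-analytic cases, namely $\mu = 1$ with $v_0 \in L^1_U(\R^N)$ or $v_0 \in \mathcal{M}_U(\R^N)$. The idea is to regularise first: for $\delta \in (0, t_0/2)$ set $w := S_1(\delta) v_0$, which by (\ref{eq:estimates_LpU-LqU}) lies in $L^\infty_U(\R^N)$, a space on which $\{S_1(s)\}_{s \geq 0}$ is analytic by Proposition \ref{prop:1st-about-{eq:linear-e-eq-Uniform}}(ii). Therefore $s \mapsto S_1(s) w$ is continuous from $(0, \infty)$ into $L^\infty_U(\R^N)$, and hence also into $L^q_U(\R^N)$ via the continuous embedding $L^\infty_U(\R^N) \hookrightarrow L^q_U(\R^N)$. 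Writing $S_1(t) v_0 = S_1(t - \delta) w$ for $t$ in a neighbourhood of $t_0$ then yields continuity of $t \mapsto S_1(t) v_0$ into $L^q_U(\R^N)$, completing the argument.
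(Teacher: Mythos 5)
Your argument is correct, and its skeleton is the same as the paper's: split the increment into a difference-in-data term, controlled uniformly for $t$ near $t_0$ by the smoothing estimate (\ref{eq:estimates_LpU-LqU}), and a difference-in-time term, handled by inserting a regularization $S_\mu(\delta)$ and invoking continuity of the semigroup at positive times coming from analyticity in Proposition \ref{prop:1st-about-{eq:linear-e-eq-Uniform}}(ii); the case $q<p$ is dispatched by the embedding $L^p_U(\R^N)\hookrightarrow L^q_U(\R^N)$ in both treatments. The one genuine divergence is where the time-continuity is invoked. The paper notes $S_\mu(\eps)u_0\in L^q_U(\R^N)$ and uses continuity of $t\mapsto S_\mu(t)$ in the \emph{target} space $L^q_U(\R^N)$, which for the non-analytic case $\mu=1$, $q=1$ forces an appeal to the external result \cite[Theorem 4.5]{C-RB}. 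You instead run the continuity in the \emph{source} space (analytic range), or, for $\mu=1$ with $v_0\in L^1_U(\R^N)$ or $\mathcal{M}_U(\R^N)$, first smooth to $w=S_1(\delta)v_0\in L^\infty(\R^N)$, use analyticity there, and push forward through $L^\infty(\R^N)\hookrightarrow L^q_U(\R^N)$ and the factorization $S_\mu(t)v_0-S_\mu(t_0)v_0=S_\mu(\delta)\bigl[S_\mu(t-\delta)v_0-S_\mu(t_0-\delta)v_0\bigr]$. This buys you a self-contained proof that never needs continuity of $S_1(\cdot)$ in $L^1_U(\R^N)$ or $\mathcal{M}_U(\R^N)$ at positive times, at the cost of one extra factorization step; the paper's route is slightly more direct but leans on the cited $L^1_U$ continuity result.
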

\begin{proof}
If $(0,\infty)\times L^{p}_U(\R^N)\ni
  (t_n,u_{0n}) \to (t_0,u_0)\in  (0,\infty)\times L^{p}_U(\R^N)$ as
  $n\to\infty$, then for any small enough $\eps>0$ we can write
\begin{displaymath}
S_{\mu}(t_n)u_{0n}- S_{\mu}(t_0)u_{0}= S_{\mu}(t_n)u_{0n} -
S_{\mu}(t_n)u_{0} + S_{\mu}(t_n-\eps)S(\eps)u_{0} -
S_{\mu}(t_0-\eps)S_{\mu}(\eps)u_{0}.
\end{displaymath}
Then for  $p\leq q\leq \infty$ we have from
(\ref{eq:estimates_LpU-LqU}) \begin{displaymath}
\|S_{\mu}(t_n)u_{0n} - S_{\mu}(t_n)u_{0}\|_{L^{q}_U(\R^N)} \leq
c\left(1+\frac{1}{
  t_n^{\frac{1}{2m\mu}(\frac{N}{p}-\frac{N}q)}} \right) \|u_{0n} -u_{0}\|_{
  L^{p}_U(\R^N)} \to 0 \quad \text{ as } n\to \infty.
\end{displaymath}
Also, since
$S_{\mu}(\eps)u_{0}\in L^{q}_U(\R^N)$ by (\ref{eq:estimates_LpU-LqU}), we
see that
\begin{displaymath}
\|S_{\mu}(t_n-\eps) S_{\mu}(\eps)u_{0} -
S_{\mu}(t_0-\eps)S_{\mu}(\eps)u_{0}\|_{L^{q}_U(\R^N)} \to 0 \quad
\text{ as } n\to \infty,
\end{displaymath}
since by  Proposition
\ref{prop:1st-about-{eq:linear-e-eq}}  the semigroup
$\{S_{\mu}(t)\}_{t\geq0}$ is analytic (thus continuous at each positive
time) in
$L^{q}_U(\R^N)$ for $0<\mu<1$, or $\mu=1$ and $q \not= 1$. For
$\mu=1$ and $q=1$ it is continuous in $L^{q}_U(\R^N)$ for positive times
from \cite[Theorem 4.5]{C-RB}.

Finally, for  $1\leq q < p$,
since we have proven  continuity in $L^p_U(\R^N)$, so we have it in
$L^q_U(\R^N)$ as  $L^p_U(\R^N)\subset L^q_U(\R^N)$.
\end{proof}

About the way the semigroup approaches the initial data, we have the
following result.

\begin{proposition}\label{prop:convergence-as-t-to0}
Let $\{S_{\mu}(t)\}_{t\geq0}$, $\mu\in (0,1]$, be as in Proposition \ref{prop:1st-about-{eq:linear-e-eq}}.

Then for any $u_0\in L^p_U(\R^N)$ with $1\leq p < \infty$ we have
\begin{equation}\label{eq:behavior-at-0-of-S{mu}(t)-in-Prop-2.4}
  S_\mu(t)u_0 \to u_0 \ \text{ as } \ t\to 0^+, \quad \mbox{in
    $L^p_{loc}(\R^N)$},
\end{equation}

The convergence in (\ref{eq:behavior-at-0-of-S{mu}(t)-in-Prop-2.4}) is in $L^p_U(\R^N)$
if $u_0\in \dot L^p_U(\R^N)$.

\end{proposition}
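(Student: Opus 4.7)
The plan is to exploit the convolution structure of the semigroup. By Proposition~\ref{prop:1st-about-{eq:linear-e-eq-Uniform}}(iii) the kernel $k_{\mu}(t,x,y)$ depends only on $x-y$ (since $A_0$ has constant coefficients) and is selfsimilar, so $S_{\mu}(t) u_{0} = K_{\mu,t} * u_{0}$ with $K_{\mu,t}(z) = t^{-N/(2m\mu)} K_{\mu}(z/t^{1/(2m\mu)})$. Two facts about $K_{\mu}$ will be used: (a) $K_{\mu} \in L^{1}(\R^{N})$, which follows from the $L^{\infty}_{U}(\R^{N})$-boundedness of $S_{\mu}(t)$ in Proposition~\ref{prop:1st-about-{eq:linear-e-eq-Uniform}}(i); and (b) $\int_{\R^{N}} K_{\mu} = 1$, since $A_{0}^{\mu}$ annihilates the constant function, so $S_{\mu}(t)\mathbf{1} = \mathbf{1}$. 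Also, the $L^{p}_{U}$-norm is translation-invariant, which permits Minkowski's integral inequality in both parts below.

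I would handle the $\dot L^{p}_{U}$ case first. Using (b), write
\begin{displaymath}
S_{\mu}(t) u_{0} - u_{0} = \int_{\R^{N}} K_{\mu,t}(z)\bigl(\tau_{z} u_{0} - u_{0}\bigr)\, dz,
\end{displaymath}
take the $L^{p}_{U}$-norm, apply Minkowski's integral inequality, and rescale $z = t^{1/(2m\mu)} w$ to obtain
\begin{displaymath}
\|S_{\mu}(t) u_{0} - u_{0}\|_{L^{p}_{U}(\R^{N})} \leq \int_{\R^{N}} |K_{\mu}(w)|\,\|\tau_{t^{1/(2m\mu)} w} u_{0} - u_{0}\|_{L^{p}_{U}(\R^{N})}\, dw.
\end{displaymath}
The integrand converges to $0$ pointwise in $w$ as $t \to 0^{+}$ by the defining property of $\dot L^{p}_{U}$, and is dominated by $2\|u_{0}\|_{L^{p}_{U}(\R^{N})} |K_{\mu}(w)|$; dominated convergence closes this case.

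For the general $L^{p}_{loc}$ statement with $u_{0} \in L^{p}_{U}$, $p < \infty$, fix $R>0$ and split $u_{0} = \phi + \psi$ with $\phi = u_{0}\chi_{B(0,2R)} \in L^{p}(\R^{N})$ and $\psi = u_{0} - \phi$. Since $\phi \in L^{p}(\R^{N}) \subset \dot L^{p}_{U}(\R^{N})$ (translations are continuous in $L^{p}(\R^{N})$ by density of $C_{c}$, and $\|\cdot\|_{L^{p}_{U}(\R^{N})} \leq \|\cdot\|_{L^{p}(\R^{N})}$), the previous step gives $\|S_{\mu}(t)\phi - \phi\|_{L^{p}(B(0,R))} \to 0$. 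For the tail $\psi$, which vanishes on $B(0,2R)$, the conditions $|x|\leq R$ and $\psi(x-z)\neq 0$ force $|z| \geq R$, whence by Minkowski and covering $B(-z,R)$ by finitely many unit balls,
\begin{displaymath}
\|S_{\mu}(t)\psi\|_{L^{p}(B(0,R))} \leq C(R) \|u_{0}\|_{L^{p}_{U}(\R^{N})} \int_{|w| \geq R t^{-1/(2m\mu)}} |K_{\mu}(w)|\, dw \to 0
\end{displaymath}
as $t \to 0^{+}$, since $K_{\mu} \in L^{1}(\R^{N})$.

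The main subtlety is that $L^{p}_{U}$ functions need not decay at infinity, so a standard approximation-of-identity argument in $L^{p}(\R^{N})$ is not directly applicable. The splitting above isolates the local part (treated by the translation-continuity method in the $\dot L^{p}_{U}$ step) from the tail, whose contribution near the origin involves only the kernel on scales $|z| \geq R$ where, as $t \to 0^{+}$, almost no mass remains.
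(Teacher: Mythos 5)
Your proposal is correct, but it takes a genuinely different route from the paper. The paper does not argue from the kernel at all: for $\mu=1$ both assertions are quoted from earlier work \cite{C-RB}, and the case $\mu\in(0,1)$ is then reduced to $\mu=1$ via the subordination formula of \cite{1978_Yosida}, estimating $\|S_\mu(t)u_0-u_0\|$ by $\int_0^\infty f_{1,\mu}(s)\,\|S_1(st^{1/\mu})u_0-u_0\|\,ds$ and applying dominated convergence ball by ball (the $\dot L^p_U$ statement follows the same way with the $L^p_U$ norm). Your argument is instead a self-contained approximate-identity proof that treats all $\mu\in(0,1]$ simultaneously: Minkowski's inequality plus continuity of translations for the dotted case, and a cut-off at scale $2R$ combined with the escape of kernel mass from $\{|z|\ge R\}$ for the local case. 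What this buys is independence from the external references and a transparent identification of exactly what is needed, namely $K_\mu\in L^1(\R^N)$ and $\int_{\R^N}K_\mu=1$; what it costs is that these two kernel facts must themselves be justified. Your deduction of $K_\mu\in L^1$ from $L^\infty$-boundedness of $S_\mu(t)$ is fine (a standard duality argument for convolution operators, and note you correctly work with $|K_\mu|$, since the kernel need not be nonnegative for general $A_0$). The normalization, however, deserves a cleaner justification than ``$A_0^\mu$ annihilates constants, so $S_\mu(t)\mathbf{1}=\mathbf{1}$'': constants do not lie in the spaces where the generator is identified, so as written this is a heuristic. It is easily repaired, e.g.\ by observing that $\widehat{K_\mu}(0)=1$ because the symbol of $A_0$ contains only order-$2m$ terms and hence vanishes at $\xi=0$, or by subordinating the unit-mass kernel $K_1$. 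With that point tightened, your proof is complete and arguably more informative than the paper's citation-plus-subordination argument.
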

\begin{proof}
  If $\mu=1$ then (\ref{eq:behavior-at-0-of-S{mu}(t)-in-Prop-2.4}) in
  $L^p_{loc}(\R^N)$ is from \cite[Theorems 4.1, 4.5 and Proposition
  4.9]{C-RB}, (\ref{eq:behavior-at-0-of-S{mu}(t)-in-Prop-2.4}) in $L^p_U(\R^N)$
  with $u_0\in \dot L^p_U(\R^N)$ is from \cite[(4.1), (4.5) and
  (4.6)]{C-RB}.

Now, given $\mu\in(0,1)$ observe that (\ref{eq:behavior-at-0-of-S{mu}(t)-in-Prop-2.4}) in $L^p_{loc}(\R^N)$ follows from the convergence properties in the case $\mu=1$ using the expressions in \cite[(20'), p. 264 and (14), p. 262]{1978_Yosida} (see also \cite[Appendix B]{C-RB-linear_morrey}). Indeed, given $u_0\in L^p_U(\R^N)$ and any ball $B\subset \R^N$, with $f_{1,\mu}$ as in \cite[p. 264]{1978_Yosida} we have that
\begin{displaymath}
\|S_\mu(t)u_0 - u_0 \|_{L^p(B)} \leq \int_0^\infty f_{1,\mu} (s) \|S_1(st^\frac{1}{\mu})u_0 - u_0 \|_{L^p(B)} \to 0 \ \text{ as } \ t\to0^+,
\end{displaymath}
where the convergence on the right is due to Lebesgue's dominated
convergence theorem.
\end{proof}

\section{The homogeneous linear equation in Morrey spaces}
\label{sec:homogeneous_equation_in_Morrey}

In this section, given $A_0$ as in (\ref{eq:operator-A0}),  we consider
the  linear \emph{fractional diffusion} problem
\begin{equation}\label{eq:linear-e-eq}
  \begin{cases}
    u_{t} + A_0^{\mu} u =0 ,  & x \in \R^{N}, \quad t>0, \\
    u(0,x)=u_0(x), & x\in \R^N
  \end{cases}
\end{equation}
where $0<\mu \leq 1$ and $A_0^{\mu}$ is the fractional power of
$A_{0}$. We collect below several previous results for
(\ref{eq:linear-e-eq}) when the initial data are taken
in Morrey spaces.

\begin{proposition}
  \label{prop:1st-about-{eq:linear-e-eq}}
Let $ 1\leq p \leq \infty$,  $0< \ell\leq N$,  $0< \mu \leq 1$ and
assume that $A_0$ is as in (\ref{eq:operator-A0}).

\begin{enumerate}
\item
  Then (\ref{eq:linear-e-eq}) defines a semigroup of solutions
  $\{S_{\mu}(t)\}_{t\geq0}$ in each one of the spaces
  $M^{p,\ell}(\R^N)$, $\mathcal{M}^{\ell}(\R^N)$

  \item
    The semigroup is analytic and has a sectorial
  generator provided $0<\mu<1$, or $\mu=1$ and $1< p \leq \infty$.

  \item
     The semigroup has a selfsimilar kernel
  $k_{\mu}(t,x,y) = \frac{1}{t^\frac{N}{2m\mu}}
  K_{\mu}\left(\frac{x-y}{t^{\frac{1}{2m\mu}}}\right)$, that is,
  \begin{displaymath}
    S_{\mu}(t) u_{0}(x) = \int_{\R^{N}} k_{\mu}(t,x,y) u_{0}(y) \, dy ,
    \quad x\in \R^{N}, \ t>0.
  \end{displaymath}
  Moreover if the semigroup generated by $-A_{0}$, that is,
  $\{S_{1}(t)\}_{t\geq0}$ is order preserving (e.g. if
  $A_{0}= -\Delta$), so is $\{S_{\mu}(t)\}_{t\geq0}$ for $0<\mu <1$ and actually
  $k_{\mu}(t,x,y) \geq 0$ for all $t,x,y$.

\item
  The space $\dot M^{p,\ell}(\R^N)$ is invariant for the semigroup.

\end{enumerate}
\end{proposition}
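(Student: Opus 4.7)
My plan is to reduce Proposition 4.1 to the corresponding results in uniform spaces (Proposition 3.1) by exploiting the dilation characterization of the Morrey norm recalled in Section~\ref{sec:morrey_spaces},
\[
\|\phi\|_{M^{p,\ell}(\R^N)} = \sup_{R>0} R^{\ell/p} \|\phi_R\|_{L^p_U(\R^N)},
\]
and the selfsimilar structure of the kernel in part (iii). The key technical tool is an intertwining identity between dilations and the semigroup. A direct change of variables $y = Rz$ in the kernel representation, together with the selfsimilar form of $k_\mu$, gives
\[
(S_\mu(t)\phi)_R = S_\mu(t/R^{2m\mu})\phi_R, \qquad R>0,\; t>0.
\]
This is exactly the mechanism that converts uniform-space estimates into Morrey-space estimates with no additional constants depending on $R$.

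Granted this, part (i) proceeds as follows. Taking $p=q$ in the uniform-space estimate~(\ref{eq:estimates_LpU-LqU}) yields a bound $\|S_\mu(s)\|_{\mathcal L(L^p_U)} \leq C$ independent of $s>0$. Combining with the intertwining identity,
\[
R^{\ell/p}\|(S_\mu(t)\phi)_R\|_{L^p_U(\R^N)} = R^{\ell/p}\|S_\mu(t/R^{2m\mu})\phi_R\|_{L^p_U(\R^N)} \leq C R^{\ell/p}\|\phi_R\|_{L^p_U(\R^N)},
\]
and taking the supremum over $R>0$ shows that $S_\mu(t)$ is bounded on $M^{p,\ell}(\R^N)$ with operator norm bounded in $t\geq 0$. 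The semigroup property is inherited pointwise from the uniform-space semigroup. For measure-valued data, the identical argument with $L^1_U(\R^N)$ replaced by $\mathcal{M}_U(\R^N)$ and using the corresponding part of Proposition~\ref{prop:1st-about-{eq:linear-e-eq-Uniform}} yields the result on $\mathcal{M}^{\ell}(\R^N)$. Part (iii) then requires no work: selfsimilarity and positivity are pointwise properties of the kernel $k_\mu$, which is the same kernel used in Proposition~\ref{prop:1st-about-{eq:linear-e-eq-Uniform}}, and so they transfer verbatim.

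For part (ii), analyticity and sectoriality for $0<\mu<1$ are a direct application of Proposition~B.1 in \cite{C-RB-linear_morrey}, which produces analytic semigroups in the Morrey scale from subordination. For $\mu=1$ and $1<p\leq \infty$ the sectoriality is either quoted from \cite{C-RB-linear_morrey} or obtained by the same subordination/perturbation route that was used in Proposition~\ref{prop:1st-about-{eq:linear-e-eq-Uniform}}, since the dilation characterization identifies the generator up to a bounded change of coordinates. For part (iv), translation invariance of the kernel $k_\mu(t,x,y) = t^{-N/(2m\mu)} K_\mu((x-y)/t^{1/(2m\mu)})$ immediately implies that $\tau_z$ commutes with $S_\mu(t)$, hence by the uniform boundedness of $S_\mu(t)$ established in~(i),
\[
\|\tau_z S_\mu(t) u_0 - S_\mu(t) u_0\|_{M^{p,\ell}(\R^N)} \leq \|S_\mu(t)\|_{\mathcal L(M^{p,\ell}(\R^N))} \|\tau_z u_0 - u_0\|_{M^{p,\ell}(\R^N)} \to 0
\]
as $z\to 0$ whenever $u_0\in \dot M^{p,\ell}(\R^N)$, proving the invariance.

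The main obstacle is checking the endpoint cases of (i) in which the clean dilation argument either degenerates or must be reinterpreted: $p=\infty$ (where by convention $M^{\infty,\ell}=L^\infty$ and the dilation characterization is trivial), $p=1$ together with the measure space $\mathcal{M}^{\ell}(\R^N)$ (which requires the $\mathcal M_U$ replacement throughout), and the borderline $\ell=N$ where $M^{p,N}=L^p$. Each of these reduces to a direct use of the corresponding statement in Section~\ref{sec:homogeneous_equation_in_Uniform}, but one must verify separately that the scaling argument remains valid at the endpoints rather than quoting the generic case.
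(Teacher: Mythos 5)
Your proposal is correct in substance but takes a genuinely different, more self-contained route than the paper for parts (i) and (iii): the paper's proof is essentially a citation (the Morrey-space semigroup results are quoted from \cite{C-RB-linear_morrey}, the selfsimilar kernel from \cite{C-RB-scaling3-4}), whereas you rederive the Morrey boundedness from the uniform-space results of Proposition \ref{prop:1st-about-{eq:linear-e-eq-Uniform}} via the dilation characterization $\|\phi\|_{M^{p,\ell}(\R^N)}=\sup_{R>0}R^{\ell/p}\|\phi_R\|_{L^p_U(\R^N)}$ and the intertwining identity $(S_\mu(t)\phi)_R=S_\mu(tR^{-2m\mu})\phi_R$, which indeed follows from the selfsimilar kernel by the change of variables you indicate, and which together with the $t$-uniform case $p=q$ of \eqref{eq:estimates_LpU-LqU} gives the uniform-in-time bound on $M^{p,\ell}(\R^N)$. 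This is precisely the scaling mechanism underlying the cited references, so your argument buys transparency (and makes the Morrey constants visibly independent of $t$), at the cost of having to verify the endpoint and measure cases by hand; in particular, for $\mathcal{M}^{\ell}(\R^N)$ your "identical argument" silently needs a dilation characterization of $\mathcal{M}^{\ell}(\R^N)$ in terms of $\mathcal{M}_U(\R^N)$, which the paper never states, so you should either prove that characterization or simply quote \cite{C-RB-linear_morrey} for the measure case as the paper does. Your part (iv) is verbatim the paper's argument, and your treatment of (ii) for $0<\mu<1$ via Proposition B.1 of \cite{C-RB-linear_morrey} matches the paper; the one soft spot is your fallback remark for $\mu=1$, $1<p\leq\infty$: subordination is unavailable at $\mu=1$ and "the dilation characterization identifies the generator up to a bounded change of coordinates" is not an argument for sectoriality (scaling transfers semigroup bounds, not resolvent or $\|tA S_1(t)\|$ estimates, unless you explicitly run the same intertwining on the differentiated, still selfsimilar, kernel), so for that case you should rely on the quotation from \cite{C-RB-linear_morrey}, exactly as the paper does.
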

\begin{proof}
  The results for the semigroup in $M^{p,\ell}(\R^N)$,
  $\mathcal{M}^{\ell}(\R^N)$,  can be found in
  \cite{C-RB-linear_morrey}, whereas for the kernel in
  \cite{C-RB-scaling3-4}. As in the proof of Proposition \ref{prop:1st-about-{eq:linear-e-eq-Uniform}}, translations $\tau_z$ commute with $S_\mu(t)$. Hence $\|\tau_z S_\mu(t)u_0 - S_\mu(t)u_0\|_{M^{p,\ell}(\R^N)} \leq \|S_\mu(t)\|_{\mathcal{L}(M^{p,\ell}(\R^N))} \|\tau_z u_0 - u_0\|_{M^{p,\ell}(\R^N)}$ and the invariance of $\dot M^{p,\ell}(\R^N)$ follows.
\end{proof}

\begin{remark}
  \label{rem:related-to-{prop:1st-about-{eq:linear-e-eq}}}
  For $0<\mu\leq1$, $1< p \leq \infty$ and $0<\ell\leq N$ the sectorial generator
  of the semigroup $\{S_{\mu}(t)\}_{t\geq0}$
  in $M^{p,\ell}(\R^N)$ in Proposition \ref{prop:1st-about-{eq:linear-e-eq}} is $-A_0^\mu$, which follows from \cite[Proposition B.1, p. 1604]{C-RB-linear_morrey}.
\end{remark}

The next result collects several estimates for the semigroup above
between the spaces considered before.

\begin{proposition}
\label{prop:2nd-about-{eq:linear-e-eq}}
Let $\{S_{\mu}(t)\}_{t\geq0}$, $\mu\in (0,1]$, be as in Proposition \ref{prop:1st-about-{eq:linear-e-eq}}.

  Given $1\leq p\leq \infty $ and $0<\ell\leq N$, for
  $1\leq q \leq \infty$ and $0\leq s\leq \ell\leq N$ satisfying
  $\frac{s}q\leq \frac{\ell}p$,  we have for some constant
  $c=c_{\mu,p,\ell,q,s}$ that
  \begin{equation} \label{eq:estimates_Mpl-Mqs}
    \|S_{\mu}(t)\|_{{\mathcal L}(M^{p,\ell}(\R^N), M^{q,s} (\R^N))} =
    \frac{c}{ t^{\frac{1}{2m\mu}(\frac\ell{p}-\frac{s}q)}}, \quad t>0,
  \end{equation}
  which remains true if for $p=1$ we replace $M^{1,\ell}(\R^N)$ by
  $\mathcal{M}^{\ell}(\R^N)$.

\end{proposition}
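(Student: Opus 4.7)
The plan is to exploit self-similarity of the kernel together with the dilation characterization of Morrey norms to reduce everything to a single $t = 1$ bound. Let $\sigma_R \phi(x) := \phi(Rx)$ denote dilation. A direct change of variables in the integral representation of $S_\mu(t)$, using the self-similar form of the kernel from Proposition \ref{prop:1st-about-{eq:linear-e-eq}}(iii), gives the commutation identity
\[
\sigma_R \circ S_\mu(t) = S_\mu(R^{-2m\mu}\, t) \circ \sigma_R, \qquad R, t > 0,
\]
which in particular yields $S_\mu(t) = \sigma_{t^{-1/(2m\mu)}}\, S_\mu(1)\, \sigma_{t^{1/(2m\mu)}}$. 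Since $\sigma_R$ is a bijection on $M^{p,\ell}(\R^N)$ with $\|\sigma_R\|_{\mathcal{L}(M^{p,\ell}(\R^N))} = R^{-\ell/p}$ (immediate from the dilation characterization $\|\phi\|_{M^{p,\ell}(\R^N)} = \sup_{\rho>0} \rho^{\ell/p}\|\sigma_\rho\phi\|_{L^p_U(\R^N)}$ recalled in Section \ref{sec:morrey_spaces}), composing the norms of the three factors gives the exact scaling identity
\[
\|S_\mu(t)\|_{\mathcal{L}(M^{p,\ell}(\R^N), M^{q,s}(\R^N))} = t^{-\frac{1}{2m\mu}\left(\frac{\ell}{p} - \frac{s}{q}\right)}\, \|S_\mu(1)\|_{\mathcal{L}(M^{p,\ell}(\R^N), M^{q,s}(\R^N))}.
\]
This already produces the sharp $t$-dependence claimed in the proposition and identifies the constant $c_{\mu,p,\ell,q,s}$ as the operator norm of $S_\mu(1)$.

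The main obstacle is therefore to show that $S_\mu(1)\colon M^{p,\ell}(\R^N) \to M^{q,s}(\R^N)$ is bounded under the hypotheses $1 \leq p \leq q \leq \infty$, $0 \leq s \leq \ell \leq N$ and $s/q \leq \ell/p$. For this I would use the convolution representation $S_\mu(1)\phi = K_\mu * \phi$, exploiting that $K_\mu$ is smooth and bounded and belongs to $L^r(\R^N)$ for every $r \in [1,\infty]$ with fast decay at infinity. The standard Morrey-space convolution estimate localizes at scale $r$ and center $x_0$: writing $\phi = \phi\chi_{B(x_0,2r)} + \phi\chi_{B(x_0,2r)^c}$, the near part is controlled by Young's inequality together with the Morrey bound $\|\phi\|_{L^p(B(x_0,2r))} \leq C\, r^{(N-\ell)/p} \|\phi\|_{M^{p,\ell}}$, while the far part is controlled by a dyadic annular decomposition exploiting the decay of $K_\mu$. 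The assumption $s/q \leq \ell/p$ is exactly what forces the resulting scale-by-scale sum to converge uniformly in $x_0$ and $r$. The precise computation is of the same type as that carried out in \cite{C-RB-linear_morrey, C-RB-scaling3-4}, which I would cite as the main input.

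The extension from $M^{1,\ell}(\R^N)$ to $\mathcal{M}^\ell(\R^N)$ when $p=1$ requires no genuine modification: since $K_\mu$ is smooth and bounded, $K_\mu * \nu$ is a well-defined continuous function for any Radon measure $\nu \in \mathcal{M}^\ell(\R^N)$; the localized mass estimate $|\nu|(B(x_0,2r)) \leq C\, r^{N-\ell}\|\nu\|_{\mathcal{M}^\ell}$ replaces the localized $L^p$ bound above, and the dilation scaling $\|\sigma_R\nu\|_{\mathcal{M}^\ell} = R^{-\ell}\|\nu\|_{\mathcal{M}^\ell}$ preserves the entire reduction to the $t=1$ case.
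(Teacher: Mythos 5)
Your scaling reduction is correct and is genuinely more informative than what the paper does: the paper's proof of this proposition is a one-line citation of \cite[Theorems 1.4 and 1.5]{C-RB-linear_morrey}, whereas your conjugation identity $S_\mu(t)=\sigma_{t^{-1/(2m\mu)}}S_\mu(1)\sigma_{t^{1/(2m\mu)}}$, together with the exact dilation norms $\|\sigma_R\|_{\mathcal{L}(M^{p,\ell})}=R^{-\ell/p}$, cleanly explains why \eqref{eq:estimates_Mpl-Mqs} can be stated as an equality and reduces everything to the boundedness of $S_\mu(1)$. That part of your argument I would accept as written (and it is the same scaling philosophy underlying \cite{C-RB-scaling1}).

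The gap is in the $t=1$ step. First, you quietly add the hypothesis $p\leq q$, which the proposition does not assume: e.g.\ $p=2$, $\ell=N$, $q=1$, $s\leq N/2$ is admissible. The case $q<p$ is recoverable, but only by a different mechanism (Hölder on balls combined with the $M^{p,\ell}\to L^\infty$ bound, i.e.\ $M^{p,\ell}\cap L^\infty\subset M^{q,s}$ when $s/q\leq\ell/p$), which your sketch does not mention. Second, and more seriously, the near/far decomposition as you describe it does not close in the genuinely Morrey regime. For the near part, Young's inequality on $B(x_0,2r)$ plus the Morrey bound gives $r^{(s-N)/q}\|K_\mu*(\phi\chi_{B(x_0,2r)})\|_{L^q(B(x_0,r))}\lesssim r^{\frac{s}{q}-\frac{\ell}{p}+N(\frac1p-\frac1q)}\|\phi\|_{M^{p,\ell}}$, and the exponent can be strictly positive under the stated hypotheses (take $p=1$, $\ell<N$, $q=2$, $s=\ell$), so the bound blows up for large $r$; thus ``$s/q\leq\ell/p$ is exactly what forces the sum to converge'' is not accurate. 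The condition $s/q\leq\ell/p$ actually enters by combining the local mass estimate with the uniform bound $\|S_\mu(1)\phi\|_{L^\infty}\lesssim\|\phi\|_{M^{p,\ell}}$ (e.g.\ interpolating $\|f\|_{L^q(B_r)}\leq\|f\|_{L^p(B_r)}^{p/q}\|f\|_{L^\infty}^{1-p/q}$, or working at the unit scale of $K_\mu$), which is the structure of the proofs in \cite{C-RB-linear_morrey}. A minor related point: for $0<\mu<1$ the subordinated kernel $K_\mu$ decays only polynomially, of order $N+2m\mu$, not ``fast''; this still suffices for the far part since $N+2m\mu>N-\ell/p$, but the claim should be stated that way. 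If, as you suggest, the $t=1$ estimate is ultimately quoted from \cite{C-RB-linear_morrey,C-RB-scaling3-4}, then you should cite the precise statements (Theorems 1.4--1.5 of \cite{C-RB-linear_morrey}) rather than ``a computation of the same type'', since as written your own computation does not go through.
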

\begin{proof}
This is  from \cite[Theorems 1.4 and 1.5]{C-RB-linear_morrey}.
\end{proof}

The next result states the time continuity properties of the
trajectories of the semigroup.

\begin{proposition}
  \label{prop:about-time-continuity}

 Let $\{S_{\mu}(t)\}_{t\geq0}$, $\mu\in (0,1]$, be as in Proposition \ref{prop:1st-about-{eq:linear-e-eq}}.

Then for $1\leq p,q \leq
\infty$ and $0\leq s\leq \ell\leq N$ satisfying $\frac{s}q\leq
\frac{\ell}p$ we have
\begin{equation}\label{eq:suitable-continuity-in-Morrey}
(0,\infty)\times M^{p,\ell}(\R^N) \ni (t,u_0) \to S_{\mu}(t) u_0 \in M^{q,s}(\R^{N})
\ \text{ is continuous},
 \end{equation}
which remains true if for $p=1$ we replace $M^{1,\ell}(\R^N)$ by $\mathcal{M}^{\ell}(\R^N)$.

\end{proposition}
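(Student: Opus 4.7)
The plan is to adapt the argument used in the preceding continuity result for locally uniform spaces, replacing the smoothing estimate (\ref{eq:estimates_LpU-LqU}) with its Morrey counterpart (\ref{eq:estimates_Mpl-Mqs}). Given a sequence $(t_n, u_{0n}) \to (t_0, u_0)$ in $(0,\infty) \times M^{p,\ell}(\R^N)$, fix $\eps \in (0, t_0)$ with $t_n > \eps$ for $n$ large, set $w := S_\mu(\eps) u_0 \in M^{q,s}(\R^N)$ by Proposition \ref{prop:2nd-about-{eq:linear-e-eq}}, and split
$$S_\mu(t_n) u_{0n} - S_\mu(t_0) u_0 \;=\; S_\mu(t_n)(u_{0n} - u_0) \;+\; \bigl[S_\mu(t_n - \eps) - S_\mu(t_0 - \eps)\bigr] w.$$
The first piece tends to zero in $M^{q,s}(\R^N)$ directly from (\ref{eq:estimates_Mpl-Mqs}), since the prefactor $t_n^{-\frac{1}{2m\mu}(\ell/p - s/q)}$ remains bounded as $t_n \to t_0 > 0$.

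For the second piece the task reduces to continuity of $\tau \mapsto S_\mu(\tau) w$ at $\tau_0 := t_0 - \eps > 0$ in the norm of $M^{q,s}(\R^N)$. When $0 < \mu < 1$, or $\mu = 1$ with $1 < q \leq \infty$, this is immediate from Proposition \ref{prop:1st-about-{eq:linear-e-eq}}(ii), which provides analyticity (hence norm-continuity for $t > 0$) of $\{S_\mu(t)\}_{t \geq 0}$ on $M^{q,s}(\R^N)$.

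The main obstacle is the case $\mu = 1$, $q = 1$, where analyticity in $M^{1,s}(\R^N)$ is not available. The plan here is to factor once more, $S_1(\tau) w = S_1(\rho)\, S_1(\tau - \delta - \rho)\, S_1(\delta) w$ for fixed $\delta, \rho > 0$ with $\delta + \rho < \tau_0$, and to choose auxiliary exponents $p' > 1$ and $\ell' \in (0, N]$ with $\ell'/p' = s$ (for instance $p' = 2$, $\ell' = 2s$ when $2s \leq N$; otherwise $\ell' = N$, $p' = N/s$, with the borderline $s = N$ giving $M^{1,N} = L^1$, where strong continuity is already known). Then (\ref{eq:estimates_Mpl-Mqs}) yields $S_1(\delta) w \in M^{p',\ell'}(\R^N)$, a space on which the semigroup is analytic, so $\sigma \mapsto S_1(\sigma)[S_1(\delta) w]$ is norm-continuous from $(0,\infty)$ into $M^{p',\ell'}(\R^N)$; composing with the bounded operator $S_1(\rho): M^{p',\ell'}(\R^N) \to M^{1,s}(\R^N)$ from (\ref{eq:estimates_Mpl-Mqs}) (valid since $s = \ell'/p'$) transfers this continuity to $M^{1,s}(\R^N)$ at $\tau_0$. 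The parallel case with $\mathcal{M}^{\ell}(\R^N)$ initial data proceeds identically, since the smoothing estimates and analyticity results extend to that setting.
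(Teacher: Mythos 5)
Your splitting and the treatment of the cases $0<\mu<1$, and $\mu=1$ with $q>1$, coincide with the paper's Case A (smoothing estimate for the $u_{0n}-u_0$ term, analyticity in $M^{q,s}(\R^N)$ for the time shift), and that part is fine. The genuine gap is in your handling of $\mu=1$, $q=1$. The step ``(\ref{eq:estimates_Mpl-Mqs}) yields $S_1(\delta)w\in M^{p',\ell'}(\R^N)$ with $\ell'/p'=s$, $p'>1$'' is not what the estimate gives: for a source $M^{1,s}(\R^N)$ and target $M^{p',\ell'}(\R^N)$, (\ref{eq:estimates_Mpl-Mqs}) requires \emph{both} $\ell'\leq s$ and $\frac{\ell'}{p'}\leq \frac{s}{1}$, so your choices $(p',\ell')=(2,2s)$, or $(N/s,N)$, violate the first condition (the second Morrey index cannot increase along the semigroup; the paper stresses these estimates are optimal). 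It is in fact false: take $\nu$ the $(N-s)$-dimensional Hausdorff measure on an $(N-s)$-plane $P$, so $\nu\in\mathcal{M}^{s}(\R^N)$ and $w:=S_1(1)\nu\in M^{1,s}(\R^N)$; then $S_1(\delta)w\approx c\,e^{-c\,d(x,P)^2}$ has $\|S_1(\delta)w\|_{L^2(B(x_0,R))}\sim R^{(N-s)/2}$ for $x_0\in P$, while membership in $M^{2,2s}(\R^N)$ would force growth at most $R^{(N-2s)/2}$.

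Moreover the strategy cannot be repaired by a better choice of $(p',\ell')$: the upward step $S_1(\delta)\colon M^{1,s}\to M^{p',\ell'}$ needs $\ell'\leq s$, the downward step $S_1(\rho)\colon M^{p',\ell'}\to M^{1,s}$ needs $s\leq \ell'$ and $s\leq \frac{\ell'}{p'}$, and together these force $\ell'=s$ and $p'\leq 1$, which is incompatible with analyticity ($p'>1$). So within the cited smoothing estimates there is no ``round trip'' through an analytic space, and your case $\mu=1$, $q=1$ remains unproven. This is exactly where the paper has to work harder: for $p\neq 1$ it uses the gradient estimate $\nabla S_1(t)u_0\in M^{1,s}$ together with $S_1(t)u_0\in L^\infty\cap M^{1,s}$ to conclude $S_1(t)u_0\in \dot M^{1,s}(\R^N)$, where convergence of $S_1(h)v\to v$ as $h\to 0^+$ holds, and then bootstraps one-sided continuity; for $p=1$, $s<\ell$ it first smooths into $M^{1+\varepsilon,\ell}$ and reduces to the previous case; and for $p=q=1$, $s=\ell$ it argues directly on the self-similar kernel, using the Gaussian bound and dominated convergence to show $\int_{\R^N}\big|\,t^{-N/2m}K_1(z\,t^{-1/2m})-(t+h)^{-N/2m}K_1(z\,(t+h)^{-1/2m})\big|\,dz\to 0$. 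You would need to incorporate arguments of this type (or some substitute) to close the $\mu=1$, $q=1$ case.
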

\begin{proof}
We argue below in three cases.

\medskip

\noindent
\fbox{{\bf Case A}: either $\mu\not=1$, or $\mu=1$ and $q\not=1$.} In this case we first remark that if $(0,\infty)\times M^{p,\ell}(\R^N)\ni
  (t_n,u_{0n}) \to (t_0,u_0)\in  (0,\infty)\times M^{p,\ell}(\R^N)$ as
  $n\to\infty$, then for any small enough $\eps>0$ we can write
\begin{displaymath}
S_{\mu}(t_n)u_{0n}- S_{\mu}(t_0)u_{0}= S_{\mu}(t_n)u_{0n} - S_{\mu}(t_n)u_{0} + S_{\mu}(t_n-\eps)S(\eps)u_{0} - S_{\mu}(t_0-\eps)S_{\mu}(\eps)u_{0}.
\end{displaymath}
Due to Proposition \ref{prop:2nd-about-{eq:linear-e-eq}} we have
\begin{displaymath}
\|S_{\mu}(t_n)u_{0n} - S_{\mu}(t_n)u_{0}\|_{M^{q,s}(\R^N)} \leq \frac{c}{
  t_n^{\frac{1}{2m\mu}(\frac\ell{p}-\frac{s}q)}} \|u_{0n} -u_{0}\|_{ M^{p,\ell}(\R^N)} \to 0 \quad \text{ as } n\to \infty.
\end{displaymath}
Also
\begin{displaymath}
\|S_{\mu}(t_n-\eps) S_{\mu}(\eps)u_{0} -
S_{\mu}(t_0-\eps)S_{\mu}(\eps)u_{0}\|_{M^{q,s}(\R^N)} \to 0 \quad
\text{ as } n\to \infty,
\end{displaymath}
because Proposition \ref{prop:2nd-about-{eq:linear-e-eq}} yields
$S_{\mu}(\eps)u_{0}\in M^{q,s}(\R^N)$ and by Proposition
\ref{prop:1st-about-{eq:linear-e-eq}} the semigroup
$\{S_{\mu}(t)\}_{t\geq0}$ is analytic (thus, in particular, continuous for positive times) in $M^{q,s}(\R^N)$.

\medskip

\noindent
\fbox{{\bf Case B}: $\mu=1$ and $q=1$ and $p\not=1$.} Given $p\not=1$, $0<s\leq \ell\leq N$, $u_0\in M^{p,\ell}(\R^N)$ and $t>0$ we have from
\cite[p. 1587, Theorem 5.1]{C-RB-linear_morrey} that
\begin{displaymath}
\nabla S_1(t) u_0\in M^{1,s}(\R^N),
\end{displaymath}
whereas from
Proposition \ref{prop:2nd-about-{eq:linear-e-eq}}
\begin{displaymath}
S_1(t) u_0\in L^{\infty}(\R^N)\cap M^{1,s}(\R^N),
\end{displaymath}
so via
\cite[p.1571, Proposition 2.2]{C-RB-linear_morrey} we see that
\begin{displaymath}
S_1(t) u_0\in \dot M^{1,s}(\R^N).
\end{displaymath}
This and \cite[formula (1.8), p. 1563]{C-RB-linear_morrey} yield
\begin{equation}\label{eq:continuity-from-the-Right}
\lim_{h\searrow 0} \| S_1(h) S_1(t) u_0 - S_1(t) u_0\|_{M^{1,s}(\R^N)} =0 \ \text{ for each } \ t>0.
\end{equation}

Now for $t>0$ and $-\frac{t}{2}<h<0$, since by Proposition \ref{prop:2nd-about-{eq:linear-e-eq}}
$
\sup_{h\in(-\frac{t}{2},0)}\| S_1\big(\frac{t}{2}+h\big)\|_{\mathcal{L}(M^{1,s}(\R^N))}=c
$,
\begin{equation}\label{eq:to-get-continuity-from-the-Left}
\begin{split}
\| S_1(t+h) u_0 - S_1(t) u_0\|_{M^{1,s}(\R^N)}&
=
\| S_1\left(\frac{t}{2}+h\right)\left(S_1\left(\frac{t}{2}\right) u_0 - S_1(-h)S_1\left(\frac{t}{2}\right) u_0\right)\|_{M^{1,s}(\R^N)}
\\&
\leq  c \|S_1 \left(\frac{t}{2} \right)u_0 - S_1(-h) S_1\left(\frac{t}{2}\right) u_0\|_{M^{1,s}(\R^N)}
\end{split}
\end{equation}
and due to (\ref{eq:continuity-from-the-Right}) the right hand of (\ref{eq:to-get-continuity-from-the-Left}) tends to zero as $h\nearrow 0$. As a consequence,
\begin{displaymath}
\lim_{h\nearrow 0} \| S_1(t+h) u_0 - S_1(t) u_0\|_{M^{1,s}(\R^N)} =0 \ \text{ for each } \ t>0.
\end{displaymath}
Given $u_0\in M^{p,\ell}(\R^N)$ we thus see that
$(0,\infty)\ni t \to S_1(t) u_0 \in M^{1,s}(\R^N)$ is continuous.

This and the estimate
$\| S_1(t) \|_{\mathcal{L}(M^{p,\ell}(\R^N),M^{1,s}(\R^N))}=\frac{c}{t^{\frac{1}{2m}(\frac{\ell}{p}-s)}}$
from Proposition \ref{prop:2nd-about-{eq:linear-e-eq}} yield (\ref{eq:suitable-continuity-in-Morrey}) in the considered case after we use a similar argument as in Case A above.

\medskip

\noindent
\fbox{{\bf Case C}: $\mu=1$ and $q=1$ and $p=1$ and $s<\ell$.} In this case for all sufficiently small $\varepsilon>0$ we have $s<\frac{\ell}{1+\varepsilon} < \ell$ and given $u_0\in M^{1,\ell}(\R^N)$ we observe from Proposition \ref{prop:2nd-about-{eq:linear-e-eq}} that
\begin{displaymath}
\text{
$S_1(t)u_0 \in M^{1+\varepsilon,\ell}(\R^N)$
and $S_1(\tau)S_1(t)u_0 \in M^{1,s}(\R^N)$ whenever $t,\tau>0$.}
\end{displaymath}

Now, if $t>0$ and $t_n \to t$ then choosing small enough $\varepsilon>0$ we have
\begin{equation}\label{eq:to-get-continuity-in-Case-C}
\|S_1(t_n)u_0 -  S_1(t) u_0 \|_{M^{1,s}(\R^N)}
=\|S_1(t_n-\varepsilon)S_1(\varepsilon)  u_0 -  S_1(t-\varepsilon) S_1(\varepsilon) u_0 \|_{M^{1,s}(\R^N)}
\end{equation}
where
\begin{displaymath}
S_1(\varepsilon)  u_0\in M^{1+\varepsilon,\ell}(\R^N) \ \text{ and } s<\frac{\ell}{1+\varepsilon}<\ell,
\end{displaymath}
so due to the continuity proved in Case B above
the right hand side of (\ref{eq:to-get-continuity-in-Case-C}) tends to zero as $n\to\infty$.

Given $u_0\in M^{1,\ell}(\R^N)$ we thus see that
$(0,\infty)\ni t \to S_1(t) u_0 \in M^{1,s}(\R^N)$ is continuous. This together with the estimate
$\| S_1(t) \|_{\mathcal{L}(M^{1,\ell}(\R^N),M^{1,s}(\R^N))}=\frac{c}{t^{\frac{1}{2m}(\ell-s)}}$ from Proposition \ref{prop:2nd-about-{eq:linear-e-eq}} yield (\ref{eq:suitable-continuity-in-Morrey}) in the considered case with a similar argument as in Case A above.

\medskip

\noindent
\fbox{{\bf Case D}: $\mu=1$ and $q=1$ and $p=1$ and $s=\ell$.} Given $u_0\in M^{1,\ell}(\R^N)$ and a ball $B(x_{0},R)\subset \R^{N}$, we denote by $\chi_{B(x_{0}, R)}$ the characteristic function of $B(x_{0},R)$ and observe that for $t>0$, $t+h>0$ we get
\begin{equation}\label{eq:first-for-in-M1ell}
  \begin{split}
  R^{\ell-N}
&\|S(t+h)  u_{0} - S(t)  u_{0}\|_{L^1(B(x_{0},R))}
\\&
  \leq R^{\ell-N}\int_{\R^N}
  \left(
  \chi_{B(x_{0}, R)} (x) \int_{\R^{N}} \Big|  \frac{K_{1}\Big(\frac{z}{(t+h)^{\frac{1}{2m}}}\Big)}{(t+h)^\frac{N}{2m}} - \frac{K_{1}\Big(\frac{z}{t^{\frac{1}{2m}}}\Big)}{t^\frac{N}{2m}} \Big|   |  u_{0}(x-z)|   \, dz
  \right)
  \, dx
\\ &
=
R^{\ell-N}
\int_{\R^N}
\left(
\Big|  \frac{K_{1}\Big(\frac{z}{(t+h)^{\frac{1}{2m}}}\Big)}{(t+h)^\frac{N}{2m}} - \frac{K_{1}\Big(\frac{z}{t^{\frac{1}{2m}}}\Big)}{t^\frac{N}{2m}} \Big|   \int_{\R^{N}}    |  u_{0}(x-z)| \ \chi_{B(x_{0}, R)} (x)  \, dx
\right)
\, dz
\\
&
=
\int_{\R^N}
\Big|  \frac{K_{1}\Big(\frac{z}{(t+h)^{\frac{1}{2m}}}\Big)}{(t+h)^\frac{N}{2m}} - \frac{K_{1}\Big(\frac{z}{t^{\frac{1}{2m}}}\Big)}{t^\frac{N}{2m}} \Big|
\, R^{\ell-N} \|u_{0}\|_{L^1(_{B(x_{0}-z, R)})}
\, dz
\\
&
\leq
\|  u_{0}\|_{M^{1,\ell}(\R^N)} \int_{\R^N}
\Big|  \frac{K_{1}\Big(\frac{z}{(t+h)^{\frac{1}{2m}}}\Big)}{(t+h)^\frac{N}{2m}} - \frac{K_{1}\Big(\frac{z}{t^{\frac{1}{2m}}}\Big)}{t^\frac{N}{2m}} \Big|
\, dz
 .
\end{split}
\end{equation}
Taking $\delta \in \mathcal{M}^{N} (\R^N)=\mathcal{M}_{BTV} (\R^N)$ we have as in \cite[Proposition 6.1(i)]{C-RB-scaling3-4} that $K_1=S_1(1)\delta$. Since due to \cite[Proposition 3.2]{C-RB-scaling3-4} $S_1(t)\delta$ immediately enters $L^p(\R^N)$ for any $p\geq1$, we see
from \cite[Proposition 2.3 and Remark 2.6]{C-RB} that $S_1(t)\delta$ also enters immediately $H^{2m}_q(\R^N)$ for arbitrarily large $q$. Thus, via Sobolev embedding, $K_1$ is in particular a bounded uniformly continuous function in $\R^N$, which in turn implies that
\begin{displaymath}
  \Big|
  \frac{K_{1}\Big(\frac{z}{(t+h)^{\frac{1}{2m}}}\Big)}{(t+h)^\frac{N}{2m}}
   -
   \frac{K_{1}\Big(\frac{z}{t^{\frac{1}{2m}}}\Big)}{t^\frac{N}{2m}}
   \Big|
   \to 0 \ \text{ as } \ h\to 0 \ \text{ for each } \ t>0, \ z\in\R^N.
\end{displaymath}
Using pointwise Gaussian bound
(see \cite[formula (2.3) in Theorem 2.2]{C-RB-scaling3-4}) we also have
\begin{displaymath}
\Big|  K_{1}(\cdot) \Big| \leq \exp\big( {-c |\cdot|^\frac{2m}{2m-1}} \big)
\end{displaymath}
for some positive constant $c$.
Hence due to Lebesgue's dominated convergence theorem
\begin{equation}\label{eq:second-for-in-M1ell}
\int_{\R^N} \Big|  \frac{K_{1}\Big(\frac{z}{(t+h)^{\frac{1}{2m}}}\Big)}{(t+h)^\frac{N}{2m}} - \frac{K_{1}\Big(\frac{z}{t^{\frac{1}{2m}}}\Big)}{t^\frac{N}{2m}} \Big|
\, dz \to 0 \ \text{ as } \ h\to 0.
\end{equation}
From (\ref{eq:first-for-in-M1ell}) and (\ref{eq:second-for-in-M1ell}) we conclude that
$\|S(t+h)  u_{0} - S(t)  u_{0}\|_{M^{1,\ell}(\R^N)}\to 0$ as $h\to0$.

This and the estimate
$\|S_{\mu}(t)\|_{{\mathcal L}(M^{1,\ell}(\R^N))} = c$
from Proposition \ref{prop:2nd-about-{eq:linear-e-eq}}
yield (\ref{eq:suitable-continuity-in-Morrey}) in the considered case with a similar argument as in Case A above.
\end{proof}

About the way the semigroup approaches the initial data, we have the
following result.

\begin{proposition}
  \label{prop:initial_data}

Let $\{S_{\mu}(t)\}_{t\geq0}$, $\mu\in (0,1]$, be as in Proposition \ref{prop:1st-about-{eq:linear-e-eq}}.

Then for any $u_0\in M^{p,\ell}(\R^N)$ with $ 1\leq p \leq \infty$ and $0< \ell\leq N$ we have
\begin{equation}\label{eq:behavior-at-0-of-S{mu}(t)}
  S_\mu(t)u_0 \to u_0 \ \text{ as } \ t\to 0^+, \quad \mbox{in
    $L^p_{loc}(\R^N)$}.
\end{equation}

The convergence in (\ref{eq:behavior-at-0-of-S{mu}(t)}) is in $M^{p,\ell}(\R^N)$ if $u_0\in \dot M^{p,\ell}(\R^N)$.

\end{proposition}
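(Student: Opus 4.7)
The plan is to prove the two assertions separately: the first by reducing to Proposition \ref{prop:convergence-as-t-to0} via the continuous embedding of Morrey into locally uniform spaces, and the second by an approximate-identity argument based on the selfsimilar kernel of the semigroup.

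For \eqref{eq:behavior-at-0-of-S{mu}(t)}, I would use the inclusion $M^{p,\ell}(\R^N) \subset L^p_U(\R^N)$ recalled in Section \ref{sec:morrey_spaces}. Since the semigroup is given by the same convolution kernel in both scales, its action on a given $u_0 \in M^{p,\ell}(\R^N) \subset L^p_U(\R^N)$ coincides, so for $1 \leq p < \infty$ the convergence in $L^p_{loc}(\R^N)$ follows immediately from Proposition \ref{prop:convergence-as-t-to0}. The endpoint $p=\infty$ reduces to this through $L^\infty(\R^N) \subset L^q_U(\R^N)$ for every $q<\infty$, which yields convergence in $L^q_{loc}(\R^N)$ for every finite $q$.

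For the refinement when $u_0 \in \dot M^{p,\ell}(\R^N)$, I would work directly with the selfsimilar representation of Proposition \ref{prop:1st-about-{eq:linear-e-eq}}(iii). After the change of variables $y = x - t^{1/(2m\mu)} w$ and using that $\int_{\R^N} K_\mu = 1$, one obtains the identity
\begin{displaymath}
S_\mu(t) u_0(x) - u_0(x) = \int_{\R^N} K_\mu(w)\bigl(\tau_{t^{1/(2m\mu)} w} u_0(x) - u_0(x)\bigr)\, dw,
\end{displaymath}
which combined with Minkowski's inequality in the Morrey norm gives
\begin{displaymath}
\|S_\mu(t) u_0 - u_0\|_{M^{p,\ell}(\R^N)} \leq \int_{\R^N} |K_\mu(w)|\, \|\tau_{t^{1/(2m\mu)} w} u_0 - u_0\|_{M^{p,\ell}(\R^N)}\, dw.
\end{displaymath}
Since $u_0 \in \dot M^{p,\ell}(\R^N)$, the integrand converges pointwise to $0$ as $t \to 0^+$ and is dominated by the integrable function $2|K_\mu(w)|\,\|u_0\|_{M^{p,\ell}(\R^N)}$, so Lebesgue's dominated convergence theorem yields the desired convergence.

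The two auxiliary facts required are: (i) $K_\mu \in L^1(\R^N)$ with $\int_{\R^N} K_\mu = 1$, which for $\mu=1$ follows from the Gaussian pointwise bound on $K_1$ recalled in the proof of Proposition \ref{prop:about-time-continuity} together with the identity $S_1(t)\mathbf{1}=\mathbf{1}$ (valid because $A_0$ is homogeneous of degree $2m$ and therefore annihilates constants), and which passes to $\mu \in (0,1)$ via the subordination formula of \cite[p.~264]{1978_Yosida} already used in Section \ref{sec:homogeneous_equation_in_Uniform}; and (ii) Minkowski's inequality for vector-valued integrals in $M^{p,\ell}(\R^N)$, which is immediate from Minkowski's inequality in $L^p(B(x_0,R))$ and the supremum in \eqref{eq:morrey_norm}. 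The main obstacle I anticipate is verifying (i) cleanly for fractional $\mu$, in particular ensuring that the normalization of the kernel is preserved under subordination; the rest is a standard approximate-identity computation adapted to the Morrey norm.
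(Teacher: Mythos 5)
Your argument is correct, but it is a genuinely different route from the paper's. The paper disposes of the proposition in two lines: for $\mu=1$ it cites \cite[Theorem 1.1]{C-RB-linear_morrey} (which contains both the $L^p_{loc}$ statement and the $\dot M^{p,\ell}$ refinement), and for $\mu\in(0,1)$ it transfers that result by the same subordination estimate $\|S_\mu(t)u_0-u_0\|\leq\int_0^\infty f_{1,\mu}(s)\|S_1(st^{1/\mu})u_0-u_0\|\,ds$ used in the proof of Proposition \ref{prop:convergence-as-t-to0}. You instead (a) deduce the $L^p_{loc}$ convergence from the uniform-space result of Proposition \ref{prop:convergence-as-t-to0} via the embedding $M^{p,\ell}(\R^N)\subset L^p_U(\R^N)$ and consistency of the kernel representation, and (b) prove the $\dot M^{p,\ell}$ refinement by a self-contained approximate-identity computation: $S_\mu(t)u_0-u_0=\int K_\mu(w)\bigl(\tau_{t^{1/(2m\mu)}w}u_0-u_0\bigr)dw$, Minkowski's inequality in the Morrey norm, continuity of translations, and dominated convergence. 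What your route buys is independence from the external Morrey-space theorem (everything reduces to facts already recorded in the paper: the selfsimilar kernel, the Gaussian bound $|K_1|\leq e^{-c|\cdot|^{2m/(2m-1)}}$ giving $K_1\in L^1$, and the subordination formula); what the paper's route buys is brevity. Your worry about the kernel normalization for fractional $\mu$ resolves easily: since the subordination density $f_{1,\mu}\geq0$ has $\int_0^\infty f_{1,\mu}=1$, one gets $\|K_\mu\|_{L^1}\leq\|K_1\|_{L^1}$ and $\int K_\mu=\int_0^\infty f_{1,\mu}(s)\,ds\cdot\int K_1=1$, where $\int K_1=\widehat{K_1}(0)=1$ (equivalently, $A_0$ has no lower-order terms, so $S_1(t)\mathbf{1}=\mathbf{1}$).

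Two small caveats. At the endpoint $p=\infty$ your reduction only yields convergence in $L^q_{loc}$ for finite $q$, not literally in $L^\infty_{loc}$ as the displayed statement would read when $p=\infty$ (which indeed cannot hold for arbitrary bounded data); the paper sidesteps this by citation, so you should either state the $p=\infty$ case in the weakened form you actually prove or restrict to $p<\infty$ there. Also note the dotted spaces $\dot M^{p,\ell}(\R^N)$ are introduced in Section \ref{sec:morrey_spaces} for $1<p<\infty$, so the second assertion is implicitly in that range; your kernel argument works verbatim there.
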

\begin{proof}
  If $\mu=1$ then the result is from
  \cite[Theorem 1.1]{C-RB-linear_morrey}, whereas for $\mu\in(0,1)$ it follows analogously as in the proof of Proposition \ref{prop:convergence-as-t-to0}.
\end{proof}

\section{Morrey potentials}
\label{sec:morrey-potential}

As mentioned in the Introduction, our goal is now to perturb the
fractional diffusion equation (\ref{eq:linear-e-eq}) with some
potential terms. Previous results in this direction can be found in
\cite[Section 7]{C-RB-linear_morrey} for specific type of homogeneous
potentials of the form  $\frac{c}{|x|^{\beta}}$ and suitable
$\beta>0$. In that reference the results rely on  suitable
Gagliardo-Nirenberg-Hardy type inequalities that can not be applied
here for more general potentials.

Here our goal is to include a general potentials in Morrey spaces,
using different techniques. Of course, our results here apply to these
type of potentials as well.
Hence, we will assume that
\begin{equation}
  \label{eq:V-in-Morrey}
 V\in M^{p_0,\ell_0}(\R^N) \quad \text{ for  $1\leq p_0\leq \infty$,
   $\ell_0\in(0,N]$. }
\end{equation}

Then we consider the  multiplication operator $P_V$,  defined for functions $\phi$ in $\R^N$ by
\begin{equation}\label{eq:operator-V}
P_V \phi (x) = V(x) \phi(x), \quad x\in \R^N.
\end{equation}

The following result states how the multiplication operator
(\ref{eq:operator-V}) acts between Morrey spaces.

\begin{lemma}\label{lem:crucial-inequality}
  Assume $ V\in M^{p_0,\ell_0}(\R^N)$  for some $1\leq p_0\leq
  \infty$, $\ell_0\in(0,N]$ and let $p_0'$ be H\"older's conjugate of
  $p_0$.

Given any
$w\in[p_0',\infty]$
and $0< \kappa \leq N$ we have that if
$z$ and $\nu$ satisfy
\begin{displaymath}
\frac{1}{z}=\frac{1}{w}+\frac{1}{p_0}, \quad \frac{\nu}{z}=\frac{\kappa}{w}+\frac{\ell_0}{p_0}
\end{displaymath}
then for any $\phi\in M^{w,\kappa}(\R^N)$ and $P_V\phi$ as in
(\ref{eq:operator-V}), we have $P_V\phi\in M^{z,\nu}(\R^N) $  and
\begin{displaymath}
  \| P_V\phi\|_{M^{z,\nu}(\R^N)} \leq \| \phi\|_{M^{w,\kappa}(\R^N)}
  \| V\|_{M^{p_0,\ell_0}(\R^N)}.
\end{displaymath}
In particular,
\begin{equation}
  \label{eq:action-of-PV}
  P_V \in \mathcal{L}( M^{w,\kappa}(\R^{N}), M^{z,\nu}(\R^N)) \quad \text{ and } \quad
\|P_V \|_{\mathcal{L}( M^{w,\kappa}(\R^{N}), M^{z,\nu}(\R^N))}\leq \| V\|_{M^{p_0,\ell_0}(\R^N)}.
\end{equation}

\end{lemma}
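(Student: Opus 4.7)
The strategy is the standard one for bilinear estimates in Morrey spaces: apply Hölder's inequality locally on each ball and then read off the global Morrey bound by checking that the scaling exponents match.

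First I would fix an arbitrary ball $B(x_0,R) \subset \R^N$ and apply Hölder's inequality with exponents $w$ and $p_0$, using $\frac{1}{z} = \frac{1}{w} + \frac{1}{p_0}$, to get
\begin{displaymath}
 \|P_V\phi\|_{L^z(B(x_0,R))} \leq \|V\|_{L^{p_0}(B(x_0,R))} \, \|\phi\|_{L^w(B(x_0,R))}.
\end{displaymath}
The condition $w \in [p_0',\infty]$ ensures $\frac{1}{w} + \frac{1}{p_0} \leq 1$, so $z \geq 1$ is admissible for the Morrey norm definition.

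Next I would multiply both sides by $R^{(\nu-N)/z}$ and use the defining estimates of the Morrey norms on the right:
\begin{displaymath}
 R^{(\ell_0-N)/p_0} \|V\|_{L^{p_0}(B(x_0,R))} \leq \|V\|_{M^{p_0,\ell_0}(\R^N)}, \qquad
 R^{(\kappa-N)/w} \|\phi\|_{L^w(B(x_0,R))} \leq \|\phi\|_{M^{w,\kappa}(\R^N)}.
\end{displaymath}
The key algebraic check is that the residual powers of $R$ cancel, i.e. that
\begin{displaymath}
 \frac{\nu-N}{z} - \frac{\ell_0-N}{p_0} - \frac{\kappa-N}{w} = 0.
\end{displaymath}
Expanding, this is $\frac{\nu}{z} - \frac{\ell_0}{p_0} - \frac{\kappa}{w} = N\bigl(\frac{1}{z} - \frac{1}{p_0} - \frac{1}{w}\bigr) = 0$, which holds by the two hypotheses on $(z,\nu)$. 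Taking the supremum over $x_0\in\R^N$ and $R>0$ then yields the claimed bound $\|P_V\phi\|_{M^{z,\nu}(\R^N)} \leq \|V\|_{M^{p_0,\ell_0}(\R^N)}\|\phi\|_{M^{w,\kappa}(\R^N)}$, from which (\ref{eq:action-of-PV}) is immediate.

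Finally, I would handle the endpoint cases. When $p_0=\infty$, so $p_0'=1$, we simply have $z=w$, $\nu=\kappa$ and the pointwise bound $|V\phi| \leq \|V\|_{L^\infty}|\phi|$ gives the result directly (recall $M^{\infty,\ell_0}(\R^N) = L^\infty(\R^N)$). When $w=\infty$, so $z = p_0$ and $\nu = \ell_0$, the same pointwise argument works with the roles swapped. In all remaining cases both $w$ and $p_0$ are finite, where the Hölder step above is textbook. The only thing to be careful about is this bookkeeping of degenerate exponents; the main argument itself is a one-line application of Hölder, so I expect no real obstacle beyond verifying the scaling identity.
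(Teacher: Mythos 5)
Your proposal is correct and is essentially the paper's argument: the paper disposes of the lemma in one line by citing the H\"older inequality for Morrey norms, $\| fg \|_{M^{z,\nu}(\R^N)} \leq \| f \|_{M^{w,\kappa}(\R^N)} \| g \|_{M^{p_{0},\ell_{0}}(\R^N)}$ (formula (2.5) of \cite{C-RB-linear_morrey}), and what you do is simply reprove that inequality by the standard ball-by-ball H\"older estimate together with the scaling check $\frac{\nu-N}{z}=\frac{\kappa-N}{w}+\frac{\ell_0-N}{p_0}$. Your exponent bookkeeping, the observation that $w\geq p_0'$ guarantees $z\geq1$, and the endpoint cases $p_0=\infty$ and $w=\infty$ are all handled correctly, so there is no gap.
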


\begin{proof}
  The result follows applying the following consequence of Hölder's
  inequality
  \begin{displaymath}
\| fg \|_{M^{z,\nu}(\R^N)} \leq \| f \|_{M^{w,\kappa}(\R^N)} \| g
\|_{M^{p_{0},\ell_{0}}(\R^N)} ,
\end{displaymath}
 see \cite[formula (2.5)]{C-RB-linear_morrey}.
\end{proof}

\section{Linear perturbations in the scale: an abstract approach}
\label{sec:abstract-approach}

As a consequence of the results in Sections
\ref{sec:homogeneous_equation_in_Morrey} and
\ref{sec:morrey-potential},  we have a semigroup
$\{S_{\mu}(t)\}_{t\geq0}$, $0< \mu\leq 1$, in the scale of spaces
$\{M^{p,\ell}(\R^{N})\}_{p,\ell}$, $1\leq p\leq \infty$, $0<\ell \leq
N$ (where, for $p=1$,  we can even replace  $M^{1,\ell}(\R^N)$ by
$\mathcal{M}^{\ell}(\R^N)$). This semigroup has continuous curves as in
Proposition \ref{prop:about-time-continuity}, attains the initial
data as in Proposition \ref{prop:initial_data} and  acts within these
spaces as in (\ref{eq:estimates_Mpl-Mqs}). On the other hand,  we have a
potential that acts within these spaces as in
(\ref{eq:action-of-PV}). Notice the corresponding restrictions on the
indexes of the spaces of the scale in these two latter  equations.

We can accomodate this situation in an abstract setting that will
allow further applications to other situations. This will be done
elsewhere.

First, assume we have a family of Banach spaces,
$\{X^\gamma\}_{\gamma\in \mathbb{J}}$ which
we call \emph{the scale}, where $\mathbb{J}$ is a certain set of
indexes. The norm  in $X^\gamma$ is denoted by
$\|\cdot\|_\gamma$. The spaces of the scale are assumed to be
topologically consistent, that is, if $\{u_{0n}\}\subset
X^\gamma\cap X^{\tilde\gamma}$  and $\{u_{0n}\}$ converges both in
$X^\gamma$ and in $X^{\tilde\gamma}$ then the limit is the same.

Each space in the scale, has an associated \emph{regularity index}
given by a mapping $\mathtt{r}:\mathbb{J} \to \R$.

As several of the results below do not depend on the semigroup
property, we consider a slightly more general situation for a family
of linear mappings $\{S(t)\}_{t\geq0}$ acting in the scale as we now
define.

\begin{definition}
\label{def:linear_mappings_on_scale}
Given  the scale $\{X^\gamma\}_{\gamma\in \mathbb{J}}$  and a family
of linear operators $\{S(t)\}_{t\geq0}$ with $S(0)=I$ defined in a
consistent way on the spaces of the scale, that is, if $u_{0} \in
X^{\gamma} \cap X^{\tilde\gamma}$ then the value of $S(t)u_{0}$ as
operators in $X^{\gamma}$ and $X^{\tilde\gamma}$ coincide.

\begin{enumerate}
\item
   We say that $\{S(t)\}_{t\geq0}$ smooths from
  $X^\gamma$ to $X^{\tilde{\gamma}}$ for positive times, which we
  denote as
  \begin{displaymath}
    \gamma \stackrel{_{S(t)}}{\dashrightarrow} \tilde\gamma,
  \end{displaymath}
  iff for any $T>0$ there is a constant $M= M(\gamma,\tilde{\gamma}, T)$
  (that can always assume to be nondecreasing with respect to $T$),
  such that
  \begin{equation} \label{eq:estimate_scale} \|S(t) \|_{
      \mathcal{L}(X^\gamma,X^{\tilde{\gamma}})} \leq
    \frac{M}{ t^{d(\tilde{\gamma},\gamma)}}
    \ \text{ for } \ 0<t\leq T
  \end{equation}
  where
  $d(\tilde\gamma, \gamma)\mydef
  \mathtt{r}(\tilde\gamma)-\mathtt{r}(\gamma)\geq0$.

  \item
      We say that $\{S(t)\}_{t\geq0}$ continuously smooths
  from $X^\gamma$ to $X^{\tilde{\gamma}}$ for positive times, which we
  denote as
  \begin{displaymath}
    \gamma \stackrel{_{S(t)}}{\leadsto} \tilde\gamma,
  \end{displaymath}
  iff additionally to (i), we have that
  $(0,\infty) \ni t\to S(t)u_0\in X^{\tilde{\gamma}}$ is continuous
  for each $u_0\in X^\gamma$.

  \item
     If for each $\gamma\in\mathbb{J}$,
  $\{S(t)\}_{t\geq0}$ is a (not necessarily $C^0$)
  semigroup in $X^\gamma$ and
  $\gamma \stackrel{_{S(t)}}{\leadsto}\gamma$ then we say that
  we have a semigroup $\{S(t)\}_{t\geq0}$ in the scale.
\end{enumerate}
\end{definition}

Therefore the results in Sections
\ref{sec:homogeneous_equation_in_Morrey} and \ref{sec:morrey-potential} for
  Morrey spaces $ X^{\gamma}=M^{p,\ell}(\R^{N})$, correspond to
\begin{equation}\label{eq:Morrey_scale-2m}
\gamma  = (p,\ell)\in \mathbb{J} = [1,\infty]\times (0,N], \qquad  \mathtt{r}(\gamma)=-\frac{\ell}{2m\mu p}
\end{equation}
(for $p=1$,  $M^{1,\ell}(\R^N)$ can be replaced by $\mathcal{M}^{\ell}(\R^N)$).
The operators $\{S_{\mu}(t)\}_{t\geq0}$, $0< \mu\leq 1$, are a
semigroup in the Morrey scale in the sense above. Notice that
(\ref{eq:estimate_scale}) holds in this case with
$M$ independent of $T$, see
(\ref{eq:estimates_Mpl-Mqs}).

As the case of semigroups in a scale is  specially relevant for applications, we
make the following important remark.

\begin{lemma} \label{lem:estimates4semigroups_in_scales}

Assume  $\{S(t)\}_{t\geq0}$ is a semigroup in the scale
$\{X^{\gamma}\}_{\gamma \in \mathbb{J}}$.
\begin{enumerate}
\item
  For each $\gamma \in \mathbb{J}$, there exists
  constants $M_{0}= M_{0}(\gamma)$, $a_\gamma$, such that
  \begin{equation}
    \label{eq:exponential-bound-for-S(t)-in-Xgamma}
    \|S(t)\|_{\mathcal{L}(X^\gamma)} \leq M_{0}  e^{a_\gamma t},
    \quad  t\geq0,
  \end{equation}

\item
   If $\gamma\stackrel{_{S(t)}}{\dashrightarrow} \gamma'$
  denote $\omega \mydef  \min\{a_{\gamma}, a_{\gamma'}\}$. Then for each
  $T>0$ there exists a constant $M=M(\gamma,\gamma', T)$ such that
  \begin{displaymath}
    \|S(t)\|_{\mathcal{L}(X^\gamma, X^{\gamma'})} \leq
    \begin{cases}
      \frac{ M }{t^{d(\gamma',\gamma)}}, &  0<t\leq T\\
      M e^{\omega t}, & T<t .
    \end{cases}
  \end{displaymath}

  In particular, for each $a>\omega$ there is a constant
  $M_{1}=M_{1}(\gamma, \gamma')$  such that
  \begin{equation} \label{eq:another-exponential-for-S(t)}
    \|S(t)\|_{\mathcal{L}(X^\gamma, X^{\gamma'})} \leq \frac{ M_{1}
    }{t^{d(\gamma',\gamma)}} e^{a t}, \quad  t>0.
  \end{equation}
\end{enumerate}

\end{lemma}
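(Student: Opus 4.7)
The plan is to reduce both parts to the smoothing estimate (\ref{eq:estimate_scale}) applied in two different ways, combined with the semigroup identity in the scale.

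For part (i), since $\{S(t)\}_{t\geq 0}$ is a semigroup in the scale we have $\gamma \stackrel{_{S(t)}}{\leadsto} \gamma$, and because $d(\gamma,\gamma)=\mathtt{r}(\gamma)-\mathtt{r}(\gamma)=0$, the estimate (\ref{eq:estimate_scale}) with $\tilde\gamma=\gamma$ and $T=1$ collapses to a uniform bound $\|S(t)\|_{\mathcal{L}(X^\gamma)}\leq K$ on $(0,1]$, where $K=M(\gamma,\gamma,1)$ may be enlarged so that $K\geq 1$, which also handles $t=0$ via $\|S(0)\|_{\mathcal{L}(X^\gamma)}=1$. For $t>1$ I write $t=n+\tau$ with $n\in\N$ and $\tau\in[0,1)$, and the semigroup identity $S(t)=S(1)^n S(\tau)$ yields $\|S(t)\|_{\mathcal{L}(X^\gamma)}\leq K^{n+1}\leq K\,e^{t\ln K}$. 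Choosing $M_0=K$ and $a_\gamma=\ln K$ (possibly negative or zero) gives (\ref{eq:exponential-bound-for-S(t)-in-Xgamma}).

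For part (ii), the range $0<t\leq T$ is just (\ref{eq:estimate_scale}). For $t>T$ I split into two cases according to which growth rate is smaller. If $\omega=a_\gamma$, I decompose $S(t)=S(T/2)\,S(t-T/2)$ and apply the smoothing estimate at the fixed time $T/2$ to the first factor, obtaining $\|S(T/2)\|_{\mathcal{L}(X^\gamma,X^{\gamma'})}\leq M/(T/2)^{d(\gamma',\gamma)}$, while part (i) on $X^\gamma$ controls the second factor by $M_0\,e^{a_\gamma(t-T/2)}$. If instead $\omega=a_{\gamma'}$, I factor $S(t)=S(t-T/2)\,S(T/2)$ and bound $S(T/2)\colon X^\gamma\to X^{\gamma'}$ by smoothing and $S(t-T/2)\colon X^{\gamma'}\to X^{\gamma'}$ by part (i). In either case the product of the two norms is controlled by $C\,e^{\omega t}$ for $t>T$, and enlarging the constant absorbs the interval $(0,T]$ as well.

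For the ``In particular'' statement, given $a>\omega$ I merge the two regimes. On $(0,T]$ I use $M/t^{d(\gamma',\gamma)}\leq (Me^{|a|T}/t^{d(\gamma',\gamma)})\,e^{at}$, which is valid because $e^{at}\geq e^{-|a|T}$ there. On $(T,\infty)$ the desired inequality $Me^{\omega t}\leq (M_1/t^{d(\gamma',\gamma)})\,e^{at}$ reduces to $M\,t^{d(\gamma',\gamma)}e^{-(a-\omega)t}\leq M_1$, and since $a-\omega>0$ the function $t\mapsto t^{d(\gamma',\gamma)}e^{-(a-\omega)t}$ is bounded on $[T,\infty)$, so any sufficiently large $M_1$ works. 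Taking the maximum of the two constants yields (\ref{eq:another-exponential-for-S(t)}). I do not anticipate a serious technical obstacle; the one subtlety is the factorization in (ii), where the order of $S(T/2)$ and $S(t-T/2)$ must be aligned with whether $a_\gamma$ or $a_{\gamma'}$ is the smaller growth rate, so that $\omega=\min\{a_\gamma,a_{\gamma'}\}$ and not the larger rate governs the exponential.
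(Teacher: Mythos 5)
Your proposal is correct and follows essentially the same route as the paper: part (i) by the uniform bound on a unit time interval (from (\ref{eq:estimate_scale}) with $\tilde\gamma=\gamma$, $d=0$) iterated via the semigroup property, and part (ii) by factoring $S(t)$ into a fixed-time smoothing step composed with a long-time step in whichever of $X^\gamma$, $X^{\gamma'}$ has the smaller growth rate, exactly matching the paper's two decompositions (the paper uses $T$ where you use $T/2$, an immaterial difference). Your explicit merging of the two regimes for (\ref{eq:another-exponential-for-S(t)}) just spells out what the paper leaves implicit.
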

\begin{proof}
(i) Since $\|S(\tau)\|_{\mathcal{L}(X^\gamma)} \leq C$ for
$\tau\in[0,1]$, then, by the semigroup property, for $k\in \N$ and $\tau
\in [0,1]$,  $\| S(k+\tau)\|_{X^\gamma} \leq C^{k+1}$
and, letting $t=k +\tau$, we get
(\ref{eq:exponential-bound-for-S(t)-in-Xgamma}).

\noindent (ii)
The estimate for $0<t\leq T$ comes from (\ref{eq:estimate_scale}). Now
for  $u_0\in X^\gamma$ and  $t>T$, by the semigroup property,
we get
$\|S(t) u_0 \|_{X^{\gamma'}}
        \leq
    \|S(t-T)\|_{\mathcal{L}(X^{\gamma'}, X^{\gamma'})} \|S(T) u_0 \|_{X^{\gamma'}}
    \leq        \frac{C}{T^{d(\gamma',\gamma)} e^{a_{\gamma' T}}}
    e^{a_{\gamma'} t} \| u_0\|_\gamma$.

Also, for $t>T$,
$\|S(t) u_0 \|_{X^{\gamma'}}    \leq
    \|S(T)\|_{\mathcal{L}(X^{\gamma},  X^{\gamma'})} \|S(t-T) u_0 \|_{X^{\gamma}}
    \leq        \frac{C}{T^{d(\gamma',\gamma)} e^{a_{\gamma T}}}
    e^{a_{\gamma} t} \| u_0\|_\gamma$.     So we    get the result.

   Also, these two estimates  together yield
(\ref{eq:another-exponential-for-S(t)}).
\end{proof}

Now we want to  consider an equation of the form
\begin{equation}\label{eq:abstract-VCF_linearly_perturbed_semigroup}
u(t) = S(t) u_0 +  \sum_{i=1}^n \int_0^t S(t-\tau)  P_i u(\tau) \,
d\tau, \quad t>0,
\end{equation}
for suitable  $u_0$ in some space in the scale, and   suitable linear
perturbations $P_{i}$ acting within the scale, as we now define.

\begin{definition}
  \label{def:perturbations_in_scale}

  Given $\alpha \in \mathbb{J}$ and $R>0$,

  \begin{enumerate}
  \item
     Denote $\mathscr{P}_{\beta,R}$, with
    $\beta \in \mathbb{J}$, the set of linear bounded mappings
    $P\in \mathcal{L}(X^\alpha,X^{\beta})$ with
    $0\leq d(\alpha,\beta)=\mathtt{r}(\alpha)-\mathtt{r}(\beta)<1$ and
    $\beta \stackrel{_{S(t)}}{\leadsto} \alpha$ and
    $\|P\|_{ \mathcal{L}(X^\alpha,X^{\beta})} \leq R$.

\item
  For $\beta_{1}, \ldots, \beta_{n} \in \mathbb{J}$
    consider sets of perturbations $P=\{P_1,\ldots,P_n\}$ such that
    $P_{i}\in \mathscr{P}_{\beta_{i},R}$. Then we say that
    \begin{equation} \label{eq:assumption-for-linear-perturbations}
      P=\{P_1,\ldots,P_n\} \in \mathscr{P}_{\beta_{1}, \ldots,
        \beta_{n},R} .
    \end{equation}
  \end{enumerate}
  \end{definition}

  Notice that if all $\beta_{i}$ are the same, then we can add the
perturbations and (\ref{eq:abstract-VCF_linearly_perturbed_semigroup})
would be equivalent to
\begin{equation}\label{eq:suming_all_perturbations}
  u(t) = S(t) u_0 +  \int_0^t S(t-\tau) \Big(\sum_{i=1}^n P_i \Big) u(\tau) \, d\tau,
\end{equation}
so, a single perturbation would be considered. Analogously, if in
(\ref{eq:abstract-VCF_linearly_perturbed_semigroup}) some
$\beta_{i}=\beta_{j}$ then $P_{i}$ and $P_{j}$ can be added into a
single perturbation. Hence we can,  without loss of generality, assume
that in (\ref{eq:abstract-VCF_linearly_perturbed_semigroup}) all
$\beta_{i}$ are different.

Hence if $P$ is as in (\ref{eq:assumption-for-linear-perturbations}),  notice that for
(\ref{eq:abstract-VCF_linearly_perturbed_semigroup}) to make sense we
need $u:(0,T) \to X^{\alpha}$ and then $\tau \mapsto S(t-\tau)  P_i
u(\tau) \in X^{\alpha}$, but it must be integrable, so we need a
precise control on how $u$ enters in $X^{\alpha}$ and use (\ref{eq:estimate_scale}). Also notice that we
can allow $u_{0}\in X^{\gamma}$ as long as $\gamma
\stackrel{_{S(t)}}{\leadsto} \alpha$.

This motivates to consider the following set of functions.

\begin{definition}
For $\alpha\in \mathbb{J}$, $T>0$ and $\varepsilon\geq0$ we define
\begin{displaymath}
{\mathcal  L}^\infty_{\alpha,\varepsilon} ((0,T])=\{\varphi\in L^{\infty}_{loc}((0,T], X^\alpha)\colon
\Norm{\varphi}_{\alpha,\varepsilon,T}= \sup_{t\in(0,T]}  t^{\varepsilon} \|\varphi(t) \|_\alpha<\infty \},
\end{displaymath}
and ${\mathcal  L}^\infty_{\alpha,\varepsilon}=\bigcap_{T>0} {\mathcal  L}^\infty_{\alpha,\varepsilon} ((0,T])$.
\end{definition}

\subsection{Perturbations in the scale. Existence, uniqueness and regularity}

Then we have the following  existence and uniqueness result for
(\ref{eq:abstract-VCF_linearly_perturbed_semigroup}), for $u_{0}$ in a
set of spaces in the scale determined by  $\mathcal{E}_{\alpha}$
below; the set of existence and uniqueness for
(\ref{eq:abstract-VCF_linearly_perturbed_semigroup}). Notice in
particular  that if
$\alpha \stackrel{_{S(t)}}{\leadsto} \alpha$ then $\alpha \in
\mathcal{E}_{\alpha}$.

\begin{theorem} [{\bf Existence of solutions}]
  \label{thm:existence-linear}

  Assume  $\alpha \in \mathbb{J}$ and $P=\{P_1,\ldots,P_n\}$ satisfies
  (\ref{eq:assumption-for-linear-perturbations}) and let
\begin{displaymath}
\gamma \in \mathcal{E}_\alpha\mydef \{\gamma \in \mathbb{J} \colon
\mathtt{r}(\gamma) \in(\mathtt{r}(\alpha)-1, \mathtt{r}(\alpha)]
\text{ and } \gamma \stackrel{_{S(t)}}{\leadsto} \alpha \}.
\end{displaymath}

Then for  $u_0\in X^\gamma$ there is a  unique
$u=u(\cdot,u_0)$ in ${\mathcal
  L}^\infty_{\alpha,d(\alpha,\gamma)}$ satisfying
(\ref{eq:abstract-VCF_linearly_perturbed_semigroup}) for each $t>0$.

Therefore we have a  family of linear operators $\{S_P(t)\}_{t\geq0}$
in $X^{\gamma}$ given by
\begin{equation}
  \label{eq:SP(t)}
  S_P(0)u_0=u_0 \ \text{ and } \
  S_P(t)u_0 = u(t,u_0)
 \end{equation}
and  $\{S_P(t)\}_{t\geq0}$ continuously smooths from $X^\gamma$ to
$X^\alpha$ for positive times, that is,
\begin{equation}
  \label{eq:crucial-property-of-SP(t)}
\gamma \stackrel{_{S_P(t)}}{\leadsto} \alpha.
\end{equation}

Finally,  $\beta_{i}\in
\mathcal{E}_{\alpha}$ and in particular,
\begin{displaymath}
\beta_i \stackrel{_{S_P(t)}}{\leadsto} \alpha, \quad i=1,\ldots n.
\end{displaymath}

Also, if $\alpha \stackrel{_{S(t)}}{\leadsto} \alpha$ then $\alpha \in
\mathcal{E}_{\alpha}$ and $\alpha \stackrel{_{S_{P}(t)}}{\leadsto}
\alpha$.

\end{theorem}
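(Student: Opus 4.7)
The plan is to apply Banach's contraction mapping theorem to the operator
\[
\Phi(u)(t) = S(t)u_0 + \sum_{i=1}^n \int_0^t S(t-\tau) P_i u(\tau) \, d\tau
\]
on the space $\mathcal{L}^\infty_{\alpha, d(\alpha, \gamma)}((0, T])$ for $T$ sufficiently small. For the free term, $\gamma \stackrel{_{S(t)}}{\leadsto} \alpha$ yields $t^{d(\alpha, \gamma)}\|S(t) u_0\|_\alpha \leq M(T) \|u_0\|_\gamma$. For each Duhamel contribution, combining $P_i \in \mathcal{L}(X^\alpha, X^{\beta_i})$ with $\|P_i\| \leq R$ and $\beta_i \stackrel{_{S(t)}}{\leadsto} \alpha$ gives
\[
\|S(t-\tau) P_i u(\tau)\|_\alpha \leq M(T)\, R\, (t-\tau)^{-d(\alpha, \beta_i)} \tau^{-d(\alpha, \gamma)} \Norm{u}_{\alpha, d(\alpha, \gamma), T},
\]
and multiplying by $t^{d(\alpha, \gamma)}$ and integrating produces a Beta integral equal to $t^{1 - d(\alpha, \beta_i)} B(1 - d(\alpha, \beta_i), 1 - d(\alpha, \gamma))$, which is finite precisely because $d(\alpha, \beta_i),\, d(\alpha, \gamma) \in [0, 1)$. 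Consequently $\Phi$ is a self-map on $\mathcal{L}^\infty_{\alpha, d(\alpha, \gamma)}((0, T])$, Lipschitz with constant $\mathcal{O}\bigl(\sum_i T^{1 - d(\alpha, \beta_i)}\bigr)$, and so a strict contraction for $T = T_0$ small enough, yielding a unique local fixed point.

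Global extension then follows from linearity. Since $\{S(t)\}_{t\geq 0}$ is a semigroup in the scale, one has $\alpha \stackrel{_{S(t)}}{\leadsto} \alpha$ and so $\alpha \in \mathcal{E}_\alpha$. Restarting the fixed-point argument at $T_0$ with initial data $u(T_0) \in X^\alpha$ extends $u$ to $(0, 2T_0]$, and iterating yields a globally defined element of $\mathcal{L}^\infty_{\alpha, d(\alpha, \gamma)}$. Global uniqueness follows since any competing solution must coincide with the constructed one on $(0, T_0]$ by the contraction principle, and hence on each successive interval.

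Linearity of $S_P(t)$ is immediate from linearity of $\Phi$ and uniqueness, and the smoothing bound $\|S_P(t) u_0\|_\alpha \leq C t^{-d(\alpha, \gamma)} \|u_0\|_\gamma$ is already contained in the self-mapping estimate, delivering the first half of $\gamma \stackrel{_{S_P(t)}}{\leadsto} \alpha$. For continuity at $t_0 > 0$, I would use
\[
S_P(t) u_0 - S_P(t_0) u_0 = \bigl[S(t) - S(t_0)\bigr] u_0 + \sum_i \left( \int_0^t S(t-\tau) P_i u(\tau) \, d\tau - \int_0^{t_0} S(t_0 - \tau) P_i u(\tau) \, d\tau \right),
\]
where the first term is continuous in $X^\alpha$ by $\gamma \stackrel{_{S(t)}}{\leadsto} \alpha$, and each integral difference is handled by dominated convergence, using pointwise continuity of $s \mapsto S(s) P_i u(\tau) \in X^\alpha$ (from $\beta_i \stackrel{_{S(t)}}{\leadsto} \alpha$) together with a $\tau$-uniform envelope built from the Beta integrand.

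The final clauses merely unpack definitions: the membership $P_i \in \mathscr{P}_{\beta_i, R}$ bundles $0 \leq d(\alpha, \beta_i) < 1$ with $\beta_i \stackrel{_{S(t)}}{\leadsto} \alpha$, which is exactly $\beta_i \in \mathcal{E}_\alpha$, and so $\beta_i \stackrel{_{S_P(t)}}{\leadsto} \alpha$ by the main conclusion applied with $\gamma = \beta_i$; similarly $\alpha \in \mathcal{E}_\alpha$ once $\alpha \stackrel{_{S(t)}}{\leadsto} \alpha$ via $d(\alpha, \alpha) = 0$. The step I expect to require most care is the dominated-convergence argument for continuity: one must cover the region $\tau$ near $t_0$ (where $(t_0 - \tau)^{-d(\alpha, \beta_i)}$ is singular) by a split that produces a uniform bound for small $|t - t_0|$, and separately handle $\tau \to 0$ via the integrable $\tau^{-d(\alpha, \gamma)}$ factor. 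Parallel bookkeeping is needed to keep the $T$-dependent constants $M(T)$ from (\ref{eq:estimate_scale}) under control along the iteration that produces the global solution.
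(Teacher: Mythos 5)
Your construction breaks down at the global-extension step, and the failure is structural, not cosmetic. Theorem \ref{thm:existence-linear} is stated in the general setting of Definition \ref{def:linear_mappings_on_scale}, where $\{S(t)\}_{t\geq0}$ is only a family of linear operators with the smoothing and continuity properties \eqref{eq:estimate_scale}; the semigroup property $S(t+s)=S(t)S(s)$ is \emph{not} among the hypotheses (it is introduced only in the later subsection, and the theorem's final clause ``if $\alpha \stackrel{_{S(t)}}{\leadsto} \alpha$ then \dots'' is deliberately conditional). Your restart-and-glue argument needs exactly that property: to show that the function obtained by solving the problem afresh at $T_0$ with data $u(T_0)\in X^\alpha$ satisfies the original equation \eqref{eq:abstract-VCF_linearly_perturbed_semigroup} based at $t=0$ on $(T_0,2T_0]$, you must rewrite $S(T_0+s)u_0+\sum_i\int_0^{T_0}S(T_0+s-\tau)P_iu(\tau)\,d\tau$ as $S(s)\bigl[S(T_0)u_0+\sum_i\int_0^{T_0}S(T_0-\tau)P_iu(\tau)\,d\tau\bigr]=S(s)u(T_0)$, which is precisely the semigroup identity. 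The same issue undermines your global uniqueness claim, since a competing solution of the equation based at $0$ need not solve the restarted equation on $(T_0,2T_0]$ without that identity. So as written, your proof establishes the theorem only under an extra hypothesis the paper explicitly avoids (and which it cannot afford to assume, e.g. because the theorem must later be reapplied in intermediate steps of the abstract scheme without re-verifying semigroup structure each time).

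The fix is the paper's device: do the fixed point on $(0,T]$ for an \emph{arbitrary} prescribed $T>0$ in one shot, using the exponentially weighted norm $\Norm{\varphi}_{T,\theta}=\sup_{t\in(0,T]}e^{-\theta t}t^{d(\alpha,\gamma)}\|\varphi(t)\|_\alpha$. The Duhamel estimate then produces constants $c_i(\theta)$ which, by a H\"older/Beta-function computation, satisfy $c_i(\theta)\to0$ as $\theta\to\infty$ for each fixed $T$, so $\mathcal{F}(\cdot,u_0)$ is a contraction for $\theta$ large with no smallness condition on $T$; uniqueness in all of ${\mathcal L}^\infty_{\alpha,d(\alpha,\gamma)}((0,T])$ follows by enlarging the ball and $\theta$, and consistency for $T_1<T_2$ gives the global solution without any gluing. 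Your remaining ingredients are sound and match the paper: the Beta-integral self-map and Lipschitz estimates, linearity from uniqueness, the bound $\|S_P(t)u_0\|_\alpha\leq Ct^{-d(\alpha,\gamma)}\|u_0\|_\gamma$ on compact time intervals, the dominated-convergence proof of continuity (the paper's Lemma \ref{lem:continuity}, with the same splitting near the singular endpoints), and the purely definitional final clauses; you would only additionally need to note, as the paper does, the measurability of $\tau\mapsto S(t-\tau)P_i\varphi(\tau)$ so the Bochner integrals make sense.
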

\begin{proof}
{\bf Step 1. Existence.}
  First, if $\gamma \in \mathcal{E}_{\alpha}$ then $0\leq d(\alpha, \gamma)
  = \mathtt{r}(\alpha) - \mathtt{r}(\gamma) <1$. Then with $T>0$ and $\theta>0$  we  consider in ${\mathcal
  L}^\infty_{\alpha,d(\alpha,\gamma)} ((0,T])$ an equivalent norm given by
\begin{displaymath}
  \Norm{\varphi}_{T,\theta}=
  \sup_{t\in(0,T]} e^{-\theta t}  t^{d(\alpha,\gamma)} \|\varphi(t) \|_\alpha.
\end{displaymath}

Then take $K_{0}>0$ to be chosen below and letting
\begin{displaymath}
\mathcal{K}_{T,K_0,\theta}=\{\varphi\in {\mathcal
  L}^\infty_{\alpha,d(\alpha,\gamma)} ((0,T]):
\Norm{\varphi}_{T,\theta}\leq K_0  \}
\end{displaymath}
and for $\varphi \in {\mathcal   L}^\infty_{\alpha,d(\alpha,\gamma)} ((0,T])$
 \begin{equation} \label{eq:VCF}
{\mathcal F}(\varphi,u_0) (t) = S(t) u_0 +  \sum_{i=1}^n\int_0^t
S(t-\tau)  P_i \varphi (\tau) \, d\tau, \quad t\in(0,T], \
\end{equation}
we look for a fixed point of $\varphi\mapsto {\mathcal F}(\varphi,u_0)$ in $\mathcal{K}_{T,K_0,\theta}$.

We remark that if $\varphi\in \mathcal{K}_{T,K_0,\theta}$ then
$\varphi$ is measurable with respect to a Lebesgue's $\sigma$-field  as
in \cite[Definition \S5, p. 4]{Dinculeanu} and, by assumptions on
$P_i$ and $\{S(t)\}_{t\geq0}$, we see via \cite[Proposition \S13,
p. 7]{Dinculeanu} that $S(t-\cdot)P_i \varphi(\cdot)$ is then
measurable for every $t\in(0,T]$ and $i=1,\ldots,n$.
Hence $S(t-\cdot)  P_i \varphi(\cdot)$ with values in $X^\alpha$ is integrable on $(0,t)$ whenever $\|S(t-\cdot)  P_i \varphi(\cdot)\|_\alpha$ is integrable on $(0,t)$.
Also, the technical  Lemma \ref{lem:continuity} proved below ensures that ${\mathcal
  F}(\varphi,u_0)$ is actually continuous in $(0,T]$ with values in
$X^\alpha$ and thus measurable (see \cite[Corollary
1.4.8]{Cazenave-Haraux}).

Given $\varphi,\psi\in
\mathcal{K}_{T,K_0,\theta}$ and $t\in (0,T]$
we have
\begin{equation} \label{eq:estimate_VCF}
\begin{split}
e^{-\theta t} &t^{d(\alpha,\gamma)} \|{\mathcal
  F}(\varphi,u_0)(t)\|_\alpha
  \leq
C \|u_0\|_{\gamma}
+ e^{-\theta t}t^{d(\alpha,\gamma)}  \sum_{i=1}^n \int_0^t \frac{C}{(t-s)^{d(\alpha,\beta_i)}}R\|\varphi(s)\|_\alpha ds
\\&
\leq  C \|u_0\|_{\gamma} +  C\sum_{i=1}^n R t^{1-d(\alpha,\beta_i)} e^{-\theta t}
 \int_0^1 \frac{e^{\theta t \zeta}}{(1-\zeta)^{d(\alpha,\beta_i)}
  \zeta^{d(\alpha,\gamma)}} \, d\zeta\ \Norm{\varphi}_{T,\theta}
\end{split}
\end{equation}
and
\begin{displaymath}
\begin{split}
e^{-\theta t} &t^{d(\alpha,\gamma)} \| ({\mathcal
  F}(\varphi,u_0)(t)-{\mathcal F}(\psi,u_0)(t))\|_\alpha
  \leq
e^{-\theta t}t^{d(\alpha,\gamma)} \sum_{i=1}^n \int_0^t
\frac{C}{(t-s)^{d(\alpha,\beta_i)}}R\|\varphi(s)-\psi(s)\|_\alpha
ds
\\&
\leq
C\sum_{i=1}^n R  t^{1-d(\alpha,\beta_i)} e^{-\theta t} \int_0^1
\frac{e^{\theta t\zeta}}{(1-\zeta)^{d(\alpha,\beta_i)}
  \zeta^{d(\alpha,\gamma)}} \, d\zeta\
\Norm{\varphi-\psi}_{T,\theta}.
\end{split}
\end{displaymath}

Now choose
$K_{0} > 2 C  \|u_0\|_{\gamma}$ and denote
\begin{displaymath}
c_{i}(\theta)= \sup_{t\in(0,T]} t^{1-d(\alpha,\beta_i)}e^{-\theta t}
\int_0^1  \frac{e^{\theta t \zeta}}{ (1-\zeta)^{d(\alpha,\beta_i)}
\zeta^{d(\alpha,\gamma)}} \, d\zeta
\end{displaymath}
so we obtain for $\varphi,\psi\in \mathcal{K}_{T,K_0,\theta}$ that
\begin{equation}\label{eq:mathcalF-in-the-sublinear-case}
  \begin{split}
      &\Norm{{\mathcal
          F}(\varphi,u_0)}_{T,\theta} \leq
      \Bigl(\frac12+ CR\sum_{i=1}^n c_{i}(\theta)   \Bigr) K_0,
      \\&
      \Norm{({\mathcal F}(\varphi,u_0)-{\mathcal F}(\psi,u_0)}_{T,\theta}\leq
CR      \sum_{i=1}^n c_{i}(\theta) \Norm{\varphi-\psi}_{T,\theta}.
\end{split}
\end{equation}

Given $i\in\{1,\ldots, n\}$ we have via  H\"older's inequality that for $\frac{1}{q}+\frac{1}{q'}=1$
\begin{displaymath}
\begin{split}
c_{i}(\theta)
&\leq  \sup_{t\in(0,T]} t^{1-d(\alpha,\beta_i)}e^{-\theta t} (\int_0^1 e^{\theta t\zeta q'} \, d\zeta)^\frac1{q'} (\int_0^1 (1-\zeta)^{-qd(\alpha,\beta_i)} \zeta^{-qd(\alpha,\gamma)} \, d\zeta)^\frac1q
\\&
\leq \sup_{t\in(0,T]} t^{\frac1{q}-d(\alpha,\beta_i)} (\theta q')^{-\frac1{q'}}( 1 - e^{-\theta t q'})^\frac1{q'} B^\frac1{q}(1-q d(\alpha,\beta_i), 1-q d(\alpha,\gamma))
\\&
\leq
\theta^{-\frac1{q'}} T^{\frac1{q}-d(\alpha,\beta_i)}   {q'}^{-\frac1{q'}}  B^\frac1{q}(1-q d(\alpha,\beta_i), 1-q d(\alpha,\gamma))
\end{split}
\end{displaymath}
where $B(\cdot,\cdot)$ is Euler's beta function and $1-q
d(\alpha,\beta_i)$, $1-q d(\alpha,\gamma)>0$ for $q>1$ close
enough to $1$, because $1-d(\alpha,\beta_i)>0$ and
$1-d(\alpha,\gamma)>0$. Therefore, for such $q$,  we  see that
$c_{i}(\theta)$ is bounded from above  by a
multiple of
$\theta^{-\frac1{q'}}T^{\frac1{q}-d(\alpha,\beta_i)} $ and then
\begin{equation}\label{eq:useful-property}
\text{
given $T>0$ we have $\lim_{\theta\to\infty} c_{i}(\theta)=0$ for every $i=1,\ldots n$}.
\end{equation}

Therefore, from this and   (\ref{eq:mathcalF-in-the-sublinear-case})
for a given $T>0$,
 we  can  choose $\theta$ large such that  ${\mathcal
  F}(\cdot,u_0)$ is a  contraction in
$\mathcal{K}_{T,K_0,\theta}$. Hence,   ${\mathcal
  F}(\cdot,u_0)$ has a unique fixed point $u$  in
$\mathcal{K}_{T,K_0,\theta}$ and then  $u\in {\mathcal
  L}^\infty_{\alpha,d(\alpha,\gamma)} ((0,T])$ satisfies
(\ref{eq:abstract-VCF_linearly_perturbed_semigroup}) for $t\in
(0,T]$.

\noindent {\bf Step 2. Uniqueness.}
For fixed $T$ notice the sets  $\mathcal{K}_{T,K_0,\theta}$ are
increasing in $K_{0}$ and in $\theta$. Hence if $v\in {\mathcal
  L}^\infty_{\alpha,d(\alpha,\gamma)} ((0,T])$ satisfies
(\ref{eq:abstract-VCF_linearly_perturbed_semigroup}) in $(0,T]$, with
$\theta$ and $u$  as in Step 1 above, denote $K_{1}= \max\{K_{0},
\Norm{v}_{T,\theta}\}$. Then $u,v\in \mathcal{K}_{T,K_1,\theta}$. Now
choose $\theta_{1}$ larger than $\theta$ and such $\mathcal{F}$ has a
unique fixed point in  $\mathcal{K}_{T,K_1,\theta_{1}} \supset
\mathcal{K}_{T,K_1,\theta} \supset
\mathcal{K}_{T,K_0,\theta}$. Therefore $u,v \in
\mathcal{K}_{T,K_1,\theta_{1}}$ and are fixed points, whence $u=v$ in
$(0,T]$. Therefore, there is a unique element in  ${\mathcal
  L}^\infty_{\alpha,d(\alpha,\gamma)} ((0,T])$ that satisfies
(\ref{eq:abstract-VCF_linearly_perturbed_semigroup}).

In particular, if  $T_{1}< T_{2}$ and $u_{i} \in {\mathcal
  L}^\infty_{\alpha,d(\alpha,\gamma)} ((0,T_{i}])$ satisfy
(\ref{eq:abstract-VCF_linearly_perturbed_semigroup}) in $(0,T_{i}]$
then $u_{1}=u_{2}$ in $(0,T_{1}]$.

As we can construct, as above, for each $T>0$ an $u\in {\mathcal
  L}^\infty_{\alpha,d(\alpha,\gamma)} ((0,T])$ that satisfies
(\ref{eq:abstract-VCF_linearly_perturbed_semigroup}) in $(0,T]$, we
have therefore a unique $u\in {\mathcal
  L}^\infty_{\alpha,d(\alpha,\gamma)}$ that satisfies
(\ref{eq:abstract-VCF_linearly_perturbed_semigroup}) for $t>0$.

\noindent {\bf Step 3. Linearity.} Now for $t>0$, $S_P(t)$ in  (\ref{eq:SP(t)})
is a well defined map  from $X^\gamma$ into $X^\alpha$. The linearity
of $S_P(t)$  is now a consequence of the uniqueness in ${\mathcal
  L}^\infty_{\alpha,d(\alpha,\gamma)}$ and the linearity in $u_{0}$
and in $\varphi$ in (\ref{eq:VCF}).

\noindent {\bf Step 4. Estimates.}
Now for $S_{P}(t) u_{0}= u(t;u_{0})$  and $T>0$, from
(\ref{eq:estimate_VCF}), for $0<t\leq T$ we have
\begin{displaymath}
\begin{split}
e^{-\theta t} &t^{d(\alpha,\gamma)} \|u(t)\|_\alpha
\leq  C \|u_0\|_{\gamma} +  CR\sum_{i=1}^n c_{i}(\theta)  \Norm{u}_{T,\theta},
\end{split}
\end{displaymath}
and therefore
\begin{displaymath}
\Norm{u}_{T,\theta} \leq
C \|u_0\|_{\gamma}+  CR\sum_{i=1}^n c_{i}(\theta)
      \Norm{u}_{T,\theta} .
\end{displaymath}
From  (\ref{eq:useful-property}) we can choose  $\theta$ large enough
such that
$ CR\sum_{i=1}^n c_{i}(\theta) \leq \frac12$, and then
$\Norm{u}_{T,\theta}\leq 2
C \|u_0\|_{\gamma}$ which in turn leads to the
estimate
\begin{equation}\label{eq:Xgamma-Xalpha-estimate}
\|S_{P}(t)u_{0}\|_{\alpha} \leq \frac{2C e^{\theta T}}{
  t^{d(\alpha,\gamma)}}   \|u_0\|_{\gamma}, \quad t\in (0,T).
\end{equation}
This and the continuity in Lemma \ref{lem:continuity} below, completes the
proof of (\ref{eq:crucial-property-of-SP(t)}).

Finally, that  $\beta_{i}\in
\mathcal{E}_{\alpha}$ and $\beta_i \stackrel{_{S_P(t)}}{\leadsto}
\alpha$, for $ i=1,\ldots n$ and that if $\alpha \stackrel{_{S(t)}}{\leadsto} \alpha$ then $\alpha \in
\mathcal{E}_{\alpha}$ and $\alpha \stackrel{_{S_{P}(t)}}{\leadsto}
\alpha$, follows by the definitions.
\end{proof}

We now prove the technical  lemma used above.

\begin{lemma}\label{lem:continuity}
Assume $P=\{P_1,\ldots,P_n\} $ satisfies (\ref{eq:assumption-for-linear-perturbations}) and let $\gamma\in \mathcal{E}_\alpha$.

If $u_0\in X^\gamma$ and $u\in {\mathcal L}^\infty_{\alpha,d(\alpha,\gamma)} ((0,T])$
then
\begin{displaymath}
(0,T]\ni t \to {\mathcal F}(u,u_0) (t) =S(t) u_0 +  \sum_{i=1}^n \int_0^t S(t-\tau)  P_i u(\tau) \, d\tau \in X^\alpha \ \text{ is continuous}.
\end{displaymath}

\end{lemma}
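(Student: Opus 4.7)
The term $S(t) u_0$ is continuous in $t \in (0, T]$ into $X^\alpha$ directly from $\gamma \stackrel{_{S(t)}}{\leadsto} \alpha$, which is part of the assumption $\gamma \in \mathcal{E}_\alpha$. Thus it remains to show, for each $i \in \{1, \dots, n\}$, that $I_i(t) \mydef \int_0^t S(t-\tau) P_i u(\tau)\, d\tau$ is continuous from $(0,T]$ into $X^\alpha$. The bound $\|P_i u(\tau)\|_{\beta_i} \leq R \|u(\tau)\|_\alpha \leq C_0 \tau^{-d(\alpha,\gamma)}$, with $C_0 = R\,\Norm{u}_{\alpha,d(\alpha,\gamma),T}$, combined with the smoothing estimate (\ref{eq:estimate_scale}) associated to $\beta_i \stackrel{_{S(t)}}{\leadsto} \alpha$, dominates the integrand's $X^\alpha$-norm by $C (t-\tau)^{-d(\alpha,\beta_i)} \tau^{-d(\alpha,\gamma)}$, which is integrable on $(0,t)$ since both exponents lie in $[0,1)$ by Definition \ref{def:perturbations_in_scale}(i) and $\gamma \in \mathcal{E}_\alpha$. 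Combined with the measurability argument from the proof of Theorem \ref{thm:existence-linear}, $I_i(t)$ is a well-defined Bochner integral in $X^\alpha$.

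Fix $t_0 \in (0, T]$. For $t \to t_0^+$ (with $t \leq T$) we write
\begin{displaymath}
  I_i(t) - I_i(t_0) = \int_0^{t_0} [S(t-\tau) - S(t_0-\tau)] P_i u(\tau)\, d\tau + \int_{t_0}^t S(t-\tau) P_i u(\tau)\, d\tau .
\end{displaymath}
The boundary integral has $X^\alpha$-norm of order $(t-t_0)^{1-d(\alpha,\beta_i)}$, which tends to $0$. For the main integral, the integrand converges pointwise to $0$ in $X^\alpha$ for each $\tau \in (0,t_0)$ by continuity of $s \mapsto S(s) P_i u(\tau)$ at the positive time $s = t_0 - \tau$ (granted by $\beta_i \stackrel{_{S(t)}}{\leadsto} \alpha$). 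Since $t > t_0$ yields $(t-\tau)^{-d(\alpha,\beta_i)} \leq (t_0-\tau)^{-d(\alpha,\beta_i)}$, the dominating function $2 C C_0 \, \tau^{-d(\alpha,\gamma)} (t_0-\tau)^{-d(\alpha,\beta_i)}$ is integrable on $(0,t_0)$ by the beta-function criterion, and Lebesgue's dominated convergence theorem gives the conclusion.

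The case $t \to t_0^-$ is the main obstacle, since the analogous moving singularity now falls inside the shared integration range. The boundary piece $\int_t^{t_0} S(t_0-\tau) P_i u(\tau)\, d\tau$ is again of order $(t_0-t)^{1-d(\alpha,\beta_i)}$ (using $\tau \geq t_0/2$ to bound $\tau^{-d(\alpha,\gamma)}$). For the remaining term $\int_0^t [S(t_0-\tau) - S(t-\tau)] P_i u(\tau)\, d\tau$, no $t$-independent integrable dominating function exists because of the singularity at $\tau = t$ which varies with $t$. We fix a small $\delta > 0$ and split the integration at $t_0 - \delta$. On $(0, t_0-\delta)$, for $t \in (t_0 - \delta/2, t_0)$ both $(t-\tau)$ and $(t_0-\tau)$ exceed $\delta/2$, which yields a $t$-uniform dominating function and allows dominated convergence to conclude that the integral over this subinterval tends to $0$. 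On $(t_0-\delta, t)$, direct norm estimates of the two pieces, using $\tau \geq t_0/2$ and integrating the remaining factor $(t-\tau)^{-d(\alpha,\beta_i)}$ or $(t_0-\tau)^{-d(\alpha,\beta_i)}$, give a bound of order $\delta^{1-d(\alpha,\beta_i)}$ which is independent of $|t-t_0|$. Choosing $\delta$ small first and then $|t - t_0|$ small completes the proof.
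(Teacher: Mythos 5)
Your proof is correct and follows essentially the same route as the paper's: the $S(t)u_0$ term is handled by $\gamma \stackrel{_{S(t)}}{\leadsto} \alpha$, right-continuity of the integral term by a boundary strip of order $(t-t_0)^{1-d(\alpha,\beta_i)}$ plus dominated convergence on the common interval (using the pointwise continuity of $s\mapsto S(s)P_iu(\tau)$ granted by $\beta_i \stackrel{_{S(t)}}{\leadsto}\alpha$), and left-continuity by the same boundary strip together with a split at a fixed distance $\delta$ from the moving singularity, with dominated convergence on the far part and a $t$-uniform $O(\delta^{1-d(\alpha,\beta_i)})$ bound on the near part. This matches the paper's decomposition into $I_{1,\alpha}, I_{2,\alpha}$ and the terms $j^{\pm}_{h,\alpha}, k^{\pm}_{h,\alpha}, l_\alpha(h,\xi), m_\alpha(h,\xi)$, with your $\delta$ playing the role of the paper's $\xi$.
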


\begin{proof}
We consider $0<t\leq T$ and $h\in\R$ satisfying
\begin{equation}\label{eq:to-recall-below}
\frac{t}2\leq t+h\leq T .
\end{equation}
We see that
\begin{equation}\label{eq:to-cite-in-the-proof-continuity-in-Xgamma'}
\begin{split}
\|{\mathcal F}(u,u_0) (t+h)  &- {\mathcal F}(u,u_0) (t)  \|_{\alpha} \leq
\|S(t +h)u_0 - S(t) u_0 \|_{\alpha}
  \\&
  +
  \sum_{i=1}^n\|\int_0^{t+h} S(t+h-s)P_iu(s) ds - \int_0^{t} S(t-s)P_iu(s) \, ds
  \|_{\alpha}
  \\&
  \phantom{aaaaaaaaa.aaaa}=: I_{1,\alpha}(h) + I_{2,\alpha}(h).
\end{split}
\end{equation}
Since
$\gamma \stackrel{_{S(t)}}{\leadsto} \alpha$,  we get
 $\lim_{h\to 0} I_{1,\alpha}(h)=0$. The proof
that $\lim_{h\to 0} I_{2,\alpha}(h)= 0$ follows in two cases.

\smallskip

\noindent
\fbox{Case $h>0$.} If $h>0$ then  $I_{2,\alpha}(h)\leq j_{h,\alpha}^+ + k_{h,\alpha}^+$ where
\begin{equation}\label{eq:mention-to-replace-alpha-by-gamma'}
\begin{split}
&j_{h,\alpha}^+= \sum_{i=1}^n
  \int_0^{t} \| (S(t+h-s) - S(t-s)) P_i u(s)  \|_{\alpha} \, ds,
  \\&
  k_{h,\alpha}^+=\sum_{i=1}^n
  \int_{t}^{t+h} \|S(t+h-s)P_i u(s)  \|_{\alpha} \, ds.
  \end{split}
\end{equation}
We see that
$\| (S(t+h-s) - S(t-s)) P_iu(s) \|_{\alpha}$ is bounded by
$\| S(t+h-s)P_iu(s)\|_{\alpha} + \| S(t-s) P_iu(s) \|_{\alpha}$, which for $s\in(0,t)$ is estimated by
$
G_i(s)=  \frac{R C}{(t-s)^{d(\alpha,\beta_i)}s^{d(\alpha,\gamma)}}
  \Norm{u}_{\alpha,d(\alpha,\gamma),T}$. Since $d(\alpha,\beta_i)<1$ and $d(\alpha,\gamma)<1$,
function $G_i(s)$ is integrable for $s\in(0,t)$. For such $s$
we also have
\begin{equation}\label{eq:about-conv-to-zero}
\lim_{h\to 0^+}\| (S(t+h-s) - S(t-s)) P_iu(s) \|_{\alpha}=0,
\end{equation}
because $P_iu(s)\in X^{\beta_i}$ and by assumption $(0,\infty)\ni t \to S(t)\phi\in X^\alpha$ is continuous when $\phi \in X^{\beta_i}$.
Thus, via Lebesgue's theorem $\lim_{h\to0^+} j_{h,\alpha}^+=0$,

For $s\in(t,t+h)$
we see in turn that $\|S(t+h-s)P_i u(s)  \|_{\alpha}$ is bounded from
above by $ \frac{R  C \Norm{u}_{\alpha,d(\alpha,\gamma),T}}{(t+h-s)^{d(\alpha,\beta_i)}t^{d(\alpha,\gamma)}}
  $. Then $k_{h,\alpha}^+\leq \sum_{i=1}^n \frac{RC}{(1-d(\alpha,\beta_i))t^{d(\alpha,\gamma)}}
  \Norm{u}_{\alpha,d(\alpha,\gamma),T} h^{1-d(\alpha,\beta_i)}$, which implies that $\lim_{h\to0^+} k_{h,\alpha}^+ =0$.
As a consequence $\lim_{h\to0^+} I_{2,\alpha}(h) =0$.

\smallskip

\noindent
\fbox{Case $h<0$.} If $h<0$, we have  $I_{2,\alpha}(h)\leq j_{h,\alpha}^- + k_{h,\alpha}^-$ where
\begin{displaymath}
\begin{split}
&j_{h,\alpha}^-=\sum_{i=1}^n
  \int_0^{t+h} \| (S(t+h-s) - S(t-s)) P_i u(s) \|_{\alpha} ds,
  \\&
  k_{h,\alpha}^-=\sum_{i=1}^n\int_{t+h}^{t} \|S(t-s) P_i u(s) \|_{\alpha} ds.
  \end{split}
\end{displaymath}

For $s\in (t+h,t)$, recalling (\ref{eq:to-recall-below}), we see that $\|S(t-s) P_i u(s) \|_{\alpha}$ is estimated from above by $\frac{R C}{(t-s)^{d(\alpha,\beta_i)}(\frac{t}{2})^{d(\alpha,\gamma)}}
  \Norm{u}_{\alpha,d(\alpha,\gamma),T}$ and
$k_{h,\alpha}^-\leq \sum_{i=1}^n\frac{R C}{(1-d(\alpha,\beta_i))(\frac{t}{2})^{d(\alpha,\gamma)}}
  \Norm{u}_{\alpha,d(\alpha,\gamma),T} (-h)^{1-d(\alpha,\beta_i)}$, which yields
$\lim_{h\to0^-}k_{h,\alpha}^- =0$.

Given any $\xi>0$ such that
$\frac{t}4\leq t-\xi$ we now write for $h\in(-\xi,0)$ (thus $t-\xi \leq t+h$)
\begin{displaymath}
\begin{split}
&j_{h,\alpha}^-\leq
\sum_{i=1}^n \int_0^{t-\xi} \| (S(t+h-s) - S(t-s)) P_i u(s) \|_{\alpha} ds
\\&
+ \sum_{i=1}^n\int_{t-\xi}^{t+h} (\| S(t+h-s) P_i u(s) \|_{\alpha} +\| S(t-s) P_i u(s) \|_{\alpha}) ds
=:l_\alpha(h,\xi) + m_\alpha(h,\xi).
\end{split}
\end{displaymath}
Observe that, since $h\in(-\xi,0)$ and $\frac{t}{4}\leq t-\xi$, $\|
S(t+h-s) P_i u(s) \|_{\alpha} +\| S(t-s) P_i u(s) \|_{\alpha}$ is for
$s\in(t-\xi, t+h)$ bounded from above by $\frac{ 2R
  C}{(t+h-s)^{d(\alpha,\beta_i)}(\frac{t}{4})^{d(\alpha,\gamma)}}
  \Norm{u}_{\alpha,d(\alpha,\gamma),T}$, whereas $m_\alpha(h,\xi)\leq  \frac{2 nRC}{(1-d(\alpha,\beta_i))(\frac{t}{4})^{d(\alpha,\gamma)}}
  \Norm{u}_{\alpha,d(\alpha,\gamma),T}\xi^{d(\alpha,\beta_i)}$.
Hence, given $\eta>0$, there exists $\xi>0$ such that $m_\alpha(h,\xi)<\eta$
for all $h\in(-\xi,0)$.  Having fixed such $\xi$, note that
$(0,t-\xi)\subset (0,t+h)$ and
$\| (S(t+h-s) - S(t-s)) P_i u(s) \|_{\alpha}$ is for $s\in(0,t-\xi)$ bounded from above by $
H_i(s)=RC
\Norm{u}_{\alpha,d(\alpha,\gamma),T} \bigl(
(t-\xi-s)^{-d(\alpha,\beta_i)} s^{-d(\alpha,\gamma)} +
(t-s)^{-d(\alpha,\beta_i)} s^{-d(\alpha,\gamma)}\bigr)$ and that
$H_i(s)$ is integrable for $s\in(0,t-\xi)$, because $d(\alpha,\gamma)<1$ and $d(\alpha,\beta_i)<1$.
From (\ref{eq:about-conv-to-zero}) we also have $\lim_{h\to 0^+}\| (S(t+h-s) - S(t-s)) P_iu(s) \|_{\alpha}
=0$.
Therefore, via Lebesgue's dominated convergence theorem
$\lim_{h\to0^-} l_\alpha(h,\xi)=0$ and we conclude that $\lim_{h\to 0^-} I_{2,\alpha}(h)= 0$.
\end{proof}

Now we prove that the family of operators
$\{S_P(t)\}_{t\geq0}$ constructed in Theorem
\ref{thm:existence-linear} is consistent in the spaces $X^{\gamma}$
with $\gamma \in \mathcal{E}_\alpha$.

  \begin{lemma}
Assume $\alpha \in \mathbb{J}$ and $P=\{P_1,\ldots,P_n\}$ satisfies
(\ref{eq:assumption-for-linear-perturbations}) and let
    $\gamma,\tilde{\gamma}\in \mathcal{E}_\alpha$.

    Given $u_0\in X^\gamma\cap X^{\tilde{\gamma}}$, if
    $u\in {\mathcal L}^\infty_{\alpha,d(\alpha,\gamma)}$ and
    $\tilde{u}\in {\mathcal
      L}^\infty_{\alpha,d(\alpha,\tilde{\gamma})}$ are the unique
    functions satisfying
    (\ref{eq:abstract-VCF_linearly_perturbed_semigroup}) for $t>0$
    then $u(t)=\tilde{u}(t)$ for every $t>0$.

    Consequently, the family
    $\{S_P(t)\}_{t\geq0}$ defined in (\ref{eq:SP(t)}) is the family of
    consistent operators in the spaces $X^\gamma$,
    $\gamma \in \mathcal{E}_\alpha$.
  \end{lemma}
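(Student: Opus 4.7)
The plan is to reduce the consistency statement to the uniqueness already proven in Theorem \ref{thm:existence-linear}. The key observation is that the family of spaces $\{\mathcal{L}^\infty_{\alpha,\varepsilon}((0,T])\}_{\varepsilon\geq 0}$ is nested on bounded time intervals: if $0\leq \varepsilon_1\leq \varepsilon_2$ then
\begin{displaymath}
\mathcal{L}^\infty_{\alpha,\varepsilon_1}((0,T])\subset \mathcal{L}^\infty_{\alpha,\varepsilon_2}((0,T]),
\end{displaymath}
since for $\varphi$ in the smaller space one has $t^{\varepsilon_2}\|\varphi(t)\|_\alpha \leq T^{\varepsilon_2-\varepsilon_1}\, t^{\varepsilon_1}\|\varphi(t)\|_\alpha$ for $t\in(0,T]$. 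This will let me compare the two candidates in the larger of the two spaces.

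Without loss of generality, relabelling if necessary, I assume $\mathtt{r}(\gamma)\geq \mathtt{r}(\tilde\gamma)$, so that $d(\alpha,\gamma)\leq d(\alpha,\tilde\gamma)$. Fix $T>0$. By the above nesting, $u\in \mathcal{L}^\infty_{\alpha,d(\alpha,\gamma)}((0,T])\subset \mathcal{L}^\infty_{\alpha,d(\alpha,\tilde\gamma)}((0,T])$, and by construction $\tilde u$ already lies in this last space. Both $u$ and $\tilde u$ satisfy the same integral equation (\ref{eq:abstract-VCF_linearly_perturbed_semigroup}) with the same initial datum $u_0\in X^{\tilde\gamma}$, so both are elements of $\mathcal{L}^\infty_{\alpha,d(\alpha,\tilde\gamma)}((0,T])$ that satisfy (\ref{eq:abstract-VCF_linearly_perturbed_semigroup}) for the index $\tilde\gamma\in\mathcal{E}_\alpha$.

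Next I invoke the uniqueness statement (Step 2) of the proof of Theorem \ref{thm:existence-linear} applied with $\gamma$ replaced by $\tilde\gamma$: any element of $\mathcal{L}^\infty_{\alpha,d(\alpha,\tilde\gamma)}((0,T])$ satisfying (\ref{eq:abstract-VCF_linearly_perturbed_semigroup}) on $(0,T]$ is unique. Hence $u(t)=\tilde u(t)$ for every $t\in (0,T]$. Since $T>0$ was arbitrary, the equality holds for all $t>0$. Because $S_P(t)u_0$ is defined by (\ref{eq:SP(t)}) through the unique solution of (\ref{eq:abstract-VCF_linearly_perturbed_semigroup}), this shows that the operator $S_P(t)$ obtained from $u_0\in X^\gamma$ and the one obtained from $u_0\in X^{\tilde\gamma}$ coincide on $X^\gamma\cap X^{\tilde\gamma}$, which is the required consistency.

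There is no real obstacle here: the only subtle point is to verify that the function produced in the smaller space actually lies in the larger one with an admissible bound on $(0,T]$, which is immediate from the monotonicity of $t\mapsto t^{\varepsilon}$ on bounded intervals. Once that inclusion is in place, the uniqueness already established in Theorem \ref{thm:existence-linear} does all the work.
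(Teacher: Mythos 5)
Your proof is correct and follows essentially the same route as the paper: order the exponents so that $d(\alpha,\gamma)\leq d(\alpha,\tilde\gamma)$, observe that $t^{d(\alpha,\tilde\gamma)}\|u(t)\|_\alpha = t^{d(\alpha,\tilde\gamma)-d(\alpha,\gamma)}\,t^{d(\alpha,\gamma)}\|u(t)\|_\alpha$ puts $u$ into ${\mathcal L}^\infty_{\alpha,d(\alpha,\tilde\gamma)}$, and then conclude by the uniqueness of Theorem \ref{thm:existence-linear} applied with the index $\tilde\gamma$. No gaps.
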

\begin{proof}
Without loss of  generality we can assume that $d(\alpha,\tilde{\gamma})\geq d(\alpha,\gamma)$.
Then, since
$t^{d(\alpha,\tilde{\gamma})}\|u(t)\|_\alpha=t^{d(\alpha,\tilde{\gamma})-d(\alpha,\gamma)}t^{d(\alpha,\gamma}\|u(t)\|_\alpha$
and $u\in  {\mathcal  L}^\infty_{\alpha,d(\alpha,\gamma)}$, we see
that
$u\in  {\mathcal  L}^\infty_{\alpha,d(\alpha,\tilde{\gamma})}$.
Then both $u$ and $\tilde{u}$ belong to ${\mathcal  L}^\infty_{\alpha,d(\alpha,\tilde{\gamma})}$ and satisfy
(\ref{eq:abstract-VCF_linearly_perturbed_semigroup})  for $t>0$, so that by the uniqueness in Theorem \ref{thm:existence-linear} we get the result.
\end{proof}

Assume $P=\{P_1,\ldots,P_n\}$ satisfies
(\ref{eq:assumption-for-linear-perturbations}). Since the set
$\mathcal{E}_{\alpha}$ does not depend on $P$, we can perturb the
original family $\{S(t)\}_{t\geq 0}$ in the scale sequentially by
first considering $\{S_{P_{1}}(t)\}_{t\geq 0}$ defined in $X^{\gamma}$
for  $\gamma\in \mathcal{E}_\alpha$ as the unique solutions in ${\mathcal L}^\infty_{\alpha,d(\alpha,\gamma)}$ of
\begin{displaymath}
u(t) = S(t) u_0 +  \int_0^t S(t-\tau)  P_1 u(\tau) \, d\tau
\end{displaymath}
 for $u_0\in X^\gamma$. Now, by Theorem \ref{thm:existence-linear}, since $\beta_{2}
\stackrel{_{S_{P_1}(t)}}{\leadsto} \alpha$ and
(\ref{eq:assumption-for-linear-perturbations}) we can perturb
$\{S_{P_{1}}(t)\}_{t\geq 0}$ with $P_{2}$ to get
$\{(S_{P_1})_{P_2}(t)\}_{t\geq0}$ as the unique solutions in $
{\mathcal L}^\infty_{\alpha,d(\alpha,\gamma)} $ of
\begin{displaymath}
 u(t) = S_{P_1}(t) u_0 +  \int_0^t S_{P_1}(t-\tau)  P_2 u(\tau) \,  d\tau,
\end{displaymath}
 to get $\{(S_{P_1})_{P_2}(t)\}_{t\geq0}$, and so on. Our next
 result shows that this sequential perturbation leads to the same
 family $\{S_{P}(t)\}_{t\geq 0}$. In particular, the order in which
 the perturbations are applied is irrelevant.

\begin{proposition} [{\bf Iterated perturbations}]
  \label{prop:iterated-perturbations}

  Assume $P \!=\! \{P_1,\ldots,\! P_n\}$ satisfies  (\ref{eq:assumption-for-linear-perturbations}) and
$\{S_P(t)\}_{t\geq0}$ is given by   (\ref{eq:SP(t)}).

 For $\gamma\in \mathcal{E}_\alpha$ and  $u_0\in
X^\gamma$ we have
\begin{displaymath}
S_P(t)u_0= ((S_{P_1})_{P_2}\ldots)_{P_n})(t)u_0, \qquad t>0 .
\end{displaymath}

\end{proposition}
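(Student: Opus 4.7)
The plan is to proceed by induction on $n$, closing the argument via the uniqueness assertion of Theorem \ref{thm:existence-linear}. The case $n=1$ is simply the definition of $S_{P_1}$. For the inductive step, set
\begin{equation*}
  \tilde{S}(t) \mydef ((S_{P_1})_{P_2}\ldots)_{P_{n-1}})(t),
  \qquad
  w_n(t) \mydef ((S_{P_1})_{P_2}\ldots)_{P_n})(t)u_0,
\end{equation*}
so that by the construction of the iterated perturbation,
\begin{equation*}
  w_n(t) = \tilde{S}(t) u_0 + \int_0^t \tilde{S}(t-\tau) P_n w_n(\tau) \, d\tau.
\end{equation*}
I would show that $w_n$ in fact satisfies the full Duhamel equation
\begin{equation*}
  w_n(t) = S(t) u_0 + \sum_{i=1}^n \int_0^t S(t-\tau) P_i w_n(\tau) \, d\tau ;
\end{equation*}
since $w_n \in {\mathcal L}^\infty_{\alpha,d(\alpha,\gamma)}$ by iterated application of (\ref{eq:crucial-property-of-SP(t)}) and (\ref{eq:Xgamma-Xalpha-estimate}), the uniqueness in Theorem \ref{thm:existence-linear} then forces $w_n(t) = S_P(t) u_0$ for all $t>0$.

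To derive this Duhamel equation, I would apply the inductive hypothesis twice: once to $\tilde{S}(t) u_0$ with initial datum $u_0 \in X^\gamma$, and once to $\tilde{S}(t-\tau) P_n w_n(\tau)$ with initial datum $P_n w_n(\tau) \in X^{\beta_n}$ (note that $\beta_n \in \mathcal{E}_\alpha$ by Theorem \ref{thm:existence-linear}, so the hypothesis does apply). Substituting both expansions into the defining equation for $w_n$ produces, in addition to $S(t)u_0$ and the simple integral with $P_n$, a sum of simple integrals $\int_0^t S(t-\tau) P_i \tilde{S}(\tau) u_0 \, d\tau$ and a sum of double integrals $\int_0^t \int_0^{t-\tau} S(t-\tau-s) P_i \tilde{S}(s) P_n w_n(\tau) \, ds \, d\tau$, for $i=1,\ldots,n-1$. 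The decisive step is to interchange the order of integration in the double integral via the substitution $u = \tau + s$, rewriting it as
\begin{equation*}
  \int_0^t S(t-u) P_i \left[ \int_0^u \tilde{S}(u-\tau) P_n w_n(\tau) \, d\tau \right] du
  = \int_0^t S(t-u) P_i \bigl[ w_n(u) - \tilde{S}(u) u_0 \bigr] du,
\end{equation*}
where the last equality uses the very equation defining $w_n$. The contributions $\int_0^t S(t-u) P_i \tilde{S}(u) u_0 \, du$ then cancel against the simple integrals produced by expanding $\tilde{S}(t) u_0$, and what remains is exactly $S(t) u_0 + \sum_{i=1}^n \int_0^t S(t-\tau) P_i w_n(\tau) \, d\tau$, which closes the induction.

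The main obstacle is the justification of the Fubini interchange and of the pointwise identities above at the level of $X^\alpha$-valued integrals. Using (\ref{eq:Xgamma-Xalpha-estimate}) applied to $\tilde{S}$ with $\gamma,\beta_n\in\mathcal{E}_\alpha$ and the definition of $\mathscr{P}_{\beta_i,R}$, each integrand in the double integral is dominated, in $X^\alpha$, by a product of singular factors $(t-\tau-s)^{-d(\alpha,\beta_i)}$, $s^{-d(\alpha,\beta_n)}$ and $\tau^{-d(\alpha,\gamma)}$ with all three exponents strictly less than $1$. Joint integrability on the triangle $\{0<s<t-\tau,\,0<\tau<t\}$ then follows from the same type of Beta-function bounds used in the proof of Theorem \ref{thm:existence-linear}, so Fubini applies; the strong continuity of the integrands provided by (\ref{eq:crucial-property-of-SP(t)}) (which yields measurability in Bochner's sense as in Lemma \ref{lem:continuity}) ensures that the rewritings above are valid in $X^\alpha$.
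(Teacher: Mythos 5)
Your proposal is correct and follows essentially the same route as the paper's proof: induction on the number of perturbations, expansion of the iterated semigroup via the inductive hypothesis (using that $\beta_n\in\mathcal{E}_\alpha$ so the Duhamel identity applies to the data $P_nw_n(\tau)$), a Fubini interchange of the double integral, recombination through the defining equation for the iterated solution, and conclusion by the uniqueness in Theorem \ref{thm:existence-linear}. The only difference is presentational — you verify directly that the iterated solution satisfies the full variation-of-constants formula, whereas the paper matches the residual term $\mathscr{R}(t)$ against $\sum_i\int_0^t S(t-s)P_i v(s)\,ds$ — and your explicit integrability justification for the interchange is a welcome, slightly more careful touch.
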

\begin{proof}
 We use induction in $n$.
For $n=1$ there is nothing to prove.  Assume that the result  holds
for $n$ perturbations  we will now prove that it holds for $n+1$ ones.
Thus consider a set $P=\{P_1, \ldots,P_{n+1}\}   \in     \mathscr{P}_{\beta_{1}, \ldots,
  \beta_{n+1},R}$ as in
(\ref{eq:assumption-for-linear-perturbations}).  Denoting $\tilde P  =\{P_1, \ldots,P_{n}\}$,  by the  induction
assumption we have $S_{\tilde P}(t) =
((S_{P_1})_{P_2}\ldots)_{P_n})(t)$, $t>0$.

Given  $\gamma\in \mathcal{E}_\alpha$ and  $u_0\in X^\gamma$, let  $u,v\in
{\mathcal  L}^\infty_{\alpha,d(\alpha,\gamma)}$ be $u(t) =
S_{P}u_{0}(t)$ and $v(t)= ((S_{P_1})_{P_2}\ldots)_{P_{n+1}})(t) u_0$ for
$t>0$. Therefore, they are, respectively,  the unique solutions in ${\mathcal
  L}^\infty_{\alpha,d(\alpha,\gamma)}$ of
\begin{displaymath}
\begin{split}
  &u(t)=S(t)u_0 + \sum_{i=1}^{n+1}\int_0^t S(t-s) P_iu(s)  \, ds,
  \\&
  v(t)=S_{\tilde P} (t) u_0 + \int_0^t
 S_{\tilde P}(t-s) P_{n+1} v(s) \, ds .
\end{split}
\end{displaymath}

The induction assumption gives that
\begin{displaymath}
S_{\tilde P} (t)u_0=S(t)u_0+
\sum_{i=1}^{n} \int_0^t S(t-s) P_i S_{\tilde P} (s)u_0\, ds,
\end{displaymath}
and also
\begin{displaymath}
S_{\tilde P} (t-s)  P_{n+1} v(s) =S(t-s) P_{n+1} v(s)  +
\sum_{i=1}^{n} \int_0^{t-s} S(t-s-\xi ) P_i S_{\tilde P} (\xi)
P_{n+1} v(s) \, d\xi .
\end{displaymath}
Therefore
\begin{displaymath}
    v(t)= S(t)u_0 + \int_0^t S(t-s) P_{n+1} v(s) \, ds + \mathscr{R}(t)
\end{displaymath}
where
\begin{displaymath}
\begin{split}
\mathscr{R}(t) &= \sum_{i=1}^{n} \int_0^t S(t-s) P_i S_{\tilde P} (s)u_0\, ds
+ \int_0^t \left(\sum_{i=1}^{n} \int_0^{t-s} S(t-s-\xi) P_i S_{\tilde  P}  (\xi) P_{n+1}v(s) \, d\xi \right) ds
\\&
= \sum_{i=1}^{n} \int_0^t S(t-s) P_i S_{\tilde P} (s)u_0\, ds
+ \sum_{i=1}^{n} \int_0^t \left( \int_s^{t} S(t-\sigma) P_i S_{\tilde P} (\sigma-s) P_{n+1}v(s) \, d\sigma \right) ds .
\end{split}
\end{displaymath}
Thus, using the uniqueness  in Theorem \ref{thm:existence-linear}, we
will get $u(t) = v(t)$ for $t>0$  if we show that
\begin{equation}\label{eq:we-must-show-this}
\sum_{i=1}^{n} \int_0^t S(t-s) P_iv(s) \, ds=\mathscr{R}(t)
\end{equation}
for which we compute below the term in the left hand side.

For this, using again that $v(s)=S_{\tilde P}(s) u_0 + \int_0^s
S_{\tilde P}(s-\xi) P_{n+1} v(\xi) \, d\xi$ and that $S(t-s) P_i\in
\mathcal{L}(X^\alpha)$ for $i=1,\ldots,n$, we see that
\begin{displaymath}
\sum_{i=1}^{n} S(t-s) P_i v(s)
=
\sum_{i=1}^{n} S(t-s) P_i S_{\tilde P}(s) u_0
+ \sum_{i=1}^{n} \int_0^s S(t-s) P_i S_{\tilde P}(s-\xi) P_{n+1} v(\xi) \, d\xi,
\end{displaymath}
which after integration with respect to $s\in(0,t)$ yields
\begin{displaymath}
\begin{split}
\sum_{i=1}^{n} \int_0^t S(t-s) P_iv(s) \, ds=
&\sum_{i=1}^{n} \int_0^t S(t-s) P_i S_{\tilde P}(s) u_0 \, ds
\\&
\
+ \sum_{i=1}^{n} \int_0^t \left(\int_0^s S(t-s) P_i S_{\tilde P}(s-\xi) P_{n+1} v(\xi) \, d\xi \right) ds.
\end{split}
\end{displaymath}
Notice that the first term above is the same as the first term in
$\mathscr{R}(t)$ above.
After  changing the order of integration, the second  term above equals
\begin{displaymath}
  \sum_{i=1}^{n} \int_0^t \left(\int_\xi^t S(t-s) P_i S_{\tilde
      P}(s-\xi) P_{n+1} v(\xi) \, ds  \right) d\xi
\end{displaymath}
and after relabelling  $s \mapsto \sigma$ and $\xi \mapsto s$,  this   is
precisely the second term $\mathscr{R}(t)$ above. Hence
(\ref{eq:we-must-show-this}) holds true and the result is proved.
\end{proof}

Now we analyse the regularisation properties of the family
$\{S_P(t)\}_{t\geq0}$  in the scale. That is, the set of spaces to
which $S_{P}(t)u_{0}$ belongs; the regularity set for
(\ref{eq:abstract-VCF_linearly_perturbed_semigroup}). For
this, for each $\beta \in \mathbb{J}$ define
\begin{equation} \label{eq:regularity_set_beta}
\mathcal{R}_{\beta} \mydef \{\gamma' \in \mathbb{J} \colon
\mathtt{r}(\gamma') \in[\mathtt{r}(\beta), \mathtt{r}(\beta)+1)
\text{ and } \beta\stackrel{_{S(t)}}{\leadsto} \gamma' \}.
\end{equation}
In particular for such $\gamma'$ we have $0\leq d(\gamma', \beta) < 1$.

\begin{theorem} [{\bf Smoothing of solutions}]
  \label{thm:Xgamma-Xgamma'-estimates}
  Assume  $P=\{P_1,\ldots,P_n\}$ satisfies
  (\ref{eq:assumption-for-linear-perturbations}) and
  $\mathcal{R}_{\beta_{i}} $ as in (\ref{eq:regularity_set_beta}) and consider
  \begin{displaymath}
\mathcal{R}_{\beta_1,\ldots,\beta_n} \mydef \bigcap_{i=1}^{n}
\mathcal{R}_{\beta_{i}} .
\end{displaymath}
Then $\alpha \in \mathcal{R}_{\beta_1,\ldots,\beta_n}$. Moreover,
\begin{enumerate}
\item
    Assume $\gamma\in \mathcal{E}_\alpha$,
  $\gamma'\in \mathcal{R}_{\beta_1,\ldots,\beta_n}$ and
  $\gamma\stackrel{_{S(t)}}{\dashrightarrow} \gamma'$. Then
  $\gamma \stackrel{_{S_{P}(t)}}{\dashrightarrow} \gamma'$, that is,
  for any $T>0$ there exists $M=M(\gamma,\gamma', T)$  (non decreasing
  in $T$) such that
  \begin{equation}\label{eq:Xgamma-Xgamma'-estimates}
    \|S_P(t)\|_{\mathcal{L}(X^\gamma, X^{\gamma'})} \leq
    \frac{ M}{t^{d(\gamma',\gamma)}}, \quad
    0<t\leq T  .
  \end{equation}

\item
    Assume $\gamma\in \mathcal{E}_\alpha$,
$\gamma'\in \mathcal{R}_{\beta_1,\ldots,\beta_n}$ and
$\gamma\stackrel{_{S(t)}}{\leadsto} \gamma'$. Then
$\gamma \stackrel{_{S_{P}(t)}}{\leadsto} \gamma'$, that is,
(\ref{eq:Xgamma-Xgamma'-estimates}) holds and
\begin{equation}
  \label{eq:smoothing-action-of-u}
  (0,t) \ni t \to S_P(t)u_0  \in X^{\gamma'} \ \text{ is continuous for every } u_0\in X^{\gamma}.
\end{equation}
\end{enumerate}
\end{theorem}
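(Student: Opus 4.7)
The plan is to apply the variation of constants formula
\begin{displaymath}
S_P(t) u_0 = S(t) u_0 + \sum_{i=1}^n \int_0^t S(t-\tau)\, P_i\, S_P(\tau) u_0 \, d\tau
\end{displaymath}
which $S_P(t)u_0$ satisfies by definition, and estimate each term in the $X^{\gamma'}$-norm. First, the preliminary claim $\alpha \in \mathcal{R}_{\beta_1,\ldots,\beta_n}$ is immediate from Definition~\ref{def:perturbations_in_scale}(i): for each $i$, we have $\beta_i \stackrel{_{S(t)}}{\leadsto} \alpha$ and $0 \leq d(\alpha,\beta_i) < 1$, which by (\ref{eq:regularity_set_beta}) is exactly $\alpha \in \mathcal{R}_{\beta_i}$.

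For part (i), the hypothesis $\gamma \stackrel{_{S(t)}}{\dashrightarrow} \gamma'$ bounds the first term by $M t^{-d(\gamma',\gamma)} \|u_0\|_\gamma$. For the integral term, since $\gamma' \in \mathcal{R}_{\beta_i}$ we have $\beta_i \stackrel{_{S(t)}}{\leadsto} \gamma'$, so $\|S(t-\tau) \phi\|_{\gamma'} \leq M (t-\tau)^{-d(\gamma',\beta_i)} \|\phi\|_{\beta_i}$; combining with $\|P_i S_P(\tau) u_0\|_{\beta_i} \leq R \|S_P(\tau) u_0\|_\alpha$ and the already proved estimate (\ref{eq:Xgamma-Xalpha-estimate}) bounding $\|S_P(\tau) u_0\|_\alpha$ by $C\tau^{-d(\alpha,\gamma)} \|u_0\|_\gamma$, we arrive at a Beta-type integral $\int_0^t (t-\tau)^{-d(\gamma',\beta_i)} \tau^{-d(\alpha,\gamma)}\, d\tau$. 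This converges because $d(\gamma',\beta_i)<1$ (from $\gamma'\in \mathcal{R}_{\beta_i}$) and $d(\alpha,\gamma)<1$ (from $\gamma \in \mathcal{E}_\alpha$), and equals a constant times $t^{1 - d(\gamma',\beta_i) - d(\alpha,\gamma)}$. The key algebraic identity, using $d(x,y)=\mathtt{r}(x)-\mathtt{r}(y)$, is
\begin{displaymath}
1 - d(\gamma',\beta_i) - d(\alpha,\gamma) \;=\; -\,d(\gamma',\gamma) + \bigl(1 - d(\alpha,\beta_i)\bigr),
\end{displaymath}
so that each integral contribution is bounded by $C_i\, T^{1-d(\alpha,\beta_i)} t^{-d(\gamma',\gamma)}$ on $(0,T]$. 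Summing over $i$ and absorbing constants yields (\ref{eq:Xgamma-Xgamma'-estimates}) with $M=M(\gamma,\gamma',T)$ non-decreasing in $T$.

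For part (ii), one needs continuity of $t\mapsto S_P(t)u_0\in X^{\gamma'}$ on $(0,\infty)$. The plan is to rerun the argument of Lemma~\ref{lem:continuity} verbatim, but with $\alpha$ replaced by $\gamma'$ throughout the estimates of $I_{1,\alpha}$ and $I_{2,\alpha}$. The three hypotheses that make the adapted proof work are: (a) $\gamma \stackrel{_{S(t)}}{\leadsto} \gamma'$, giving continuity of $t\mapsto S(t)u_0 \in X^{\gamma'}$ (which handles $I_{1,\gamma'}$); (b) $\beta_i \stackrel{_{S(t)}}{\leadsto} \gamma'$ since $\gamma'\in \mathcal{R}_{\beta_i}$, giving pointwise continuity $S(t-s)P_i S_P(s)u_0\in X^{\gamma'}$ in $t$, which is the analogue of (\ref{eq:about-conv-to-zero}); and (c) the integrability of $(t-s)^{-d(\gamma',\beta_i)} s^{-d(\alpha,\gamma)}$ on $(0,t)$ which supplies the dominating function for Lebesgue's theorem. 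The case split $h>0$ and $h<0$ and the $\xi$-splitting of the $h<0$ piece then proceed without change. The main (and only) point requiring care is to keep track that, while $S_P(\tau)u_0$ is controlled only in $X^\alpha$ via $\tau^{-d(\alpha,\gamma)}$, this combined with the smoothing from $\beta_i$ to $\gamma'$ at the outer step suffices because the exponents $d(\gamma',\beta_i)$ and $d(\alpha,\gamma)$ are both strictly less than $1$; no new obstacle appears beyond the bookkeeping already present in Lemma~\ref{lem:continuity}.
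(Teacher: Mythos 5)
Your proposal is correct and follows essentially the same route as the paper: estimate the variation-of-constants formula in $X^{\gamma'}$ using $\gamma\stackrel{_{S(t)}}{\dashrightarrow}\gamma'$ for the free term, the smoothing $\beta_i\stackrel{_{S(t)}}{\leadsto}\gamma'$ together with (\ref{eq:Xgamma-Xalpha-estimate}) inside the integral, and the exponent identity $1-d(\gamma',\beta_i)-d(\alpha,\gamma)=1-d(\alpha,\beta_i)-d(\gamma',\gamma)$ with $1-d(\alpha,\beta_i)>0$ to extract the factor $t^{-d(\gamma',\gamma)}$. For part (ii) the paper likewise just reruns Lemma \ref{lem:continuity} with $\alpha$ replaced by $\gamma'$, exactly as you indicate, so no gap remains.
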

\begin{proof}
Notice that  (\ref{eq:assumption-for-linear-perturbations})  implies
$\alpha \in \mathcal{R}_{\beta_1,\ldots,\beta_n}$.

Now, given  $u_0\in X^\gamma$ and $T>0$ and using (\ref{eq:abstract-VCF_linearly_perturbed_semigroup}) with
$u(t)=S_P(t)u_0$ we have for $0<t\leq T<\infty$
\begin{equation}\label{eq:Xgamma-estimate}
\begin{split}
 \| &u(t) \|_{\gamma'} \leq \frac{C}{t^{d(\gamma',\gamma)}} \|u_0\|_{\gamma} +
 \sum_{i=1}^n  \int_0^t
 \frac{C}{(t-s)^{d(\gamma',\beta_i)}}
 \|P_i\|_{{\mathcal L}(X^\alpha,X^{\beta_i})}\|u(s)\|_\alpha \, ds
\\&
\leq
\frac{C}{t^{d(\gamma',\gamma)}} \|u_0\|_{\gamma} +  CR  \sum_{i=1}^n
 \int_0^1
\frac{t^{1-d(\alpha,\gamma)-d(\gamma',\beta_i)}}{(1-\zeta)^{d(\gamma',\beta_i)}
  \zeta^{d(\alpha,\gamma)}} \, d\zeta \,
  \Norm{u}_{T}
  \end{split}
\end{equation}
where we have set $\Norm{u}_{T}=
\Norm{u}_{\alpha,d(\alpha,\gamma),T}$.
Since $d(\alpha,\gamma)<1$ and $d(\gamma',\beta_i)<1$ for all $i$, we conclude that
\begin{displaymath}
u(t)= S_P(t)u_0\in X^{\gamma'} \quad \text{ for } t>0.
\end{displaymath}

From   (\ref{eq:Xgamma-Xalpha-estimate}) we see that
$\Norm{u}_{T}\leq  C\| u_0\|_\gamma$.
Hence from  (\ref{eq:Xgamma-estimate}) we obtain
\begin{displaymath}
 \| u(t) \|_{\gamma'}
  \leq
  \!
  \left(
\frac{C}{t^{d(\gamma',\gamma)}} +  \! RC \sum_{i=1}^n
 t^{1-d(\alpha,\gamma)-d(\gamma',\beta_i)} \! \! \int_0^1 \! \! \!
\frac{d\zeta}{(1-\zeta)^{d(\gamma',\beta_i)}
  \zeta^{d(\alpha,\gamma)}}  \right)
 \| u_0\|_\gamma.
\end{displaymath}
Now, we multiply both sides of the above inequality by
$t^{d(\gamma',\gamma)}$ and observing  that
$1-d(\alpha,\gamma)-d(\gamma',\beta_i) + d(\gamma',\gamma) =
1-d(\alpha,\beta_i)>0$, because of
(\ref{eq:assumption-for-linear-perturbations}), we get
\begin{displaymath}
t^{d(\gamma',\gamma)} \| S_P(t)u_0\|_{\gamma'}
  \leq
C \| u_0\|_\gamma, \quad u_0\in X^\gamma, \ 0<t\leq T .
\end{displaymath}
This completes the proof of part (i).

Concerning part (ii) we only need to prove
(\ref{eq:smoothing-action-of-u}). Not being to exhaustive we now
observe that continuity in (\ref{eq:smoothing-action-of-u}) follows
analogously as in the proof of Lemma \ref{lem:continuity}. Namely, we
replace in (\ref{eq:to-cite-in-the-proof-continuity-in-Xgamma'}) and
(\ref{eq:mention-to-replace-alpha-by-gamma'}) $\alpha$ by $\gamma'$
and see that $\| (S(t+h-s) - S(t-s)) P_iu(s) \|_{\gamma'}$ is
estimated by function
$\frac{2RC}{(t-s)^{d(\gamma',\beta_i)}s^{d(\alpha,\gamma)}}
\Norm{u}_{\alpha,d(\alpha,\gamma),T}$, which is integrable for
$s\in(0,t)$ and for such $s$ we also see that
(\ref{eq:about-conv-to-zero}) holds true with $\alpha$ replaced by
$\gamma'$, because $P_iu(s)\in X^{\beta_i}$ and by assumption
$(0,\infty)\ni t \to S(t)\phi\in X^{\gamma'}$ is continuous when
$\phi \in X^{\beta_i}$. Hence we get, via Lebesgue's theorem
$\lim_{h\to0^+} j_{h,\gamma'}^+=0$. After similar modifications we get
$\lim_{h\to0^+} I_{2,\gamma'}(h) =0$ and
$\lim_{h\to 0^-} I_{2,\gamma'}(h)= 0$, which leads to
(\ref{eq:smoothing-action-of-u}).
\end{proof}

Now we can prove the joint continuity of $\{S_{P}(t)\}_{t\geq0}$ with
respect to time and the initial data.

\begin{corollary}
  \label{cor:continuity-of-SP(t)u0-with-respect-to-a-pair-of-argument}

  Assume  $P=\{P_1,\ldots,P_n\}$ satisfies (\ref{eq:assumption-for-linear-perturbations}).

If $\gamma\in \mathcal{E}_\alpha$, $\gamma'\in \mathcal{R}_{\beta_1,\ldots,\beta_n}$ and $\gamma\stackrel{_{S(t)}}{\leadsto} \gamma'$ then
\begin{displaymath}
(0,\infty)\times X^\gamma \ni (t,u_0) \to S_P(t) u_0 \in X^{\gamma'}
\ \text{ is continuous}.
 \end{displaymath}
\end{corollary}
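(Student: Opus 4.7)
The plan is to reduce the joint continuity to the two ingredients already established in Theorem \ref{thm:Xgamma-Xgamma'-estimates}: the uniform norm bound on $S_{P}(t)$ from $X^{\gamma}$ to $X^{\gamma'}$ valid on compact time intervals bounded away from $0$, and the pointwise-in-time continuity of $t\mapsto S_{P}(t)u_{0}$ in $X^{\gamma'}$ for fixed $u_{0}\in X^{\gamma}$. The linearity of $S_{P}(t)$ (Step 3 of the proof of Theorem \ref{thm:existence-linear}) will allow a clean triangle inequality split, so no fixed point argument or new estimate needs to be reopened.

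Concretely, let $(t_{n},u_{0n})\to(t_{0},u_{0})$ in $(0,\infty)\times X^{\gamma}$. The first step is the decomposition
\begin{displaymath}
S_{P}(t_{n})u_{0n}-S_{P}(t_{0})u_{0}
= S_{P}(t_{n})(u_{0n}-u_{0}) + \bigl(S_{P}(t_{n})u_{0}-S_{P}(t_{0})u_{0}\bigr).
\end{displaymath}
For the second term, the hypothesis $\gamma\stackrel{_{S(t)}}{\leadsto}\gamma'$ together with $\gamma\in\mathcal{E}_{\alpha}$ and $\gamma'\in\mathcal{R}_{\beta_{1},\ldots,\beta_{n}}$ places us exactly in case (ii) of Theorem \ref{thm:Xgamma-Xgamma'-estimates}, so (\ref{eq:smoothing-action-of-u}) gives that $(0,\infty)\ni t\mapsto S_{P}(t)u_{0}\in X^{\gamma'}$ is continuous; evaluating at $t=t_{0}$ this term tends to $0$ as $n\to\infty$.

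For the first term, I would fix some $T>t_{0}$ and use that eventually $t_{0}/2\leq t_{n}\leq T$. Then part (i) of Theorem \ref{thm:Xgamma-Xgamma'-estimates} furnishes a constant $M=M(\gamma,\gamma',T)$ such that
\begin{displaymath}
\|S_{P}(t_{n})(u_{0n}-u_{0})\|_{\gamma'}\leq \frac{M}{t_{n}^{d(\gamma',\gamma)}}\,\|u_{0n}-u_{0}\|_{\gamma}\leq \frac{M}{(t_{0}/2)^{d(\gamma',\gamma)}}\,\|u_{0n}-u_{0}\|_{\gamma},
\end{displaymath}
which tends to $0$ since $u_{0n}\to u_{0}$ in $X^{\gamma}$. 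Combining the two estimates yields the desired convergence in $X^{\gamma'}$.

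There is no real obstacle here; the work has already been absorbed into Theorem \ref{thm:Xgamma-Xgamma'-estimates}. The only mild point to verify is that the norm bound from part (i) is locally uniform in $t$ away from $0$, which is immediate from its explicit form $M/t^{d(\gamma',\gamma)}$ and the fact that $M$ is non-decreasing in $T$, so taking any fixed $T>t_{0}$ suffices. In the sequential formulation above, one could equivalently phrase the argument with nets or directly with $\varepsilon$-$\delta$; the structure is identical.
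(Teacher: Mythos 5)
Your proposal is correct and follows essentially the same route as the paper: the same triangle-inequality split (the paper writes $S_P(t_n)u_{0n}-S_P(t_n)u_0$, which by linearity is your $S_P(t_n)(u_{0n}-u_0)$), with the first term controlled by the bound (\ref{eq:Xgamma-Xgamma'-estimates}) using $t_n$ bounded away from $0$, and the second by the continuity (\ref{eq:smoothing-action-of-u}) from Theorem \ref{thm:Xgamma-Xgamma'-estimates}(ii). No gaps.
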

\begin{proof}
Consider $(t_0,u_0)\in  (0,\infty)\times X^\gamma$ and a sequence
$\{(t_n,u_{0n})\}\subset(0,\infty)\times X^\gamma$ such that
$(t_n,u_{0n}) \to (t_0,u_0)$ in  $(0,\infty)\times X^\gamma$ as
$n\to\infty$.
Write
\begin{displaymath}
S_P(t_n)u_{0n}- S_P(t_0)u_{0}= S_P(t_n)u_{0n} - S_P(t_n)u_{0} + S_P(t_n)u_{0} - S_P(t_0)u_{0}.
\end{displaymath}
From  (\ref{eq:Xgamma-Xgamma'-estimates})
$\|S_P(t_n)u_{0n} - S(t_n)u_{0}\|_{X^{\gamma'}}
\leq \frac{c}{t_n^{d(\gamma',\gamma)}}
\|u_{0n} -u_{0}\|_{X^\gamma} \to 0$ as $n\to \infty$. On the other
hand, from  (\ref{eq:smoothing-action-of-u}),
$\|S_P(t_n)u_{0} - S_P(t_0)u_{0}\|_{X^{\gamma'}} \to 0$ as $n\to \infty$.
Hence, $\lim_{n\to \infty}\|S_P(t_n)u_{0n}- S_P(t_0)u_{0}\|_{X^{\gamma'}} \to 0$.
\end{proof}

The next result describes the  behavior of $S_P(t)u_0$ at $t=0$.

\begin{theorem}
Assume $P=\{P_1,\ldots,P_n\}$ satisfies
(\ref{eq:assumption-for-linear-perturbations}) and let $\gamma\in
\mathcal{E}_\alpha$  and
$\gamma'\in \mathcal{R}_{\beta_1,\ldots,\beta_n}$ such that $0\leq
d(\gamma',\gamma) < 1-d(\alpha,\beta_i)$ for all $i=1,\ldots, n$.

Then for  $u_0\in X^\gamma$,
\begin{equation}\label{eq:lim-of=int0t-ato+}
  \lim_{t\to0^+}\| S_P(t) u_0 - S(t) u_0 \|_{\gamma'} =0 .
\end{equation}
\end{theorem}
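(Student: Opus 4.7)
The plan is to apply Duhamel's identity (\ref{eq:abstract-VCF_linearly_perturbed_semigroup}) directly with $u(t) = S_P(t)u_0$, which gives
\begin{equation*}
S_P(t) u_0 - S(t) u_0 = \sum_{i=1}^n \int_0^t S(t-s)\, P_i\, S_P(s) u_0 \, ds,
\end{equation*}
and then to bound the $X^{\gamma'}$ norm of each integral by a positive power of $t$. The smoothing hypothesis $\gamma' \in \mathcal{R}_{\beta_i}$ ensures $\beta_i \stackrel{_{S(t)}}{\leadsto} \gamma'$ with $0 \le d(\gamma',\beta_i) < 1$, so the scale estimate (\ref{eq:estimate_scale}) applies to $S(t-s)$ acting from $X^{\beta_i}$ to $X^{\gamma'}$. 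Combining with $P_i \in \mathscr{P}_{\beta_i,R}$ and the estimate $\|S_P(s) u_0\|_\alpha \le \frac{C}{s^{d(\alpha,\gamma)}}\|u_0\|_\gamma$ from Theorem \ref{thm:existence-linear}, I get for each $0 < t \le T$
\begin{equation*}
\|S_P(t)u_0 - S(t)u_0\|_{\gamma'} \le CR\|u_0\|_\gamma \sum_{i=1}^n \int_0^t \frac{ds}{(t-s)^{d(\gamma',\beta_i)}\, s^{d(\alpha,\gamma)}}.
\end{equation*}

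Next I would perform the change of variable $s = t\zeta$ in each integral to factor out the $t$-dependence:
\begin{equation*}
\int_0^t \frac{ds}{(t-s)^{d(\gamma',\beta_i)}\, s^{d(\alpha,\gamma)}} = t^{1-d(\gamma',\beta_i)-d(\alpha,\gamma)}\, B\bigl(1-d(\gamma',\beta_i),\, 1-d(\alpha,\gamma)\bigr),
\end{equation*}
where the Beta-integral is finite because $d(\gamma',\beta_i)<1$ (from $\gamma' \in \mathcal{R}_{\beta_i}$) and $d(\alpha,\gamma)<1$ (from $\gamma \in \mathcal{E}_\alpha$).

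The core of the argument is then the following bookkeeping on the exponent of $t$: using $d(\cdot,\cdot) = \mathtt{r}(\cdot) - \mathtt{r}(\cdot)$ one checks the identity
\begin{equation*}
d(\gamma',\beta_i) + d(\alpha,\gamma) = d(\alpha,\beta_i) + d(\gamma',\gamma),
\end{equation*}
so that
\begin{equation*}
1 - d(\gamma',\beta_i) - d(\alpha,\gamma) = 1 - d(\alpha,\beta_i) - d(\gamma',\gamma) > 0
\end{equation*}
precisely by the standing hypothesis $d(\gamma',\gamma) < 1-d(\alpha,\beta_i)$. This is the only place the hypothesis is used, and it is the main — indeed the entire — obstacle: without it the exponent would be zero or negative and the integrals would not vanish as $t \to 0^+$. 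With it, each of the finitely many summands is $O(t^{1-d(\alpha,\beta_i)-d(\gamma',\gamma)})$ and so (\ref{eq:lim-of=int0t-ato+}) follows. I expect no further subtleties: the integrability at $s=0$ and $s=t$ is automatic from $\gamma \in \mathcal{E}_\alpha$ and $\gamma' \in \mathcal{R}_{\beta_i}$, and the measurability of $s \mapsto S(t-s) P_i S_P(s) u_0$ is inherited from the construction of $S_P$ in Theorem \ref{thm:existence-linear} (via Lemma \ref{lem:continuity}).
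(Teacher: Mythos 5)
Your proposal is correct and follows essentially the same route as the paper: the paper also writes $S_P(t)u_0 - S(t)u_0$ via the Duhamel term, bounds it in $X^{\gamma'}$ using the scale estimate from $X^{\beta_i}$ to $X^{\gamma'}$, the bound $\|P_i\|\leq R$ and $\|S_P(s)u_0\|_\alpha \leq C s^{-d(\alpha,\gamma)}\|u_0\|_\gamma$, and then uses exactly the exponent identity $1-d(\alpha,\gamma)-d(\gamma',\beta_i)=1-d(\alpha,\beta_i)-d(\gamma',\gamma)>0$ to conclude. Your Beta-function computation and the measurability remark are just the explicit form of what the paper cites from (\ref{eq:Xgamma-estimate}) and Lemma \ref{lem:continuity}.
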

\begin{proof}
Observe that $v(t) =  S_P(t) u_0 - S(t) u_0$ satisfies, as the second
term in (\ref{eq:Xgamma-estimate}),
\begin{displaymath}
  \|v(t)\|_{\gamma'} \leq  R C \sum_{i=1}^n
t^{1-d(\alpha,\gamma)-d(\gamma',\beta_i)}  \int_0^1 \frac{1}{(1-\zeta)^{d(\gamma',\beta_i)}
  \zeta^{d(\alpha,\gamma)}} \, d\zeta \,
  \Norm{S_P(\cdot)u_0}_{T}
\end{displaymath}
and $1-d(\alpha,\gamma)-d(\gamma',\beta_i) = 1-d(\alpha,
\beta_{i})-d(\gamma',\gamma)>0$
which leads to (\ref{eq:lim-of=int0t-ato+}).
\end{proof}

Now we show Lipschitz continuous dependence of $S_P(t)u_0$ with
respect to $P$ and $u_{0}$.

\begin{theorem} [{\bf Continuous dependence on perturbations}]
  \label{thm:Lipschitz-properties-of-S_P(t)}

  Assume $P=\{P_1,\ldots,P_n\}$ and
$\tilde{P}=\{\tilde{P}_1,\ldots,\tilde{P}_n\}$ satisfy
(\ref{eq:assumption-for-linear-perturbations}) and define
$|P-\tilde
P|_{\alpha,\beta_1,\ldots,\beta_n}=\max_{i=1,\ldots,n}\|P_i-\tilde{P}_i\|_{{\mathcal
    L}(X^\alpha,X^{\beta_i})}$. Also, assume $\gamma\in
\mathcal{E}_\alpha$ and $u_0,\tilde{u}_0\in X^\gamma$ are such that
$\|u_0\|_{\gamma}, \|\tilde{u}_0\|_{\gamma}\leq \mathscr{R}$.

Then for  $\gamma'\in \mathcal{R}_{\beta_1,\ldots,\beta_n}$  such that
$\gamma\stackrel{_{S(t)}}{\dashrightarrow} \gamma'$, and $T>0$, we
have, for $0<t\leq T$,
\begin{equation}\label{eq:estimate-for-P-tildeP-as-well-as-u1-u2}
\|S_{P}(t)u_{0} - S_{\tilde{P}}(t)\tilde{u}_0\|_{\gamma'}
\leq \frac{M_{0}}{t^{d(\gamma',\gamma)}} \left( \|u_0-\tilde{u}_0 \|_{\gamma} + |P-\tilde P|_{\alpha,\beta_1,\ldots,\beta_n} \right),
\end{equation}
and
\begin{equation}\label{eq:estimate-for-P1-P2}
\|S_{P}(t) - S_{\tilde{P}}(t)\|_{{\mathcal L}(X^\gamma,X^{\gamma'})}
\leq \frac{M_{1}}{t^{d(\gamma',\gamma)}} |P-\tilde P|_{\alpha,\beta_1,\ldots,\beta_n} ,
\end{equation}
where $M_{0}$ and $M_{1}$ depend on $\alpha, \beta_{i}, \gamma,
\gamma', R$ and
$T$. Additionally $M_{0}$ depends on $\mathscr{R}$.
\end{theorem}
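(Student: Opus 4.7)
The plan is to set $u(t)=S_P(t)u_0$, $\tilde u(t)=S_{\tilde P}(t)\tilde u_0$ and study their difference $w=u-\tilde u$. Subtracting the two Duhamel equations (\ref{eq:abstract-VCF_linearly_perturbed_semigroup}) satisfied by $u$ and $\tilde u$, and using the telescoping identity $P_i u-\tilde P_i\tilde u=P_i w+(P_i-\tilde P_i)\tilde u$, one finds that $w$ is a fixed point of exactly the affine map analysed in the proof of Theorem \ref{thm:existence-linear}, with a new inhomogeneous term: namely
\begin{displaymath}
  w(t)=F(t)+\sum_{i=1}^n\int_0^t S(t-\tau)P_iw(\tau)\,d\tau,
\end{displaymath}
where
\begin{displaymath}
  F(t)=S(t)(u_0-\tilde u_0)+\sum_{i=1}^n\int_0^t S(t-\tau)(P_i-\tilde P_i)\tilde u(\tau)\,d\tau.
\end{displaymath}

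To control $F$ I would use the a priori bound $\|\tilde u(\tau)\|_\alpha\leq C\mathscr R/\tau^{d(\alpha,\gamma)}$ from (\ref{eq:Xgamma-Xalpha-estimate}), combine it with (\ref{eq:estimate_scale}) on each summand, and carry out the same Beta-function integration as in (\ref{eq:estimate_VCF}) (finite because $d(\alpha,\beta_i),d(\alpha,\gamma)<1$). This yields
\begin{displaymath}
  t^{d(\alpha,\gamma)}\|F(t)\|_\alpha\leq C\|u_0-\tilde u_0\|_\gamma+C(T)\,\mathscr R\sum_{i=1}^n t^{1-d(\alpha,\beta_i)}\,|P-\tilde P|_{\alpha,\beta_1,\ldots,\beta_n},
\end{displaymath}
so that $F\in\mathcal{L}^\infty_{\alpha,d(\alpha,\gamma)}((0,T])$ with $\Norm{F}_{T,\theta}\leq C\bigl(\|u_0-\tilde u_0\|_\gamma+\mathscr R\,|P-\tilde P|_{\alpha,\beta_1,\ldots,\beta_n}\bigr)$. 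Since for $\theta$ large the affine map $\varphi\mapsto F+\sum_i\int_0^\cdot S(\cdot-\tau)P_i\varphi(\tau)\,d\tau$ is a strict contraction with constant at most $\tfrac12$ by (\ref{eq:useful-property}), comparing $w$ with the vanishing solution of the associated homogeneous equation gives $\Norm{w}_{T,\theta}\leq 2\Norm{F}_{T,\theta}$, which is (\ref{eq:estimate-for-P-tildeP-as-well-as-u1-u2}) in the case $\gamma'=\alpha$.

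To promote this to a general $\gamma'\in\mathcal R_{\beta_1,\ldots,\beta_n}$ with $\gamma\stackrel{_{S(t)}}{\dashrightarrow}\gamma'$, I would take $\|\cdot\|_{\gamma'}$ directly in the equation for $w$ and argue as in (\ref{eq:Xgamma-estimate}), feeding the already proved $\alpha$-norm bound $\|w(\tau)\|_\alpha\leq C(\|u_0-\tilde u_0\|_\gamma+\mathscr R\,|P-\tilde P|)\tau^{-d(\alpha,\gamma)}$ into the convolution. The relevant power of $t$ becomes $1-d(\alpha,\gamma)-d(\gamma',\beta_i)+d(\gamma',\gamma)=1-d(\alpha,\beta_i)>0$, so multiplication by $t^{d(\gamma',\gamma)}$ keeps all terms bounded on $(0,T]$ and yields (\ref{eq:estimate-for-P-tildeP-as-well-as-u1-u2}) with $M_0$ depending on $\mathscr R$. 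Finally, (\ref{eq:estimate-for-P1-P2}) follows by specialising to $\tilde u_0=u_0$: the $\|u_0-\tilde u_0\|_\gamma$ term vanishes, and the factor $\mathscr R$ may be replaced by $\|u_0\|_\gamma$, which is absorbed into the operator norm to give $M_1$ independent of $\mathscr R$. The main delicate step is the Beta-function bookkeeping in both the $\alpha$- and $\gamma'$-norm estimates, but all integrability exponents involved are strictly less than one by the hypotheses $d(\alpha,\beta_i),d(\alpha,\gamma),d(\gamma',\beta_i)<1$ built into $\mathscr P_{\beta_i,R}$ and $\mathcal E_\alpha,\mathcal R_{\beta_1,\ldots,\beta_n}$.
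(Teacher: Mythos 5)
Your proposal is correct and follows essentially the same route as the paper: the same decomposition $P_iu-\tilde P_i\tilde u=P_iw+(P_i-\tilde P_i)\tilde u$, the a priori bound (\ref{eq:Xgamma-Xalpha-estimate}) on $S_{\tilde P}(\tau)\tilde u_0$, the weighted norm $\Norm{\cdot}_{T,\theta}$ with large $\theta$ to close the estimate first for $\gamma'=\alpha$, and then the general $\gamma'$ via the exponent identity $1-d(\alpha,\gamma)-d(\gamma',\beta_i)+d(\gamma',\gamma)=1-d(\alpha,\beta_i)>0$, with (\ref{eq:estimate-for-P1-P2}) obtained by taking $\tilde u_0=u_0$ and normalizing. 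No gaps.
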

\begin{proof}
We see that $U(\cdot):= S_{P}(\cdot) u_{0}-S_{\tilde{P}}(\cdot) \tilde{u}_{0}$ satisfies
\begin{equation*}
  \begin{split}
U(t) = S(t)(u_{0}- \tilde{u}_{0}) &+ \sum_{i=1}^n\int_0^t S(t-\tau)  \left( P_i U(\tau) + (P_i-\tilde{P}_i) S_{\tilde{P}}(\tau) \tilde{u}_{0}\right) d\tau
,\ t>0
  \end{split}
\end{equation*}
and that given  $\theta>0$ we have, for $0<t\leq T$,
\begin{displaymath}
  \begin{split}
  e^{-\theta t} &t^{d(\gamma',\gamma)}
 \|U(t)\|_{\gamma'} \leq C \|u_{0}- \tilde{u}_{0}\|_{\gamma} +
C \sum_{i=1}^n \int_0^t \frac{e^{-\theta t}
   t^{d(\gamma',\gamma)}}{(t-\tau)^{d(\gamma', \beta_i)}}  R \|U(\tau
 )\|_{\alpha}\, d\tau
\\&
\phantom{aaaaaaaaaaaaa}+ C \sum_{i=1}^n  \int_0^t
\frac{t^{d(\gamma',\gamma)} }{(t-\tau)^{d(\gamma',\beta_i)}} |P-\tilde
P|_{\alpha,\beta_1,\ldots,\beta_n} \|S_{\tilde{P}}(\tau)
\tilde{u}_0\|_\alpha \, d\tau
\\&\leq C \|u_{0}- \tilde{u}_{0}\|_{\gamma} + R C \sum_{i=1}^n  \int_0^1
\frac{t^{1-d(\alpha,\beta_i)}e^{-\theta t(1-\zeta)}}{(1-\zeta)^{d(\gamma',\beta_i)}
  \zeta^{d(\alpha,\gamma)}} \, d\zeta\ \Norm{U}_{T,\theta}
\\&
+ |P-\tilde P|_{\alpha,\beta_1,\ldots,\beta_n} C \sum_{i=1}^n \int_0^1
\frac{t^{1-d(\alpha,\beta_i)}e^{-\theta t(1-\zeta)}}{(1-\zeta)^{d(\gamma',\beta_i)}
  \zeta^{d(\alpha,\gamma)}} \, d\zeta\ \Norm{S_{\tilde{P}}(s)
  \tilde{u}_0}_{T,\theta},
\end{split}
\end{displaymath}
where we have set $\Norm{\cdot}_{T,\theta}= \Norm{\cdot}_{\alpha,d(\alpha,\gamma),T,\theta}$.
From  (\ref{eq:Xgamma-Xalpha-estimate}) we have
$\Norm{S_{\tilde{P}}(s) \tilde{u}_0}_{T,\theta}\leq
C  \|\tilde{u}_0\|_\gamma$,
and then
\begin{equation}\label{eq:and-again-to-use-just-below}
\begin{split}
&\Norm{U}_{\gamma', d(\gamma',\gamma),T,\theta}
\leq RC \sum_{i=1}^n c_{i}(\theta) \Norm{U}_{T,\theta}
\\&
+C \left( 1 +  \mathscr{R} \sum_{i=1}^n c_{i}(\theta) \right)\left( \|u_{0}-
\tilde{u}_{0}\|_{\gamma} + |P-\tilde
P|_{\alpha,\beta_1,\ldots,\beta_n}\right)
.
\end{split}
\end{equation}
with
\begin{displaymath}
c_{i}(\theta)= \sup_{t\in(0,T]} t^{1-d(\alpha,\beta_i)}e^{-\theta t}
\int_0^1  \frac{e^{\theta t \zeta}}{ (1-\zeta)^{d(\gamma',\beta_i)}
\zeta^{d(\alpha,\gamma)}} \, d\zeta .
\end{displaymath}

Now, observe that $\alpha \in \mathcal{R}_{\beta_1,\ldots,\beta_n} $,
see Theorem \ref{thm:Xgamma-Xgamma'-estimates}, and since $\gamma\in
\mathcal{E}_\alpha$ then $\gamma\stackrel{_{S(t)}}{\leadsto}
\alpha$. Hence, we can take first  $\gamma'=\alpha$,
and as in (\ref{eq:useful-property}),
for $\theta$ large enough $RC \sum_{i=1}^n c_{i}(\theta) \leq
\frac12$ and hence we get
\begin{displaymath}
\Norm{U}_{T,\theta}
\leq 2  C\left( 1+\frac{ \mathscr{R}}{R}\right)\left( \|u_{0}- \tilde{u}_{0}\|_{\gamma} + |P-\tilde P|_{\alpha,\beta_1,\ldots,\beta_n}\right)
.
\end{displaymath}
Plugging this in the right side of
(\ref{eq:and-again-to-use-just-below}) we obtain
(\ref{eq:estimate-for-P-tildeP-as-well-as-u1-u2}).

Finally,  (\ref{eq:estimate-for-P-tildeP-as-well-as-u1-u2}) with
$u_0=\tilde{u}_0$ and $\mathscr{R}=1$ gives
(\ref{eq:estimate-for-P1-P2}).
\end{proof}

\subsection{Perturbations of a   semigroup in the scale}

We now assume that we have a semigroup in the scale as in Definition
\ref{def:linear_mappings_on_scale}. As noticed above Theorem
\ref{thm:existence-linear} in this case we have $\alpha \in
\mathcal{E}_{\alpha}$.
Our goal is to show that
$\{S_{P}(t)\}_{t\geq0}$ is still a semigroup at least in some spaces
of the scale. The first basic result is the following.

\begin{proposition}\label{prop:linear-perturbed-semigroup}
Assume  $\{S(t)\}_{t\geq0}$ is  a semigroup in the scale and
$P=\{P_1,\ldots,P_n\}$ satisfies
(\ref{eq:assumption-for-linear-perturbations}).

If $\gamma\in \mathcal{E}_\alpha$ and $u_0\in X^\gamma$ then
\begin{equation}\label{eq:smgp-property}
S_P(t_1+t_2)u_0=S_P(t_1)S_P(t_2)u_0
\end{equation}
holds as the equality in $X^\alpha$ for all $t_{1}, t_{2}>0$.
\end{proposition}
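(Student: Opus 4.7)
The plan is to reduce (\ref{eq:smgp-property}) to the uniqueness statement in Theorem \ref{thm:existence-linear}, by showing that both sides satisfy the same integral equation in $\mathcal{L}^{\infty}_{\alpha,0}$ with the same ``initial datum'' $v_{0}:=S_{P}(t_{2})u_{0}$.

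First, since $\{S(t)\}_{t\geq 0}$ is a semigroup in the scale, we have $\alpha \stackrel{_{S(t)}}{\leadsto} \alpha$, so $\alpha \in \mathcal{E}_{\alpha}$. By Theorem \ref{thm:existence-linear} applied with $\gamma=\alpha$, since $\gamma \stackrel{_{S_{P}(t)}}{\leadsto} \alpha$ gives $v_{0}=S_{P}(t_{2})u_{0}\in X^{\alpha}$, the function $w(t):=S_{P}(t)v_{0}$ is the unique element of $\mathcal{L}^{\infty}_{\alpha,0}$ satisfying
\begin{equation}\label{eq:plan-duhamel-w}
w(t)=S(t)v_{0}+\sum_{i=1}^{n}\int_{0}^{t}S(t-\sigma)P_{i}w(\sigma)\,d\sigma, \quad t>0.
\end{equation}

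Next, I would define $\tilde w(t):=S_{P}(t+t_{2})u_{0}$ for $t>0$ and show that $\tilde w$ also lies in $\mathcal{L}^{\infty}_{\alpha,0}$ and satisfies (\ref{eq:plan-duhamel-w}). The bound on $\tilde w$ follows from the estimate (\ref{eq:Xgamma-Xalpha-estimate}) in Theorem \ref{thm:existence-linear}: for $0<t\leq T$ one has $\|\tilde w(t)\|_{\alpha}\leq Ce^{\theta(T+t_{2})}t_{2}^{-d(\alpha,\gamma)}\|u_{0}\|_{\gamma}$, which is bounded uniformly in $t\in(0,T]$, so $\tilde w \in \mathcal{L}^{\infty}_{\alpha,0}((0,T])$. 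For the integral equation, I start from the defining equation for $S_{P}(t+t_{2})u_{0}$ and split the $\tau$--integral at $t_{2}$:
\begin{equation*}
\tilde w(t) = S(t+t_{2})u_{0}+\sum_{i=1}^{n}\int_{0}^{t_{2}}S(t+t_{2}-\tau)P_{i}S_{P}(\tau)u_{0}\,d\tau+\sum_{i=1}^{n}\int_{t_{2}}^{t+t_{2}}S(t+t_{2}-\tau)P_{i}S_{P}(\tau)u_{0}\,d\tau.
\end{equation*}
In the first two terms I use the semigroup property $S(t+t_{2}-\tau)=S(t)S(t_{2}-\tau)$ for $\tau\in[0,t_{2}]$, pull the bounded operator $S(t)\in\mathcal{L}(X^{\alpha})$ out of the Bochner integral, and recognize the bracket as the Duhamel formula defining $S_{P}(t_{2})u_{0}=v_{0}$; this gives $S(t)v_{0}$. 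In the last term the change of variables $\tau=\sigma+t_{2}$ turns it into $\sum_{i}\int_{0}^{t}S(t-\sigma)P_{i}\tilde w(\sigma)\,d\sigma$. Hence $\tilde w$ satisfies (\ref{eq:plan-duhamel-w}).

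By the uniqueness part of Theorem \ref{thm:existence-linear} in $\mathcal{L}^{\infty}_{\alpha,0}$, we conclude $\tilde w(t)=w(t)$ in $X^{\alpha}$ for every $t>0$, which, setting $t=t_{1}$, is precisely $S_{P}(t_{1}+t_{2})u_{0}=S_{P}(t_{1})S_{P}(t_{2})u_{0}$.

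The main obstacle is the bookkeeping in justifying the pull-out $S(t)\int_{0}^{t_{2}}S(t_{2}-\tau)P_{i}S_{P}(\tau)u_{0}\,d\tau=\int_{0}^{t_{2}}S(t)S(t_{2}-\tau)P_{i}S_{P}(\tau)u_{0}\,d\tau$ and the use of the semigroup identity $S(t+t_{2}-\tau)=S(t)S(t_{2}-\tau)$ as an equality of operators on $X^{\alpha}$. Both are safe because $S_{P}(\tau)u_{0}\in X^{\alpha}$, $P_{i}\in\mathcal{L}(X^{\alpha},X^{\beta_{i}})$, and $S(\cdot)$ restricted to $X^{\beta_{i}}$ takes values in $X^{\alpha}$ with integrable norm on $(0,t_{2})$ thanks to $d(\alpha,\beta_{i})<1$; the measurability of the integrand (already established in the proof of Lemma \ref{lem:continuity}) together with the semigroup property of $\{S(t)\}$ in $X^{\alpha}$ then makes the pull-out a standard property of Bochner integrals.
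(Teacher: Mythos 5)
Your proof is correct and follows essentially the same route as the paper: split the Duhamel integral for $S_P(\cdot+t_2)u_0$ at $t_2$, use the semigroup property of $\{S(t)\}$ to recognize $S(t)S_P(t_2)u_0$, change variables in the tail, and conclude by the uniqueness in Theorem \ref{thm:existence-linear}. Your extra care in working in $\mathcal{L}^\infty_{\alpha,0}$ with datum $S_P(t_2)u_0\in X^\alpha$ and in justifying pulling $S(t)$ out of the Bochner integral only makes explicit points the paper leaves implicit.
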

\begin{proof}
  For $u_0\in X^\gamma$, using
  (\ref{eq:abstract-VCF_linearly_perturbed_semigroup})  and that
  $\{S(t)\}_{t\geq0}$ is a semigroup in the scale, we have
  \begin{displaymath}
S_P(t+t_1)u_0= S(t)S(t_1)u_0 +  S(t)\sum_{i=1}^n\int_0^{t_1}S(t_1-\tau)  P_i S_P(\tau) \, d\tau
+\sum_{i=1}^n\int_{t_1}^{t+t_1}S(t+t_1-\tau)  P_i S_P(\tau) \, d\tau.
\end{displaymath}
After the change of variable $\tau-t_1=s$ in the last integral above, we conclude that
\begin{displaymath}
S_P(t+t_1)u_0 = S(t) S_P(t_1)u_0 +  \sum_{i=1}^n\int_0^t S(t-s)  P_i S_P(s+t_1)u_0 \, ds,
\end{displaymath}
that is, $v(\cdot)= S_P(\cdot+t_1)u_0$ satisfies
\begin{displaymath}
v(t) = S(t) S_P(t_1)u_0 +  \sum_{i=1}^n\int_0^t S(t-s)  P_i v(s) \, ds
\end{displaymath}
for $t>0$. By the uniqueness in  ${\mathcal
  L}^\infty_{\alpha,d(\alpha,\gamma)}$, we get $v(t) = S_{P}(t)
S_P(t_1)u_0$ for $t>0$ as claimed.
\end{proof}

\begin{corollary}
Assume  $\{S(t)\}_{t\geq0}$ is  a semigroup in the scale and
$P=\{P_1,\ldots,P_n\}$ satisfies
(\ref{eq:assumption-for-linear-perturbations}).

Then for $c\in \R$,  $\{P, cI\}$ satisfies
(\ref{eq:assumption-for-linear-perturbations}) with
$\beta_{n+1}=\alpha$   and for $\gamma \in\mathcal{E}_\alpha$ and  $u_0\in X^\gamma$
\begin{displaymath}
 S_{\{P,cI\}}(t) u_{0}=  (S_P)_{cI}(t) u_0=  (S_{cI})_{P}(t) u_0=e^{ct}
 S_P(t)u_0, \quad t>0.
\end{displaymath}

\end{corollary}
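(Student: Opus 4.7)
The plan breaks into three essentially mechanical steps, with the only subtlety being to check we land in the right uniqueness class at each stage.

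First, I would verify that $\{P_1,\ldots,P_n,cI\}$ satisfies (\ref{eq:assumption-for-linear-perturbations}) with $\beta_{n+1}=\alpha$. Indeed, $cI\in\mathcal{L}(X^\alpha,X^\alpha)$ with $\|cI\|\le|c|$, so enlarging $R$ to $R':=\max\{R,|c|\}$ we have $cI\in\mathscr{P}_{\alpha,R'}$; the distance condition is trivial, $d(\alpha,\alpha)=0<1$, and the smoothing requirement $\alpha\stackrel{_{S(t)}}{\leadsto}\alpha$ holds because $\{S(t)\}_{t\ge0}$ is a semigroup in the scale (Definition \ref{def:linear_mappings_on_scale}(iii)). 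Hence $S_{\{P,cI\}}(t)$ is defined via Theorem \ref{thm:existence-linear}.

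Next, the chain of equalities $S_{\{P,cI\}}(t)u_0=(S_P)_{cI}(t)u_0=(S_{cI})_P(t)u_0$ is a direct application of Proposition \ref{prop:iterated-perturbations}, which precisely says that the order in which perturbations are applied does not matter. So the only remaining task is to identify this common value with $e^{ct}S_P(t)u_0$.

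For this I would first treat the unperturbed case and show $S_{cI}(t)u_0=e^{ct}S(t)u_0$ by uniqueness: the function $w(t):=e^{ct}S(t)u_0$ lies in $\mathcal{L}^\infty_{\alpha,d(\alpha,\gamma)}$ (from the smoothing estimates for $S(t)$), and a direct check using the semigroup property $S(t-\tau)S(\tau)=S(t)$ gives
\begin{displaymath}
S(t)u_0+\int_0^t S(t-\tau)\,c\,w(\tau)\,d\tau
= S(t)u_0 + c\,S(t)u_0\int_0^t e^{c\tau}\,d\tau
= e^{ct}S(t)u_0=w(t),
\end{displaymath}
so $w$ solves the Volterra equation defining $S_{cI}$, and uniqueness identifies them. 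Then, factoring $e^{ct}=e^{c(t-\tau)}e^{c\tau}$ inside Duhamel's formula for $S_P(t)u_0$, I would verify that $v(t):=e^{ct}S_P(t)u_0$ satisfies
\begin{displaymath}
v(t) = S_{cI}(t)u_0 + \sum_{i=1}^n\int_0^t S_{cI}(t-\tau)\,P_i\,v(\tau)\,d\tau,\qquad t>0,
\end{displaymath}
which is precisely the equation (\ref{eq:abstract-VCF_linearly_perturbed_semigroup}) defining $(S_{cI})_P(t)u_0$; since $v\in\mathcal{L}^\infty_{\alpha,d(\alpha,\gamma)}$ (because $S_P(\cdot)u_0$ is, by (\ref{eq:Xgamma-Xalpha-estimate}), and the factor $e^{ct}$ is bounded on $(0,T]$), the uniqueness part of Theorem \ref{thm:existence-linear} applied to the perturbation $\{P_1,\ldots,P_n\}$ of the semigroup $S_{cI}$ forces $v(t)=(S_{cI})_P(t)u_0$, completing the identification.

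I do not anticipate any serious obstacle; the only thing one has to keep in mind is to invoke uniqueness in the correct weighted $L^\infty$-class at each use, and to re-enlarge the perturbation bound from $R$ to $\max\{R,|c|\}$ so that $(S_{cI})_P$ and $(S_P)_{cI}$ both fit into the framework of Theorem \ref{thm:existence-linear} and Proposition \ref{prop:iterated-perturbations}.
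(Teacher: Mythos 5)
Your argument is correct, and it is the mirror image of the paper's: both proofs reduce, via Proposition \ref{prop:iterated-perturbations}, to identifying one of the two sequential compositions with $e^{ct}S_P(t)u_0$ by a uniqueness argument in ${\mathcal L}^\infty_{\alpha,d(\alpha,\gamma)}$, but you verify the composition $(S_{cI})_{P}$ while the paper verifies $(S_P)_{cI}$. The paper's choice makes the computation a one-liner at the price of invoking the semigroup property of the \emph{perturbed} family $S_P$ (Proposition \ref{prop:linear-perturbed-semigroup}): plugging $v(t)=e^{ct}S_P(t)u_0$ into the equation $u(t)=S_P(t)u_0+\int_0^t S_P(t-s)\,c\,u(s)\,ds$ and using $S_P(t-s)S_P(s)=S_P(t)$ gives $\bigl(1+\int_0^t ce^{cs}\,ds\bigr)S_P(t)u_0=e^{ct}S_P(t)u_0$. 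Your choice only uses the semigroup property of the unperturbed $S$ (first to get $S_{cI}(t)=e^{ct}S(t)$, then the factorization $e^{ct}=e^{c(t-\tau)}e^{c\tau}$ inside Duhamel), at the cost of the extra intermediate identity and of tacitly using that $e^{ct}S(t)$ is an admissible base family for Theorem \ref{thm:existence-linear} relative to $\gamma$ and the $\beta_i$'s, i.e.\ that $\gamma\stackrel{_{S_{cI}(t)}}{\leadsto}\alpha$ and $\beta_i\stackrel{_{S_{cI}(t)}}{\leadsto}\alpha$; this is immediate from $S_{cI}(t)=e^{ct}S(t)$ and the boundedness of $e^{ct}$ on $(0,T]$, but it is worth stating, since the definition of the sequential perturbation $(S_{cI})_P$ presupposes it. Your remarks on the uniqueness class and on enlarging $R$ to $\max\{R,|c|\}$ are exactly the right bookkeeping; otherwise the two proofs are of comparable length and difficulty.
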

\begin{proof}
Since $\alpha\stackrel{_{S(t)}}{\leadsto}\alpha$ then clearly $\{P, cI\}$ satisfies
(\ref{eq:assumption-for-linear-perturbations}) with
$\beta_{n+1}=\alpha$.  Also, from Proposition \ref{prop:iterated-perturbations} it is enough to show that
$(S_P)_{cI}(t) u_0 =e^{ct}  S_P(t)u_0$ for $\gamma
\in\mathcal{E}_\alpha$ and  $ u_0\in X^\gamma$ and $t>0$.

Let $v \in {\mathcal  L}^\infty_{\alpha,d(\alpha,\gamma)}$ be $v(t)=
e^{c t} S_{P}(t)u_{0}$ for $t>0$. Then
\begin{displaymath}
  \begin{split}
    S_{P}(t)& u_{0} + \int_{0}^{t} S_{P}(t-s) cv(s) \, ds = S_{P}(t)
    u_{0} + \int_{0}^{t} S_{P}(t-s) c e^{cs} S_{P}(s) u_{0} \, ds
    \\
  & \myeq{~(\ref{eq:smgp-property})} \big( 1 + \int_{0}^{t} c e^{cs}\, ds \big)  S_{P}(t) u_{0} = e^{ct}
    S_{P}(t) u_{0}= v(t)
  \end{split}
\end{displaymath}
which is precisely the integral  equation for  $u(t) = (S_P)_{cI}(t) u_0$.
\end{proof}

In the next theorem we specify spaces in the scale in which $\{S_P(t)\}_{t\geq 0}$
is a semigroup.

\begin{theorem} [{\bf Perturbed semigroup in the scale}]
  \label{thm:linear-perturbed-semigroup}
Assume $\{S(t)\}_{t\geq0}$ is a semigroup in the scale and
$P=\{P_1,\ldots,P_n\}$ satisfies
(\ref{eq:assumption-for-linear-perturbations}). Define for each
$P_{i}$, $\Sigma_{\alpha,\beta_{i}}  \mydef
\mathcal{E}_\alpha\cap\mathcal{R}_{\beta_{i}}$ and
\begin{displaymath}
  \Sigma_{\alpha,\beta_1,\ldots,\beta_n}  \mydef
  \mathcal{E}_\alpha\cap\mathcal{R}_{\beta_1,\ldots,\beta_n}
  =\bigcap_{i=1}^n \Sigma_{\alpha,\beta_{i}}  .
\end{displaymath}

Then $\alpha \in  \Sigma_{\alpha,\beta_1,\ldots,\beta_n}$ and for
\begin{displaymath}
  \gamma\in \Sigma_{\alpha,\beta_1,\ldots,\beta_n}  =\bigcap_{i=1}^n
  \{\gamma\in \mathbb{J} \colon
  \beta_i\stackrel{_{S(t)}}{\leadsto}\gamma, \
  \gamma\stackrel{_{S(t)}}{\leadsto} \alpha \ \text{ and } \
  \mathtt{r}(\gamma)\in[\mathtt{r}(\beta_i),\mathtt{r}(\alpha)]\}
\end{displaymath}
 then
$\{S_P(t)\}_{t\geq0}$ is a semigroup in $X^\gamma$ satisfying for some
constants $M_{0}= M_{0}(\gamma)$, $\omega_\gamma$
\begin{equation}
  \label{eq:exponential-bound-for-SV(t)-n-Xgamma}
  \|S_P(t)\|_{\mathcal{L}(X^\gamma)} \leq M_{0}  e^{\omega_\gamma t},
  \quad  t\geq0
\end{equation}
and for  $u_0\in X^\gamma$
\begin{equation}\label{eq:SV(t) u0 - S(t)u0-as-t-to-0}
\lim_{t\to0^+}
\| S_P(t) u_0 - S(t)u_0\|_{X^\gamma} =0 .
\end{equation}

For  $\gamma'\in \mathcal{R}_{\beta_1,\ldots,\beta_n}$ such that
$\gamma\stackrel{_{S(t)}}{\dashrightarrow} \gamma'$ and  $T>0$ there exists a
constant $M=M(\gamma,\gamma', T)$
such that
\begin{displaymath}
    \|S_{P}(t)\|_{\mathcal{L}(X^\gamma, X^{\gamma'})} \leq
  \begin{cases}
    \frac{ M }{t^{d(\gamma',\gamma)}}, &  0<t\leq T\\
    M e^{\omega_{\gamma} t},  & T<t .
  \end{cases}
\end{displaymath}
In particular, for  any $\omega>\omega_{\gamma}$, there exists a constant
$M_{1}=M_{1}(\gamma, \gamma')$ independent of $t>0$ such that
\begin{equation}\label{eq:another-exponential-Xgamma-Xgamma'-estimates}
  \|S_P(t)\|_{\mathcal{L}(X^\gamma, X^{\gamma'})} \leq  \frac{M_{1}
  }{t^{d(\gamma',\gamma)}} e^{\omega t}, \quad  t>0 .
\end{equation}

\end{theorem}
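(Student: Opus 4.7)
The plan is to assemble the statement from the machinery already developed, essentially reading each claim off previously proved results and upgrading the semigroup identity of Proposition \ref{prop:linear-perturbed-semigroup} from $X^\alpha$ to $X^\gamma$. First, $\alpha\in\Sigma_{\alpha,\beta_1,\ldots,\beta_n}$: since $\{S(t)\}_{t\geq 0}$ is a semigroup in the scale we have $\alpha\stackrel{_{S(t)}}{\leadsto}\alpha$, whence $\alpha\in\mathcal{E}_\alpha$, while $\alpha\in\mathcal{R}_{\beta_1,\ldots,\beta_n}$ is already recorded in Theorem \ref{thm:Xgamma-Xgamma'-estimates}. The equivalent description of $\Sigma_{\alpha,\beta_1,\ldots,\beta_n}$ is then just the intersection of the defining sets, after simplifying the regularity-index intervals using $0\leq d(\alpha,\beta_i)<1$ so that $(\mathtt{r}(\alpha)-1,\mathtt{r}(\alpha)]\cap[\mathtt{r}(\beta_i),\mathtt{r}(\beta_i)+1)=[\mathtt{r}(\beta_i),\mathtt{r}(\alpha)]$.

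For $\gamma\in\Sigma_{\alpha,\beta_1,\ldots,\beta_n}$ I would apply Theorem \ref{thm:Xgamma-Xgamma'-estimates}(ii) with $\gamma'=\gamma$: the required $\gamma\stackrel{_{S(t)}}{\leadsto}\gamma$ holds because $\{S(t)\}_{t\geq 0}$ is a semigroup in the scale, so $\gamma\stackrel{_{S_P(t)}}{\leadsto}\gamma$, i.e. $S_P(t)\in\mathcal{L}(X^\gamma)$ with continuous trajectories. Proposition \ref{prop:linear-perturbed-semigroup} provides $S_P(t_1+t_2)u_0=S_P(t_1)S_P(t_2)u_0$ as an equality in $X^\alpha$; since both sides also belong to $X^\gamma$, topological consistency of the scale upgrades this to equality in $X^\gamma$. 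Consequently $\{S_P(t)\}_{t\geq 0}$ is a semigroup in $X^\gamma$, and Lemma \ref{lem:estimates4semigroups_in_scales}(i) applied in $X^\gamma$ gives (\ref{eq:exponential-bound-for-SV(t)-n-Xgamma}). The convergence (\ref{eq:SV(t) u0 - S(t)u0-as-t-to-0}) is a direct corollary of the preceding theorem yielding (\ref{eq:lim-of=int0t-ato+}), applied with $\gamma'=\gamma$, whose hypothesis $d(\gamma,\gamma)=0<1-d(\alpha,\beta_i)$ is automatic from (\ref{eq:assumption-for-linear-perturbations}).

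For the two-regime bound with $\gamma'\in\mathcal{R}_{\beta_1,\ldots,\beta_n}$ and $\gamma\stackrel{_{S(t)}}{\dashrightarrow}\gamma'$, the short-time estimate on $(0,T]$ is exactly Theorem \ref{thm:Xgamma-Xgamma'-estimates}(i). For $t>T$, I would split $S_P(t)=S_P(T)S_P(t-T)$ using the just-established semigroup property in $X^\gamma$, and estimate
\[
\|S_P(t)u_0\|_{\gamma'}\leq \|S_P(T)\|_{\mathcal{L}(X^\gamma,X^{\gamma'})}\,\|S_P(t-T)\|_{\mathcal{L}(X^\gamma)}\,\|u_0\|_\gamma\leq \frac{M}{T^{d(\gamma',\gamma)}}\,M_0\,e^{\omega_\gamma(t-T)}\|u_0\|_\gamma ,
\]
which absorbs into a bound of the form $Me^{\omega_\gamma t}$. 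Combining the two regimes and comparing $t^{-d(\gamma',\gamma)}$ with $e^{\omega t}$ for any $\omega>\omega_\gamma$ yields (\ref{eq:another-exponential-Xgamma-Xgamma'-estimates}), mirroring the proof of Lemma \ref{lem:estimates4semigroups_in_scales}(ii). The only subtle point I anticipate is the passage of the semigroup identity from $X^\alpha$ to $X^\gamma$: once both sides are shown to lie in $X^\gamma$ (via the first step), topological consistency of the scale does the job. Everything else is bookkeeping among previously established results.
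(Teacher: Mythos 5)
Your proposal is correct and follows essentially the same route as the paper, which proves the theorem by combining Theorem \ref{thm:Xgamma-Xgamma'-estimates} (with $\gamma'=\gamma$ for boundedness and continuity in $X^\gamma$, and general $\gamma'$ for the short-time estimate), Proposition \ref{prop:linear-perturbed-semigroup} for the semigroup identity, Lemma \ref{lem:estimates4semigroups_in_scales} for the exponential and two-regime bounds, and (\ref{eq:lim-of=int0t-ato+}) with $\gamma'=\gamma$ for the behaviour at $t=0$. You merely spell out the bookkeeping (the interval identity $[\mathtt{r}(\beta_i),\mathtt{r}(\alpha)]$, the upgrade of the semigroup identity from $X^\alpha$ to $X^\gamma$ via consistency, and the $S_P(t)=S_P(T)S_P(t-T)$ splitting) that the paper leaves implicit.
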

\begin{proof}
This is a consequence of Theorem \ref{thm:Xgamma-Xgamma'-estimates},
Proposition \ref{prop:linear-perturbed-semigroup} and Lemma
\ref{lem:estimates4semigroups_in_scales}. Finally, using
    (\ref{eq:lim-of=int0t-ato+}) with $\gamma'=\gamma$ we obtain
    (\ref{eq:SV(t) u0 - S(t)u0-as-t-to-0}).
\end{proof}

  Finally we  obtain some result in which we have a more
  precise estimate on the exponential type of $\{S_{P}(t)\}_{t\geq0}$,
  that is, of the exponents in the exponentials in
  (\ref{eq:exponential-bound-for-SV(t)-n-Xgamma}) for $\gamma=\alpha$. This estimate will
  be obtained in terms of the corresponding exponentials for
  $\{S(t)\}_{t\geq0}$ and the size of the perturbations.

  \begin{proposition} [{\bf Exponential bounds for the perturbed semigroup}]
    \label{prop:exponential-bound-for-SV(t)-in-Xalpha}
Assume $\{S(t)\}_{t\geq0}$ is a semigroup in the scale and
$P=\{P_1,\ldots,P_n\}$ satisfies
(\ref{eq:assumption-for-linear-perturbations})
  and assume
    \begin{displaymath}
      \|S(t)\|_{\mathcal{L}(X^\alpha)} \leq   M_{0}
      e^{\omega_\alpha t}, \quad  t\geq0 .
    \end{displaymath}

Then for any $a>\omega_{\alpha}$ there exists a constant $M_{1}$ such
that for $u_{0}\in X^{\alpha}$
    \begin{displaymath}
      \| S_{P}(t) u_0\|_{\alpha} \leq M_{1}    \|u_0\|_{\alpha}
      e^{(a +\theta_{P}) t}, \qquad t>0
    \end{displaymath}
    with
\begin{displaymath}
      \theta_{P} = \sum_{i=1}^{n}  c_{i} \|P_{i}\|_{{\mathcal
            L}(X^\alpha,X^{\beta_{i}})}^{\frac{1}{1-d(\alpha,\beta_{i})}}.
    \end{displaymath}
for some constants $c_{i}=c_{i}(\alpha, \beta_{i})$.

    Also for any $\gamma\in \mathcal{E}_\alpha$ and $T>0$ there is a
    certain constant $M=M(T)$ such that
    \begin{equation}\label{eq:exponential-bound-for-SV(t)-n-Xgamma-Xalpha}
      \|S_P(t)\|_{\mathcal{L}(X^\gamma, X^{\alpha})} \leq
      \begin{cases}
        \frac{ M }{t^{d(\alpha,\gamma)}}, &  0<t\leq T\\
        M e^{(a + \theta_{P}) t}, & T<t.
      \end{cases}
    \end{equation}

  \end{proposition}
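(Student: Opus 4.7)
The plan is, for the first estimate, to rescale by $e^{-at}$ to absorb the target exponential and then reduce the bound to a weighted-norm Gronwall absorption whose threshold parameter $\theta$ can be chosen explicitly in the claimed form $\theta_P$. Concretely, I set $w(t) \mydef e^{-at} S_P(t) u_0$ and $\tilde S(\tau) \mydef e^{-a\tau} S(\tau)$, so that (\ref{eq:abstract-VCF_linearly_perturbed_semigroup}) rewrites as
\[ w(t) = \tilde S(t) u_0 + \sum_{i=1}^n \int_0^t \tilde S(t-s)\,P_i\, w(s)\,ds. \]
Since $a > \omega_\alpha$, the hypothesis gives $\|\tilde S(\tau)\|_{\mathcal{L}(X^\alpha)} \leq M_0$ uniformly in $\tau \geq 0$. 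Each $\beta_i \stackrel{_{S(t)}}{\leadsto} \alpha$ by (\ref{eq:assumption-for-linear-perturbations}), so Lemma \ref{lem:estimates4semigroups_in_scales}(ii) provides a constant $M_1$ with $\|S(\tau)\|_{\mathcal{L}(X^{\beta_i},X^\alpha)} \leq M_1 \tau^{-d(\alpha,\beta_i)} e^{a\tau}$, whence $\|\tilde S(\tau)\|_{\mathcal{L}(X^{\beta_i},X^\alpha)} \leq M_1 \tau^{-d(\alpha,\beta_i)}$. Plugging these bounds into the equation for $w$ yields the singular integral inequality
\[ \|w(t)\|_\alpha \leq M_0 \|u_0\|_\alpha + \sum_{i=1}^n M_1 \|P_i\|_{\mathcal{L}(X^\alpha,X^{\beta_i})} \int_0^t \frac{\|w(s)\|_\alpha}{(t-s)^{d(\alpha,\beta_i)}}\,ds. \]

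Next I will apply the same weighted-norm trick already used in the proof of Theorem \ref{thm:existence-linear}. On every finite $(0,T]$ the quantity $\|w\|_{T,\theta} \mydef \sup_{t\in(0,T]} e^{-\theta t}\|w(t)\|_\alpha$ is finite thanks to the a priori exponential bound for $\{S_P(t)\}$ in $X^\alpha$ coming from Theorem \ref{thm:linear-perturbed-semigroup}. Multiplying by $e^{-\theta t}$ and using $\int_0^t (t-s)^{-d(\alpha,\beta_i)} e^{-\theta(t-s)}\,ds \leq \Gamma(1-d(\alpha,\beta_i))\,\theta^{-(1-d(\alpha,\beta_i))}$ (which requires $d(\alpha,\beta_i)<1$) gives
\[ \|w\|_{T,\theta} \leq M_0\|u_0\|_\alpha + \sum_{i=1}^n M_1\Gamma(1-d(\alpha,\beta_i))\|P_i\|\,\theta^{-(1-d(\alpha,\beta_i))}\|w\|_{T,\theta}. \]
The key step is to choose $\theta = \theta_P \mydef \sum_{i=1}^n c_i \|P_i\|^{1/(1-d(\alpha,\beta_i))}$ with constants $c_i = c_i(\alpha,\beta_i)$ large enough that each summand on the right is $\leq 1/(2n)$. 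This absorbs the integral and yields $\|w\|_{T,\theta_P} \leq 2 M_0 \|u_0\|_\alpha$ with a constant independent of $T$, which, rewritten in terms of $S_P$, is exactly the first assertion.

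For the second estimate, fix $\gamma \in \mathcal{E}_\alpha$ and $T > 0$. For $0 < t \leq T$ the required bound is already (\ref{eq:Xgamma-Xalpha-estimate}) from Theorem \ref{thm:existence-linear}. For $t>T$ I will use the semigroup property of Proposition \ref{prop:linear-perturbed-semigroup} to write $S_P(t) u_0 = S_P(t-T)\bigl(S_P(T) u_0\bigr)$; since $S_P(T) u_0 \in X^\alpha$ with $\|S_P(T)u_0\|_\alpha \leq M T^{-d(\alpha,\gamma)}\|u_0\|_\gamma$, applying the first part of the present proposition to $S_P(t-T)$ yields the required (\ref{eq:exponential-bound-for-SV(t)-n-Xgamma-Xalpha}).

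The hard part will be the tuning of $\theta$ that produces exactly the shape $\|P_i\|^{1/(1-d(\alpha,\beta_i))}$ in $\theta_P$ rather than a coarser combined bound: it forces me to handle each index $i$ separately in the absorption step, which is possible only because $d(\alpha,\beta_i) < 1$ strictly (built into (\ref{eq:assumption-for-linear-perturbations})) and because each $\beta_i \stackrel{_{S(t)}}{\leadsto} \alpha$.
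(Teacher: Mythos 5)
Your proof is correct, but it takes a genuinely different route from the paper's. The paper adds the perturbations one at a time: via Proposition \ref{prop:iterated-perturbations} it writes $S_{P}$ as the iterated perturbation $((S_{P_1})_{P_2}\ldots)_{P_n}$, and at each step applies the singular Gronwall inequality of \cite[Lemma 7.1.1]{1981_Henry} to $e^{-at}\|S_{P_1}(t)u_0\|_{\alpha}$, obtaining an exponent $\theta_i=\bigl(C\,\Gamma(1-d(\alpha,\beta_i))\|P_i\|_{\mathcal{L}(X^\alpha,X^{\beta_i})}\bigr)^{1/(1-d(\alpha,\beta_i))}$ and summing; the estimate on $\|S_{P_1}(t)\|_{\mathcal{L}(X^{\beta_2},X^{\alpha})}$ needed to perform the next step comes from Lemma \ref{lem:estimates4semigroups_in_scales} together with the fact, from Theorem \ref{thm:existence-linear}, that $\beta_2$ is smoothed into $X^{\alpha}$ by the already perturbed semigroup. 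You instead treat all $n$ perturbations at once, replacing Henry's lemma by the exponentially weighted norm $\sup_{t\in(0,T]}e^{-\theta t}\|\cdot\|_{\alpha}$ already used in the proof of Theorem \ref{thm:existence-linear}; the key observation that $\theta_P\geq c_i\|P_i\|^{1/(1-d(\alpha,\beta_i))}$ for each $i$ lets you make each absorption term at most $1/(2n)$, so the bound is $T$-independent and the claimed shape of $\theta_P$ drops out directly. Your route is more self-contained (no iteration machinery, no external Gronwall lemma), at the harmless cost that your constants $c_i$ also absorb the factor $2n$ and the constant $M_1$ from Lemma \ref{lem:estimates4semigroups_in_scales}, while the paper's route keeps each step tied to a single perturbation and reuses results it needs anyway; you also correctly secure finiteness of the weighted norm before absorbing. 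For the second estimate the two arguments agree in substance: small times are (\ref{eq:Xgamma-Xalpha-estimate}), and large times follow from the semigroup property of Proposition \ref{prop:linear-perturbed-semigroup} combined with the $X^{\alpha}$ bound just proved, which is precisely how Lemma \ref{lem:estimates4semigroups_in_scales}, invoked by the paper, is proved.
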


  \begin{proof}
    Using Proposition \ref{prop:iterated-perturbations} we add one
    perturbation at a time.
By the assumption we have
$\beta_{1}\stackrel{_{S(t)}}{\leadsto} \alpha$ and by  Lemma
\ref{lem:estimates4semigroups_in_scales}
we have, for any  $a > \omega_{\alpha} $,
    \begin{equation} \label{eq:estimates_to_perform_perturbation}
  \|S(t)\|_{\mathcal{L}(X^\alpha)} \leq   C       e^{at}, \quad       \|S(t)\|_{\mathcal{L}(X^{\beta_{1}}, X^{\alpha})} \leq  \frac{
        C }{t^{d(\alpha,\beta_{1}) }}
      e^{a t}, \quad  t>0.
    \end{equation}
        Then
    \begin{displaymath}
      \| S_{P_{1}}(t) u_0\|_{\alpha} \leq C  e^{a t}
      \|u_0\|_{\alpha} + \int_0^t \frac{C  e^{a (t-s)}}{(t-s)^{d(\alpha,\beta_{1})}}
      \|P_{1}\|_{{\mathcal  L}(X^\alpha, X^{\beta_{1}})}\|S_{P_{1}}(s)u_0\|_\alpha \, ds, \quad t>0 .
    \end{displaymath}

    Hence,
    $u(t) = e^{-a t} \| S_{P_{1}}(t) u_0\|_{\alpha}$
    satisfies
    \begin{displaymath}
      u(t)  \leq C   \|u_0\|_{\alpha} + \int_0^t
      \frac{C}{(t-s)^{d(\alpha,\beta_{1})}}   \|P_{1}\|_{{\mathcal
          L}(X^\alpha, X^{\beta_{1}})} u(s) \, ds, \quad t>0 .
    \end{displaymath}
    Then from \cite[Lemma 7.1.1]{1981_Henry} we get
    \begin{displaymath}
      u(t) \leq C   \|u_0\|_{\alpha}  e^{ \theta_{1} t}, \qquad t\geq0
    \end{displaymath}
    with
    \begin{displaymath}
      \theta_{1}=\left(C\Gamma(1-d(\alpha,\beta_{1}))
        \|P_{1}\|_{{\mathcal
            L}(X^\alpha,X^{\beta_{1}})}\right)^\frac{1}{1-d(\alpha,\beta_{1})}.
    \end{displaymath}
    Hence
    \begin{equation}\label{eq:esimate_after_perturbation}
      \| S_{P_{1}}(t) u_0\|_{\alpha} \leq C   \|u_0\|_{\alpha}
      e^{(a +\theta_{1}) t}  \quad t>0 .
    \end{equation}

 Now by assumption we have $\beta_{2}\in
 \mathcal{E}_{\alpha}$ and then,  by  Theorem
 \ref{thm:existence-linear},  $\beta_2
    \stackrel{_{S_{P_{1}}(t)}}{\leadsto} \alpha$ so
    we use again Lemma \ref{lem:estimates4semigroups_in_scales} for
    $S_{P_{1}}(t)$ so we get
    \begin{displaymath}
       \|S_{P_{1}}(t)\|_{\mathcal{L}(X^{\beta_{2}}, X^{\alpha})} \leq  \frac{
C }{t^{d(\alpha,\beta_{2}) }}      e^{(a +\theta_{1}) t}, \quad  t>0
    \end{displaymath}
which together to (\ref{eq:esimate_after_perturbation}) is like (\ref{eq:estimates_to_perform_perturbation}) but for
$S_{P_{1}} (t)$.

Now we  perturb this semigroup  with $P_{2}$ and denote
    $P=\{P_{1}, P_{2}\}$ then from the argument above, for any $a >
    \omega_{\alpha}$,
    \begin{displaymath}
      \| S_{P}(t) u_0\|_{\alpha} \leq C   \|u_0\|_{\alpha}
      e^{(a +\theta_{1} +\theta_{2}) t} , \quad t>0
    \end{displaymath}
    with $\theta_{2}=\left(C\Gamma(1-d(\alpha,\beta_{2}))        \|P_{2}\|_{{\mathcal
            L}(X^\alpha,X^{\beta_{2}})}\right)^{\frac{1}{1-d(\alpha,\beta_{2})}}$.

Reiterating the perturbations  for $P=\{P_1,\ldots,P_n\}$ we get for any $a >
    \omega_{\alpha}$,
    \begin{displaymath}
      \| S_{P}(t) u_0\|_{\alpha} \leq C   \|u_0\|_{\alpha}
      e^{(a +\theta_{P}) t} , \quad t>0
    \end{displaymath}
    with $\theta_{P} = \sum_{i=1}^{n} \theta_{i}$ as in the
    statement.

    Finally, (\ref{eq:exponential-bound-for-SV(t)-n-Xgamma-Xalpha}) is
    a consequence of Lemma \ref{lem:estimates4semigroups_in_scales}.
  \end{proof}

\subsection{Perturbations of an  analytic semigroup in the scale}

We now consider the case when the unperturbed  semigroup is analytic
with sectorial generator in some space of the scale  as we now define, see e.g.
\cite[Definition 2.0.1]{lunardi95:_analy} although notice that we
changed a bit the notations in this reference. The goal is to show that
the perturbed semigroup is also analytic and to identify its
generator.

\begin{definition}\label{def:analytic-Semigroup-in-the-scale}
If  $\{S(t)\}_{t\geq0}$ is a semigroup in a Banach space  $X$,  we say that $\{S(t)\}_{t\geq0}$ is
analytic in $X$ with sectorial generator iff  there is a
linear operator $L$ defined on domain $D(L)$ (which we do not assume
to be  dense in $X$) such that,
denoting in general, for $a \in \R$, $\theta\in \left(0,  \pi\right)$,
\begin{displaymath}
  S_{a,\theta} = \C \setminus \overline{C_{a,\theta}} \quad \text{
    where } \   C_{a,\theta} = \{z\in \C\setminus\{a\}, \  |Arg
  (z-a)|< \theta \},
\end{displaymath}
then for some $a_{0} \in \R$, $\theta_{0}\in \left(0,
  \frac{\pi}{2}\right)$ we have $S_{a_{0},\theta_{0}}\subset \rho(L)$
(the resolvent set of $L$) and
\begin{equation}\label{eq:L-is-sectorial}
\sup_{\lambda\in S_{a_{0},\theta_{0}}} {|\lambda-a_{0}|}
\|(L-\lambda)^{-1} \|_{\mathcal{L} (X)} <\infty
\end{equation}
and
\begin{displaymath}
  S(t)= \frac1{2\pi i}\int_{a_{0}+\Gamma_{r,\eta}} e^{-\lambda t}
  (L - \lambda)^{-1} \, d\lambda \quad  \ \text{ for } t>0
\end{displaymath}
where, for  fixed $0 < \theta_{0} < \eta<\frac{\pi}{2}$ and $r>0$,
$\Gamma_{r,\eta}$ denotes the clockwise oriented  curve
$\{\lambda\in\C\colon |Arg(\lambda)|=\eta, |\lambda|\geq r\}\cup
\{\lambda\in\C\colon |Arg(\lambda)|\leq \eta, |\lambda|= r\}$.

In such a case, we write $S(t)=e^{-L t}$ in $X$ for $t>0$ and $-L$ is
the sectorial  generator of the semigroup.

\end{definition}

\begin{remark}

  \begin{enumerate}
  \item
    When a semigroup is analytic with sectorial generator as above, then the resolvent
    operator of $L$ can be computed with the semigroup. More
    precisely, if $\{S(t)\}_{t\geq0}$ is analytic in $X$ with
    sectorial generator as above, then in particular
    $\{\lambda \in \C \colon Re(\lambda)<a_{0}\}\subset \rho(L)$,
    \begin{equation}\label{eq:domain-of-L}
      D(L)= (L-\lambda)^{-1} (X),  \quad Re(\lambda) <a_{0}
    \end{equation}
    and from \cite[(2.1.1)(a)]{lunardi95:_analy} for some constant $c$
    \begin{displaymath}
      \| S(t)\|_{\mathcal{L}(X)}\leq c e^{-a_{0} t}, \quad  t>0.
    \end{displaymath}

    Then, from \cite[Lemma 2.1.6]{lunardi95:_analy}, for
    $Re(\lambda) <a_{0}$ and $ u_0 \in X$, we have
    \begin{equation} \label{eq:resolvent-of-L}
      (L-\lambda)^{-1} u_0 = G(\lambda) u_0 \mydef \int_0^\infty e^{\lambda t} S(t)
      u_0\, dt .
          \end{equation}

  \item
To identify the sectorial  generator of the perturbed semigroup
$\{S_P(t)\}_{t\geq0}$ and to prove is analytic, we will consider the
candidate for resolvent, as in  the right hand side of
(\ref{eq:resolvent-of-L}), see  (\ref{eq:Fgammagamma'(lambda)}) below,
which are denoted \emph{pseudoresolvents},
and prove, using a result in \cite{lunardi95:_analy}, that  (\ref{eq:Fgammagamma'(lambda)})  is
actually the resolvent of some operator, see Lemma
\ref{lem:operator-Lambda}.

  \end{enumerate}
\end{remark}

The next result establishes a relationship between the
\emph{pseudoresolvents} of the semigroups $\{S(t)\}_{t\geq0}$ and
$\{S_P(t)\}_{t\geq0}$.  Observe that we do not use yet that
$\{S(t)\}_{t\geq0}$ is analytic. Also notice that the subscripts
in the operators $ G_{\gamma,\gamma'}$ and $F_{\gamma,\gamma'}$ below  are
used to indicate in which spaces these operators act.

\begin{proposition}
  \label{prop:pseudoresolvents}

Assume $\{S(t)\}_{t\geq0}$ is a semigroup in the scale and
$P=\{P_1,\ldots,P_n\}$ satisfies
(\ref{eq:assumption-for-linear-perturbations}).

\begin{enumerate}
\item
   If $\gamma\stackrel{_{S(t)}}{\leadsto} \gamma'$
  with $d(\gamma',\gamma)<1$ then let $a_{\gamma'}$ be
  the exponent in (\ref{eq:exponential-bound-for-S(t)-in-Xgamma}) for
  the space $X^{\gamma'}$. Then for  $a>a_{\gamma'}$ and
  $Re(\lambda) <-a$, the pseudoresolvent maps
  \begin{equation}\label{eq:Gbetaigamma0(lambda)}
    X^{\gamma}\ni u_0 \to
    G_{\gamma,\gamma'}(\lambda)u_0=\int_0^\infty e^{\lambda t} S(t)
    u_0\, dt \in X^{\gamma'}
  \end{equation}
  satisfy,  for some constant $M$,
  \begin{equation}\label{eq:crucial-bound-for-analyticity-result}
    \|G_{\gamma,\gamma'}(\lambda) \|_{{\mathcal L}(X^{\gamma},X^{\gamma'})}
    \leq
    \frac{M\Gamma(1-d(\gamma',\gamma))}{|Re(\lambda + a)|^{1-d(\gamma',\gamma)}},
    \qquad Re(\lambda) <-a,
  \end{equation}
  where $\Gamma(\cdot)$ is Euler's gamma function.

  \item
    For $\gamma\in \mathcal{E}_\alpha$ and
    $\gamma'\in \Sigma_{\alpha,\beta_1,\ldots,\beta_n}$ let
    $\omega_{\gamma'}$ be  the exponent
  in (\ref{eq:exponential-bound-for-SV(t)-n-Xgamma}) in the space
  $X^{\gamma'}$. Then for
  $\omega>\omega_{\gamma'}$ and   $Re(\lambda) <- \omega$, the pseudoresolvent maps
  \begin{equation}\label{eq:Fgammagamma'(lambda)}
    X^{\gamma}\ni u_0 \mapsto
    F_{\gamma,\gamma'}(\lambda)u_0 \mydef \int_0^\infty e^{\lambda t} S_P(t)
    u_0\, dt \in X^{\gamma'}
  \end{equation}
  satisfy, for some constant $C$,
  \begin{displaymath}
    \|F_{\gamma,\gamma'}(\lambda) \|_{{\mathcal L}(X^{\gamma},X^{\gamma'})}
    \leq \frac{C\,
      \Gamma(1-d(\gamma',\gamma))}{|Re(\lambda+\omega)|^{1-d(\gamma',\gamma)}}
    ,     \qquad  Re(\lambda) < -\omega .
  \end{displaymath}

  \item
       For $\gamma\in\Sigma_{\alpha,\beta_1,\ldots,\beta_n}$
  and $\gamma\stackrel{_{S(t)}}{\leadsto} \gamma$, there exists  $k>0$ such
  that for $Re(\lambda) <-k$, the pseudoresolvents
  $G_{\gamma,\gamma}(\lambda)$, $G_{\beta_i,\gamma}(\lambda)$ with
  $i=1,\ldots,n$ and $F_{\gamma,\gamma}(\lambda)$,
  $F_{\gamma,\alpha}(\lambda)$ as above are well defined  and for
  $u_0\in X^{\gamma}$
  \begin{equation}
    \label{eq:crucial-equality-to-prove-analiticity}
    F_{\gamma,\gamma}(\lambda) u_0= G_{\gamma,\gamma}(\lambda) u_0 +
    \sum_{i=1}^n G_{\beta_i,\gamma}(\lambda) P_i
    F_{\gamma,\alpha}(\lambda)u_0,  \qquad     Re(\lambda) < -k.
  \end{equation}
\end{enumerate}

\end{proposition}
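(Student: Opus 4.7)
My strategy is to handle (i) and (ii) with the same elementary integral computation based on the smoothing bounds with exponential growth from Lemma~\ref{lem:estimates4semigroups_in_scales}, and then to derive (iii) from the variations-of-constants formula (\ref{eq:abstract-VCF_linearly_perturbed_semigroup}) via a Fubini interchange.

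For part (i), I would use estimate (\ref{eq:another-exponential-for-S(t)}) from Lemma~\ref{lem:estimates4semigroups_in_scales}: for any $a>a_{\gamma'}$ there is $M_{1}$ with $\|S(t)\|_{\mathcal{L}(X^{\gamma},X^{\gamma'})}\leq M_{1}t^{-d(\gamma',\gamma)}e^{at}$ for all $t>0$. Since $d(\gamma',\gamma)<1$, when $\mathrm{Re}(\lambda)<-a$ we have $\mu\mydef |\mathrm{Re}(\lambda)+a|>0$ and the Bochner integral $\int_{0}^{\infty}e^{\lambda t}S(t)u_{0}\,dt$ converges absolutely in $X^{\gamma'}$ by
\begin{displaymath}
\int_{0}^{\infty}\frac{M_{1}\,e^{(a+\mathrm{Re}(\lambda))t}}{t^{d(\gamma',\gamma)}}\,dt \,\|u_{0}\|_{\gamma}
= \frac{M_{1}\,\Gamma(1-d(\gamma',\gamma))}{\mu^{1-d(\gamma',\gamma)}}\,\|u_{0}\|_{\gamma},
\end{displaymath}
where I have substituted $s=\mu t$ and recognised the gamma integral. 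This is (\ref{eq:crucial-bound-for-analyticity-result}). For (ii) the argument is identical, replacing the bound on $S(t)$ by the bound (\ref{eq:another-exponential-Xgamma-Xgamma'-estimates}) from Theorem~\ref{thm:linear-perturbed-semigroup} on $S_{P}(t)$, applied with any $\omega>\omega_{\gamma'}$; this is legitimate because the hypothesis $\gamma\in\mathcal{E}_{\alpha}$, $\gamma'\in\Sigma_{\alpha,\beta_{1},\ldots,\beta_{n}}$ together with $\gamma\stackrel{_{S(t)}}{\leadsto}\gamma'$ (implicit in $d(\gamma',\gamma)<1$, which follows from the index constraints defining $\Sigma$ and $\mathcal{E}_{\alpha}$) puts us in the framework of that theorem.

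For part (iii), I would take $k$ large enough so that all of $G_{\gamma,\gamma}$, $G_{\beta_{i},\gamma}$ (here I use $\beta_{i}\stackrel{_{S(t)}}{\leadsto}\gamma$, which holds because $\gamma\in\Sigma_{\alpha,\beta_{1},\ldots,\beta_{n}}\subset\mathcal{R}_{\beta_{i}}$, with $d(\gamma,\beta_{i})<1$), $F_{\gamma,\gamma}$ and $F_{\gamma,\alpha}$ are well defined by parts (i) and (ii); concretely, $k$ can be chosen larger than each of the relevant exponents $a_{\gamma}$, $a_{\gamma}$ (applied to the pair $(\beta_{i},\gamma)$), $\omega_{\gamma}$ and $\omega_{\alpha}$. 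Starting from (\ref{eq:abstract-VCF_linearly_perturbed_semigroup}) with $u(t)=S_{P}(t)u_{0}$, I multiply by $e^{\lambda t}$ and integrate in $t\in(0,\infty)$ to get
\begin{displaymath}
F_{\gamma,\gamma}(\lambda)u_{0}
= G_{\gamma,\gamma}(\lambda)u_{0}
+ \sum_{i=1}^{n}\int_{0}^{\infty}e^{\lambda t}\int_{0}^{t} S(t-\tau)P_{i}S_{P}(\tau)u_{0}\,d\tau\,dt.
\end{displaymath}
The key move is the change of variable $s=t-\tau$ in each double integral and a Fubini interchange, yielding
\begin{displaymath}
\int_{0}^{\infty}\!\!\int_{0}^{\infty} e^{\lambda s}S(s)\,e^{\lambda\tau}P_{i}S_{P}(\tau)u_{0}\,d\tau\,ds
= G_{\beta_{i},\gamma}(\lambda)\,P_{i}\,F_{\gamma,\alpha}(\lambda)u_{0},
\end{displaymath}
where the inner $\tau$-integral yields $F_{\gamma,\alpha}(\lambda)u_{0}\in X^{\alpha}$, then $P_{i}$ maps into $X^{\beta_{i}}$, and the outer $s$-integral is $G_{\beta_{i},\gamma}(\lambda)$ acting on that element.

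The main delicate point, and the step that I expect to require the most care, is justifying Fubini and the fact that the inner $\tau$-integral actually lands in $X^{\alpha}$ with $P_{i}$ applied outside the integral. For this I would bound the integrand by the product of $\|S(s)\|_{\mathcal{L}(X^{\beta_{i}},X^{\gamma})}\leq C s^{-d(\gamma,\beta_{i})}e^{as}$, $\|P_{i}\|_{\mathcal{L}(X^{\alpha},X^{\beta_{i}})}\leq R$, and $\|S_{P}(\tau)\|_{\mathcal{L}(X^{\gamma},X^{\alpha})}$ controlled by Proposition~\ref{prop:exponential-bound-for-SV(t)-in-Xalpha} as $C\tau^{-d(\alpha,\gamma)}e^{(a+\theta_{P})\tau}$. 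The combined double integral is
\begin{displaymath}
\int_{0}^{\infty}\!\!\int_{0}^{\infty} e^{\mathrm{Re}(\lambda)(s+\tau)} s^{-d(\gamma,\beta_{i})}e^{as}\tau^{-d(\alpha,\gamma)}e^{(a+\theta_{P})\tau}\,ds\,d\tau\,\|u_{0}\|_{\gamma},
\end{displaymath}
which is finite as soon as $\mathrm{Re}(\lambda)<-k$ with $k>\max\{a,a+\theta_{P}\}$, since both exponents $d(\gamma,\beta_{i})<1$ and $d(\alpha,\gamma)<1$ come from $\gamma\in\Sigma_{\alpha,\beta_{1},\ldots,\beta_{n}}$. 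This absolute integrability in the Bochner sense legitimises Fubini and the factorisation, and establishes (\ref{eq:crucial-equality-to-prove-analiticity}).
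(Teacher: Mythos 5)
Your proposal is correct and follows essentially the same route as the paper: parts (i)--(ii) via the exponentially weighted smoothing bounds (\ref{eq:another-exponential-for-S(t)}) and (\ref{eq:another-exponential-Xgamma-Xgamma'-estimates}) together with the Gamma-function integral, and part (iii) by Laplace-transforming (\ref{eq:abstract-VCF_linearly_perturbed_semigroup}), interchanging the order of integration with the change of variables $s=t-\tau$, and factoring out the operators to recognise $G_{\beta_i,\gamma}(\lambda)P_iF_{\gamma,\alpha}(\lambda)u_0$. The only cosmetic difference is in the last interchange: the paper commutes the bounded operator $\sum_i G_{\beta_i,\gamma}(\lambda)P_i$ with the remaining Bochner integral by citing \cite[Proposition 1.4.22]{Cazenave-Haraux} after noting $e^{\lambda\cdot}S_P(\cdot)u_0\in L^1((0,\infty),X^\alpha)$, whereas you justify the same step through the absolute-integrability bound for the double integral; the two justifications are equivalent.
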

\begin{proof}
  (i)
Since  for $u_0\in X^{\gamma}$,  $(0,\infty)\ni t \to S(t) u_0 \in
X^{\gamma'}$ is continuous  and     $Re(\lambda) < -a$, then using
    (\ref{eq:another-exponential-for-S(t)}) we have
    $\int_{0}^{\infty} \| e^{\lambda t} S(t) u_0 \,
    dt\|_{X^{\gamma'}} \leq M \int_0^\infty \frac{e^{Re(\lambda+a)t}
    }{t^{d(\gamma',\gamma)}}\, dt \, \|u_0\|_{X^{\gamma}}$ and we get
    the estimate.

\noindent   (ii)
From Theorem \ref{thm:Xgamma-Xgamma'-estimates},
$(0,\infty)\ni t  \to S_P(t) u_0 \in X^{\gamma'}$ is continuous and
since $Re(\lambda) < -\omega$ and
(\ref{eq:another-exponential-Xgamma-Xgamma'-estimates}), we see that
$\int_0^\infty \|e^{\lambda t} S_P(t) u_0\, dt\|_{X^{\gamma'}} \leq
C \int_0^\infty   \frac{e^{Re(\lambda+\omega)t}
}{t^{d(\gamma',\gamma)}}\, dt \, \|u_0\|_{X^{\gamma}}$ and we get the
estimate.

\noindent (iii)
For  $u_0\in X^\gamma$ and negative  enough $Re(\lambda)$, since
$u(t)= S_P(t)u_0$ satisfies
(\ref{eq:abstract-VCF_linearly_perturbed_semigroup}), we multiply this
expression  by $e^{\lambda t}$ and then we integrate with respect to
$t\in(0,\infty)$  to get
\begin{equation}\label{eq:to-transform-below}
F_{\gamma,\gamma}(\lambda) u_0= G_{\gamma,\gamma}(\lambda) u_0 +
\sum_{i=1}^n \int_0^\infty \left(e^{\lambda t}  \int_0^t S(t-s) P_i
  S_P(s) u_0 \, ds \right) dt.
\end{equation}
After changing  the order of integration and the  change of variable $\tau=t-s$ we see that
\begin{displaymath}
\begin{split}
\sum_{i=1}^n
\int_0^\infty \bigl(\int_0^t e^{\lambda t} S(t-s) P_i S_P(s) u_0 \, ds \bigr) dt&=  \sum_{i=1}^n\int_0^\infty \bigl(\int_s^\infty e^{\lambda t} S(t-s) P_i S_P(s) u_0 \, dt \bigr) ds
\\
&
= \int_0^\infty \bigl(\sum_{i=1}^n \int_0^\infty e^{\lambda \tau} S(\tau) P_i e^{\lambda s} S_P(s) u_0 \, d\tau\bigr) ds
\\&
=\int_0^\infty \sum_{i=1}^n G_{\beta_i,\gamma} (\lambda) P_i e^{\lambda s} S_P(s) u_0 ds.
\end{split}
\end{displaymath}
Using (\ref{eq:crucial-bound-for-analyticity-result}) with $\gamma=\beta_i$, $\gamma'=\gamma$ and  (\ref{eq:assumption-for-linear-perturbations}) we see that
$\sum_{i=1}^n G_{\beta_i,\gamma}(\lambda) P_i\in
\mathcal{L}(X^\alpha,X^{\gamma})$, whereas using
(\ref{eq:crucial-bound-for-analyticity-result}) with $\gamma'=\alpha$ we see that
$e^{\lambda \cdot} S_P(\cdot) u_0\in L^1((0,\infty),X^\alpha)$.
Therefore, via \cite[Proposition 1.4.22]{Cazenave-Haraux} we obtain
\begin{displaymath}
\int_0^\infty \sum_{i=1}^n G_{\beta_i,\gamma} P_i e^{\lambda s} S_P(s) u_0 ds=\sum_{i=1}^n G_{\beta_i,\gamma} (\lambda)P_i \int_0^\infty e^{\lambda s} S_P(s) u_0 ds
= \sum_{i=1}^n G_{\beta_i,\gamma}(\lambda)P_i F_{\gamma,\alpha}(\lambda)u_0.
\end{displaymath}
As a consequence, for all negative enough $Re(\lambda)$ the right hand
side in (\ref{eq:to-transform-below}) is equal to
$G_{\gamma,\gamma}(\lambda) u_0 +\sum_{i=1}^n
G_{\beta_i,\alpha}(\lambda) P_i F_{\gamma,\alpha}(\lambda)u_0$ and
thus (\ref{eq:crucial-equality-to-prove-analiticity}) is proven.
\end{proof}

The next result shows that if  $\{S(t)\}_{t\geq0}$ is analytic  in
$X^{\gamma}$  with sectorial generator  then the pseudoresolvents above
are actually the resolvent of some operator.

\begin{lemma}\label{lem:operator-Lambda}
Assume $\{S(t)\}_{t\geq0}$ is a semigroup in the scale and
$P=\{P_1,\ldots,P_n\}$ satisfies
(\ref{eq:assumption-for-linear-perturbations}).

Assume  $\gamma\in\Sigma_{\alpha,\beta_1,\ldots,\beta_n}$
and $\{S(t)\}_{t\geq0}$ is analytic  in $X^{\gamma}$ with sectorial generator,
then there exists $\omega_\gamma$  such that for  $Re(\lambda) <-\omega_\gamma$
the maps $F_{\gamma,\gamma}(\lambda)$ in Proposition
\ref{prop:pseudoresolvents} are well defined  and there exists a linear
operator $\Lambda$   in $X^{\gamma}$
such that $\{\lambda\in \C \colon Re(\lambda)<-\omega_{\gamma}\}\subset
\rho(\Lambda)$ and for $ u_0\in X^{\gamma}$
\begin{displaymath}
 (\Lambda - \lambda)^{-1}u_0 =
 F_{\gamma,\gamma}(\lambda) u_0 , \qquad  Re(\lambda) <
 -\omega_{\gamma} .
\end{displaymath}
Also, the domain $D(\Lambda)$ of $\Lambda$ is given by
\begin{equation}\label{eq:domain-of-Lambda}
  D(\Lambda)= (\Lambda - \lambda)^{-1} (X^{\gamma}),  \quad
  Re(\lambda) < -\omega_{\gamma}.
\end{equation}

\end{lemma}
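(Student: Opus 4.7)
The plan is to apply the classical pseudoresolvent principle, in the form available in \cite{lunardi95:_analy}: a family $\{F(\lambda)\}$ of bounded operators satisfying the pseudoresolvent identity is the resolvent family of a (not necessarily densely defined) linear operator $\Lambda$ if and only if $\ker F(\lambda)=\{0\}$. First, by Theorem \ref{thm:linear-perturbed-semigroup} one has $\|S_P(t)\|_{\mathcal{L}(X^\gamma)}\leq M_0 e^{\omega_\gamma t}$ for some $\omega_\gamma$, so Proposition \ref{prop:pseudoresolvents}(ii) with $\gamma'=\gamma$ gives $F_{\gamma,\gamma}(\lambda)\in\mathcal{L}(X^\gamma)$ for $Re(\lambda)<-\omega_\gamma$. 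Next I would verify the pseudoresolvent identity
\begin{displaymath}
F_{\gamma,\gamma}(\lambda)-F_{\gamma,\gamma}(\mu)=(\mu-\lambda)\,F_{\gamma,\gamma}(\lambda)\,F_{\gamma,\gamma}(\mu)
\end{displaymath}
by the routine double Laplace integral computation, invoking Fubini together with the semigroup property from Proposition \ref{prop:linear-perturbed-semigroup} via the substitution $t=s+\tau$.

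The hard part is injectivity, and this is precisely where analyticity of $\{S(t)\}_{t\geq 0}$ in $X^\gamma$ enters decisively. Suppose $F_{\gamma,\gamma}(\lambda_0)u_0=0$. A direct consequence of the pseudoresolvent identity is that $\ker F_{\gamma,\gamma}(\lambda)$ is independent of $\lambda$, so $F_{\gamma,\gamma}(\lambda)u_0=0$ for every $\lambda$ with $Re(\lambda)<-\omega_\gamma$. Since $t\mapsto S_P(t)u_0\in X^\gamma$ is continuous on $(0,\infty)$ (by Corollary \ref{cor:continuity-of-SP(t)u0-with-respect-to-a-pair-of-argument} with $\gamma'=\gamma$, which is admissible because $\gamma\in\Sigma_{\alpha,\beta_1,\ldots,\beta_n}$ implies $\gamma\stackrel{_{S(t)}}{\leadsto}\gamma$) and has at most exponential growth, the uniqueness theorem for vector-valued Laplace transforms forces $S_P(t)u_0=0$ in $X^\gamma$ for every $t>0$. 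Feeding this back into the integral identity (\ref{eq:abstract-VCF_linearly_perturbed_semigroup}) annihilates all the perturbation terms and leaves $S(t)u_0=0$ in $X^\gamma$ for every $t>0$, so
\begin{displaymath}
G_{\gamma,\gamma}(\lambda)u_0=\int_0^\infty e^{\lambda t}S(t)u_0\,dt=0.
\end{displaymath}
By Definition \ref{def:analytic-Semigroup-in-the-scale} and formula (\ref{eq:resolvent-of-L}) in the remark that follows it, $G_{\gamma,\gamma}(\lambda)=(L-\lambda)^{-1}$ is the resolvent of the sectorial generator and is in particular injective, whence $u_0=0$.

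With the pseudoresolvent identity and trivial kernel in hand, the operator $\Lambda$ is produced in the standard manner: one takes $D(\Lambda)$ to be the common range of the $F_{\gamma,\gamma}(\lambda)$ (independent of $\lambda$ by the pseudoresolvent identity) and defines $\Lambda v=\lambda v + F_{\gamma,\gamma}(\lambda)^{-1}v$ on $D(\Lambda)$; injectivity makes $F_{\gamma,\gamma}(\lambda)^{-1}$ single-valued, and a brief manipulation of the pseudoresolvent identity shows that this prescription does not depend on the chosen $\lambda$. This construction yields $(\Lambda-\lambda)^{-1}=F_{\gamma,\gamma}(\lambda)$ on the whole of $X^\gamma$ for each $\lambda$ with $Re(\lambda)<-\omega_\gamma$, so $\{\lambda\in\C\colon Re(\lambda)<-\omega_\gamma\}\subset\rho(\Lambda)$, and the description (\ref{eq:domain-of-Lambda}) of $D(\Lambda)$ is immediate from the construction.
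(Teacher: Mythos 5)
Your proposal is correct and follows essentially the same route as the paper: well-definedness of $F_{\gamma,\gamma}(\lambda)$ from the exponential bound of Theorem \ref{thm:linear-perturbed-semigroup} and Proposition \ref{prop:pseudoresolvents}, the pseudoresolvent identity, injectivity deduced from the vanishing Laplace transform of $S_P(\cdot)u_0$, substitution back into (\ref{eq:abstract-VCF_linearly_perturbed_semigroup}) to get $S(\cdot)u_0\equiv 0$, and then the standard pseudoresolvent-to-operator construction (which the paper gets by citing \cite[Proposition~A.0.2]{lunardi95:_analy}). The only deviations are minor: the paper proves the vector-valued Laplace-transform uniqueness by scalarizing with functionals and a moment argument where you invoke it as a known theorem, and it concludes $u_0=0$ from $S(\cdot)u_0\equiv 0$ via \cite[Corollary 2.1.7]{lunardi95:_analy} rather than your equally valid use of the injectivity of $(L-\lambda)^{-1}$ through (\ref{eq:resolvent-of-L}).
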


\begin{proof}
From  Proposition \ref{prop:pseudoresolvents} with $\gamma'=\gamma$,
the  maps $F_{\gamma,\gamma}(\lambda)\in \mathcal{L}(X^{\gamma})$  are
well defined  for $Re(\lambda) < -\omega_{\gamma}$.
We also have, see \cite[p. 42]{lunardi95:_analy},
\begin{equation}\label{eq:to-refer-to-Lunardi}
F_{\gamma,\gamma}(\lambda_1)-F_{\gamma,\gamma}(\lambda_2)
=
(\lambda_1-\lambda_2)F_{\gamma,\gamma}(\lambda_1)F_{\gamma,\gamma}(\lambda_2), \quad
Re(\lambda_1),\ Re(\lambda_2) < -\omega_{\gamma} .
\end{equation}

Now we prove that  $F_{\gamma,\gamma}(\lambda)$ is injective  for  $Re(\lambda) < -\omega_{\gamma}$.
Due to (\ref{eq:to-refer-to-Lunardi}), if $u_0\in X^{\gamma}$ and
$F_{\gamma,\gamma}(\lambda)u_0=0$ for some  $Re(\lambda) < -\omega_{\gamma}$ then
$F_{\gamma,\gamma}(\lambda)u_0=0$ for all  $Re(\lambda) <
-\omega_{\gamma}$.  Then for any  functional $l\in (X^{\gamma})'$ the
function  $\chi_l(\cdot)=l(S_P(\cdot) u_0)$
satisfies
\begin{equation}\label{eq:null_laplace_transform}
\int_0^\infty e^{\lambda t} \chi_l(t) \, dt =0 \quad \text{ for all } \
Re(\lambda) < -\omega_{\gamma}.
\end{equation}

Then we claim that  $\chi_l(\cdot)$ must be zero in $(0,\infty)$ for every
$l\in (X^{\gamma})'$, which implies that $S_P(\cdot)u_0=0$ in
$(0,\infty)$. Since $u(t)=S_P(t)u_0$ satisfies
(\ref{eq:abstract-VCF_linearly_perturbed_semigroup}) then we get that  $S(\cdot)u_0=0$
in $(0,\infty)$. Then \cite[Corollary 2.1.7]{lunardi95:_analy} yields
$u_0=0$. Thus $F_{\gamma,\gamma}(\lambda)$ is injective for
$Re(\lambda) < -\omega_{\gamma}$ and using
\cite[Proposition~A.0.2]{lunardi95:_analy} we get the result.

To prove the claim above, observe that
  $\chi_l$ grows like $e^{\omega_{\gamma t}}$.    So take $\lambda = b
  -n -1 $ with $b< -\omega_{\gamma}$ and $n=0,1,\ldots$ to get
  \begin{displaymath}
    \int_{0}^{\infty} e^{-nt} \big( e^{bt} e^{-t} \chi_l(t) \big)   =0
    .
  \end{displaymath}
Then   take $s= e^{-t}$ to get
  \begin{displaymath}
    \int_{0}^{1} s^{n} \big( s^{-b} \chi_{l}(-\ln(s)) \big)  =0
  \end{displaymath}
  and
  \begin{displaymath}
   | g(s)|: = | s^{-b} \chi_{l}(-\ln(s)) | \leq s^{-b}
   e^{- \omega_{\gamma} \ln(s)} = s^{-b -\omega_{\gamma}}
  \end{displaymath}
  and $b <-\omega_{\gamma}$ implies $g(s)$ is bounded in $(0,1)$.
  So $g$ is orthogonal in $(0,1)$ to all polynomials and then to all continuous
  functions and  to all $L^{1}(0,1)$ functions. So $g=0$ and the
  claim is proved.
\end{proof}

We are now ready to prove analyticity results for the perturbed
semigroup  $\{S_P(t)\}_{t\geq0}$. The first result is about
analyticity in the common domain of the perturbations.

\begin{theorem}\label{thm:Xalpha-analiticity-of-linearly-perturbed-semigroup}

Assume $\{S(t)\}_{t\geq0}$ is a semigroup in the scale and
$P=\{P_1,\ldots,P_n\}$ satisfies
(\ref{eq:assumption-for-linear-perturbations}).

If
$\{S(t)\}_{t\geq0}$ is analytic in $X^{\alpha}$ with sectorial
generator then $\{S_P(t)\}_{t\geq0}$ is analytic in $X^\alpha$ with
sectorial generator, $-L_{P}$.

\end{theorem}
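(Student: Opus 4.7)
The plan is to produce the candidate sectorial operator $L_P$ via Lemma \ref{lem:operator-Lambda} applied with $\gamma=\alpha$; this is legitimate since $\alpha\in \Sigma_{\alpha,\beta_1,\ldots,\beta_n}$ (by Theorem \ref{thm:linear-perturbed-semigroup}) and $\alpha \stackrel{_{S(t)}}{\leadsto}\alpha$ by the assumed analyticity in $X^\alpha$. This yields a linear operator $L_P$ on $X^\alpha$ with $\{Re(\lambda)<-\omega_\alpha\}\subset \rho(L_P)$ and $(L_P-\lambda)^{-1}= F_{\alpha,\alpha}(\lambda)$ on that half-plane, so $-L_P$ is the natural candidate for the sectorial generator. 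What remains is to extend $\rho(L_P)$ to a proper sector $S_{a_0,\theta_0}$ with the sectorial bound (\ref{eq:L-is-sectorial}), and then to recover the Dunford representation for $S_P(t)$.

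For the extension I would use the identity (\ref{eq:crucial-equality-to-prove-analiticity}) rearranged as
\begin{equation*}
\big(I - \sum_{i=1}^n G_{\beta_i,\alpha}(\lambda) P_i\big) F_{\alpha,\alpha}(\lambda) = G_{\alpha,\alpha}(\lambda),
\end{equation*}
together with analytic continuations of $G_{\alpha,\alpha}$ and $G_{\beta_i,\alpha}$ to a sector $S_{a_0,\theta_0}$. For $G_{\alpha,\alpha}$ this is automatic from the assumed analyticity: it coincides with the resolvent of the sectorial generator of $\{S(t)\}$ in $X^\alpha$, so $\|G_{\alpha,\alpha}(\lambda)\|_{\mathcal{L}(X^\alpha)}\leq M/|\lambda-a_0|$ on that sector. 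For $G_{\beta_i,\alpha}$ I would use the splitting $S(z)=S(z-t_0)\,S(t_0)$ for a small real $t_0>0$: the right factor lies in $\mathcal{L}(X^{\beta_i},X^\alpha)$ by the smoothing of Lemma \ref{lem:estimates4semigroups_in_scales}, while the left factor extends holomorphically in $\mathcal{L}(X^\alpha)$ by the analyticity of $\{S(t)\}$ in $X^\alpha$. Integrating $e^{\lambda z} S(z)$ along suitably rotated rays would then produce the extension of $G_{\beta_i,\alpha}$ with bound $\|G_{\beta_i,\alpha}(\lambda)\|_{\mathcal{L}(X^{\beta_i},X^\alpha)}\leq C/|\lambda-a_1|^{1-d(\alpha,\beta_i)}$ on the sector.

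The critical step is a Neumann-series argument. Since $1-d(\alpha,\beta_i)>0$ for every $i$ by (\ref{eq:assumption-for-linear-perturbations}), the composed operators satisfy $\|\sum_i G_{\beta_i,\alpha}(\lambda) P_i\|_{\mathcal{L}(X^\alpha)}\to 0$ as $|\lambda|\to \infty$ within the sector. Hence there exists $b_0\geq a_0$ such that on the shifted sector $S_{b_0,\theta_0}$ the operator $I-\sum_i G_{\beta_i,\alpha}(\lambda) P_i$ is boundedly invertible with uniformly bounded inverse, and the prescription
\begin{equation*}
F_{\alpha,\alpha}(\lambda) := \big(I - \sum_{i=1}^n G_{\beta_i,\alpha}(\lambda) P_i\big)^{-1} G_{\alpha,\alpha}(\lambda), \qquad \lambda \in S_{b_0,\theta_0},
\end{equation*}
extends the Laplace transform (\ref{eq:Fgammagamma'(lambda)}) by analytic continuation. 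One then inherits the bound $|\lambda-b_0|\,\|F_{\alpha,\alpha}(\lambda)\|_{\mathcal{L}(X^\alpha)}\leq C$ on $S_{b_0,\theta_0}$, which, through $(L_P-\lambda)^{-1}=F_{\alpha,\alpha}(\lambda)$, is precisely (\ref{eq:L-is-sectorial}) for $L_P$.

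It remains to check that the semigroup $\{S_P(t)\}_{t\geq 0}$ on $X^\alpha$ produced by Theorem \ref{thm:linear-perturbed-semigroup} coincides with the Dunford integral $\frac{1}{2\pi i}\int_{b_0+\Gamma_{r,\eta}}e^{-\lambda t}(L_P-\lambda)^{-1}\,d\lambda$. Both families have $F_{\alpha,\alpha}(\lambda)$ as Laplace transform on $\{Re(\lambda)<-\omega_\alpha\}$---the Dunford integral by the standard analytic semigroup calculus of \cite[Chapter 2]{lunardi95:_analy}, and $S_P(t)$ by the very definition of $F_{\alpha,\alpha}$---and the injectivity argument already used in Lemma \ref{lem:operator-Lambda} shows that a semigroup on $X^\alpha$ is uniquely determined by its Laplace transform on such a half-plane. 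The two therefore agree, giving $S_P(t)=e^{-L_Pt}$ with $-L_P$ sectorial, as required. The main obstacle I anticipate is the careful holomorphic extension of $G_{\beta_i,\alpha}$ to a complex sector with the sharp decay $|\lambda|^{-(1-d(\alpha,\beta_i))}$; once that is in place, the rest reduces to the standard Neumann-series/Kato perturbation scheme.
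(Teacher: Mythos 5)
Your proposal is sound and shares the paper's skeleton -- Lemma \ref{lem:operator-Lambda} with $\gamma=\alpha$ to produce the candidate operator, the identity of Proposition \ref{prop:pseudoresolvents}(iii) rearranged so that $\bigl(I-\sum_i G_{\beta_i,\alpha}(\lambda)P_i\bigr)F_{\alpha,\alpha}(\lambda)=G_{\alpha,\alpha}(\lambda)$, smallness of $\sum_i G_{\beta_i,\alpha}(\lambda)P_i$ via (\ref{eq:crucial-bound-for-analyticity-result}), and finally identification of $S_P(t)$ with the Dunford integral through uniqueness of Laplace transforms -- but your middle step is genuinely different. You propose to holomorphically extend the pseudoresolvents $G_{\beta_i,\alpha}$ to a complex sector (via the splitting $S(z)=S(z-t_0)S(t_0)$ with $t_0\sim|z|$ and rotated contours) and then run a Neumann series on that sector to continue $F_{\alpha,\alpha}$ and read off the sectorial bound directly. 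The paper avoids this entirely: it only establishes the half-plane estimate $\sup_{Re(\lambda)\leq -k}|\lambda|\,\|(\Lambda-\lambda)^{-1}\|_{\mathcal{L}(X^\alpha)}<\infty$ from the same perturbation identity and the half-plane bounds already proved in Proposition \ref{prop:pseudoresolvents}, and then invokes the abstract criterion \cite[Proposition 2.1.11]{lunardi95:_analy}, which upgrades a resolvent bound on a left half-plane to sectoriality. Your route buys a self-contained construction of the sector, but at the price of the step you yourself flag as the main obstacle -- the sharp sectorial decay $|\lambda|^{-(1-d(\alpha,\beta_i))}$ for the extended $G_{\beta_i,\alpha}$ is nowhere in the paper and must be proved from scratch (feasible, since analyticity of $\{S(t)\}$ in $X^\alpha$ together with the smoothing $\beta_i\stackrel{_{S(t)}}{\dashrightarrow}\alpha$ gives $\|S(z)\|_{\mathcal{L}(X^{\beta_i},X^\alpha)}\lesssim |z|^{-d(\alpha,\beta_i)}e^{c|z|}$ on a subsector, but this needs to be written out carefully); you should also make explicit the standard argument that your Neumann-series continuation of $F_{\alpha,\alpha}$ really is $(L_P-\lambda)^{-1}$ on the sector (openness of the resolvent set plus bounded analytic continuation), a point you currently pass over. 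With those two points filled in, your argument is a correct, somewhat longer alternative to the paper's half-plane-plus-Lunardi shortcut.
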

\begin{proof}
By Definition \ref{def:analytic-Semigroup-in-the-scale} there exists
an operator $L$ satisfying  (\ref{eq:L-is-sectorial}) in $X^{\alpha}$. In particular
$\{\lambda\in \C \colon Re(\lambda) <a_0\}\subset \rho(L)$ and for
some constant $C_0$
\begin{equation}\label{eq:recall-resolvent-estimate-for-Lalpha}
\|(L - \lambda)^{-1} \|_{\mathcal L(X^\alpha)} \leq \frac{C_0}{|\lambda-a_0| },
\quad
Re(\lambda) < a_{0}
\end{equation}
and from (\ref{eq:resolvent-of-L}) and (\ref{eq:Gbetaigamma0(lambda)}), for $u_0 \in  X^{\alpha}$,
  \begin{displaymath}
(L-\lambda)^{-1} u_0 = G_{\alpha,\alpha}(\lambda) u_0,
\qquad Re(\lambda) < a_{0} .
\end{displaymath}
Now from   Lemma \ref{lem:operator-Lambda} with $\gamma=\alpha$ we
have the operator $\Lambda$  in $X^\alpha$ such that
$\{\lambda\in \C \colon Re(\lambda) < -\omega_{\alpha}\}\subset \rho(\Lambda)$
and for $u_0\in X^{\alpha}$,
\begin{equation}\label{eq:resolvent-of-Lambda-in-Xalpha}
(\Lambda - \lambda)^{-1}u_0=F_{\alpha,\alpha}(\lambda)u_0,
\qquad  Re(\lambda) < -\omega_{\alpha} .
\end{equation}

Using these and  part (iii) in Proposition
\ref{prop:pseudoresolvents} with $\gamma=\alpha$, we get for large
enough $k$ and $u_0\in X^{\alpha}$,
\begin{equation}
  \label{eq:crucial-equality-to-prove-analiticity''}
(\Lambda - \lambda)^{-1} u_0= (L - \lambda)^{-1} u_0 + \sum_{i=1}^n
G_{\beta_i,\alpha}(\lambda) P_i (\Lambda -\lambda)^{-1}u_0, \qquad
 Re(\lambda) < -k.
\end{equation}

Now from  (\ref{eq:assumption-for-linear-perturbations}) and
(\ref{eq:crucial-bound-for-analyticity-result}),  we can choose $k$ so
large that
\begin{displaymath}
\sup_{Re(\lambda)\leq -k}\|\sum_{i=1}^n G_{\beta_i,\alpha} (\lambda)P_i\|_{{\mathcal L}(X^\alpha)}
< \frac{1}{2}.
\end{displaymath}
Then we get
\begin{displaymath}
\|(\Lambda -\lambda)^{-1} u_0\|_{\alpha}\leq  2 \|(L - \lambda)^{-1} u_0\|_\alpha , \qquad
 Re(\lambda) < -k.
\end{displaymath}
and from  (\ref{eq:recall-resolvent-estimate-for-Lalpha}) we
obtain
\begin{displaymath}
  \|(\Lambda -\lambda)^{-1} u_0\|_{\alpha}\leq  \frac{2
    C_0}{|\lambda - a_0| } \|u_0\|_\alpha, \qquad
 Re(\lambda) < -k.
\end{displaymath}
Then $\|\lambda (\Lambda -\lambda)^{-1} \|_{{\mathcal L}(X^\alpha)}\leq
\frac{2C_0}{|1+\frac{a_0}{\lambda}|}$ for $Re(\lambda) < -k$
and, increasing $k$ if needed, we conclude that
\begin{equation}\label{eq:needed-in-remark-below'}
\sup_{Re(\lambda)\leq -k} |\lambda|\|(\Lambda -\lambda)^{-1} \|_{{\mathcal L}(X^\alpha)}\leq 4 C_0,
\end{equation}
as  $|\frac{a_0}{\lambda}|\leq \frac{|a_0|}{k}<\frac12$ for
$Re(\lambda) \leq -k$.

This implies, using  \cite[Proposition 2.1.11]{lunardi95:_analy} and
Definition \ref{def:analytic-Semigroup-in-the-scale}, that
for some $0<\theta < \frac{\pi}{2}$
\begin{displaymath}
S_{-k,\theta}\subset \rho(\Lambda), \quad
\sup_{\lambda\in S_{-k,\theta}} {|\lambda + k|} \|(\Lambda-\lambda)^{-1} \|_{\mathcal{L} (X^{\alpha})} <\infty.
\end{displaymath}
Hence $\Lambda$ is sectorial in $X^{\alpha}$
and the corresponding  analytic semigroup
$\{e^{-\Lambda t}\}_{t\geq0}$ is given for $\theta < \eta < \frac\pi2$ and
$r>0$ by the formula
  \begin{displaymath}
  e^{-\Lambda t}= \frac1{2\pi i}\int_{-k+\Gamma_{r,\eta}} e^{-\lambda t}
  (\Lambda -\lambda )^{-1} \, d\lambda, \quad  t>0.
\end{displaymath}
Also,  \cite[Lemma 2.1.6]{lunardi95:_analy} implies for $u_0\in
X^{\alpha}$,
\begin{equation}\label{eq:resolvent-of-e{-LPt}}
  (\Lambda-\lambda )^{-1} u_0= \int_{0}^{\infty} e^{\lambda t} e^{-\Lambda t} u_0 \, dt, \qquad
 Re(\lambda) < -k.
\end{equation}

Then combining
(\ref{eq:resolvent-of-Lambda-in-Xalpha}),
(\ref{eq:Fgammagamma'(lambda)})  and (\ref{eq:resolvent-of-e{-LPt}}) we have for $u_0\in X^\alpha$ and $Re(\lambda) < -k$,  $\int_0^\infty e^{\lambda t} S_P(t)u_0 \, dt=\int_0^\infty
e^{\lambda t}e^{-\Lambda t} u_0 \, dt$.  Thus, arguing as in
(\ref{eq:null_laplace_transform}) we get
\begin{equation}\label{eq:to-go-next-to-corollary}
S_P(t)u_0=e^{-\Lambda t}u_0, \quad u_0\in X^\alpha, \ t>0
\end{equation}
and therefore $\Lambda= L_{P}$  and $-L_{P}$ is the sectorial generator of the semigroup
$\{S_{p}(t)\}_{t\geq 0}$ in $X^{\alpha}$.
\end{proof}

In a similar way, the  next result is about analyticity in the range of a single
perturbation.

\begin{theorem}\label{thm:Xbeta-analiticity-of-linearly-perturbed-semigroup}
Assume (\ref{eq:assumption-for-linear-perturbations}) holds with
$\beta_1=\ldots=\beta_n=:\beta$. Then we can assume that we have only
one perturbation as in (\ref{eq:suming_all_perturbations}).

If
$\{S(t)\}_{t\geq0}$ is a semigroup in the scale and $\{S(t)\}_{t\geq0}$
is analytic in $X^{\beta}$ with sectorial generator then
$\{S_P(t)\}_{t\geq0}$ is analytic in $X^\beta$ with sectorial
generator, $-L_{P}$.

\end{theorem}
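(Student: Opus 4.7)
The plan is to adapt the argument of Theorem \ref{thm:Xalpha-analiticity-of-linearly-perturbed-semigroup} to the space $X^\beta$ in place of $X^\alpha$. First I would sum the identical perturbations into one as in (\ref{eq:suming_all_perturbations}), so that without loss of generality $n=1$ and $P_1=P$. Since $\{S(t)\}_{t\geq 0}$ is a semigroup in the scale we have $\beta \stackrel{_{S(t)}}{\leadsto} \beta$, and (\ref{eq:assumption-for-linear-perturbations}) gives $0\leq d(\alpha,\beta)<1$ together with $\beta \stackrel{_{S(t)}}{\leadsto} \alpha$; hence $\beta\in\Sigma_{\alpha,\beta}$ and Theorem \ref{thm:linear-perturbed-semigroup} already ensures $\{S_P(t)\}_{t\geq 0}$ is a semigroup in $X^\beta$. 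Writing $-L$ for the sectorial generator of $\{S(t)\}$ in $X^\beta$, identity (\ref{eq:resolvent-of-L}) gives $G_{\beta,\beta}(\lambda)=(L-\lambda)^{-1}$ with the sectorial bound $\|(L-\lambda)^{-1}\|_{\mathcal{L}(X^\beta)}\leq C_0/|\lambda-a_0|$. Lemma \ref{lem:operator-Lambda} with $\gamma=\beta$ then produces an operator $\Lambda$ on $X^\beta$ whose resolvent is the pseudoresolvent $F_{\beta,\beta}(\lambda)$ for $Re(\lambda)<-\omega_\beta$.

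The heart of the proof is the pseudoresolvent identity from Proposition \ref{prop:pseudoresolvents}(iii), which in our setting ($\gamma=\beta$, $n=1$, $\beta_1=\beta$) combined with $G_{\beta,\beta}(\lambda)=(L-\lambda)^{-1}$ reads
\begin{displaymath}
  (\Lambda-\lambda)^{-1} u_0 = (L-\lambda)^{-1} u_0 + (L-\lambda)^{-1} P\, F_{\beta,\alpha}(\lambda) u_0, \qquad u_0\in X^\beta,
\end{displaymath}
for $Re(\lambda)<-k$ with $k$ large. Here the essential new ingredient, not present in the $X^\alpha$ proof, is that the correction involves $F_{\beta,\alpha}(\lambda)$, which maps $X^\beta \to X^\alpha$ and absorbs the shift of spaces produced by $P\colon X^\alpha\to X^\beta$. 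By Proposition \ref{prop:pseudoresolvents}(ii) with $\gamma=\beta$ and $\gamma'=\alpha$,
\begin{displaymath}
  \|F_{\beta,\alpha}(\lambda)\|_{\mathcal{L}(X^\beta,X^\alpha)} \leq \frac{C\,\Gamma(1-d(\alpha,\beta))}{|Re(\lambda+\omega)|^{1-d(\alpha,\beta)}} \longrightarrow 0 \quad \text{as } Re(\lambda)\to-\infty,
\end{displaymath}
since $1-d(\alpha,\beta)>0$. Together with $\|P\|_{\mathcal{L}(X^\alpha,X^\beta)}\leq R$, enlarging $k$ yields $R\|F_{\beta,\alpha}(\lambda)\|_{\mathcal{L}(X^\beta,X^\alpha)}\leq 1$ on $\{Re(\lambda)\leq -k\}$, so the displayed identity gives $\|(\Lambda-\lambda)^{-1}\|_{\mathcal{L}(X^\beta)}\leq 2C_0/|\lambda-a_0|$. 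Enlarging $k$ once more (so that $|a_0/\lambda|<\tfrac12$) yields $\sup_{Re(\lambda)\leq -k}|\lambda|\,\|(\Lambda-\lambda)^{-1}\|_{\mathcal{L}(X^\beta)}<\infty$, exactly as in (\ref{eq:needed-in-remark-below'}).

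I would then conclude via \cite[Proposition 2.1.11]{lunardi95:_analy} that this bound extends to a sector $S_{-k,\theta}\subset \rho(\Lambda)$ with the sectorial estimate of Definition \ref{def:analytic-Semigroup-in-the-scale}, so $\Lambda$ generates an analytic semigroup $\{e^{-\Lambda t}\}_{t\geq 0}$ in $X^\beta$. Finally, the Laplace transform uniqueness argument of (\ref{eq:null_laplace_transform})--(\ref{eq:to-go-next-to-corollary}) identifies $e^{-\Lambda t}u_0 = S_P(t) u_0$ for $u_0\in X^\beta$ and $t>0$, so $L_P\mydef \Lambda$ is the sectorial generator of $\{S_P(t)\}$ in $X^\beta$. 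The main obstacle compared with the $X^\alpha$ case is that one cannot directly iterate the Neumann-type argument of Theorem \ref{thm:Xalpha-analiticity-of-linearly-perturbed-semigroup} on $X^\beta$, because $P$ does not act within $X^\beta$; instead the smallness of the perturbative term must be routed through $F_{\beta,\alpha}(\lambda)$ from Proposition \ref{prop:pseudoresolvents}(ii), whose precise decay rate $|Re(\lambda)|^{-(1-d(\alpha,\beta))}$ relies on the strict inequality $d(\alpha,\beta)<1$ built into (\ref{eq:assumption-for-linear-perturbations}).
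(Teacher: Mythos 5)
Your proposal is correct and follows essentially the same route as the paper: Lemma \ref{lem:operator-Lambda} with $\gamma=\beta$, the identity $(\Lambda-\lambda)^{-1}=(L-\lambda)^{-1}\left(I+P\,F_{\beta,\alpha}(\lambda)\right)$ from Proposition \ref{prop:pseudoresolvents}(iii), smallness of $P\,F_{\beta,\alpha}(\lambda)$ for $Re(\lambda)\leq -k$ via Proposition \ref{prop:pseudoresolvents}(ii), the resulting sectorial bound through \cite[Proposition 2.1.11]{lunardi95:_analy}, and the Laplace-transform uniqueness argument identifying $e^{-\Lambda t}=S_P(t)$. The only cosmetic difference is that you bound $R\|F_{\beta,\alpha}(\lambda)\|\leq 1$ where the paper asks $\|P F_{\beta,\alpha}(\lambda)\|<1$, which makes no difference to the conclusion.
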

\begin{proof}
By Definition \ref{def:analytic-Semigroup-in-the-scale} there exists
an operator $L$ satisfying  (\ref{eq:L-is-sectorial}) in $X^{\beta}$.
In particular $\{\lambda\in \C \colon Re(\lambda) < a_0\}\subset \rho(L)$ and for
some constant $C_0$
\begin{equation}\label{eq:recall-resolvent-estimate-for-Lbeta}
\|(L - \lambda)^{-1} \|_{\mathcal L(X^\beta)} \leq \frac{C_0}{|\lambda
  - a_0| },
\qquad Re(\lambda) < a_{0}
\end{equation}
and from (\ref{eq:resolvent-of-L}) and
(\ref{eq:Gbetaigamma0(lambda)}), for $u_0 \in  X^{\beta}$,
  \begin{displaymath}
(L-\lambda)^{-1} u_0 = G_{\beta,\beta}(\lambda) u_0,
\qquad Re(\lambda) < a_{0} .
\end{displaymath}

Now from   Lemma \ref{lem:operator-Lambda} with $\gamma=\beta$ we
have the operator $\Lambda$  in $X^\beta$ such that
$\{\lambda\in \C \colon Re(\lambda) < -\omega_{\beta}\} \subset  \rho(\Lambda)$
and for $u_0\in X^{\beta}$,
\begin{displaymath}
(\Lambda  - \lambda)^{-1}u_0=F_{\beta,\beta}(\lambda)u_0 \quad \text{
  for } \quad  Re(\lambda) < -\omega_{\beta} .
\end{displaymath}

Part (ii) in Proposition  \ref{prop:pseudoresolvents} with
$\gamma=\beta$ and $\gamma'=\alpha$ gives that for a suitably large
$\omega$
\begin{equation}\label{eq:to-mention-in-the-proof-below'}
\|F_{\beta,\alpha}(\lambda) \|_{{\mathcal L}(X^{\beta},X^\alpha)}
\leq \frac{C\,
  \Gamma(1-d(\alpha,\beta))}{|Re(\lambda  + \omega)|^{1-d(\alpha,\beta)}},
\quad Re(\lambda) < - \omega.
\end{equation}
while part (iii) in Proposition
\ref{prop:pseudoresolvents} with $\gamma=\beta$ gives,  for large
enough $k$ and $u_0\in X^{\beta}$,
\begin{equation}
  \label{eq:crucial-equality-to-prove-analiticity''''}
(\Lambda -\lambda)^{-1} u_0= (L - \lambda)^{-1} u_0 + (L
- \lambda)^{-1} P F_{\beta,\alpha}(\lambda) u_0, \qquad  Re(\lambda) < -k.
\end{equation}

Now from  (\ref{eq:assumption-for-linear-perturbations}) and
(\ref{eq:to-mention-in-the-proof-below'}) we can choose $k$ so large that
\begin{displaymath}
\sup_{Re(\lambda)\leq -k}\| P  F_{\beta,\alpha}(\lambda)\|_{{\mathcal L}(X^\beta)}
< 1
\end{displaymath}
and then
\begin{displaymath}
  \|(\Lambda -\lambda)^{-1} u_0\|_{\beta}\leq  2 \|(L -
  \lambda)^{-1}\|_{\mathcal{L}(X^\beta)} \|u_0\|_\beta, \qquad Re(\lambda) < -k
\end{displaymath}
and from  (\ref{eq:recall-resolvent-estimate-for-Lbeta}) we obtain
\begin{displaymath}
  \|(\Lambda -\lambda)^{-1} \|_{\mathcal{L}(X^\beta)} \leq  \frac{2 C_0}{|\lambda -
    a_0| }, \qquad Re(\lambda) < -k.
\end{displaymath}
From this, arguing as in (\ref{eq:needed-in-remark-below'})--(\ref{eq:to-go-next-to-corollary}) we get
that for some $0 < \theta < \frac{\pi}{2}$
\begin{displaymath}
S_{-k,\theta}\subset \rho(\Lambda), \quad
\sup_{\lambda\in S_{-k,\theta}} {|\lambda + k|} \|(\Lambda-\lambda)^{-1} \|_{\mathcal{L} (X^{\beta})} <\infty,
\end{displaymath}
and
\begin{displaymath}
S_P(t)u_0=e^{-\Lambda t}u_0, \quad u_0\in X^\beta, \ t>0
\end{displaymath}
and therefore $\Lambda= L_{P}$  and $-L_{P}$ is the sectorial generator of the semigroup
$\{S_{p}(t)\}_{t\geq 0}$ in $X^{\beta}$.
\end{proof}

In the last results in this section, we  characterize the operator $L_P$.

\begin{proposition}\label{prop:LP-in-Xbeta}
In the case of Theorem
\ref{thm:Xbeta-analiticity-of-linearly-perturbed-semigroup}, let $-L$
be the sectorial  generator of $\{S(t)\}_{t\geq0}$ in $X^{\beta}$.

Then
\begin{displaymath}
  D(L_{P}) = D(L) \subset X^{\alpha}
\end{displaymath}
and for $v\in D(L_{P})$
\begin{displaymath}
  L_{P} v = Lv -P v \in X^{\beta }
\end{displaymath}

Finally for $u_{0}\in X^{\beta}$, $u(t) =S_P(t) u_0$, $t>0$, satisfies
in $X^{\beta}$
\begin{displaymath}
  u_{t} + Lu = P u, \quad t>0.
\end{displaymath}

\end{proposition}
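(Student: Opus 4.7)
My plan is to read off the structure of $L_P$ from the pseudoresolvent identity
\begin{displaymath}
(L_P-\lambda)^{-1}u_0 = (L-\lambda)^{-1}u_0 + (L-\lambda)^{-1}P(L_P-\lambda)^{-1}u_0, \quad u_0\in X^\beta,
\end{displaymath}
valid for $\mathrm{Re}(\lambda)<-k$ with $k$ large, which was obtained inside the proof of Theorem~\ref{thm:Xbeta-analiticity-of-linearly-perturbed-semigroup}. Before using it I would first verify the auxiliary inclusion $D(L)\subset X^\alpha$: for $v\in D(L)$ one writes $v=(L-\lambda)^{-1}g=\int_0^\infty e^{\lambda t}S(t)g\,dt$ via (\ref{eq:resolvent-of-L}) for some $g\in X^\beta$, and using $\beta\stackrel{_{S(t)}}{\leadsto}\alpha$ (guaranteed by (\ref{eq:assumption-for-linear-perturbations})) combined with (\ref{eq:another-exponential-for-S(t)}) and $d(\alpha,\beta)<1$, the Bochner integral converges absolutely in $X^\alpha$ once $\mathrm{Re}(\lambda)$ is negative enough; topological consistency of the scale then places $v$ in $X^\alpha$, so in particular $Pv\in X^\beta$ is meaningful.

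Next I would obtain $D(L_P)\subseteq D(L)$ together with the formula $L_Pv=Lv-Pv$ in one stroke. The right-hand side of the pseudoresolvent identity lies in the range of $(L-\lambda)^{-1}$, which is $D(L)$, giving the inclusion. Since $P(L_P-\lambda)^{-1}u_0\in X^\beta$ (because $(L_P-\lambda)^{-1}u_0\in D(L)\subset X^\alpha$), I may apply $L-\lambda$ to both sides of the identity to get $(L-\lambda)(L_P-\lambda)^{-1}u_0 = u_0 + P(L_P-\lambda)^{-1}u_0$, which reads, for $v=(L_P-\lambda)^{-1}u_0$, as $(L_P-\lambda)v = (L-\lambda)v - Pv$, i.e.\ $L_Pv = Lv - Pv$.

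The opposite inclusion $D(L)\subseteq D(L_P)$ is the main point. For $v\in D(L)$ set $u_0:=Lv-Pv-\lambda v\in X^\beta$ and $\tilde v:=(L_P-\lambda)^{-1}u_0\in D(L_P)\subseteq D(L)$; from the formula just proved $(L-\lambda-P)\tilde v=u_0=(L-\lambda-P)v$, so the question reduces to injectivity of $L-\lambda-P$ on $D(L)$. I would get this by factoring $L-\lambda-P=(L-\lambda)\bigl(I-(L-\lambda)^{-1}P\bigr)$ on $D(L)$ and observing, via (\ref{eq:crucial-bound-for-analyticity-result}), that $\|(L-\lambda)^{-1}\|_{\mathcal{L}(X^\beta,X^\alpha)}=O(|\mathrm{Re}\,\lambda|^{-(1-d(\alpha,\beta))})$ while $\|P\|_{\mathcal{L}(X^\alpha,X^\beta)}\le R$; hence $I-(L-\lambda)^{-1}P$ is invertible on $X^\alpha$ by a Neumann series for $|\mathrm{Re}\,\lambda|$ large, and combined with bijectivity of $L-\lambda:D(L)\to X^\beta$ yields the required injectivity, forcing $\tilde v=v$. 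The only delicate book-keeping is to enlarge $k$ so that the pseudoresolvent identity and the Neumann inversion are simultaneously in force—both are one-sided half-plane conditions of the same type, so this is automatic.

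The final assertion is then immediate: by Theorem~\ref{thm:Xbeta-analiticity-of-linearly-perturbed-semigroup}, $\{S_P(t)\}_{t\ge 0}=\{e^{-L_Pt}\}_{t\ge 0}$ is analytic in $X^\beta$ with sectorial generator $-L_P$, so standard analytic-semigroup theory (see \cite[Chap.~2]{lunardi95:_analy}) gives $u(t)=S_P(t)u_0\in D(L_P)=D(L)$ for every $t>0$ and $u_t(t)=-L_Pu(t)=-Lu(t)+Pu(t)$ in $X^\beta$, which rewrites as $u_t+Lu=Pu$.
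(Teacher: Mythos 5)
Your proposal is correct in substance and rests on the same key identity as the paper, namely the relation $(\Lambda-\lambda)^{-1}u_0=(L-\lambda)^{-1}u_0+(L-\lambda)^{-1}PF_{\beta,\alpha}(\lambda)u_0$ from the proof of Theorem \ref{thm:Xbeta-analiticity-of-linearly-perturbed-semigroup}, and on the same appeal to \cite[Proposition 2.1.1]{lunardi95:_analy} for the final statement about the equation. Where you diverge is in how the domain equality is extracted. The paper factors the identity as $(\Lambda-\lambda)^{-1}=(L-\lambda)^{-1}\bigl(I+PF_{\beta,\alpha}(\lambda)\bigr)$ and uses that $I+PF_{\beta,\alpha}(\lambda)$ is a bijection of $X^{\beta}$ (the smallness $\sup\|PF_{\beta,\alpha}(\lambda)\|_{\mathcal{L}(X^\beta)}<1$ was already secured in the analyticity proof), so that the two resolvents have the same range and $D(L_P)=D(L)$ follows in one stroke; it also gets $D(L)\subset X^{\alpha}$ from $(\Lambda-\lambda)^{-1}u_0=F_{\beta,\alpha}(\lambda)u_0\in X^{\alpha}$ and consistency. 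You instead prove the two inclusions separately: $D(L_P)\subseteq D(L)$ plus the formula $L_Pv=Lv-Pv$ by applying $L-\lambda$ to the identity, and $D(L)\subseteq D(L_P)$ via the other standard factorization $L-\lambda-P=(L-\lambda)\bigl(I-(L-\lambda)^{-1}P\bigr)$, a Neumann series on $X^{\alpha}$ using the bound of Proposition \ref{prop:pseudoresolvents}(i) for $G_{\beta,\alpha}(\lambda)$, and an injectivity argument; you also obtain $D(L)\subset X^{\alpha}$ directly from the Laplace-transform representation of $(L-\lambda)^{-1}$ as $G_{\beta,\alpha}(\lambda)$. Both routes are legitimate; the paper's is shorter because the single bijectivity statement on $X^{\beta}$ already does all the work, while yours requires one extra smallness condition (harmless, since both are half-plane conditions, as you note) but is perhaps more transparent about why the reverse inclusion holds.

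One phrasing slip you should repair: when you argue that the right-hand side of the identity lies in the range of $(L-\lambda)^{-1}$, you justify $P(L_P-\lambda)^{-1}u_0\in X^{\beta}$ by saying $(L_P-\lambda)^{-1}u_0\in D(L)\subset X^{\alpha}$, which is circular, since $D(L_P)\subseteq D(L)$ is exactly what you are proving at that moment. The correct (and immediate) justification is that the identity from the theorem's proof is stated with $F_{\beta,\alpha}(\lambda)u_0=\int_0^\infty e^{\lambda t}S_P(t)u_0\,dt$, which lies in $X^{\alpha}$ by Proposition \ref{prop:pseudoresolvents}(ii) and coincides with $(L_P-\lambda)^{-1}u_0$ by consistency of the scale; with that substitution your argument goes through verbatim.
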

\begin{proof}
We follow the notations in the proof of Theorem
\ref{thm:Xbeta-analiticity-of-linearly-perturbed-semigroup}. Using
(\ref{eq:domain-of-L}) and (\ref{eq:domain-of-Lambda})
with $\gamma=\beta$
we have for negative  enough $Re(\lambda)$
\begin{displaymath}
D(L)= (L-\lambda)^{-1}(X^\beta) , \qquad D(\Lambda)=  (\Lambda-\lambda)^{-1}(X^\beta) =D(L_P).
\end{displaymath}

Now  (\ref{eq:to-mention-in-the-proof-below'}) ensures  that for $Re(\lambda)$ negative enough the norm  of
$P F_{\beta,\alpha}(\lambda)$ in $X^\beta$ is strictly
less than $1$ and then $I+ P F_{\beta,\alpha}(\lambda)$ is bijective from $X^\beta$ in
$X^\beta$. But then, from
(\ref{eq:crucial-equality-to-prove-analiticity''''}) we have
\begin{displaymath}
(\Lambda - \lambda)^{-1} =   (L - \lambda)^{-1} \left(I + P
  F_{\beta,\alpha}(\lambda)\right)
\end{displaymath}
and then $D(L) =  D(\Lambda)$.

From  the consistency of these operators in the spaces of the
scale, if $u_{0}\in X^{\beta}$ then
$ v_{0}= (\Lambda-\lambda)^{-1} u_0 = F_{\beta,\alpha}(\lambda)u_{0} \in
X^\alpha$ and therefore $D(L) \subset X^{\alpha}$.

Also,  from (\ref{eq:crucial-equality-to-prove-analiticity''''}) we
have
\begin{displaymath}
  v_0=(L - \lambda)^{-1} \left(I+ P F_{\beta,\alpha}(\lambda)\right)
  (\Lambda -\lambda)v_0 .
\end{displaymath}
 Applying in both sides  $L - \lambda$ we get
\begin{displaymath}
Lv_0 = \Lambda v_0 + P F_{\beta,\alpha}(\lambda)(\Lambda -\lambda)v_0
= \Lambda v_0 + P F_{\beta,\alpha}(\lambda) u_0 = Lv _{0} + P v_{0} .
\end{displaymath}

The last statement about the equation $u_{t} + L_{P}u=0$
follows now  from \cite[Proposition  2.1.1]{lunardi95:_analy}  since the
perturbed semigroup is analytic in $X^{\beta}$.
\end{proof}

In the case the perturbations have different ranges we have the
following result on the operator $L_{P}$ in Theorem
\ref{thm:Xalpha-analiticity-of-linearly-perturbed-semigroup}  that requires that the unperturbed semigroup is well
defined and analytic in a \emph{superspace} that contains all target
spaces of the perturbations.

\begin{proposition} \label{prop:LP-in-Xalpha}
In the case of Theorem
\ref{thm:Xalpha-analiticity-of-linearly-perturbed-semigroup} assume there is a Banach space $Z$ such that
$X^\alpha\subset Z$, $X^{\beta_1}\subset Z,\ldots,X^{\beta_n}\subset
Z$ continuously and $\{S(t)\}_{t\geq0}$ is an analytic semigroup in
$Z$ with sectorial generator so that $S(t)=e^{-\mathscr{L} t}$ in
$Z$.

Then
\begin{displaymath}
  D_{X^{\alpha}}(L_{P}) \subset  D_{Z}(\mathscr{L})
\end{displaymath}
and for $v\in D_{X^{\alpha}}(L_{P})$
\begin{displaymath}
  L_{P} v = \mathscr{L} v -P v .
\end{displaymath}

Finally for $u_{0}\in X^{\alpha}$, $u(t) =S_P(t) u_0$, $t>0$,
satisfies in $X^{\alpha}$
\begin{displaymath}
  u_{t} + (\mathscr{L} u -\sum_{i=1}^n P_iu) =0 , \quad t>0.
\end{displaymath}

\end{proposition}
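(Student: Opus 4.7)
The plan is to mimic the proof of Proposition \ref{prop:LP-in-Xbeta}, but exploiting the common superspace $Z$ to play the role previously played by $X^\beta$, so that $\mathscr{L}$ can receive, in one go, all the perturbation terms $P_i v$, each of which lives in a different space $X^{\beta_i}$ but all of which sit continuously inside $Z$.

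First I would fix $u_0\in X^\alpha$ and, for $Re(\lambda)$ sufficiently negative (below $-k$ as in Proposition \ref{prop:pseudoresolvents}), consider $v_0 \mydef F_{\alpha,\alpha}(\lambda)u_0 = (L_P-\lambda)^{-1}u_0$, which by Theorem \ref{thm:Xalpha-analiticity-of-linearly-perturbed-semigroup} and (\ref{eq:resolvent-of-L}) lies in $D_{X^\alpha}(L_P)$. By Proposition \ref{prop:pseudoresolvents}(iii) with $\gamma=\alpha$, we have
\begin{equation*}
v_0 = G_{\alpha,\alpha}(\lambda) u_0 + \sum_{i=1}^n G_{\beta_i,\alpha}(\lambda)\, P_i v_0.
\end{equation*}
The key identification is that since $\{S(t)\}_{t\geq0}$ is analytic in $Z$ with sectorial generator $-\mathscr{L}$, formula (\ref{eq:resolvent-of-L}) gives $(\mathscr{L}-\lambda)^{-1}z=\int_0^\infty e^{\lambda t}S(t)z\,dt$ for every $z\in Z$ and suitably negative $Re(\lambda)$. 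By the consistency of the family $\{S(t)\}_{t\geq0}$ across the spaces of the scale and $Z$ (which holds since $X^\alpha,X^{\beta_i}\subset Z$ continuously), the integrals defining $G_{\alpha,\alpha}(\lambda)u_0$ and $G_{\beta_i,\alpha}(\lambda)P_iv_0$ coincide, viewed as elements of $Z$, with $(\mathscr{L}-\lambda)^{-1}u_0$ and $(\mathscr{L}-\lambda)^{-1}(P_iv_0)$ respectively.

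Therefore, in $Z$,
\begin{equation*}
v_0 = (\mathscr{L}-\lambda)^{-1}\Bigl(u_0+\sum_{i=1}^n P_i v_0\Bigr),
\end{equation*}
which immediately gives $v_0\in D_Z(\mathscr{L})$, hence $D_{X^\alpha}(L_P)\subset D_Z(\mathscr{L})$ (since every element of $D_{X^\alpha}(L_P)$ is of the form $v_0=(L_P-\lambda)^{-1}u_0$ by (\ref{eq:domain-of-L}) applied to $L_P$). Applying $\mathscr{L}-\lambda$ to both sides yields $(\mathscr{L}-\lambda)v_0 = u_0+\sum_i P_i v_0$, and since $u_0=(L_P-\lambda)v_0$, the $\lambda v_0$ terms cancel and we obtain $L_P v_0 = \mathscr{L} v_0 - \sum_{i=1}^n P_i v_0$, as claimed.

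For the final PDE statement, since $\{S_P(t)\}_{t\geq0}$ is analytic in $X^\alpha$ with sectorial generator $-L_P$ by Theorem \ref{thm:Xalpha-analiticity-of-linearly-perturbed-semigroup}, \cite[Proposition 2.1.1]{lunardi95:_analy} gives $u(t)=S_P(t)u_0\in D_{X^\alpha}(L_P)$ for all $t>0$ and $u_t+L_P u=0$ in $X^\alpha$. Substituting the identity for $L_P$ just proved produces $u_t + \mathscr{L} u - \sum_{i=1}^n P_i u = 0$ in $X^\alpha$. The main obstacle in carrying out this plan is verifying cleanly that the pseudoresolvent integrals $G_{\alpha,\alpha}(\lambda)$ and $G_{\beta_i,\alpha}(\lambda)$, which a priori take values in $X^\alpha$, coincide in $Z$ with the resolvent $(\mathscr{L}-\lambda)^{-1}$; this requires invoking the topological consistency built into Definition \ref{def:linear_mappings_on_scale} and the continuous embeddings $X^\alpha,X^{\beta_i}\hookrightarrow Z$ to commute the integral with the embedding. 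Everything else is algebra on the already-established Duhamel identity.
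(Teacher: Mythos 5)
Your proposal is correct and follows essentially the same route as the paper: both start from the pseudoresolvent identity of Proposition \ref{prop:pseudoresolvents}(iii) with $\gamma=\alpha$, identify $G_{\alpha,\alpha}(\lambda)$ and $G_{\beta_i,\alpha}(\lambda)$ with $(\mathscr{L}-\lambda)^{-1}$ in $Z$ by consistency and the continuous embeddings, deduce the domain inclusion and the formula $L_Pv=\mathscr{L}v-\sum_i P_iv$ by applying $\mathscr{L}-\lambda$, and conclude the evolution equation from \cite[Proposition 2.1.1]{lunardi95:_analy}. The consistency point you flag as the main obstacle is exactly the step the paper handles with the same one-line appeal to consistency of the semigroup across the scale and $Z$.
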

\begin{proof}
By assumption,  for all negative enough $Re(\lambda)$ and $z_0 \in  Z$
we have
\begin{displaymath}
(\mathscr{L} - \lambda)^{-1} z_0 = \int_{0}^{\infty} e^{\lambda t}
S(t)z_0 \, dt.
\end{displaymath}

By consistency, for $Re(\lambda)$  sufficiently negative and  $u_0\in
X^{\beta_i} \subset Z$, we have $G_{\beta_i,\alpha}(\lambda) u_0 =
(L - \lambda)^{-1} u_0 = (\mathscr{L} - \lambda)^{-1} u_0$,
see (\ref{eq:Gbetaigamma0(lambda)}). Then
(\ref{eq:crucial-equality-to-prove-analiticity''}) reads,  for $u_0\in
X^{\alpha}$,
\begin{equation}
  \label{eq:from-{eq:crucial-equality-to-prove-analiticity''}}
(\Lambda -\lambda)^{-1} u_0= (\mathscr{L} - \lambda)^{-1} u_0 +
\sum_{i=1}^n (\mathscr{L} - \lambda)^{-1} P_i (\Lambda
-\lambda)^{-1}u_0 .
\end{equation}
Since $D(\Lambda) = (\Lambda-\lambda)^{-1}(X^\alpha) = D_{X^{\alpha}}(L_{P})$ in
$X^\alpha$  and  $(\mathscr{L} - \lambda)^{-1}(X^\alpha), (\mathscr{L}
- \lambda)^{-1}(X^\beta_i) \subset D_{Z}(\mathscr{L})$ for $i=1,\ldots,n$,
we get $ D_{X^{\alpha}}(L_{P}) \subset  D_{Z}(\mathscr{L})$.

Now for  $u_0\in X^\alpha$ and  $v_0 = (\Lambda -\lambda)^{-1}
u_0 \in D_{X^{\alpha}}(L_{P}) $, from
(\ref{eq:from-{eq:crucial-equality-to-prove-analiticity''}})  we get
\begin{displaymath}
v_0= (\mathscr{L} - \lambda)^{-1} (\Lambda -\lambda) v_0 +
\sum_{i=1}^n (\mathscr{L} - \lambda)^{-1} P_i v_0.
\end{displaymath}
After applying $\mathscr{L} - \lambda$ to both sides of the last
equality above and using that $L_P=\Lambda$, we get $L_P
v_0=\mathscr{L} v_0 -\sum_{i=1}^n P_i v_0$.

The last statement about the equation $u_{t} + L_{P}u=0$
follows now  from \cite[Proposition  2.1.1]{lunardi95:_analy}  since the
perturbed semigroup is analytic in $X^{\alpha}$.
\end{proof}

\section{Linear equation with Morrey potential}
\label{sec:linear-eq-with-Morrey-potential(s)}

In this section we will use the approach from Section \ref{sec:abstract-approach},
to perturb the   semigroup $\{S_{\mu}(t)\}_{t\geq0}$, $0<\mu\leq 1$
associated with the homogeneous problem (\ref{eq:linear-e-eq}) in the Morrey scale.

For this, we first represent the Morrey spaces $\{M^{p,\ell}(\R^{N})\}_{p,\ell}$, $1\leq p\leq \infty$, $0<\ell \leq
N$ (where, for $p=1$,  we can even replace  $M^{1,\ell}(\R^N)$ by
$\mathcal{M}^{\ell}(\R^N)$) in a more convenient way than in
(\ref{eq:Morrey_scale-2m}) as follows: we write
$X^{\gamma}=M^{p,\ell}(\R^{N})$ (or $\mathcal{M}^{\ell}(\R^N)$ if $p=1$) with
\begin{equation}\label{eq:parameters_4_Morrey}
(p,\ell) \longmapsto \gamma= \gamma (p,\ell)  = \left(\frac{1}{p},\frac{\ell}{2m\mu p}\right) \in \mathbb{J}
\end{equation}
with
\begin{equation}  \label{eq:interpretation-of-Morrey_scale-2m}
     \mathbb{J} =  \mathbb{J}_{*}  \cup \{(0,0)\},  \qquad   \mathbb{J}_{*} =\{(\gamma_1,\gamma_2)\in (0,1]\times
    \Big(0,\frac{N}{2m\mu}\Big] \colon \ \frac{\gamma_2}{\gamma_1}
    \leq \frac{N}{2m\mu}\}
\end{equation}
which is a planar triangle with vertices $(0,0)$, $(0,1)$ and
$\left(1,\frac{N}{2m\mu}\right)$, so all points in
$\gamma= (\gamma_1,\gamma_2) \in \mathbb{J}_{*}$, have slopes $0\leq
\frac{\gamma_2}{\gamma_1} =\frac{\ell}{2m\mu p}\leq
  \frac{N}{2m\mu}$. Also notice that for $p=\infty$ all Morrey spaces $M^{\infty,\ell}(\R^{N})$
  are equal to $L^{\infty}(\R^{N})$ and they are mapped by
  (\ref{eq:parameters_4_Morrey}) into
  $(0,0)$. For any $(\gamma_1,\gamma_2) \in
  \mathbb{J}_{*}$ there exist a unique $1\leq p<\infty$
  and $0<\ell\leq N$
  such that $X^{\gamma}=M^{p,\ell}(\R^{N})$.

\subsection{One perturbation}
\label{sec:one-perturbation}

For simplicity in the exposition we will first consider below only one
perturbation given by the multiplication operator by a given potential
as in Section \ref{sec:morrey-potential}, see (\ref{eq:V-in-Morrey}),
\begin{equation}  \label{eq:Morrey_potential}
 V\in M^{p_0,\ell_0}(\R^N) \quad \text{ for  $1\leq p_0\leq \infty$,
   \   $\ell_0\in(0,N]$}
\end{equation}
and so we will show that
\begin{equation}\label{eq:in-Sec-6-linear-e-eq-with-potential}
  \begin{cases}
    u_{t} + A_0^\mu u = V(x)u , &  t>0, \ x\in \R^N , \\
    u(0,x)=u_0(x), & x\in \R^N
  \end{cases}
\end{equation}
defines a semigroup in Morrey spaces possessing suitable smoothing and
analyticity properties.

Abusing of the notations we will denote by $V$ itself the
multiplication operator by $V$ in Morrey spaces.

The next result translates the smoothing of
the semigroup $\{S_{\mu}(t)\}_{t\geq0}$, $0<\mu\leq 1$, in Section
\ref{sec:homogeneous_equation_in_Morrey} and the properties of the multiplication
operator in Lemma \ref{lem:crucial-inequality}  into the parameters in
(\ref{eq:parameters_4_Morrey}), (\ref{eq:interpretation-of-Morrey_scale-2m}).

\begin{lemma}
  \label{lem:abstract_setup_4_Morrey}

  \begin{enumerate}

\item
  The semigroup $\{S_\mu(t)\}_{t\geq0}$ is a
    semigroup in the scale $\{X^{\gamma}\}_{\gamma \in \mathbb{J}}$ as
    in (\ref{eq:interpretation-of-Morrey_scale-2m}) as in Definition
    \ref{def:linear_mappings_on_scale} and moreover
    $\gamma \stackrel{S_\mu(t)}{\leadsto} \tilde{\gamma}$ provided
    that
    \begin{equation} \label{eq:smoothing_morrey_gamma_gamma'}
      \tilde{\gamma}_2\leq \gamma_2 \quad \text{ and } \quad
      \frac{\tilde{\gamma}_2}{\tilde{\gamma}_1} \leq
      \frac{\gamma_2}{\gamma_1}.
    \end{equation}

    Moreover,
    \begin{equation} \label{eq:estimates_Mpl-Mqs_abstract}
      \|S_{\mu}(t) \|_{ \mathcal{L}(X^\gamma,X^{\tilde{\gamma}})} =
      \frac{c}{t^{d(\tilde{\gamma},\gamma)}} \quad t>0 ,
    \end{equation}
    with
    $d(\tilde{\gamma},\gamma) =
    \mathtt{r}(\gamma')-\mathtt{r}(\gamma)\geq 0$ and regularity
    mapping
    \begin{displaymath}
\mathtt{r}(\gamma) =-\gamma_2= -\frac{\ell}{2m\mu p} .
\end{displaymath}

Finally for $\gamma \in
\mathbb{J}$
the semigroup is analytic in $X^{\gamma}$ with the additional restriction that $\gamma_{1}<1$ if $\mu=1$.

  \item
    The assumption (\ref{eq:Morrey_potential}) on $V$
    reads
    \begin{displaymath}
      \text{
        $V\in X^{\gamma^0}$,  \quad  $\gamma^0
        =\left(\frac{1}{p_{0}},\frac{\ell_{0}}{2m\mu p_{0}}\right) \in
        \mathbb{J}$}.
          \end{displaymath}

    \item
      For $\alpha \in \mathbb{J}$, the multiplication
    operator defined by $V$ in Morrey spaces is linear and bounded
    \begin{displaymath}
    V: X^{\alpha} \to
      X^{\beta}, \quad \beta \in \mathbb{J}, \quad \beta = \alpha +
      \gamma^{0}
    \end{displaymath}
    provided
    \begin{displaymath}
      \alpha_{1}+ \gamma_{1}^{0} \leq 1.
    \end{displaymath}

  \end{enumerate}
\end{lemma}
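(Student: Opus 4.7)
The proof is a bookkeeping exercise that translates the quantitative results of Sections \ref{sec:homogeneous_equation_in_Morrey} and \ref{sec:morrey-potential} into the abstract language of Section \ref{sec:abstract-approach} via the change of variables (\ref{eq:parameters_4_Morrey}). Each of the three items corresponds to one previously-established fact.

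For (i), I would write $\gamma = (1/p, \ell/(2m\mu p))$ and $\tilde\gamma = (1/q, s/(2m\mu q))$, and observe that the slopes satisfy $\gamma_2/\gamma_1 = \ell/(2m\mu)$ and $\tilde\gamma_2/\tilde\gamma_1 = s/(2m\mu)$. Hence the conditions $s \leq \ell$ and $s/q \leq \ell/p$ appearing in Proposition \ref{prop:2nd-about-{eq:linear-e-eq}} become precisely $\tilde\gamma_2/\tilde\gamma_1 \leq \gamma_2/\gamma_1$ and $\tilde\gamma_2 \leq \gamma_2$, that is (\ref{eq:smoothing_morrey_gamma_gamma'}). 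A direct computation gives $\frac{1}{2m\mu}(\ell/p - s/q) = \gamma_2 - \tilde\gamma_2 = d(\tilde\gamma, \gamma)$ when one sets $\mathtt{r}(\gamma) = -\gamma_2$, yielding (\ref{eq:estimates_Mpl-Mqs_abstract}). The remaining requirements of Definition \ref{def:linear_mappings_on_scale}(iii) are the semigroup property in each $X^\gamma$, which is Proposition \ref{prop:1st-about-{eq:linear-e-eq}}(i), and the continuous smoothing $\gamma \stackrel{S_\mu(t)}{\leadsto} \gamma$, which reduces to Proposition \ref{prop:about-time-continuity} with $(q,s) = (p,\ell)$. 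The analyticity claim is Proposition \ref{prop:1st-about-{eq:linear-e-eq}}(ii), noting that $p > 1$ corresponds to $\gamma_1 < 1$ while the boundary $p = \infty$ to $\gamma = (0,0)$.

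Part (ii) is immediate from (\ref{eq:Morrey_potential}) and (\ref{eq:parameters_4_Morrey}), once one verifies $\gamma^0 \in \mathbb{J}$: its slope is $\ell_0/(2m\mu)$, which lies in $[0, N/(2m\mu)]$ since $\ell_0 \in (0,N]$. For (iii), I would set $X^\alpha = M^{w,\kappa}$ and $X^\beta = M^{z,\nu}$ and apply Lemma \ref{lem:crucial-inequality}. Its prescriptions $1/z = 1/w + 1/p_0$ and $\nu/z = \kappa/w + \ell_0/p_0$ become $\beta_1 = \alpha_1 + \gamma_1^0$ (directly) and $\beta_2 = \alpha_2 + \gamma_2^0$ (after dividing the second relation by $2m\mu$), so $\beta = \alpha + \gamma^0$. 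The admissibility hypothesis $w \in [p_0', \infty]$ rewrites as $1/w + 1/p_0 \leq 1$, that is the stated $\alpha_1 + \gamma_1^0 \leq 1$, while the norm bound is precisely (\ref{eq:action-of-PV}). Finally $\beta \in \mathbb{J}$: $\beta_1 \leq 1$ is the hypothesis, and $\beta_2/\beta_1 = \nu/(2m\mu)$, where $\nu = (\kappa/w + \ell_0/p_0)/(1/w + 1/p_0)$ is a weighted average of $\kappa$ and $\ell_0$, hence at most $N$.

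The main point of care, rather than a genuine obstacle, is the handling of the boundary situations built into (\ref{eq:interpretation-of-Morrey_scale-2m}): the collapse of $M^{\infty,\ell}$ to the single point $(0,0)$, the replacement of $M^{1,\ell}$ by $\mathcal{M}^\ell$ when $p=1$, and the $\mu = 1$ caveat in the analyticity statement. Each of these must be tracked coordinate by coordinate through (\ref{eq:parameters_4_Morrey}), but no surprises arise.
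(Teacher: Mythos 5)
Your proposal is correct and follows essentially the same route as the paper: part (i) is the translation of Propositions \ref{prop:1st-about-{eq:linear-e-eq}}, \ref{prop:2nd-about-{eq:linear-e-eq}} and \ref{prop:about-time-continuity} through the parameter map (\ref{eq:parameters_4_Morrey}), part (ii) is immediate, and part (iii) is Lemma \ref{lem:crucial-inequality} plus the verification that $\beta=\alpha+\gamma^{0}\in\mathbb{J}$ exactly when $\alpha_{1}+\gamma^{0}_{1}\leq 1$. Your weighted-average argument for the slope of $\beta$ is just a rephrasing of the paper's mediant inequality $\frac{\alpha_{2}+\gamma^{0}_{2}}{\alpha_{1}+\gamma^{0}_{1}}\leq\frac{N}{2m\mu}$, so there is no substantive difference.
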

\begin{proof}
Part  (i)  follows from Propositions
\ref{prop:2nd-about-{eq:linear-e-eq}} and
\ref{prop:about-time-continuity}, where analyticity is from Proposition \ref{prop:1st-about-{eq:linear-e-eq}}.

  (ii) This  is by (\ref{eq:parameters_4_Morrey}). Part (iii) is by Lemma
  \ref{lem:crucial-inequality} and since
  $\frac{\alpha_{2}}{\alpha_{1}},
  \frac{\gamma^{0}_{2}}{\gamma^{0}_{1}}\leq \frac{N}{2m\mu}$ we have
  \begin{displaymath}
    \frac{\beta_{2}}{\beta_{1}} =
    \frac{\alpha_{2}+\gamma^{0}_{2}}{\alpha_{1}+ \gamma^{0}_{1}} \leq
    \frac{\frac{N}{2m\mu}(\alpha_{1}+\gamma^{0}_{1})}{\alpha_{1}+
      \gamma^{0}_{1}}  = \frac{N}{2m\mu}.
  \end{displaymath}
  Therefore $\beta = \alpha +\gamma^{0} \in \mathbb{J}$ if and only if
  $\alpha_{1}+ \gamma^{0}_{1} \leq 1$.
\end{proof}

Using the results in Section \ref{sec:abstract-approach}, specifically
Theorem \ref{thm:linear-perturbed-semigroup}, leads us to
the following result.

\begin{theorem}\label{thm:perturbation-by-a-potential}

Let $A_0$ be as in (\ref{eq:operator-A0}), $\mu\in(0,1]$  and
assume $V$ is as in (\ref{eq:V-in-Morrey}), that is, $V\in
M^{p_0,\ell_0}(\R^N)$  for  $1\leq p_0\leq     \infty$, $0<\ell_0\leq
N$ and moreover assume
\begin{displaymath}
\kappa_{0}\mydef     \frac{\ell_{0}}{2m\mu p_{0}}<1.
\end{displaymath}

\begin{enumerate}
\item
  {\bf (The perturbed semigroup)} For
  $1\leq p \leq \infty$ and $0<\ell \leq \ell_0$,
  (\ref{eq:in-Sec-6-linear-e-eq-with-potential}) defines a semigroup
  $\{S_{\mu,V}(t)\}_{t\geq0}$ in $M^{p,\ell}(\R^N)$ such that for
  $u_0\in M^{p,\ell}(\R^N)$, $u(t) \mydef   S_{\mu,V}(t)u_0 $
  satisfies
  \begin{displaymath}
    u(t) = S_{\mu}(t) u_{0} + \int_{0}^{t}  S_{\mu}(t-s) V u(s) \, ds,
    \quad  t>0,
  \end{displaymath}
 \begin{displaymath}
    \lim_{t\to 0^+} \| u(t)- S_{\mu}(t)
    u_0\|_{M^{p,\ell}(\R^N)}= 0 .
  \end{displaymath}
  Also,
  \begin{equation}\label{eq:Mp,ell-estimate-of-S{mu,V}(t)}
    \|S_{\mu,V}(t) \|_{\mathcal{L}(M^{p,\ell}(\R^N))} \leq C e^{\omega
      t}, \quad t\geq 0
  \end{equation}
  for some constants $C$, $\omega$.
  For $p=1$ all the above  remains true if we replace $M^{1,\ell}(\R^N)$ by
  $\mathcal{M}^{\ell}(\R^N)$.

  Moreover, when $p\in[p_0',\infty]$ the exponent in
  (\ref{eq:Mp,ell-estimate-of-S{mu,V}(t)}) can be taken as
  \begin{equation}\label{eq:specifying-omega}
    \omega =c \| V\|_{M^{p_0,\ell_0}(\R^N)}^{\frac{1}{1-\kappa_{0}}},
  \end{equation}
  for some positive constant $c=c(p,\ell)$.

\item
  {\bf (Smoothing properties)} For
$1\leq p \leq \infty $, $0<\ell \leq \ell_0$, if   $1\leq q\leq \infty$ and
$0< s\leq \ell$ satisfy  $\frac{s}{q}\leq \frac{\ell}{p}$ we have,
for some constants $a$, $b$,
\begin{equation}\label{eq:estimate-for-SV(t)}
  \| S_{\mu,V}(t)\|_{\mathcal{L}(M^{p,\ell}(\R^N), M^{q,s}(\R^N))}
  \leq
  \frac{be^{at}}{t^{\frac{1}{2m\mu}(\frac{\ell}{p}-\frac{s}{q})}},
  \quad t>0
\end{equation}
and
\begin{equation}\label{eq:(t,u0)-continuity-of-SV(t)}
  (0,\infty)\times M^{p,\ell}(\R^N) \ni (t,u_0) \to S_{\mu,V}(t) u_0 \in M^{q,s}(\R^{N})
  \ \text{ is continuous}.
\end{equation}
For $p=1$ all these  remain  true if we replace $M^{1,\ell}(\R^N)$ by
$\mathcal{M}^{\ell}(\R^N)$.

Moreover, if $p\in[p_0',\infty]$, or $q\in[p_0',\infty]$, then (\ref{eq:estimate-for-SV(t)})
holds with any $a$ satisfying
\begin{equation}\label{eq:specifying-a}
  a= c \| V\|_{M^{p_0,\ell_0}(\R^N)} ^{\frac{1}{1-\kappa_{0}}}
\end{equation}
for some positive constant $c=c(p,q,\ell,s)$.

\item
  {\bf (Analyticity)}
  For either

  \begin{enumerate}
  \item
    $p_0'\leq p \leq\infty$ and $0<\ell\leq\ell_0$, or

  \item
        $1\leq p\leq p_0$ and $0<\ell\leq \ell_0$ satisfying
    $\frac{\ell_0}{p_0} \leq \frac{\ell}{p}$,
  \end{enumerate}
the semigroup
$\{S_{\mu,V}(t)\}_{t\geq0}$ is analytic in $M^{p,\ell}(\R^N)$ with
sectorial generator with the additional restriction that $p\not=1$
when $\mu=1$.

\item
    {\bf (The perturbed equation)}
For $1< p \leq p_0$ and $0<\ell\leq\ell_0$ satisfying
$\frac{\ell_0}{p_0}\leq  \frac{\ell}{p}$,
we have that $u=S_{\mu,V}(\cdot) u_0$ with $u_0\in M^{p,\ell}(\R^N)$
satisfies, for $t>0$,
\begin{displaymath}
u_{t} + A_0^\mu u  = Vu  \quad \text{in
$M^{p,\ell}(\R^N)$}.
\end{displaymath}

\end{enumerate}

\end{theorem}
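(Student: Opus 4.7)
The plan is to apply the abstract framework of Section~\ref{sec:abstract-approach} verbatim, using the dictionary from Lemma~\ref{lem:abstract_setup_4_Morrey}: the single perturbation is $P=V$, with $\gamma^0=(1/p_0,\ell_0/(2m\mu p_0))$, and the hypothesis $\kappa_0<1$ says exactly that $d(\alpha,\alpha+\gamma^0)=\gamma^0_2=\kappa_0<1$, so $V$ qualifies as an admissible perturbation in Definition~\ref{def:perturbations_in_scale} whenever the base space $\alpha\in\mathbb{J}$ is chosen with $\alpha_1+1/p_0\le 1$. The whole proof is then an accounting exercise: for each $(p,\ell)$ we must specify a base $\alpha\in\mathbb{J}$ so that (a) $V\in\mathcal{L}(X^\alpha,X^{\alpha+\gamma^0})$, (b) $\gamma=\gamma(p,\ell)\in\mathcal{E}_\alpha$, and, when relevant, (c) $\gamma'=\gamma(q,s)\in\mathcal{R}_{\alpha+\gamma^0}$ with $\gamma\stackrel{S_\mu(t)}{\leadsto}\gamma'$ in the Morrey sense of (\ref{eq:estimates_Mpl-Mqs}).

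\textbf{Item (1).} When $p\in[p_0',\infty]$ take $\alpha=\gamma$. Then $\alpha_1+\gamma^0_1=1/p+1/p_0\le1$, so $V\in\mathcal{L}(X^\alpha,X^{\alpha+\gamma^0})$ by Lemma~\ref{lem:abstract_setup_4_Morrey}(iii); automatically $\alpha\in\mathcal{E}_\alpha\cap\mathcal{R}_{\alpha+\gamma^0}$ (the latter is where $\kappa_0<1$ is used). Theorem~\ref{thm:linear-perturbed-semigroup} gives $\{S_{\mu,V}(t)\}$ as a semigroup in $X^\gamma$ satisfying the integral equation, the exponential bound (\ref{eq:exponential-bound-for-SV(t)-n-Xgamma}), and the convergence (\ref{eq:SV(t) u0 - S(t)u0-as-t-to-0}); combining the latter with Proposition~\ref{prop:initial_data} yields the attainment of $u_0$. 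For the explicit $\omega$ in (\ref{eq:specifying-omega}), Proposition~\ref{prop:exponential-bound-for-SV(t)-in-Xalpha} applies: $\omega_\alpha=0$ by Proposition~\ref{prop:2nd-about-{eq:linear-e-eq}}, and $\theta_P=c\|V\|_{M^{p_0,\ell_0}}^{1/(1-\kappa_0)}$. When $p\in[1,p_0')$ the choice $\alpha=\gamma$ is forbidden (it fails $\alpha_1+\gamma^0_1\le1$), so we take instead $\alpha$ with $\alpha_1=1/p_0'$ and $\alpha_2=\min\{\gamma_2,\ell/(2m\mu p_0')\}$, chosen to lie on the line through the origin of the same slope as $\gamma$; this places $\alpha$ in $\mathbb{J}$ with $V\in\mathcal{L}(X^\alpha,X^{\alpha+\gamma^0})$ and $\gamma\stackrel{S_\mu(t)}{\leadsto}\alpha$. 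If $\gamma_2-\alpha_2<1$ then $\gamma\in\mathcal{E}_\alpha$ and Theorem~\ref{thm:linear-perturbed-semigroup} applies; otherwise we insert finitely many intermediate base spaces with consecutive regularity gaps less than $1$ and iterate via Proposition~\ref{prop:iterated-perturbations}, whose consistency in the scale guarantees that the construction does not depend on this intermediate chain. The exponential bound in $M^{p,\ell}$ follows from Lemma~\ref{lem:estimates4semigroups_in_scales}.

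\textbf{Item (2).} The smoothing estimate (\ref{eq:estimate-for-SV(t)}) is (\ref{eq:another-exponential-Xgamma-Xgamma'-estimates}) from Theorem~\ref{thm:linear-perturbed-semigroup} after translating $d(\gamma',\gamma)=(1/(2m\mu))(\ell/p-s/q)$ and verifying $\gamma\stackrel{S_\mu(t)}{\dashrightarrow}\gamma'$ from the hypotheses $s\le\ell$ and $s/q\le\ell/p$ together with Proposition~\ref{prop:2nd-about-{eq:linear-e-eq}}; for the refined $a$ in (\ref{eq:specifying-a}) (when $p$ or $q$ falls in $[p_0',\infty]$) we invoke Proposition~\ref{prop:exponential-bound-for-SV(t)-in-Xalpha} with $\alpha$ equal to that endpoint space so that $\omega_\alpha=0$. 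Joint continuity (\ref{eq:(t,u0)-continuity-of-SV(t)}) is Corollary~\ref{cor:continuity-of-SP(t)u0-with-respect-to-a-pair-of-argument}, again with $\gamma,\gamma'$ as above.

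\textbf{Items (3) and (4).} For analyticity case (a), $p\in[p_0',\infty]$, apply Theorem~\ref{thm:Xalpha-analiticity-of-linearly-perturbed-semigroup} with $\alpha=\gamma$; the analyticity hypothesis on $\{S_\mu(t)\}$ in $X^\alpha$ is Proposition~\ref{prop:1st-about-{eq:linear-e-eq}}(ii), whose sole exclusion ($\mu=1$ and $p=1$) is reflected in the statement. For case (b), $1\le p\le p_0$ with $\ell_0/p_0\le\ell/p$ (equivalently $\kappa_0\le\gamma_2$), set $\beta=\gamma$ and $\alpha=\gamma-\gamma^0$; the constraints on $p,\ell$ give $\alpha_1,\alpha_2\ge 0$, and the slope bound $\alpha_2/\alpha_1\le N/(2m\mu)$ follows from the fact that both $\gamma$ and $\gamma^0$ have slope in $[0,N/(2m\mu)]$ so the line segment from $\gamma^0$ to $\gamma$ has slope in that interval too, placing $\alpha$ in $\mathbb{J}$. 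Now $V\in\mathcal{L}(X^\alpha,X^\beta)$ and $\{S_\mu(t)\}$ is analytic in $X^\beta$ (again by Proposition~\ref{prop:1st-about-{eq:linear-e-eq}}, except at $\mu=1$, $p=1$), so Theorem~\ref{thm:Xbeta-analiticity-of-linearly-perturbed-semigroup} gives analyticity of $\{S_{\mu,V}(t)\}$ in $M^{p,\ell}$; for item (4), restricting further to $p>1$, Proposition~\ref{prop:LP-in-Xbeta} identifies the generator as $A_0^\mu-V$ (in the sense that $D(L_P)=D(A_0^\mu)$ and $L_P v=A_0^\mu v-Vv$), and yields the PDE $u_t+A_0^\mu u=Vu$ in $M^{p,\ell}$. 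The main obstacle throughout is the bookkeeping of the correct $\alpha$ in each $(p,\ell)$ regime—particularly verifying the slope constraint keeping $\alpha\in\mathbb{J}_*$ and, in item (1) for small $p$, ensuring the regularity gap $\gamma_2-\alpha_2<1$ (resorting to iterated perturbations otherwise).
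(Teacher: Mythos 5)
Your overall strategy is the paper's: translate everything through Lemma \ref{lem:abstract_setup_4_Morrey} and feed it into the abstract machinery of Section \ref{sec:abstract-approach}. But there is a genuine gap in your item (2). The smoothing estimate \eqref{eq:another-exponential-Xgamma-Xgamma'-estimates} of Theorem \ref{thm:linear-perturbed-semigroup} (equivalently \eqref{eq:Xgamma-Xgamma'-estimates} of Theorem \ref{thm:Xgamma-Xgamma'-estimates}) is only available for $\gamma'\in\mathcal{R}_{\beta}$, and by \eqref{eq:regularity_set_beta} this forces $\mathtt{r}(\gamma')-\mathtt{r}(\beta)<1$, i.e. $\gamma'_2>\alpha_2+\gamma^0_2-1$. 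Since $\gamma\in\mathcal{E}_\alpha$ also forces $\alpha_2\le\gamma_2$, a single application of the abstract theorem only yields target spaces with $\gamma_2-\gamma'_2<1$, that is $\frac{1}{2m\mu}\bigl(\frac{\ell}{p}-\frac{s}{q}\bigr)<1$ (up to the $\kappa_0$ correction). The statement you are proving allows this quantity to be as large as $\frac{N}{2m\mu}$ (e.g. $p=1$, $\ell=\ell_0=N$, $q=\infty$ with $N>2m\mu$), so you cannot simply "verify $\gamma\stackrel{S_\mu(t)}{\dashrightarrow}\gamma'$ from the hypotheses" and cite the abstract estimate: the hypothesis $\gamma'\in\mathcal{R}_\beta$ fails. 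The missing idea is a bootstrap: choose a finite chain $\gamma=\gamma^1,\ldots,\gamma^M=\gamma'$ with consecutive regularity gaps below the admissible threshold, apply the abstract smoothing to each consecutive pair, and glue the estimates using the semigroup property $S_{\mu,V}(t)=S_{\mu,V}(t/M)\circ\cdots\circ S_{\mu,V}(t/M)$ (this is the paper's Step 10, and it is also what makes the refined exponent \eqref{eq:specifying-a} survive the composition).

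Two smaller points. First, in item (1) for $p<p_0'$ your fallback ("insert intermediate base spaces and iterate via Proposition \ref{prop:iterated-perturbations}") invokes the wrong tool: that proposition concerns several perturbations sharing a common base space $\alpha$, not a chain of base spaces for one perturbation. Fortunately the fallback is never needed: with your choice $\alpha_1=1/p_0'$, $\alpha_2=\frac{\gamma_2}{\gamma_1}\alpha_1$, one has $\gamma_2-\alpha_2=\frac{\ell}{2m\mu}\bigl(\frac1p-1+\frac1{p_0}\bigr)\le\frac{\ell_0}{2m\mu p_0}=\kappa_0<1$ using $\ell\le\ell_0$, so $\gamma\in\mathcal{E}_\alpha$ always; you should verify this rather than hedge. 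Second, in item (3)(b) the claim that $\alpha=\gamma-\gamma^0$ has slope in $[0,N/(2m\mu)]$ "because both $\gamma$ and $\gamma^0$ do" is false as a general principle (the chord between two points of the triangle can be steeper than both rays); what saves you is $\ell\le\ell_0$, which gives $\gamma_2-\gamma^0_2\le\frac{\ell_0}{2m\mu}(\gamma_1-\gamma^0_1)$. Relatedly, admissibility of $V$ in Definition \ref{def:perturbations_in_scale} requires not only $\alpha_1+\gamma^0_1\le1$ but also $\beta\stackrel{S_\mu(t)}{\leadsto}\alpha$, which is the slope condition $\frac{\alpha_2}{\alpha_1}\le\frac{\gamma^0_2}{\gamma^0_1}$; this is exactly where the hypothesis $\ell\le\ell_0$ enters and it should be checked explicitly for each choice of $\alpha$.
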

\begin{proof}
(i) ({\bf The perturbed semigroup}) We show that we can use Theorem
\ref{thm:linear-perturbed-semigroup} in the setting of Lemma
\ref{lem:abstract_setup_4_Morrey}, whose notations  we use all the
time below.

For convenience, if $\gamma^0\not=(0,0)$, in what follows we will denote
$\mathbb{J}_{\gamma^{0}}$ the set of $\gamma \in \mathbb{J}$ that
satisfy
\begin{equation} \label{eq:bound-on-gamma2overgamma1}
\frac{\gamma_{2}}{\gamma_{1}} \leq \frac{\gamma^{0}_{2}}{
    \gamma^{0}_{1}}  =  \frac{\ell_{0}}{2m\mu}
\end{equation}
(that is, a triangle   of elements in $\mathbb{J}$ with slopes less or equal that of $\gamma^0$),
while  if
$\gamma^{0}=(0,0)$ then we will denote $\mathbb{J}_{(0,0)}=
\mathbb{J}$.

Then, in the following steps we are going to prove that for all
$\gamma \in \mathbb{J}_{\gamma^{0}}$ we can apply Theorem
\ref{thm:linear-perturbed-semigroup}.

\noindent {\bf Step 1.}
Assume first  $\gamma^0\not=(0,0)$.
For $\alpha, \beta\in \mathbb{J}$ with $\beta= \alpha+ \gamma^{0}$, the multiplication operator belongs to the class of
admissible perturbations $\mathscr{P}_{\beta,R}$, as in Definition
\ref{def:perturbations_in_scale}, if and only if
$\alpha_1\leq 1-\gamma^0_1$ and
\begin{displaymath}
  \gamma_2^0=\kappa_{0}\mydef  \frac{\ell_0}{2m\mu p_0}< 1 , \quad
  \|V\|_{M^{p_{0}\ell_{0}}(\R^{N})} \leq R, \qquad
  \frac{\alpha_{2}}{\alpha_{1}} \leq \frac{\gamma^{0}_{2}}{
    \gamma^{0}_{1}} ,
\end{displaymath}
 that is, $\alpha \in \mathbb{J}_{\gamma^{0}}$.

To see this, observe that after (iii) in  Lemma
\ref{lem:abstract_setup_4_Morrey} we must have $\alpha_1\leq
1-\gamma^0_1$ and   in Definition \ref{def:perturbations_in_scale} we
require $0\leq
  d(\alpha,\beta)=\mathtt{r}(\alpha)-\mathtt{r}(\beta)
  =\gamma^{0}_{2} =  \frac{\ell_0}{2m\mu p_0}<1$.
  On the other hand, from
  (\ref{eq:smoothing_morrey_gamma_gamma'}) the condition   $\beta
  \stackrel{_{S_{\mu}(t)}}{\leadsto} \alpha$ reads
\begin{displaymath}
    \alpha_2\leq \beta_2 =\alpha_{2}+ \gamma^{0}_{2}\quad \text{ and } \quad
    \frac{\alpha_2}{\alpha_1} \leq \frac{\beta_2}{\beta_1} =
    \frac{\alpha_{2}+ \gamma^{0}_{2}}{\alpha_{1}+ \gamma^{0}_{1}} .
\end{displaymath}
The former is always satisfied and the latter one is equivalent to  $
\frac{\alpha_{2}}{\alpha_{1}} \leq \frac{\gamma^{0}_{2}}{
  \gamma^{0}_{1}}$,  that is, $\alpha \in \mathbb{J}_{\gamma^{0}}$.

\medskip
\noindent {\bf Step 2.}
With the restrictions in Step 1, we
have that  $\gamma\in \mathcal{E}_\alpha$, as in Theorem
\ref{thm:existence-linear} if and only if $\gamma\in\mathbb{J}$ and
\begin{displaymath}
  \alpha_{2} \leq \gamma_{2} < \alpha_{2}+1, \qquad \frac{\alpha_{2}}{\alpha_{1}} \leq \frac{\gamma_{2}}{
  \gamma_{1}}.
\end{displaymath}
To see this, note that the conditions
$\gamma\stackrel{S_\mu(t)}{\leadsto} \alpha$, with
(\ref{eq:smoothing_morrey_gamma_gamma'}),  and $\mathtt{r}(\gamma)
\in(\mathtt{r}(\alpha)-1, \mathtt{r}(\alpha)]$ give the restrictions
above.

\medskip
\noindent {\bf Step 3.}
With the restrictions in Step 1, we
have that  $\gamma' \in \mathcal{R}_\beta$, as in
Theorem \ref{thm:Xgamma-Xgamma'-estimates}, see
(\ref{eq:regularity_set_beta}),  if and only if $\gamma' \in
\mathbb{J}$ and
\begin{displaymath}
    \gamma'_2\leq \beta_2 =\alpha_{2}+ \gamma^{0}_{2} \quad \text{ and } \quad
    \frac{\gamma'_2}{\gamma'_1} \leq \frac{\beta_2}{\beta_1} =
    \frac{\alpha_{2}+ \gamma^{0}_{2}}{\alpha_{1}+ \gamma^{0}_{1}}  \quad \text{ and } \quad
    \gamma'_{2} > \alpha_{2} - j_{0}
\end{displaymath}
with $j_{0} = 1-\gamma^{0}_{2}>0$.  In particular $\gamma' \in
\mathbb{J}_{\gamma^{0}}$, that is $\mathcal{R}_\beta \subset
\mathbb{J}_{\gamma^{0}}$.

Actually,  the first two conditions
stem  from $\beta \stackrel{S_\mu(t)}{\leadsto} \gamma'$  and
Lemma \ref{lem:abstract_setup_4_Morrey} and the third one
from the condition $\mathtt{r}(\gamma') \in[\mathtt{r}(\beta), \mathtt{r}(\beta)+1)$.

Also observe that since from Step 1 we have
$\frac{\alpha_2}{\alpha_1}\leq \frac{\gamma_2^0}{\gamma_1^0}$ then
$\frac{\alpha_2 +\gamma_2^0}{\alpha_1
  +\gamma_1^0}=\frac{\gamma_2^0}{\gamma_1^0}\frac{\frac{\alpha_2}{\gamma_2^0}+1}{\frac{\alpha_1}{\gamma_1^0}+1}
\leq \frac{\gamma_2^0}{\gamma_1^0}$. Therefore $\beta\in \mathbb{J}_{\gamma^{0}}$, and in particular we have
$\frac{\gamma'_2}{\gamma'_1} \leq \frac{\gamma_2^0}{\gamma_1^0}$ and
then $\gamma' \in \mathbb{J}_{\gamma^{0}}$.

\medskip
\noindent {\bf Step 4.}
With the restrictions in Step 1, we
have that $\gamma\in \Sigma_{\alpha,\beta} =\mathcal{E}_\alpha\cap\mathcal{R}_{\beta}$, as in Theorem
\ref{thm:linear-perturbed-semigroup},  if and only if
\begin{equation}
  \label{eq:we-altogether-need}
  \begin{cases}
    \alpha_1+\gamma^0_1\leq 1
\\
\alpha_2\leq \gamma_2 \leq \alpha_2 +\gamma_2^0
    \\
    \frac{\alpha_2}{\alpha_1} \leq \frac{\gamma_2}{\gamma_1} \leq
    \frac{\alpha_2 +\gamma_2^0}{\alpha_1 +\gamma_1^0} .
  \end{cases}
\end{equation}
In particular $\gamma \in \mathbb{J}_{\gamma^{0}}$, that is,
$\Sigma_{\alpha,\beta} \subset \mathbb{J}_{\gamma^{0}}$.

This   follows immediately from Steps 1 to 3.

\medskip
\noindent {\bf Step 5.}
Given $\gamma \in \mathbb{J}_{\gamma^{0}}$,  we show that we can produce $\alpha \in
\mathbb{J}$ and $\beta= \alpha +\gamma^{0}\in \mathbb{J}$ such that
$\gamma \in \Sigma_{\alpha, \beta}$ as in
(\ref{eq:we-altogether-need}) (so such $\alpha$, $\beta$ will belong to $\mathbb{J}_{\gamma^0}$). Hence for such $\gamma$, Theorem
\ref{thm:linear-perturbed-semigroup} applies.

To see this, assume first $\gamma \not = (0,0)$ and  we  take $\alpha$ with the same slope
than $\gamma$, that is $ \frac{\alpha_2}{\alpha_1} =
\frac{\gamma_2}{\gamma_1}$ so the third condition in
(\ref{eq:we-altogether-need}) is met.

Now we can furthermore take $\alpha_{2}= \gamma_{2}$ (so the second condition in
(\ref{eq:we-altogether-need}) is met), and therefore
$\alpha=\gamma$, provided $\gamma_{1}\leq 1 -\gamma^{0}_{1}$, which
comes from the first condition in (\ref{eq:we-altogether-need}).

If, on the other hand, $1 -\gamma^{0}_{1}<\gamma_{1}$, then we must
choose $\alpha_{2}= \frac{\gamma_2}{\gamma_1}\alpha_{1}$ with
$0\leq \alpha_{1} \leq 1 -\gamma^{0}_{1}$ from the  first condition in
(\ref{eq:we-altogether-need}), such that the second  condition in
(\ref{eq:we-altogether-need}) is satisfied, that is
$\frac{\gamma_2}{\gamma_1}\alpha_{1}\leq \gamma_{2} \leq
\frac{\gamma_2}{\gamma_1}\alpha_{1} + \gamma^{0}_{2}$.

Then we claim that choosing $\alpha_{1}= 1 -\gamma^{0}_{1}$ achieves
that. For this notice that, since $\gamma$ satisfies
(\ref{eq:bound-on-gamma2overgamma1}),
\begin{displaymath}
   \frac{\gamma_2}{\gamma_1}(1 -\gamma^{0}_{1})+ \gamma^{0}_{2}  =  \frac{\gamma_2}{\gamma_1}(1 -\gamma^{0}_{1})+
\gamma^{0}_{1}  \frac{\gamma^{0}_2}{\gamma^{0}_1} \geq  \frac{\gamma_2}{\gamma_1}(1 -\gamma^{0}_{1})+
\gamma^{0}_{1}  \frac{\gamma_2}{\gamma_1}=  \frac{\gamma_2}{\gamma_1}
\end{displaymath}
and $\frac{\gamma_2}{\gamma_1}\geq \gamma_{2}$ because $\gamma_{1}\leq
1$. On the other hand, $\frac{\gamma_2}{\gamma_1}\alpha_{1}\leq
\gamma_{2}$ is satisfied because $\alpha_{1} = 1-\gamma^{0}_{1} <
\gamma_{1}$.

Finally, if $\gamma = (0,0)$ we
take $\alpha =\gamma = (0,0)$ and (\ref{eq:we-altogether-need})  is
satisfied.

\medskip
\noindent {\bf Step 6.} Assume now $\gamma^{0}=(0,0)$, that is $V\in
L^{\infty}(\R^{N})$.
Then in Step 1
above we have $\alpha \in \mathbb{J}$ and $\beta = \alpha$, while in
Steps 2 to 4   we just get $\Sigma_{\alpha, \beta}=\{\alpha\}$. In Step 5 for
$\gamma \in \mathbb{J}$, we take $\alpha=
\gamma$. Then we can apply Theorem
\ref{thm:linear-perturbed-semigroup} as well in this case.

\medskip
\noindent {\bf Step 7.}
Finally, as seen in Step 5 above, if $\gamma_{1}\leq 1
-\gamma^{0}_{1}$ then we can take $\alpha =\gamma$ and then the exponential estimates on the perturbed semigroup with
exponent (\ref{eq:specifying-omega}),  follow
from Proposition \ref{prop:exponential-bound-for-SV(t)-in-Xalpha}
since from (\ref{eq:estimates_Mpl-Mqs_abstract}) in Lemma
\ref{lem:abstract_setup_4_Morrey} we have
(\ref{eq:estimates_to_perform_perturbation}) with $a=0$ and then we
get (\ref{eq:esimate_after_perturbation}) with $a=0$. In terms of the
original parameters of the Morrey scale, the case $\gamma_{1}\leq 1
-\gamma^{0}_{1}$  corresponds to $p\geq p_{0}'$.

\medskip
\noindent (ii) ({\bf Smoothing}) We will use  Theorem
  \ref{thm:Xgamma-Xgamma'-estimates}, Corollary
  \ref{cor:continuity-of-SP(t)u0-with-respect-to-a-pair-of-argument},
  or the second part of Theorem \ref{thm:linear-perturbed-semigroup}
to get the smoothing.

\medskip
\noindent {\bf Step 8.}
Given  $\alpha \in
\mathbb{J}$ and $\beta= \alpha +\gamma^{0}\in \mathbb{J}$ to apply Theorem
  \ref{thm:Xgamma-Xgamma'-estimates}, Corollary
  \ref{cor:continuity-of-SP(t)u0-with-respect-to-a-pair-of-argument}
  (or the second part of Theorem \ref{thm:linear-perturbed-semigroup})
  we need
$\gamma\in \mathcal{E}_\alpha$ (or $\gamma\in\Sigma_{\alpha,\beta}$) and  $\gamma'\in
\mathcal{R}_{\beta}$ such that
$\gamma\stackrel{_{S(t)}}{\leadsto} \gamma'$. Hence, by Steps 2 and 3
above and  Lemma
\ref{lem:abstract_setup_4_Morrey}, we need
\begin{displaymath}
    \gamma'_2\leq \gamma_2 \quad \text{ and } \quad
    \frac{\gamma'_2}{\gamma'_1} \leq
    \frac{\gamma_2}{\gamma_1}
\end{displaymath}
and then we would get
$\gamma\stackrel{_{S_{\mu, V}(t)}}{\leadsto} \gamma'$, provided
(\ref{eq:we-altogether-need}) and
\begin{equation}\label{eq:gamma'_4_regularization_morrey}
  \begin{split}
  0&\leq \gamma'_2
  \\
  \alpha_{2} - j_{0}&<   \gamma'_2 \leq \gamma_2 \leq \alpha_{2}+ \gamma^{0}_{2} \\
     \frac{\gamma'_2}{\gamma'_1} & \leq
    \frac{\gamma_2}{\gamma_1} \leq  \frac{\alpha_{2}+
      \gamma^{0}_{2}}{\alpha_{1}+ \gamma^{0}_{1}} .
  \end{split}
\end{equation}

\medskip
\noindent {\bf Step 9.}
Given $\gamma \in \mathbb{J}_{\gamma^0}$ we want to produce  $\alpha \in
\mathbb{J}$ (and $\beta= \alpha +\gamma^{0}\in \mathbb{J}$) such that
$\gamma \in \Sigma_{\alpha, \beta}$ as in
(\ref{eq:we-altogether-need}) and such that the set of $\gamma'$ in
(\ref{eq:gamma'_4_regularization_morrey}) is as large as
possible. We remark that such $\alpha$, $\beta$, $\gamma$, $\gamma'$ will belong to $\mathbb{J}_{\gamma^0}$.

As in Step 3,  we  take $\alpha$ with the same slope
than $\gamma$, that is $ \frac{\alpha_2}{\alpha_1} =
\frac{\gamma_2}{\gamma_1}$,  and then we want to take the smallest
$\alpha_{2}$ possible in (\ref{eq:we-altogether-need})  (and hence in
(\ref{eq:gamma'_4_regularization_morrey})). Since
$\alpha_{2}=\frac{\gamma_2}{\gamma_1}\alpha_{1}$,  we minimize
$\alpha_{1}$ such that
\begin{displaymath}
\alpha_1 \leq 1 -   \gamma^0_1, \qquad
\frac{\gamma_2}{\gamma_1}\alpha_{1}  \leq \gamma_2 \leq
    \frac{\gamma_2}{\gamma_1}\alpha_{1} + \gamma^{0}_{2} = \alpha_2 + \gamma^{0}_{2}
\end{displaymath}
that is, we can take $\alpha_2$ up to $\alpha_{2}= \max\{0,\gamma_{2} -  \gamma^{0}_{2}\}$, and so
(\ref{eq:we-altogether-need}) is satisfied. Hence, since $j_{0} =
1-\gamma^{0}_{2}>0$ and $\max\{0,\gamma_{2} -  \gamma^{0}_{2}\}-j_0=\max\{-j_0,\gamma_{2} -  1\}$, the set of $\gamma'$
in (\ref{eq:gamma'_4_regularization_morrey}) is given by
\begin{equation}\label{eq:regularization_gamma_gamma'_Morrey}
0\leq \gamma'_2,  \qquad  \gamma_{2} - 1 <  \gamma'_2 \leq \gamma_2,  \qquad
     \frac{\gamma'_2}{\gamma'_1}  \leq
    \frac{\gamma_2}{\gamma_1} .
\end{equation}

Since for this $\gamma'$ we have $\gamma\stackrel{_{S_{\mu,
      V}(t)}}{\leadsto} \gamma'$, then Lemma
\ref{lem:estimates4semigroups_in_scales} and part (i) of the theorem
give the estimates (\ref{eq:estimate-for-SV(t)}) and
(\ref{eq:specifying-a}), whereas (\ref{eq:(t,u0)-continuity-of-SV(t)}) is from Corollary \ref{cor:continuity-of-SP(t)u0-with-respect-to-a-pair-of-argument}.

\medskip
\noindent {\bf Step 10.}
Now we prove that for any  $\gamma,\gamma' \in \mathbb{J}_{\gamma^0}$
such that
\begin{equation} \label{eq:smoothing_morrey_gamma_gamma'_4_perturbed}
   \gamma'_2 \leq \gamma_2, \qquad
     \frac{\gamma'_2}{\gamma'_1}  \leq
    \frac{\gamma_2}{\gamma_1}
\end{equation}
we have $\gamma\stackrel{_{S_{\mu, V}(t)}}{\leadsto} \gamma'$ with
  (\ref{eq:estimate-for-SV(t)}), (\ref{eq:(t,u0)-continuity-of-SV(t)}) and
(\ref{eq:specifying-a}).  For this we construct a finite sequence
$\gamma^{j} \in \mathbb{J}_{\gamma^{0}}$ for $j=1,\ldots, M$, such that $\gamma^{1}=
\gamma$, $\gamma^{M} = \gamma'$ and, given $j=1,\ldots, M-1$,  $\gamma^{j} \stackrel{_{S_{\mu,
      V}(t)}}{\leadsto} \gamma^{j+1}$ as in (\ref{eq:regularization_gamma_gamma'_Morrey}). Actually,  we choose
\begin{displaymath}
  \gamma^{j}_{2} - \frac{1}{2} \leq  \gamma^{j+1}_2 \leq
  \gamma^{j}_2, \qquad
     \frac{\gamma^{j+1}_2}{\gamma^{j+1}_1}  \leq
    \frac{\gamma^{j}_2}{\gamma^{j}_1}
\end{displaymath}
so (\ref{eq:regularization_gamma_gamma'_Morrey}) holds at each step of
the iteration. To see that this is possible, observe that if we take
points  $\tilde{\gamma}= \theta \gamma' + (1-\theta) \gamma$ with
$\theta \in (0,1)$ then
\begin{displaymath}
\gamma'_{2}< \tilde{\gamma}_{2}
   < \gamma_{2} , \qquad     \frac{\gamma'_2}{\gamma'_1} \leq     \frac{\tilde{\gamma}_2}{\tilde{\gamma}_1}  \leq
    \frac{\gamma_2}{\gamma_1} , \qquad \theta \in (0,1).
\end{displaymath}
Therefore, we chose $\theta_{j}\in (0,1)$ iteratively such that $
\gamma^{j}_{2} - \frac{1}{2} \leq   \gamma^{j+1}_2 \leq \gamma^{j}_2$
until we can take $\gamma^{M}_2 = \gamma'_{2}$ and thus $\gamma^{M} = \gamma'$.

To prove (\ref{eq:estimate-for-SV(t)}) and (\ref{eq:(t,u0)-continuity-of-SV(t)})
between $X^{\gamma}$ and $X^{\gamma'}$, observe that since they hold for each pair
$(\gamma^{j}, \gamma^{j+1})$, $j=1,\ldots, M-1$ (even if with
constants and exponents depending on $j$),  we use the semigroup
property $S_{\mu, V}(t) = S_{\mu, V}(\frac{t}{M}) \circ \cdots \circ
S_{\mu, V}(\frac{t}{M})$ and we get (\ref{eq:estimate-for-SV(t)}) and
(\ref{eq:(t,u0)-continuity-of-SV(t)}) for $(\gamma, \gamma')$.

Then using Lemma
\ref{lem:estimates4semigroups_in_scales} and (\ref{eq:estimate-for-SV(t)}) and part (i) of the theorem we conclude (\ref{eq:specifying-a}).

\medskip
\noindent (iii) {\bf (Analyticity)}
From part (i) in Lemma \ref{lem:abstract_setup_4_Morrey}  the unperturbed
semigroup is analytic in $X^{\gamma}$ for $\gamma \in
\mathbb{J}$ with the additional restriction that $\gamma_{1}<1$ if $\mu=1$ that we take into account, without mentioning it further.
For the perturbed semigroup, we now show we can apply either
Theorem \ref{thm:Xalpha-analiticity-of-linearly-perturbed-semigroup}
or Theorem
\ref{thm:Xbeta-analiticity-of-linearly-perturbed-semigroup}.

For $\gamma \in  \mathbb{J}_{\gamma^0}$, Theorem \ref{thm:Xalpha-analiticity-of-linearly-perturbed-semigroup}
applies provided that in the proof of part (i) above we can take
$\alpha =\gamma$. This can be done when
$\gamma_1\leq 1-\gamma_1^0$  (see Step 5 in the proof of (i)
above) which, from (\ref{eq:parameters_4_Morrey}) corresponds to
$p_0'\leq p$ and  $0<\ell\leq \ell_0$. This gives part (a) of the
statement.

On the other hand, Theorem
\ref{thm:Xbeta-analiticity-of-linearly-perturbed-semigroup} applies
provided that in the proof of part (i) above we can take $\beta=
\alpha + \gamma^{0} =
\gamma$, that is $\gamma\in \Big(\gamma^0+ \mathbb{J}_{\gamma^0}\Big)\cap
\mathbb{J}_{\gamma^0}$.  The intersection of these two triangles is
the triangle  $\gamma_{1}^{0} \leq \gamma_{1}\leq
  1$,   $\gamma_{2}^{0} \leq \gamma_{2}\leq
  \frac{N}{2m\mu}$,  and $\frac{\gamma_{2}}{\gamma_{1}} \leq
  \frac{\gamma_{2}^{0}}{\gamma_{1}^{0}}$ and  from (\ref{eq:parameters_4_Morrey}) and
  (\ref{eq:interpretation-of-Morrey_scale-2m}) this corresponds to

  \begin{displaymath}
    \frac{1}{p_0}\leq  \frac{1}{p} \leq 1, \qquad
    \frac{\ell_{0}}{2m\mu p_{0} }\leq \frac{\ell}{2m\mu p}
    \leq
  \frac{N}{2m\mu},   \qquad \frac{\ell}{2m\mu }
    \leq \frac{\ell_{0}}{2m\mu }
  \end{displaymath}
  i.e.
  \begin{displaymath}
   1\leq  p\leq p_{0},  \qquad
   \frac{\ell_{0}}{p_{0} } \leq \frac{\ell}{ p} , \qquad \ell \leq
   \ell_{0} .
  \end{displaymath}
This completes the proof of part (b) in the statement.

\medskip
\noindent (iv) {\bf (The perturbed equation)}
As we are in case (b) of part (iii) above, we have that Proposition
\ref{prop:LP-in-Xbeta} applies and we get the result, because from
Remark \ref{rem:related-to-{prop:1st-about-{eq:linear-e-eq}}}
the sectorial generator of the
unperturbed semigroup is $-A_0^\mu $.
\end{proof}

\begin{remark}
  \label{rem:summary_1perturbation_morrey}

  As a summary of the results above, observe that  for
(\ref{eq:in-Sec-6-linear-e-eq-with-potential}) we start with an
unperturbed semigroup   $\{S_{\mu}(t)\}_{t\geq0}$ (that is $V=0$) acting on the spaces in the  triangle
$\mathbb{J}$ in (\ref{eq:interpretation-of-Morrey_scale-2m}) with the smoothing in
(\ref{eq:smoothing_morrey_gamma_gamma'}).

Then we add a perturbation $V\in X^{\gamma^{0}}$ with
$\kappa_{0}\mydef     \frac{\ell_{0}}{2m\mu p_{0}}<1$ and we end up
with a perturbed semigroup   $\{S_{\mu,V}(t)\}_{t\geq0}$ acting on the spaces in the  triangle
$\mathbb{J}_{\gamma^{0}} \subset \mathbb{J}$ as in
(\ref{eq:bound-on-gamma2overgamma1}),
with the smoothing in
(\ref{eq:smoothing_morrey_gamma_gamma'_4_perturbed}).

Observe that (\ref{eq:smoothing_morrey_gamma_gamma'}) are
(\ref{eq:smoothing_morrey_gamma_gamma'_4_perturbed}) are identical
although the latter in the smaller triangle
$\mathbb{J}_{\gamma^{0}}$.

\end{remark}

As for the   continuous dependence on perturbations we have the
following result.

\begin{theorem}
  \label{thr:morrey_continuous_dependence_1_perturbation}

Assume $V,\tilde V\in M^{p_0,\ell_0}(\R^N)$ with
\begin{displaymath}
\kappa_{0}\mydef     \frac{\ell_{0}}{2m\mu p_{0}}<1
\end{displaymath}
and  $u_0, \tilde{u}_0\in M^{p,\ell}(\R^N)$ with $1\leq p\leq \infty$
and $0<\ell\leq \ell_0$. Assume also
  \begin{displaymath}
  \|V\|_{\mathcal{L}(M^{p_0,\ell_0}(\R^N))}, \|\tilde{V}\|_{\mathcal{L}(M^{p_0,\ell_0}(\R^N))}\leq R
  \end{displaymath}
  and
  \begin{displaymath}
\|u_0\|_{M^{p,\ell}(\R^N)}, \|\tilde{u}_0\|_{M^{p,\ell}(\R^N)}\leq
\mathscr{R} .
\end{displaymath}

Then for $1\leq q\leq \infty$  and $0<s\leq \ell$ such that
$\frac{s}{q}\leq \frac{\ell}{p}$ and  $T>0$, there exists
$C_0$, $C_1$ depending  on $p_0,\ell_0, p, \ell, q, s, R$ and
$T$ and  $C_0$ depending also  on $\mathscr{R}$,  such that
for  $t\in (0,T]$ we have
\begin{displaymath}
\|S_{\mu,V}(t)u_{0} - S_{\mu,\tilde{V}}(t)\tilde{u}_0\|_{M^{q,s}(\R^N)}
\leq \frac{C_{0}}{t^{\frac{1}{2m\mu}(\frac{\ell}{p}-\frac{s}{q})}}
\left( \|u_0-\tilde{u}_0 \|_{M^{p,\ell}(\R^N)}  + \|V-\tilde V\|_{\mathcal{L}(M^{p_0,\ell_0}(\R^N))} \right)
\end{displaymath}
and
\begin{displaymath}
\|S_{\mu,V}(t) - S_{\mu,\tilde{V}}(t)\|_{{\mathcal L}(M^{p,\ell}(\R^N), M^{q,s}(\R^N))}
\leq \frac{C_1}{t^{\frac{1}{2m\mu}(\frac{\ell}{p}-\frac{s}{q})}}
\|V-\tilde V\|_{\mathcal{L}(M^{p_0,\ell_0}(\R^N))} .
\end{displaymath}

\end{theorem}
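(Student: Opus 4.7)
The plan is to deduce both estimates from the abstract Lipschitz continuous dependence result, Theorem \ref{thm:Lipschitz-properties-of-S_P(t)}, applied in the Morrey scale via the identification (\ref{eq:parameters_4_Morrey})--(\ref{eq:interpretation-of-Morrey_scale-2m}). Under that identification, setting $\gamma=(1/p,\ell/(2m\mu p))$, $\gamma'=(1/q,s/(2m\mu q))$, $\gamma^0=(1/p_0,\ell_0/(2m\mu p_0))$, the hypotheses $\ell\le\ell_0$, $s\le\ell$ and $s/q\le \ell/p$ translate respectively into $\gamma,\gamma'\in\mathbb{J}_{\gamma^0}$, $\gamma'_2/\gamma'_1\le\gamma_2/\gamma_1$, and $\gamma'_2\le\gamma_2$; by Lemma \ref{lem:abstract_setup_4_Morrey} this gives $\gamma\stackrel{S_\mu(t)}{\dashrightarrow}\gamma'$ with $d(\gamma',\gamma)=\frac{1}{2m\mu}(\ell/p-s/q)$. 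Moreover, the potentials $V,\tilde V$ induce multiplication operators $P_V,P_{\tilde V}$ belonging, by Lemma \ref{lem:crucial-inequality}, to $\mathscr{P}_{\beta,R}$ with $\beta=\alpha+\gamma^0$ for any admissible $\alpha$, and satisfy $|P_V-P_{\tilde V}|_{\alpha,\beta}\le\|V-\tilde V\|_{M^{p_0,\ell_0}(\R^N)}$.

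Next I would handle first the regime of \emph{small regularity gap}, namely when $\gamma_2-\gamma'_2<1$ (which in particular always holds if $\gamma_2\le 1$). Mimicking Step 9 of the proof of Theorem \ref{thm:perturbation-by-a-potential}, choose $\alpha$ on the ray through the origin with the same slope as $\gamma$ and $\alpha_2=\max\{0,\gamma_2-\gamma_2^0\}$, and set $\beta=\alpha+\gamma^0$. A direct verification shows that $\gamma\in\mathcal E_\alpha$, $\gamma'\in\mathcal R_\beta$, and $\gamma\stackrel{S_\mu(t)}{\dashrightarrow}\gamma'$. A single application of Theorem \ref{thm:Lipschitz-properties-of-S_P(t)}, combined with the norm comparison $|P_V-P_{\tilde V}|_{\alpha,\beta}\le\|V-\tilde V\|_{M^{p_0,\ell_0}}$, then yields both inequalities on any $(0,T]$, with constants depending on $p_0,\ell_0,p,\ell,q,s,R,T$ (and, for the first one, on $\mathscr R$ through the $\mathscr R$-dependence of $M_0$).

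For the general case, I would iterate as in Step 10 of the proof of Theorem \ref{thm:perturbation-by-a-potential}, choosing a finite chain $\gamma=\gamma^{(1)},\ldots,\gamma^{(M)}=\gamma'$ in $\mathbb{J}_{\gamma^0}$ along which $\gamma^{(j)}_2-\gamma^{(j+1)}_2\le \tfrac12$ and $\gamma^{(j+1)}_2/\gamma^{(j+1)}_1\le\gamma^{(j)}_2/\gamma^{(j)}_1$, so that the small-gap estimates of the previous paragraph apply to each consecutive pair. Writing
\begin{displaymath}
S_{\mu,V}(t)u_0-S_{\mu,\tilde V}(t)\tilde u_0=S_{\mu,V}(t)(u_0-\tilde u_0)+[S_{\mu,V}(t)-S_{\mu,\tilde V}(t)]\tilde u_0,
\end{displaymath}
the first summand is controlled by the smoothing estimate (\ref{eq:estimate-for-SV(t)}) from Theorem \ref{thm:perturbation-by-a-potential}, while the operator difference in the second summand is expanded through the telescoping identity
\begin{displaymath}
S_{\mu,V}(t)-S_{\mu,\tilde V}(t)=\sum_{k=0}^{M-1} S_{\mu,V}\!\bigl(\tfrac{(M-1-k)t}{M}\bigr)\,\bigl[S_{\mu,V}(\tfrac{t}{M})-S_{\mu,\tilde V}(\tfrac{t}{M})\bigr]\,S_{\mu,\tilde V}\!\bigl(\tfrac{kt}{M}\bigr),
\end{displaymath}
where the middle factor is estimated by the small-gap bound between two consecutive $\gamma^{(j)}$ and the outer factors by (\ref{eq:estimate-for-SV(t)}) applied to the perturbed semigroups.

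The main technical obstacle is matching the time-singularity exponents: each of the three operators in a summand contributes a negative power of $t$, and one must check that their sum, after telescoping through the chain, reproduces exactly $t^{-d(\gamma',\gamma)}=t^{-\frac{1}{2m\mu}(\ell/p-s/q)}$ (absorbing the $e^{aT}$ factors into the constants). This reduces to a standard beta-function type computation once the chain is chosen so that the regularity indices $\mathtt r(\gamma^{(j)})$ decrease in equal increments from $\mathtt r(\gamma)$ to $\mathtt r(\gamma')$, so that the exponents add telescopically. The $\mathscr R$-dependence of $C_0$, but not of $C_1$, is inherited directly from the corresponding dependence of $M_0$ versus $M_1$ in Theorem \ref{thm:Lipschitz-properties-of-S_P(t)}.
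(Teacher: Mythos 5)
Your proposal is correct in substance, but the iteration step is genuinely different from the paper's. Both arguments begin the same way: pass to the abstract scale via (\ref{eq:parameters_4_Morrey}), check that $V,\tilde V$ give perturbations in $\mathscr{P}_{\beta,R}$ with $\beta=\alpha+\gamma^0$ and $|P_V-P_{\tilde V}|\le\|V-\tilde V\|_{M^{p_0,\ell_0}(\R^N)}$, and apply Theorem \ref{thm:Lipschitz-properties-of-S_P(t)} once when the gap is small enough that $\gamma'\in\mathcal{R}_\beta$ for the minimal choice of $\alpha$ from Step 9 of Theorem \ref{thm:perturbation-by-a-potential}; this is exactly the paper's Step 1. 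For the general gap, the paper keeps the two-data, two-potential difference intact and bootstraps it along the chain by the semigroup property, splitting $t=\tfrac t2+\tfrac t2$ and reinserting the time-weighted states $(\tfrac t2)^{\mathtt{r}(\gamma^2)-\mathtt{r}(\gamma^1)}S_{\mu,V}(\tfrac t2)u_0$, etc., as new initial data with a uniform bound (this is how $\mathscr R$ propagates through the induction). You instead linearize: split off $S_{\mu,V}(t)(u_0-\tilde u_0)$, control it by the already established smoothing (\ref{eq:estimate-for-SV(t)}), and expand the operator difference through the identity $A^M-B^M=\sum_{k}A^{M-1-k}(A-B)B^k$ with $A=S_{\mu,V}(\tfrac tM)$, $B=S_{\mu,\tilde V}(\tfrac tM)$, estimating in each summand the single difference factor by the small-gap operator bound (\ref{eq:estimate-for-P1-P2}) and the outer factors by (\ref{eq:estimate-for-SV(t)}) for the two perturbed semigroups. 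This is legitimate and in some respects cleaner: the operator-norm inequality comes out directly without tracking initial data, and $\mathscr R$ enters only through $\|\tilde u_0\|_{M^{p,\ell}(\R^N)}$ in the split-off term; what it uses beyond the paper's argument is the full smoothing of Theorem \ref{thm:perturbation-by-a-potential}(ii) for \emph{both} perturbed semigroups, which is available without circularity since it is proved beforehand, with constants depending on the potentials only through $R$.

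Two points should be tightened. First, in the $k$-th summand the intermediate spaces must be chosen per $k$: for $k=M-1$ the difference factor must land exactly in $X^{\gamma'}$, so the space reached by $S_{\mu,\tilde V}(\tfrac{(M-1)t}{M})$ from $X^\gamma$ (an arbitrary admissible jump, by (\ref{eq:estimate-for-SV(t)})) must lie within gap $<1$ of $\gamma'$ and have admissible slope; symmetrically for $k=0$. Your chain guarantees such pairs exist, but ``between two consecutive $\gamma^{(j)}$'' is not literally the correct pairing for every $k$. Second, there is no beta-function computation to do: since $d(x,y)=\mathtt{r}(x)-\mathtt{r}(y)$, the three singular exponents in each summand add exactly to $d(\gamma',\gamma)$, so the powers of $t$ combine trivially (constants depend on $M$, which is bounded in terms of $N,m,\mu$); equal increments of $\mathtt r$ along the chain are convenient but not needed.
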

\begin{proof}
We will use Theorem \ref{thm:Lipschitz-properties-of-S_P(t)} in the setting of Lemma
\ref{lem:abstract_setup_4_Morrey} and the notations in the proof of
Theorem \ref{thm:perturbation-by-a-potential}.

\smallskip

\noindent {\bf Step 1.}
For $\gamma\in \mathbb{J}_{\gamma^0}$, we  choose $\alpha$ satisfying
$\frac{\alpha_2}{\alpha_1}=\frac{\gamma_2}{\gamma_1}$ and
(\ref{eq:we-altogether-need}) and
$\beta=\alpha+\gamma^0$ (as in Step 5 of the proof of Theorem \ref{thm:perturbation-by-a-potential}). Hence we have $\gamma\in
\Sigma_{\alpha,\beta}\subset \mathcal{E}_\alpha$ and the perturbations
$P=V, \tilde{P}=\tilde{V}\in \mathscr{P}_{\beta,R}$.

Then for $\gamma'\in \mathcal{R}_\beta$, that is for $\gamma'$ as in  (\ref{eq:regularization_gamma_gamma'_Morrey}),
 from  Theorem
\ref{thm:Lipschitz-properties-of-S_P(t)} we have the estimates, for
$t\in (0,T]$,
\begin{equation}\label{eq:(*)-in-cd}
 \|S_{\mu,V}(t)u_0 - S_{\mu,\tilde{V}}(t)\tilde{u}_0\|_{X^{\gamma'}}
\leq \frac{M_0}{t^{\mathtt{r}(\gamma')-\mathtt{r}(\gamma)}} \left( \|u_0-\tilde{u}_0 \|_{X^\gamma} +\|V-\tilde V\|_{\mathcal{L}(X^{\gamma_0})}\right)
\end{equation}
and
\begin{equation}\label{eq:(**)-in-cd}
\|S_{\mu,V}(t) - S_{\mu,\tilde{V}}(t)\|_{{\mathcal L}(X^{\gamma}, X^{\gamma'})}
\leq \frac{M_1}{t^{\mathtt{r}(\gamma')-\mathtt{r}(\gamma)}} \|V-\tilde
V\|_{\mathcal{L}(X^{\gamma_0})} .
\end{equation}

\smallskip
\noindent {\bf Step 2.}
Now we show that
(\ref{eq:(*)-in-cd}) and (\ref{eq:(**)-in-cd})
hold for $\gamma'$ as in
(\ref{eq:smoothing_morrey_gamma_gamma'_4_perturbed}),
that is  satisfying $\gamma'_2 \leq \gamma_2$ and
$\frac{\gamma'_2}{\gamma'_1}  \leq \frac{\gamma_2}{\gamma_1}$. With
this and (\ref{eq:parameters_4_Morrey}),
(\ref{eq:interpretation-of-Morrey_scale-2m}), the theorem is proved.

So we consider $\gamma\in \mathbb{J}_{\gamma^0}$ and $\gamma'$
satisfying $\gamma'_2 \leq \gamma_2$ and $\frac{\gamma'_2}{\gamma'_1}
\leq \frac{\gamma_2}{\gamma_1}$.
As in  Step 10 in the proof of  part (ii) in Theorem \ref{thm:perturbation-by-a-potential},
we construct points
$\gamma^{j} \in \mathbb{J}_{\gamma^{0}}$ for $j=1,\ldots, M$, such that $\gamma^{1}=
\gamma$, $\gamma^{M} = \gamma'$ and $\gamma^{j}$ satisfy for $j=1,\ldots,M-1$ both $  \gamma^{j}_{2} - \frac{1}{2} \leq  \gamma^{j+1}_2 \leq
  \gamma^{j}_2$ and
     $\frac{\gamma^{j+1}_2}{\gamma^{j+1}_1}  \leq
    \frac{\gamma^{j}_2}{\gamma^{j}_1}$.

Due to Steps 1 and 2 above,
(\ref{eq:(*)-in-cd}) holds with $\gamma$ replaced by $\gamma^j$ and $\gamma'$ replaced by
$\gamma^{j+1}$, that is, for  $j=1,\ldots, M-1$ and  $t\in (0,T]$,
\begin{equation}\label{eq:(***)-in-cd}
 \|S_{\mu,V}(t)u_0 - S_{\mu,\tilde{V}}(t)\tilde{u}_0\|_{X^{\gamma^{j+1}}}
\leq \frac{c}{t^{\mathtt{r}(\gamma^{j+1})-\mathtt{r}(\gamma^j)}} \left( \|u_0-\tilde{u}_0 \|_{X^{\gamma^j}} +\|V-\tilde V\|_{\mathcal{L}(X^{\gamma^{0}})}\right)
\end{equation}
provided $\|u_0\|_{X^{\gamma^j}}, \|\tilde{u}_0 \|_{X^{\gamma^j}} \leq \mathscr{R}$.

Using (\ref{eq:(***)-in-cd}) and the semigroup property we will show
below that for  $j=1,\ldots, M-1$
\begin{equation}\label{eq:(****)-in-cd}
 \|S_{\mu,V}(t)u_0 - S_{\mu,\tilde{V}}(t)\tilde{u}_0\|_{X^{\gamma^{j+1}}}
\leq  \frac{c}{t^{\mathtt{r}(\gamma^{j+1})-\mathtt{r}(\gamma^1)}}
\left( \|u_0 - \tilde{u}_0\|_{X^{\gamma^1}} + \|V-\tilde V\|_{\mathcal{L}(X^{\gamma^{0}})}\right)
\end{equation}
for some constant $c$, which for $j=M-1$ gives the estimate in the statement.

First, (\ref{eq:(****)-in-cd}) for  $j=1$ follows from
(\ref{eq:(***)-in-cd}) with $j=1$.
In order to establish (\ref{eq:(****)-in-cd}) for $j=2$ we first observe that we have
\begin{equation}\label{eq:(*****)-in-cd}
\begin{split}
&\|S_{\mu,V}(t)u_0  - S_{\mu,\tilde{V}} (t)\tilde{u}_0\|_{X^{\gamma^3}}
\\
&
=
\frac{1}{\big(\frac{t}{2}\big)^{\mathtt{r}(\gamma^2)-\mathtt{r}(\gamma^1)}}
\Big\|
S_{\mu,V}\big(\frac{t}{2}\big)
\Big[\big(\frac{t}{2}\big)^{\mathtt{r}(\gamma^2)-\mathtt{r}(\gamma^1)}
S_{\mu,V}\big(\frac{t}{2}\big)u_0\Big]
- S_{\mu,\tilde{V}}\big(\frac{t}{2}\big)
\Big[\big(\frac{t}{2}\big)^{\mathtt{r}(\gamma^2)-\mathtt{r}(\gamma^1)}
S_{\mu,\tilde{V}}\big(\frac{t}{2}\big)\tilde{u}_0\Big]
\Big\|_{X^{\gamma^3}} .
\end{split}
\end{equation}
Using (\ref{eq:(****)-in-cd}) with $j=1$, $\tilde{u}_0=0$ and
$\tilde{V}=0$, and then with $u_0=0$ and $V=0$, we get  that
\begin{displaymath}
t^{\mathtt{r}(\gamma^2)-\mathtt{r}(\gamma^1)}
\|S_{\mu,V}(t)u_0\|_{X^{\gamma^{2}}}, \quad
t^{\mathtt{r}(\gamma^2)-\mathtt{r}(\gamma^1)}
\|S_{\mu,\tilde{V}}(t)\tilde{u}_0\|_{X^{\gamma^{2}}}
\end{displaymath}
are  bounded uniformly in  $t\in (0,T]$ provided
$\|u_0\|_{X^{\gamma^1}}, \|\tilde{u}_0\|_{X^{\gamma^1}} \leq
\mathscr{R}$ and $\|V\|_{\mathcal{L}(X^{\gamma^{0}})} ,
\|\tilde{V}\|_{\mathcal{L}(X^{\gamma^{0}})} \leq R$.
Hence we can use (\ref{eq:(***)-in-cd}) with $j=2$ to estimate the right hand side of (\ref{eq:(*****)-in-cd}) as
\begin{displaymath}
\begin{split}
\| &S_{\mu,V}(t) u_0 - S_{\mu,\tilde{V}}(t)\tilde{u}_0\|_{X^{\gamma^{3}}}
 \\
 &
 \leq
\frac{1}{\big(\frac{t}{2}\big)^{\mathtt{r}(\gamma^2)-\mathtt{r}(\gamma^1)}}
\Big[
 \frac{c}{\big(\frac{t}{2}\big)^{\mathtt{r}(\gamma^{3})-\mathtt{r}(\gamma^2)}} \Big(
 \Big\|\big(\frac{t}{2}\big)^{\mathtt{r}(\gamma^2)-\mathtt{r}(\gamma^1)} S_{\mu,V}\big(\frac{t}{2}\big)u_0
  -\big(\frac{t}{2}\big)^{\mathtt{r}(\gamma^2)-\mathtt{r}(\gamma^1)}
 S_{\mu,\tilde{V}}\big(\frac{t}{2}\big)\tilde{u}_0
 \Big\|_{X^{\gamma^{2}}}
 \\
 &
 \phantom{aaaaaaaaaaaaaaaaaaaaaaaaaaaaaaaaaaaaaaaa}
 +\|V-\tilde V\|_{\mathcal{L}(X^{\gamma^{0}})}
 \Big)
\Big].
\end{split}
\end{displaymath}
From this, after applying (\ref{eq:(****)-in-cd}) with $j=1$, we obtain
\begin{displaymath}
\|S_{\mu,V}(t)u_0 - S_{\mu,\tilde{V}}(t)\tilde{u}_0\|_{X^{\gamma^{3}}}
\leq \frac{c}{\left(\frac{t}{2}\right)^{\mathtt{r}(\gamma^{3})-\mathtt{r}(\gamma^1)}}
\big(\|u_0 - \tilde{u}_0\|_{X^{\gamma^1}}+\|V-\tilde V\|_{\mathcal{L}(X^{\gamma^{0}})}\big)
\end{displaymath}
that is,  (\ref{eq:(****)-in-cd}) for $j=2$. With a similar procedure
we conclude that (\ref{eq:(****)-in-cd}) holds for every
$j=1,\ldots,M-1$.

This proves the first inequality in the statement. The second follows
from taking $u_0=\tilde{u}_0$ of norm one.
\end{proof}

\subsection{Two perturbations}
\label{sec:two-perturbations}

  Assume now we have two perturbations given by the potentials

  \begin{equation}\label{eq:for-thm-with-2potentials}
     V^{i}\in M^{p_i,\ell_i}(\R^N) \quad \text{ for  $1\leq p_i\leq \infty$,
   \   $\ell_i\in(0,N]$} , \quad i=0,1.
  \end{equation}
  with
  \begin{equation}\label{eq:also-for-thm-with-2potentials}
 \kappa_{i}\mydef     \frac{\ell_{i}}{2m\mu p_{i}}<1 , \quad i=0,1 .
\end{equation}
Without loss of generality, we can assume
\begin{equation}\label{eq:one-more-for-thm-with-2potentials}
\ell_{0} \leq \ell_{1}.
\end{equation}

Then we have the following results concerning evolution problem
\begin{equation}\label{eq:in-Sec-6-linear-e-eq-with-2potentials}
  \begin{cases}
    u_{t} + A_0^\mu u = V^0(x)u + V^1(x)u, &  t>0, \ x\in \R^N ,
    \\
    u(0,x)=u_0(x), &x\in \R^N,
  \end{cases}
\end{equation}
that clearly generalizes to more
than two perturbations.

\begin{theorem}
  \label{thm:linear-e-eq-with-two-potentials}
Let $A_0$ be as in (\ref{eq:operator-A0}), $\mu\in(0,1]$  and
assume (\ref{eq:for-thm-with-2potentials}), (\ref{eq:also-for-thm-with-2potentials}) and (\ref{eq:one-more-for-thm-with-2potentials}).

\begin{enumerate}
\item
  {\bf (The perturbed semigroup)}
For
  $1\leq p \leq \infty$ and $0<\ell \leq \ell_0$,
    (\ref{eq:in-Sec-6-linear-e-eq-with-2potentials})
    defines a semigroup
  $\{S_{\mu,\{V^{0}, V^{1}\}}(t)\}_{t\geq0}$ in $M^{p,\ell}(\R^N)$ such that for
  $u_0\in M^{p,\ell}(\R^N)$, $u(t) \mydef   S_{\mu, \{V^{0}, V^{1}\}}(t)u_0 $
  satisfies
  \begin{displaymath}
    u(t) = S_{\mu}(t) u_{0} + \int_{0}^{t}  S_{\mu}(t-s) V^{0} u(s) \, ds
    + \int_{0}^{t}  S_{\mu}(t-s) V^{1} u(s) \, ds,
    \quad  t>0,
  \end{displaymath}
 \begin{displaymath}
    \lim_{t\to 0^+} \| u(t)- S_{\mu}(t)
    u_0\|_{M^{p,\ell}(\R^N)}= 0 .
  \end{displaymath}
  Also,
  \begin{equation}\label{eq:Mp,ell-estimate-of-S{mu,V0,V1}(t)}
    \|S_{\mu, \{V^{0}, V^{1}\}}(t) \|_{\mathcal{L}(M^{p,\ell}(\R^N))} \leq C e^{\omega
      t}, \quad t\geq 0
  \end{equation}
  for some constants $C$, $\omega$.
  For $p=1$ all the above  remains true if we replace $M^{1,\ell}(\R^N)$ by
  $\mathcal{M}^{\ell}(\R^N)$.

  Moreover, if $p\in[\max\{p_0',p_1'\},\infty]$ then (\ref{eq:Mp,ell-estimate-of-S{mu,V0,V1}(t)}) holds with any $\omega$ satisfying
  \begin{equation}\label{(diamonddiamonddiamonddiamond)-for-2}
    \omega = c\big(
    \| V^{0}\|_{M^{p_0,\ell_0}(\R^N)}^{\frac{1}{1-\kappa_{0}}}
    +
    \| V^{1}\|_{M^{p_1,\ell_1}(\R^N)}^{\frac{1}{1-\kappa_{1}}}
    \big)
  \end{equation}
  for some positive constant $c=c(p,\ell)$.

\item
{\bf (Smoothing properties)}
For
$1\leq p \leq \infty $, $0<\ell \leq \ell_0$, if   $1\leq q\leq \infty$ and
$0< s\leq \ell$ satisfy  $\frac{s}{q}\leq \frac{\ell}{p}$ we have,
for some constants $a$, $C$,
\begin{equation}\label{eq:(diamond)-sp-for-2}
  \| S_{\mu, \{V^{0}, V^{1}\}}(t)\|_{\mathcal{L}(M^{p,\ell}(\R^N), M^{q,s}(\R^N))}
  \leq
  \frac{Ce^{at}}{t^{\frac{1}{2m\mu}(\frac{\ell}{p}-\frac{s}{q})}},
  \quad t>0,
\end{equation}
and
\begin{equation}\label{eq:(diamonddiamond)-sp-for-2}
  (0,\infty)\times M^{p,\ell}(\R^N) \ni (t,u_0) \to S_{\mu, \{V^{0}, V^{1}\}}(t) u_0 \in M^{q,s}(\R^{N})
  \ \text{ is continuous}.
\end{equation}
For $p=1$ this remains true if we replace $M^{1,\ell}(\R^N)$ by
$\mathcal{M}^{\ell}(\R^N)$.

Moreover, if $p\in[\max\{p_0',p_1'\},\infty]$, or $q\in[\max\{p_0',p_1'\},\infty]$, then (\ref{eq:(diamond)-sp-for-2})
holds with any $a$ satisfying
\begin{equation}\label{eq:(diamonddiamonddiamond)-sp-for-2}
  a=
  c
  \big(
  \|V^{0}\|_{M^{p_0,\ell_0}(\R^N)}^{\frac{1}{1-\kappa_{0}}}
  +
  \|V^{1}\|_{M^{p_1,\ell_1}(\R^N)}^{\frac{1}{1-\kappa_{1}}}
  \big)
\end{equation}
  for some positive constant $c=c(p,\ell,q,s)$.

\item
 {\bf (Analyticity)}
 If either

 \begin{enumerate}
 \item
   $\max\{p_0', p_1'\}\leq p \leq\infty$ and $0<\ell\leq\ell_0$, or

\item
  if $1\leq p\leq \min\{p_0, p_1\}$ and $0<\ell\leq\ell_0$
   are such that
   $\max\{\frac{\ell_0}{p_0}, \frac{\ell_1}{p_1}\} \leq\frac{\ell}{p}$
 \end{enumerate}
 the semigroup $\{S_{\mu,\{V^0,V^1\}}(t)\}_{t\geq0}$ is analytic in
   $M^{p,\ell}(\R^N)$ with sectorial generator with the additional
   restriction that $p\not=1$ when $\mu=1$.

\item
   {\bf (The perturbed equation)}
For $1< p \leq \min\{p_0, p_1\}$ and $0<\ell\leq\ell_0$ satisfying
$\max\{\frac{\ell_0}{p_0}, \frac{\ell_1}{p_1}\}\leq  \frac{\ell}{p}$,
we have that $u=S_{\mu,\{V^{0}, V^{1}\}}(\cdot) u_0$ with $u_0\in M^{p,\ell}(\R^N)$
satisfies, for $t>0$,
\begin{displaymath}
u_{t} + A_0^\mu u  = V^0 u + V^1 u \quad \text{in
$M^{p,\ell}(\R^N)$}.
\end{displaymath}

\end{enumerate}
\end{theorem}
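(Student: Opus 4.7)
The plan is to recast (\ref{eq:in-Sec-6-linear-e-eq-with-2potentials}) as a two-perturbation problem in the Morrey scale and apply the abstract machinery of Section \ref{sec:abstract-approach}. Setting $\gamma^{i}=(1/p_{i},\ell_{i}/(2m\mu p_{i}))\in \mathbb{J}$ for $i=0,1$, the assumption (\ref{eq:one-more-for-thm-with-2potentials}) forces $\gamma^{0}_{2}/\gamma^{0}_{1}=\ell_{0}/(2m\mu)\le \ell_{1}/(2m\mu)=\gamma^{1}_{2}/\gamma^{1}_{1}$, so the admissible range $\ell\le \ell_{0}$ corresponds exactly to $\gamma \in \mathbb{J}_{\gamma^{0}}$ and both slopes of $\gamma^{0},\gamma^{1}$ dominate that of $\gamma$. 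Mimicking Step~5 of the proof of Theorem \ref{thm:perturbation-by-a-potential}, for each $\gamma\in \mathbb{J}_{\gamma^{0}}$ I choose $\alpha\in \mathbb{J}$ with $\alpha_{2}/\alpha_{1}=\gamma_{2}/\gamma_{1}$ and $\alpha_{1}=\min\{\gamma_{1},\,1-\max\{\gamma^{0}_{1},\gamma^{1}_{1}\}\}$. The admissibility constraints $\alpha_{1}+\gamma^{i}_{1}\le 1$ for $i=0,1$, required by Lemma \ref{lem:abstract_setup_4_Morrey}(iii) so that $V^{i}\in \mathscr{P}_{\alpha+\gamma^{i},R}$, reduce to $\alpha_{1}\le 1-\max\{\gamma^{0}_{1},\gamma^{1}_{1}\}$, which holds by construction, while the slope conditions in $\mathcal{E}_{\alpha}$ and $\mathcal{R}_{\alpha+\gamma^{i}}$ hold because slope$(\alpha)=$slope$(\gamma)\le$slope$(\gamma^{i})$.

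With this setup, parts (i) and (ii) follow from Theorem \ref{thm:linear-perturbed-semigroup}, Theorem \ref{thm:Xgamma-Xgamma'-estimates} and Corollary \ref{cor:continuity-of-SP(t)u0-with-respect-to-a-pair-of-argument}, together with the chain-of-$\gamma^{j}$ argument of Step~10 of the one-perturbation proof that extends the smoothing beyond $\mathcal{R}_{\beta_{0},\beta_{1}}=\mathcal{R}_{\alpha+\gamma^{0}}\cap\mathcal{R}_{\alpha+\gamma^{1}}$ to all target pairs $(\gamma,\gamma')$ satisfying $\gamma'_{2}\le \gamma_{2}$ and slope$(\gamma')\le $ slope$(\gamma)$. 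For the explicit exponent in (\ref{eq:Mp,ell-estimate-of-S{mu,V0,V1}(t)}) and (\ref{eq:(diamond)-sp-for-2}) when $p\in[\max\{p_{0}',p_{1}'\},\infty]$, the construction allows $\alpha=\gamma$, so Proposition \ref{prop:exponential-bound-for-SV(t)-in-Xalpha} applied to $P=\{V^{0},V^{1}\}$ yields exponential type $a+\theta_{P}$ with $\theta_{P}=c_{0}\|V^{0}\|_{M^{p_{0},\ell_{0}}(\R^{N})}^{1/(1-\kappa_{0})}+c_{1}\|V^{1}\|_{M^{p_{1},\ell_{1}}(\R^{N})}^{1/(1-\kappa_{1})}$, since $d(\alpha,\alpha+\gamma^{i})=\gamma^{i}_{2}=\kappa_{i}$.

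For analyticity (iii), case (a) is handled exactly as in its one-perturbation counterpart: the choice $\alpha=\gamma$ is admissible precisely when $\gamma_{1}\le 1-\max\{\gamma^{0}_{1},\gamma^{1}_{1}\}$, i.e.\ $p\ge \max\{p_{0}',p_{1}'\}$, and Theorem \ref{thm:Xalpha-analiticity-of-linearly-perturbed-semigroup} delivers sectorial analyticity in $X^{\gamma}$. The main obstacle is case (b): Theorem \ref{thm:Xbeta-analiticity-of-linearly-perturbed-semigroup} cannot be applied directly because the target spaces $\alpha+\gamma^{0}$ and $\alpha+\gamma^{1}$ of the two perturbations are generically distinct. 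To overcome this I use the commutativity of perturbations (Proposition \ref{prop:iterated-perturbations}) to write $S_{\mu,\{V^{0},V^{1}\}}(t)=(S_{\mu,V^{0}})_{V^{1}}(t)$ and argue in two stages. Since the hypotheses of case (b) imply $p\le p_{0}$ and $\ell_{0}/p_{0}\le \ell/p$, Theorem \ref{thm:perturbation-by-a-potential}(iii)(b) applied to $V^{0}$ alone shows that $S_{\mu,V^{0}}$ is analytic in $M^{p,\ell}(\R^{N})$ with sectorial generator; by Theorem \ref{thm:perturbation-by-a-potential}(i)--(ii) this perturbed family is itself a semigroup in the Morrey scale with the same smoothing structure up to exponential factors. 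Treating $S_{\mu,V^{0}}$ as the new base semigroup and perturbing it by the single potential $V^{1}$, I apply Theorem \ref{thm:Xbeta-analiticity-of-linearly-perturbed-semigroup} with $\alpha=\gamma-\gamma^{1}$; the case (b) hypotheses $p\le p_{1}$ and $\ell_{1}/p_{1}\le \ell/p$ guarantee $\alpha\in\mathbb{J}$, and slope$(\alpha)\le$ slope$(\gamma)$ follows by a short weighted-average computation from slope$(\gamma)\le$ slope$(\gamma^{0})\le$ slope$(\gamma^{1})$, so that $\beta:=\alpha+\gamma^{1}=\gamma$ coincides with the space of analyticity of the base.

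Finally, part (iv) is obtained by an identical iteration. Under its hypotheses, Theorem \ref{thm:perturbation-by-a-potential}(iv) applied to $V^{0}$ gives that $v(t)=S_{\mu,V^{0}}(t)u_{0}$ satisfies $v_{t}+A_{0}^{\mu}v=V^{0}v$ in $M^{p,\ell}(\R^{N})$; Proposition \ref{prop:LP-in-Xbeta} identifies the sectorial generator of $\{S_{\mu,V^{0}}(t)\}$ in $M^{p,\ell}(\R^{N})$ as $L^{V^{0}}=A_{0}^{\mu}-V^{0}$ on its domain. Applying Theorem \ref{thm:perturbation-by-a-potential}(iv) (equivalently, Proposition \ref{prop:LP-in-Xbeta}) to the perturbation of $S_{\mu,V^{0}}$ by $V^{1}$ and then invoking Proposition \ref{prop:iterated-perturbations} to identify the iterated semigroup with $S_{\mu,\{V^{0},V^{1}\}}$ shows that $u(t)=S_{\mu,\{V^{0},V^{1}\}}(t)u_{0}$ satisfies $u_{t}+L^{V^{0}}u=V^{1}u$, i.e.\ $u_{t}+A_{0}^{\mu}u=V^{0}u+V^{1}u$ in $M^{p,\ell}(\R^{N})$. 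The conceptual obstacle common to (iii)(b) and (iv) is that the two perturbations live in different target spaces, ruling out any single-pass application of the abstract theorems that require a common target; iteration via Proposition \ref{prop:iterated-perturbations} is the device that restores their applicability.
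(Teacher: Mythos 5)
Your choices of $\alpha$ (same slope as $\gamma$, $\alpha_1=\min\{\gamma_1,\theta\}$ with $\theta=1-\max\{\gamma^0_1,\gamma^1_1\}$) and your sequential treatment of parts (iii)(b) and (iv) are essentially in line with the paper (the paper iterates in the order $V^1$ then $V^0$; your reversed order survives there because the target space of the second perturbation is $\gamma$ itself). The genuine gap is in parts (i) and (ii): you claim they follow from a single (simultaneous) application of Theorem \ref{thm:linear-perturbed-semigroup} and Theorem \ref{thm:Xgamma-Xgamma'-estimates} to $P=\{V^0,V^1\}$ plus the chaining of Step 10. But Theorem \ref{thm:linear-perturbed-semigroup} requires $\gamma\in\Sigma_{\alpha,\beta_0,\beta_1}$, and membership in $\mathcal{R}_{\beta_0}\cap\mathcal{R}_{\beta_1}$ forces $\gamma_2\le\alpha_2+\gamma^0_2\wedge\gamma^1_2$, while admissibility of both perturbations forces $\alpha_1\le\theta$ and $\frac{\alpha_2}{\alpha_1}\le\frac{\gamma_2}{\gamma_1}$, hence $\alpha_2\le\frac{\gamma_2}{\gamma_1}\theta$. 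When $\gamma_1>\theta$ these are compatible only if $\gamma_2\le\frac{\gamma_2}{\gamma_1}\theta+\gamma^0_2\wedge\gamma^1_2$, which is exactly the restriction (\ref{eq:region_4_continuous_dependence_2perturbations}) defining $\mathbb{J}_{\gamma^0}^{*}$ in the two-perturbation continuous-dependence theorem; the Remark following it states that outside $\mathbb{J}_{\gamma^0}^{*}$ the simultaneous estimates cannot be obtained with $\gamma'=\gamma$. This excluded region is nonempty (e.g. $\ell_0=\ell_1<2m\mu$, $p_0=1$, $p_1=2$, and $p=1$, $\ell=\ell_0$), yet it lies in the range claimed by the theorem. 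At such $\gamma$ your route yields neither the bound (\ref{eq:Mp,ell-estimate-of-S{mu,V0,V1}(t)}) in $\mathcal{L}(M^{p,\ell}(\R^N))$, nor $\lim_{t\to0^+}\|u(t)-S_\mu(t)u_0\|_{M^{p,\ell}(\R^N)}=0$, nor the case $q=p$, $s=\ell$ of (\ref{eq:(diamond)-sp-for-2}); and chaining cannot repair this, because every simultaneous step available there forces a strictly lower regularity index $\gamma'_2\le\frac{\gamma_2}{\gamma_1}\theta+\gamma^0_2\wedge\gamma^1_2<\gamma_2$, with no way back up to $\gamma$ itself.

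The paper avoids this obstruction by doing parts (i)--(ii) sequentially as well: first perturb $S_\mu$ by $V^1$ alone, so Theorem \ref{thm:perturbation-by-a-potential} gives a semigroup on the whole triangle $\mathbb{J}_{\gamma^1}$ with the same structure as the unperturbed one (Remark \ref{rem:summary_1perturbation_morrey}); then perturb $S_{\mu,V^1}$ by $V^0$ alone over $\mathbb{J}_{\gamma^0}$; and only afterwards identify the result with $S_{\mu,\{V^0,V^1\}}$ via Proposition \ref{prop:iterated-perturbations}, which needs only $\gamma\in\mathcal{E}_\alpha$ and hence holds on all of $\mathbb{J}_{\gamma^0}$. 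Note the order matters for this step: perturbing by $V^0$ first confines the intermediate semigroup to $\mathbb{J}_{\gamma^0}$, whereas $V^1$ can map into Morrey spaces with second parameter up to $\ell_1>\ell_0$, outside that triangle, so the required smoothing $\beta_1\leadsto\alpha$ for the intermediate semigroup is not available in general. Finally, a smaller omission: for $\gamma_1>\theta$ you never verify $\gamma_2<\alpha_2+1$, which is part of $\gamma\in\mathcal{E}_\alpha$; it does hold, but only through the computation using $\kappa_i<1$ that the paper carries out explicitly.
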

\begin{proof}
(i) ({\bf The perturbed semigroup})
Through (\ref{eq:parameters_4_Morrey}),  the perturbation potentials  correspond to Morrey
spaces $V^{i} \in X^{\gamma^{i}}$ with $\gamma^{i} \in
\mathbb{J}$. Since we have assumed  $\ell_{0} \leq \ell_{1}$ then   the slope of
$\gamma^{0}$ is smaller than that of $\gamma^{1}$ and therefore with the
definition (\ref{eq:bound-on-gamma2overgamma1}), the corresponding
triangles satisfy
$\mathbb{J}_{\gamma^0} \subset \mathbb{J}_{\gamma^1}$.

Now we do  sequential  perturbations, following  here  the notations
and  the proof of Theorem  \ref{thm:perturbation-by-a-potential}.

  \noindent {\bf Step 1.}
  We apply Theorem \ref{thm:perturbation-by-a-potential} with the
  perturbation $V^{1}$ so we get the perturbed semigroup
  $\{S_{\mu,V^{1}}(t)\}_{t\geq 0}$ defined in the spaces $X^{\gamma}$ with
  $\gamma \in  \mathbb{J}_{\gamma^{1}}$, that satisfies $ \lim_{t\to 0^+} \| S_{\mu,V^{1}}(t) - S_{\mu}(t)
    u_0\|_{X^{\gamma}}= 0$ for $u_{0}\in X^{\gamma}$.

Also, from Step 7 in the proof of part (i) in Theorem
\ref{thm:perturbation-by-a-potential}, we get that for  $\gamma \in
\mathbb{J}_{\gamma^1}$ such that $\gamma_{1} \leq 1-\gamma_{1}^{1}$
we have that
    $\|S_{\mu,V^{1}}(t) \|_{\mathcal{L}(X^{\gamma})} \leq C
    e^{\omega_{\gamma}^1 t}$ where     $\omega^{1}_{\gamma}=c_{1} \|
    V^{1}\|_{X^{\gamma^{1}}}^{\frac{1}{1-\kappa_{1}}}$ with
    $c_{1}=c_{1}(\gamma)$.

  \medskip
\noindent {\bf Step 2.} Since the perturbed semigroup
  $\{S_{\mu,V^{1}}(t)\}_{t\geq 0}$,  defined in  $X^{\gamma}$ with
  $\gamma \in  \mathbb{J}_{\gamma^{1}}$,  has the same properties in
  this scale  than
  the original unperturbed semigroup, although in the smaller triangle
  $\mathbb{J}_{\gamma^{1}} \subset \mathbb{J}$ (compare
  (\ref{eq:smoothing_morrey_gamma_gamma'}) and
  (\ref{eq:smoothing_morrey_gamma_gamma'_4_perturbed})), we can apply
  again Theorem \ref{thm:perturbation-by-a-potential} to this semigroup with the
  perturbation $V^{0}$, see Remark \ref{rem:summary_1perturbation_morrey}. So  we get the perturbed semigroup
  $\{(S_{\mu,V^{1}})_{V^{0}}(t)\}_{t\geq 0}$ defined in the spaces $X^{\gamma}$ with
  $\gamma \in  \mathbb{J}_{\gamma^{0}} \subset
  \mathbb{J}_{\gamma^{1}}$ and
      $ \lim_{t\to 0^+} \| (S_{\mu,V^{1}})_{V^{0}}(t) u_0 -
    S_{\mu,V^{1}}(t) u_0 \|_{X^{\gamma}}= 0$ for $u_{0}\in
    X^{\gamma}$. In particular,       $ \lim_{t\to 0^+} \|
    (S_{\mu,V^{1}})_{V^{0}}(t) u_0 -     S_{\mu}(t) u_0 \|_{X^{\gamma}}= 0$ for $u_{0}\in X^{\gamma}$.

  As in Step 1 above,
   we also get that for  $\gamma \in
\mathbb{J}_{\gamma^0}$ such that $\gamma_{1} \leq
\min\{1-\gamma_{1}^{0}, 1-\gamma_{1}^{1}\}$
we have that
 \begin{displaymath}
\|(S_{\mu,V^{1}})_{V^{0}} (t) \|_{\mathcal{L}(X^{\gamma})} \leq C
    e^{\omega t}
    \end{displaymath}
where     $\omega>\omega^{2}_{\gamma}=c_{1} \|
    V^{1}\|_{X^{\gamma^{1}}}^{\frac{1}{1-\kappa_{1}}} + c_{2} \|
    V^{0}\|_{X^{\gamma^{0}}}^{\frac{1}{1-\kappa_{0}}}$with
    $c_{i}=c_{i}(\gamma)$.  This gives
    (\ref{eq:Mp,ell-estimate-of-S{mu,V0,V1}(t)}) and
    (\ref{(diamonddiamonddiamonddiamond)-for-2}).

Using (\ref{eq:parameters_4_Morrey}),
(\ref{eq:interpretation-of-Morrey_scale-2m}) the range of $\gamma$
above correspond to $p\in[\max\{p_0',p_1'\},\infty]$  and $0<\ell\leq
\ell_0$.

     \medskip
\noindent {\bf Step 3.} Now we prove that actually
$\{(S_{\mu,V^{1}})_{V^{0}}(t)\}_{t\geq 0}$ coincides with
$\{(S_{\mu,P}(t)\}_{t\geq 0}$ with $P=\{V^{0}, V^{1}\}$, in the spaces $X^\gamma$ for $\gamma\in\mathbb{J}_{\gamma^0}$. For
this we employ Proposition \ref{prop:iterated-perturbations}.

For this, given $\gamma \in  \mathbb{J}_{\gamma^{0}}$ we must produce
$\alpha \in  \mathbb{J}_{\gamma^{0}}$ such that $\gamma \in
\mathcal{E}_{\alpha}$, $\beta_i=\alpha+\gamma^i\in \mathbb{J}$,   $\beta_{i} \stackrel{_{S(t)}}{\leadsto}
\alpha$ and
$0\leq d(\alpha,\beta_{i})=\mathtt{r}(\alpha)-\mathtt{r}(\beta_{i})<1$
for $i=0,1$ so the perturbations  $P_i=V^i\in\mathscr{P}_{\beta_i,R}$.

From Step 1 in the proof of part (i) in Theorem
\ref{thm:perturbation-by-a-potential}, the conditions for $\beta_i=\alpha+\gamma^i\in \mathbb{J}$ and $\beta_{i} \stackrel{_{S_\mu(t)}}{\leadsto}
\alpha$ for $i=0,1$  read
\begin{equation}\label{eq:the-1st-relation-needed-here}
  \alpha_1+\gamma_1^i \leq 1
\end{equation}
and
\begin{equation}\label{eq:the-2nd-relation-needed-here}
    \alpha_2\leq (\beta_2)_{i} =\alpha_{2}+ \gamma^{i}_{2}  \quad \text{ and } \quad
    \frac{\alpha_2}{\alpha_1} \leq \frac{(\beta_2)_{i} }{(\beta_1)_{i} } =
    \frac{\alpha_{2}+ \gamma^{i}_{2} }{\alpha_{1}+  \gamma^{i}_{1}}
  \end{equation}
  and are satisfied taking
  \begin{equation}\label{eq:the-3rd-relation-needed-here}
    \alpha=\begin{cases}\gamma & \text{ if } \gamma_1\leq \theta \\
    (\theta, \frac{\gamma_2}{\gamma_1} \theta) & \text{ if } \gamma_1>
    \theta
    \end{cases} \qquad \text{ where } \ \theta:=\min\{1-\gamma_1^0,
    1-\gamma_1^1\}.
\end{equation}
Indeed, for $\alpha$ as in (\ref{eq:the-3rd-relation-needed-here}) both
(\ref{eq:the-1st-relation-needed-here}) and the first condition  in
(\ref{eq:the-2nd-relation-needed-here}) are clearly satisfied, whereas
the second condition  in (\ref{eq:the-2nd-relation-needed-here}) is so,
  because $\gamma \in   \mathbb{J}_{\gamma^{0}}\subset
  \mathbb{J}_{\gamma^1}$.
  Also $0\leq  d(\alpha,\beta_{i}) <1$   holds because $\kappa_{i} <1$.

  From  Step 2 in the proof of part (i) in Theorem
\ref{thm:perturbation-by-a-potential} we see that
  $\gamma\in \mathcal{E}_\alpha$ if and only if
\begin{equation}\label{eq:the-4th-relation-needed-here}
  \alpha_{2} \leq \gamma_{2} < \alpha_{2}+1, \qquad \frac{\alpha_{2}}{\alpha_{1}} \leq \frac{\gamma_{2}}{
  \gamma_{1}}.
\end{equation}
For $\alpha$ as in (\ref{eq:the-3rd-relation-needed-here}) the first
and third inequalities  in (\ref{eq:the-4th-relation-needed-here}) are
clearly satisfied, whereas the second
inequality needs to be checked  only when $\gamma_1> \theta$ and then  $\alpha=(\theta,
\frac{\gamma_2}{\gamma_1} \theta)$ and in this case we need to justify
that
$\gamma_2<\frac{\gamma_2}{\gamma_1} \theta +1$. For this
observe that if $i$ is such that $\theta=1-\gamma_1^i$ then, using
that $\gamma_2^i=\kappa_{i}<1$, we have
\begin{displaymath}
\frac{\gamma_2}{\gamma_1} \theta +1
>
\frac{\gamma_2}{\gamma_1} (1-\gamma_1^i) +\kappa_i
=
\frac{\gamma_2}{\gamma_1} (1-\gamma_1^i)
+\frac{\gamma_2^i}{\gamma_1^i}\gamma_1^i
\geq
\frac{\gamma_2}{\gamma_1} (1-\gamma_1^i)
+\frac{\gamma_2}{\gamma_1}\gamma_1^i
= \frac{\gamma_2}{\gamma_1} \geq \gamma_2
\end{displaymath}
where we have used that $\gamma \in
\mathbb{J}_{\gamma^{0}}\subset \mathbb{J}_{\gamma^1}$ and $\gamma_1\leq
1$.

Therefore given   $\gamma$ in the triangle  $\mathbb{J}_{\gamma^{0}}$
we can always choose  $\alpha \in  \mathbb{J}_{\gamma^{0}}$ like in
(\ref{eq:the-3rd-relation-needed-here}) above, and so employ Proposition
\ref{prop:iterated-perturbations}.
Part (i) is thus proved.

\noindent
(ii) {\bf (Smoothing properties)}
We now observe that the proof of part (ii) in Theorem
\ref{thm:perturbation-by-a-potential} can be repeated line by line
perturbing  $\{S_{\mu,V^{1}}(t)\}_{t\geq 0}$, defined in the spaces of
the triangle $\mathbb{J}_{\gamma^{1}}$, with $V^{0}$ until
(\ref{eq:smoothing_morrey_gamma_gamma'_4_perturbed}) and
therefore, since from Step 3 we have $S_{\mu, \{V^{0},
  V^{1}\}}(t)=(S_{\mu,V^{1}})_{V^{0}}(t)$,  we get
\begin{displaymath}
\gamma\stackrel{_{S_{\mu, \{V^{0}, V^{1}\}}(t)}}{\leadsto} \gamma' \qquad \text{ for } \gamma,\gamma' \in \mathbb{J}_{\gamma^0} \ \text{ such that }
\gamma'_2 \leq \gamma_2,  \
     \frac{\gamma'_2}{\gamma'_1}  \leq
    \frac{\gamma_2}{\gamma_1}.
\end{displaymath}
Using Lemma
 \ref{lem:estimates4semigroups_in_scales}, this gives
(\ref{eq:(diamond)-sp-for-2}) and
(\ref{eq:(diamonddiamond)-sp-for-2}) and  (\ref{eq:(diamonddiamonddiamond)-sp-for-2}).
 This completes the proof of part (ii).

 \medskip
\noindent (iii)  {\bf (Analyticity)}
From Lemma \ref{lem:abstract_setup_4_Morrey}  the
unperturbed semigroup is analytic in $X^{\gamma}$ for $\gamma \in
\mathbb{J}$ with the additional restriction that $\gamma_{1}<1$ if $\mu=1$ that we take into account, without mentioning it further.
For the perturbed semigroup, we now apply
Theorem \ref{thm:Xalpha-analiticity-of-linearly-perturbed-semigroup}, which can be done provided that in Step 3 above we can take
$\alpha =\gamma$. This, in turn, can be done provided that
$\gamma_1\leq \min\{1-\gamma_1^0,
1-\gamma_1^1\}$. This gives the part (a) in the statement.

To prove part (b) we consider sequential perturbations. We apply first Theorem
\ref{thm:perturbation-by-a-potential}(iii)(b)
to get analyticity of
$\{S_{\mu,V^1}(t)\}_{t\geq0}$ in $X^\gamma$ for $\gamma$ satisfying
\begin{equation}\label{eq:first-for-gamma-in-analyticity(b)-for-2-perturbation}
\gamma\in\mathbb{J}_{\gamma^1}, \ \gamma_1^1\leq\gamma_1\leq 1 \ \text{ and } \gamma_2^1\leq\gamma_2 \leq \frac{N}{2m \mu}.
\end{equation}
Then we proceed similarly to conclude analyticity of
$\{(S_{\mu,V^1})_{V^0}(t)\}_{t\geq0}$ in $X^\gamma$ for $\gamma$ which besides (\ref{eq:first-for-gamma-in-analyticity(b)-for-2-perturbation}) satisfy also
\begin{equation}\label{eq:second-for-gamma-in-analyticity(b)-for-2-perturbation}
\gamma\in\mathbb{J}_{\gamma^0}, \ \gamma_1^0\leq \gamma_1 \leq 1\ \text{ and } \gamma_2^0<\gamma_2\leq \frac{N}{2m \mu}.
\end{equation}
Since $\mathbb{J}_{\gamma^0}\subset \mathbb{J}_{\gamma^1}$,
combining (\ref{eq:first-for-gamma-in-analyticity(b)-for-2-perturbation})
and (\ref{eq:second-for-gamma-in-analyticity(b)-for-2-perturbation})
gives the statement in (b).

\medskip

\noindent (iv) {\bf (The perturbed equation)}
Using sequential perturbations we first see via Theorem \ref{thm:perturbation-by-a-potential}(iv) that
$u=S_{\mu,V^1}(\cdot) u_0$ with $u_0\in X^\gamma$
and $\gamma$ in (\ref{eq:first-for-gamma-in-analyticity(b)-for-2-perturbation}) satisfying $\gamma_1<1$
solves, for $t>0$,
\begin{displaymath}
u_{t} + A_0^\mu u  = V^1 u  \quad \text{in
$X^\gamma$},
\end{displaymath}
and then
$u=(S_{\mu,V^1})_{V^0}(\cdot) u_0$ with $u_0\in X^\gamma$ and $\gamma$ satisfying in addition (\ref{eq:second-for-gamma-in-analyticity(b)-for-2-perturbation})
solves, for $t>0$,
\begin{displaymath}
u_{t} + (A_0^\mu -V^1) u  = V^0 u  \quad \text{in
$X^{\gamma}$} ,
\end{displaymath}
because for the considered parameters
the sectorial generator of $\{S_{\mu,V^1}(t)\}_{t\geq0}$ is $-A_0^\mu + V^1 $.
\end{proof}

\begin{remark}
  Observe that if say $V^{1}\in L^{\infty}(\R^{N})$ then we can set  $p_{1}=\infty$
  $\ell_{1}=N$ so the restrictions in Theorem
  \ref{thm:linear-e-eq-with-two-potentials} come from $V^{0}\in
  M^{p_{0}, \ell_{0}} (\R^{N})$.
\end{remark}

The following result concerns  continuous dependence with respect
to two perturbations.

\begin{theorem}
  Assume $V^{ i},\tilde{V}^{ i}\in M^{p_i,\ell_i}(\R^N)$ for
  $1\leq p_i\leq \infty$ and $\ell_i\in(0,N]$ which satisfy
  (\ref{eq:also-for-thm-with-2potentials}) and
  (\ref{eq:one-more-for-thm-with-2potentials}).
  Define the region of parameters $(p,\ell)$ for Morrey spaces as
\begin{equation} \label{eq:region_4_continuous_dependence_2perturbations}
1\leq p \leq \infty, \qquad     \ell \leq \ell_{0}, \qquad \ell
     \big( \frac{1}{p} - \frac{1}{ p'_{0}\vee p'_{1}} \big) \leq \frac{\ell_{0}}{p_{0}} \wedge
    \frac{\ell_{1}}{p_{1}} .
  \end{equation}

  If
  \begin{displaymath}
    \|V\|_{\mathcal{L}(M^{p_i,\ell_i}(\R^N))},
    \|\tilde{V}\|_{\mathcal{L}(M^{p_i,\ell_i}(\R^N))}\leq R \quad
    \text{ for } \ i=0,1,
  \end{displaymath}
  and
  \begin{displaymath}
    \|u_0\|_{M^{p,\ell}(\R^N)}, \|\tilde{u}_0\|_{M^{p,\ell}(\R^N)}\leq \mathscr{R}
  \end{displaymath}
  then for $T>0$, $1\leq q\leq \infty$ and $0<s\leq \ell$ satisfying
  $\frac{s}{q}\leq \frac{\ell}{p}$ and $(q,s)$ belong to the region
  (\ref{eq:region_4_continuous_dependence_2perturbations}),  we have in $(0,T]$
  \begin{displaymath}
    \begin{split}
      \| S_{\mu, \{V^0,V^1\}}(t)u_{0} &-
      S_{\mu,\{\tilde{V}^0,\tilde{V}^1\}}(t)\tilde{u}_0\|_{M^{q,s}(\R^N)}
      \\
      & \qquad \leq
      \frac{C_{0}}{t^{\frac{1}{2m\mu}(\frac{\ell}{p}-\frac{s}{q})}}
      \big( \|u_0-\tilde{u}_0 \|_{M^{p,\ell}(\R^N)} +
      \max_{i\in\{0,1\}}\|V^i-\tilde{V}^i\|_{\mathcal{L}(M^{p_i,\ell_i}(\R^N))}
      \big)
    \end{split}
  \end{displaymath}
  and
  \begin{displaymath}
    \begin{split}
      \|&S_{\mu, \{V^0,V^1\}}(t) -
      S_{\mu,\{\tilde{V}^0,\tilde{V}^1\}}(t)\|_{{\mathcal
          L}(M^{p,\ell}(\R^N), M^{q,s}(\R^N))} \leq
      \frac{C_1}{t^{\frac{1}{2m\mu}(\frac{\ell}{p}-\frac{s}{q})}}
      \max_{i\in\{0,1\}}\|V^i-\tilde{V}^i\|_{\mathcal{L}(M^{p_i,\ell_i}(\R^N))}
      ,
    \end{split}
  \end{displaymath}
  where $C_0$, $C_1$ depend on $p_i,\ell_i, p, \ell, q, s, R$ and
  $T$. Additionally $C_0$ depends on $\mathscr{R}$.
\end{theorem}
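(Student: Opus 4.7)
The plan is to mirror the proof of Theorem \ref{thr:morrey_continuous_dependence_1_perturbation} in the two-perturbation setting, invoking the abstract Theorem \ref{thm:Lipschitz-properties-of-S_P(t)} with $P=\{V^0,V^1\}$, $\tilde P=\{\tilde V^0,\tilde V^1\}$ in the Morrey scale set-up of Lemma \ref{lem:abstract_setup_4_Morrey}, and then telescoping along a finite chain of intermediate Morrey spaces by means of the semigroup property and the a priori bounds already furnished by Theorem \ref{thm:linear-e-eq-with-two-potentials}. Throughout I will work with the identification (\ref{eq:parameters_4_Morrey})--(\ref{eq:interpretation-of-Morrey_scale-2m}), writing $\gamma$ for the parameter attached to $M^{p,\ell}(\R^N)$, $\gamma'$ for the one attached to $M^{q,s}(\R^N)$, and $\gamma^i$ for the one attached to $V^i,\tilde V^i$.

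Step 1 (one-shot abstract estimate). Given $(p,\ell)$ in (\ref{eq:region_4_continuous_dependence_2perturbations}) (so $\gamma\in\mathbb{J}_{\gamma^0}$), I pick $\alpha\in\mathbb{J}_{\gamma^0}$ exactly as in (\ref{eq:the-3rd-relation-needed-here}) from Step 3 of the proof of Theorem \ref{thm:linear-e-eq-with-two-potentials}(i), and set $\beta_i=\alpha+\gamma^i$. The arguments recorded there show that $\gamma\in\mathcal{E}_\alpha$, $\beta_i\in\mathbb{J}$, $\beta_i\stackrel{S_\mu(t)}{\leadsto}\alpha$ and $V^i,\tilde V^i\in\mathscr{P}_{\beta_i,R}$, so $P,\tilde P$ are admissible perturbations. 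Taking now $\alpha$ with $\alpha_2/\alpha_1=\gamma_2/\gamma_1$ and $\alpha_2$ as small as the constraints (\ref{eq:we-altogether-need}) for both $i=0,1$ allow (the analogue of Step 9 of the proof of Theorem \ref{thm:perturbation-by-a-potential}, with $\theta=\min\{1-\gamma_1^0,1-\gamma_1^1\}$ playing the role of $1-\gamma_1^0$), the bound (\ref{eq:regularization_gamma_gamma'_Morrey}) is replaced by its two-perturbation analogue, which translated back through (\ref{eq:parameters_4_Morrey}) is precisely the restriction encoded in (\ref{eq:region_4_continuous_dependence_2perturbations}). For any such $\gamma'\in\mathcal{R}_{\beta_0,\beta_1}$ with $\gamma\stackrel{S_\mu(t)}{\dashrightarrow}\gamma'$, Theorem \ref{thm:Lipschitz-properties-of-S_P(t)} yields, for $0<t\le T$,
\begin{equation*}
\|S_{\mu,\{V^0,V^1\}}(t)u_0-S_{\mu,\{\tilde V^0,\tilde V^1\}}(t)\tilde u_0\|_{X^{\gamma'}}\le \frac{M_0}{t^{d(\gamma',\gamma)}}\bigl(\|u_0-\tilde u_0\|_{X^\gamma}+\max_i\|V^i-\tilde V^i\|_{\mathcal L(X^{\gamma^i})}\bigr),
\end{equation*}
and the corresponding estimate on $\|S_{\mu,\{V^0,V^1\}}(t)-S_{\mu,\{\tilde V^0,\tilde V^1\}}(t)\|_{\mathcal L(X^\gamma,X^{\gamma'})}$.

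Step 2 (telescoping). For a general pair $(\gamma,\gamma')$ in the region (\ref{eq:region_4_continuous_dependence_2perturbations}) satisfying $\gamma'_2\le\gamma_2$ and $\gamma'_2/\gamma'_1\le\gamma_2/\gamma_1$, I interpolate along the segment $\theta\mapsto\theta\gamma'+(1-\theta)\gamma$, producing a finite chain $\gamma=\gamma^{(1)},\gamma^{(2)},\dots,\gamma^{(M)}=\gamma'$ of points of (the image of) (\ref{eq:region_4_continuous_dependence_2perturbations}) with $\gamma^{(j)}_2-\gamma^{(j+1)}_2\le\tfrac12$ and $\gamma^{(j+1)}_2/\gamma^{(j+1)}_1\le\gamma^{(j)}_2/\gamma^{(j)}_1$, so that the one-shot estimate of Step 1 applies to each consecutive pair $(\gamma^{(j)},\gamma^{(j+1)})$. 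Writing $S_{\mu,\{V^0,V^1\}}(t)=S_{\mu,\{V^0,V^1\}}(t/M)\circ\cdots\circ S_{\mu,\{V^0,V^1\}}(t/M)$ and using (\ref{eq:(diamond)-sp-for-2}) from Theorem \ref{thm:linear-e-eq-with-two-potentials}(ii) to control the intermediate norms (so that on each leg the initial data stays bounded by a constant multiple of $\mathscr R$), I telescope exactly as in Step 2 of the proof of Theorem \ref{thr:morrey_continuous_dependence_1_perturbation}, absorbing the finite product of prefactors $(t/M)^{-d(\gamma^{(j+1)},\gamma^{(j)})}$ into a single factor $C\,t^{-d(\gamma',\gamma)}$. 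This yields the first inequality of the statement; the second follows by setting $u_0=\tilde u_0$ of norm one.

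The main obstacle I anticipate is the verification in Step 1 that the freedom in choosing $\alpha$ (with slope equal to that of $\gamma$ and minimal second coordinate consistent with admissibility of both perturbations simultaneously) delivers precisely the range of $\gamma'$ corresponding to (\ref{eq:region_4_continuous_dependence_2perturbations}); the condition $\ell\bigl(\tfrac1p-\tfrac1{p'_0\vee p'_1}\bigr)\le\tfrac{\ell_0}{p_0}\wedge\tfrac{\ell_1}{p_1}$ encodes the tension between the constraint $\alpha_1\le1-\max\{\gamma^0_1,\gamma^1_1\}$ coming from both perturbations acting into $\mathbb{J}$ and the constraint $\alpha_2+\min\{\gamma^0_2,\gamma^1_2\}\ge\gamma_2$ coming from $\gamma\in\mathcal{R}_{\beta_0,\beta_1}$. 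Once this translation is carried out, the telescoping in Step 2 is routine, being a direct copy of the one-perturbation argument.
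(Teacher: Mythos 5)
Your Step 1 is essentially the paper's: choose $\alpha$ as in (\ref{eq:the-3rd-relation-needed-here})/(\ref{eq:alpha_4_2perturbations}) with $\alpha_1\le\theta:=\min\{1-\gamma_1^0,1-\gamma_1^1\}$ and slope equal to that of $\gamma$, check $P,\tilde P\in\mathscr{P}_{\beta_0,\beta_1,R}$, $\gamma\in\mathcal{E}_\alpha$, and apply Theorem \ref{thm:Lipschitz-properties-of-S_P(t)} for $\gamma'\in\mathcal{R}_{\beta_0}\cap\mathcal{R}_{\beta_1}$ with $\gamma\stackrel{_{S_\mu(t)}}{\dashrightarrow}\gamma'$. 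The gap is in Step 2, and it is exactly the point where the two-perturbation case differs from Theorem \ref{thr:morrey_continuous_dependence_1_perturbation}. In the one-perturbation case the only per-leg restriction is the lower bound $\gamma^{j+1}_2>\gamma^{j}_2-1$ (plus the slope condition), and the admissible set is the full convex triangle $\mathbb{J}_{\gamma^0}$, so a chain of small steps along the straight segment works. With two perturbations the per-leg constraint also contains an \emph{upper} bound: since $\alpha_1$ is capped by $\theta$, when the source point has $\gamma_1>\theta$ the reachable targets must satisfy $\gamma'_2\le\frac{\gamma_2}{\gamma_1}\theta+\gamma_2^0\wedge\gamma_2^1$, and one can stay at the same height (make small steps) only if $\gamma_2\le h(\gamma_1)$ with $h(\gamma_1)=\gamma_2^0\wedge\gamma_2^1+\frac{(\gamma_2^0\wedge\gamma_2^1)\theta}{\gamma_1-\theta}$, i.e.\ only inside the set $\mathbb{J}^*_{\gamma^0}$ that corresponds to (\ref{eq:region_4_continuous_dependence_2perturbations}) (this cap is the ``threshold'' in the remark following the theorem, and your own computation shows the choice of $\alpha$ cannot be improved, since maximizing $\alpha_2$ under $\alpha_1\le\theta$ and the slope constraints gives exactly $\alpha_2=\frac{\gamma_2}{\gamma_1}\theta$).

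The set $\mathbb{J}^*_{\gamma^0}$ is \emph{not} convex: for $\gamma_1>\theta$ its upper boundary is the graph of the convex, decreasing function $h$, so the chord joining two admissible points $\gamma,\gamma'$ (for instance $\gamma$ with $\gamma_1\le\theta$ high up in the triangle and $\gamma'$ with $\gamma'_1$ near $1$ and $\gamma'_2$ small) can pass strictly above the graph of $h$. At such intermediate points your chain $\gamma^{(j)}$ leaves $\mathbb{J}^*_{\gamma^0}$, and from them the one-shot estimate of Step 1 cannot reach the next point of your chain, because any admissible target from $\gamma^{(j)}$ must drop below $\frac{\gamma^{(j)}_2}{\gamma^{(j)}_1}\theta+\gamma_2^0\wedge\gamma_2^1<\gamma^{(j)}_2$, which is incompatible with steps of size at most $\tfrac12$ staying on the segment. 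This is why the paper, after identifying $\mathbb{J}^*_{\gamma^0}$ in its Step 3, does not telescope along the straight segment when it exits the region: it replaces the chord by the two segments tangent to the graph of $h$ through $\gamma$ (to the right) and through $\gamma'$ (to the left), which exist by the Exterior Tangent Lemma \ref{lem:exterior_tangent}, lie below the graph of the convex function $h$, and meet inside $\mathbb{J}^*_{\gamma^0}$; the bootstrap is then performed along each of these two segments. Your proposal has no substitute for this geometric step, and without it the claimed iteration does not go through for general pairs $(p,\ell)$, $(q,s)$ in (\ref{eq:region_4_continuous_dependence_2perturbations}). The remaining ingredients of your Step 2 (re-expressing the composed semigroup, controlling intermediate norms after factoring out the powers of $t$ as in (\ref{eq:(*****)-in-cd}), and deducing the operator estimate by taking $u_0=\tilde u_0$ of norm one) do match the paper's argument.
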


\begin{proof}
  For this result we will apply Theorem
  \ref{thm:Lipschitz-properties-of-S_P(t)}. For this we denote below
  $a\vee b= \max\{a,b\}$ and $a \wedge b= \min\{a,b\}$.

\medskip
\noindent {\bf Step 1.}
Let $\gamma \in \mathbb{J}_{\gamma^{0}}$.
From Step 3 in the proof of part (i) Theorem
\ref{thm:linear-e-eq-with-two-potentials}, see
(\ref{eq:the-1st-relation-needed-here}),
(\ref{eq:the-2nd-relation-needed-here}), (\ref{eq:the-4th-relation-needed-here}),
we have
$P=\{V^0,V^1\} \in \mathscr{P}_{\beta_0,\beta_1,R}$ and
$\tilde{P}=\{\tilde{V}^0,\tilde{V}^1\}\in
\mathscr{P}_{\beta_0,\beta_1,R}$,
provided that
\begin{equation}\label{eq:auxiliary1-for-cont-depend-for-2-perturbations}
\begin{cases}
\alpha_1+\gamma^i_1\leq 1
\\
\alpha_2\leq \alpha_2 +\gamma_2^i, \qquad \  i\in\{0,1\}
    \\
    \frac{\alpha_2}{\alpha_1} \leq
    \frac{\alpha_2 +\gamma_2^i}{\alpha_1 +\gamma_1^i}
  \end{cases}
\end{equation}
and $\gamma\in\mathcal{E}_\alpha$  provided
\begin{equation}\label{eq:auxiliary2-for-cont-depend-for-2-perturbations}
\begin{cases}
\alpha_2\leq \gamma_2 < \alpha_2 +1
    \\
    \frac{\alpha_2}{\alpha_1} \leq
    \frac{\gamma_2}{\gamma_1}.
  \end{cases}
\end{equation}
Notice the second condition in
(\ref{eq:auxiliary1-for-cont-depend-for-2-perturbations}) is always
satisfied and the third one is satisfied if $\alpha \in
\mathbb{J}_{\gamma^{0}}$. So we need to solve
(\ref{eq:auxiliary2-for-cont-depend-for-2-perturbations}) for $\alpha \in
\mathbb{J}_{\gamma^{0}}$ with $\alpha_{1}
\leq \theta := 1-(\gamma^{0}_{1} \vee \gamma^{1}_{1})$.  For this we take
$\alpha$ with the same slope than $\gamma$ as in
(\ref{eq:the-3rd-relation-needed-here}), that is,
\begin{equation} \label{eq:alpha_4_2perturbations}
    \alpha=\begin{cases}\gamma & \text{ if } \gamma_1\leq \theta \\
    (\theta, \frac{\gamma_2}{\gamma_1} \theta) & \text{ if } \gamma_1>
    \theta
    \end{cases}
\end{equation}
and all conditions are met, as
we showed below  (\ref{eq:the-4th-relation-needed-here}).

\medskip
\noindent {\bf Step 2.}
Since $\mathcal{R}_{\beta_0,\beta_1}= \mathcal{R}_{\beta_0}\cap \mathcal{R}_{\beta_1}$, from Step 3 in the proof of Theorem \ref{thm:perturbation-by-a-potential} we have $\gamma'\in \mathcal{R}_{\beta_0,\beta_1}$ provided that
\begin{equation}\label{eq:auxiliary3-for-cont-depend-for-2-perturbations}
  \begin{cases}
  0\leq \gamma'_2
  \\
  \alpha_{2} - j_{i}<   \gamma'_2 \leq \alpha_{2}+ \gamma^{i}_{2}
  \ \ \text{ where } \ j_i:=1-\gamma_2^i>0, \ i\in\{0,1\} \
    \\
     \frac{\gamma'_2}{\gamma'_1}  \leq
    \frac{\alpha_{2}+ \gamma^{i}_{2}}{\alpha_{1}+ \gamma^{i}_{1}},
  \end{cases}
  \end{equation}
while from  Lemma \ref{lem:abstract_setup_4_Morrey} we have $\gamma
\stackrel{_{S_\mu(t)}}{\leadsto} \gamma'$ provided that
\begin{equation}\label{eq:auxiliary4-for-cont-depend-for-2-perturbations}
  \begin{cases}
    \gamma'_2 &\leq \gamma_2
   \\
     \frac{\gamma'_2}{\gamma'_1} & \leq
    \frac{\gamma_2}{\gamma_1}
  \end{cases}
\end{equation}
and in particular $\gamma'\in \mathbb{J}_{\gamma^{0}}$.
  So, for $\gamma \in \mathbb{J}_{\gamma^{0}}$, $\alpha$ as in
  (\ref{eq:alpha_4_2perturbations}) and $\gamma'$ as above, we can
  apply Theorem
  \ref{thm:Lipschitz-properties-of-S_P(t)} to get the result.

  Now observe that the second condition in
  (\ref{eq:auxiliary4-for-cont-depend-for-2-perturbations}) and the
  choice of $\alpha$ implies the third condition in
  (\ref{eq:auxiliary3-for-cont-depend-for-2-perturbations}). So, given
  the choice of $\alpha$ in (\ref{eq:alpha_4_2perturbations}), the
  conditions  for $\gamma'$ are given by
  (\ref{eq:auxiliary4-for-cont-depend-for-2-perturbations}),
  $0<\gamma'_{2}$ and either
  \begin{displaymath}
    \gamma_{2}+ \gamma^{i}_{2}-1
    < \gamma_{2}' \leq \gamma_{2}, \quad \
    i\in\{0,1\} \ \qquad \text{if
      $\gamma_{1}\leq \theta$}
  \end{displaymath}
  or
  \begin{displaymath}
   \frac{\gamma_2}{\gamma_1} \theta   + \gamma^{i}_{2}-1  <
   \gamma_{2}' \leq   \frac{\gamma_2}{\gamma_1} \theta   +
   \gamma^{i}_{2} \quad \
    i\in\{0,1\} \ \qquad \text{if
      $\gamma_{1} > \theta$} .
  \end{displaymath}
These conditions can be recast as
  \begin{displaymath}
    \gamma_{2}+ (\gamma^{0}_{2} \vee \gamma^{1}_{2})-1
    < \gamma_{2}' \leq \gamma_{2},  \qquad \text{if
      $\gamma_{1}\leq \theta$}
  \end{displaymath}
  or
  \begin{displaymath}
   \frac{\gamma_2}{\gamma_1} \theta   + (\gamma^{0}_{2} \vee \gamma^{1}_{2})-1  <
   \gamma_{2}' \leq   \gamma_{2} \wedge \Big( \frac{\gamma_2}{\gamma_1} \theta   +
  (\gamma^{0}_{2} \wedge \gamma^{1}_{2})\Big) \qquad \text{if
      $\gamma_{1} > \theta$}
  \end{displaymath}
  and notice that actually $\frac{\gamma_2}{\gamma_1} \theta   +
  (\gamma^{0}_{2} \vee \gamma^{1}_{2})-1 < \gamma_{2}$, since
  $\frac{\gamma_2}{\gamma_1}\theta < \gamma_{2}$ if $\gamma_1>\theta$, and $ (\gamma^{0}_{2} \vee
  \gamma^{1}_{2})-1 <0$ so this second condition is non void.

\medskip
\noindent {\bf Step 3.}
Now, when $\gamma_{1} > \theta$, we restrict to the region in
$\mathbb{J}_{\gamma^{0}}$ such that
$\gamma_{2} \leq  \frac{\gamma_2}{\gamma_1} \theta   +
  (\gamma^{0}_{2} \wedge \gamma^{1}_{2})$,
that is
\begin{displaymath}
    \gamma_{2} \leq h(\gamma_{1})= \gamma^{0}_{2} \wedge
    \gamma^{1}_{2} + \frac{(\gamma^{0}_{2} \wedge \gamma^{1}_{2})
      \theta}{\gamma_{1}-\theta}, \quad \theta< \gamma_{1} \leq 1
\end{displaymath}
so $h$ is convex, decreasing $h(\theta)=\infty$ and $h(1)= \gamma^{0}_{2} \wedge
    \gamma^{1}_{2} + \frac{(\gamma^{0}_{2} \wedge
      \gamma^{1}_{2})}{\gamma^{0}_{1} \vee \gamma^{1}_{1}}
    (1-\gamma^{0}_{1} \vee \gamma^{1}_{1}) = \frac{\gamma^{0}_{2} \wedge
      \gamma^{1}_{2}}{\gamma^{0}_{1} \vee \gamma^{1}_{1}}$.

    Hence we define the subset $\mathbb{J}_{\gamma^{0}}^{*} = \{\gamma
    \in \mathbb{J}_{\gamma^{0}}, \ \gamma_{2} \leq h(\gamma_{1}), \
    \text{if $\theta< \gamma_{1} \leq 1$}\}$ and  prove that  for
    $\gamma,\gamma' \in \mathbb{J}_{\gamma^{0}}^{*}$  satisfying
    (\ref{eq:auxiliary4-for-cont-depend-for-2-perturbations})
    we can apply Theorem  \ref{thm:Lipschitz-properties-of-S_P(t)}.

First note that, with minor changes,  the bootstrap of estimates along segments that we
performed in Step 10 of the proof of Theorem
\ref{thm:perturbation-by-a-potential} and in Step 2 of the proof of
Theorem \ref{thm:linear-e-eq-with-two-potentials}, allows us in this
case to get the estimates in Theorem
\ref{thm:Lipschitz-properties-of-S_P(t)} if  $\gamma, \gamma' \in \mathbb{J}_{\gamma^{0}}^{*}$ satisfy
(\ref{eq:auxiliary4-for-cont-depend-for-2-perturbations}) and  the
segment joining them is contained  in $\mathbb{J}_{\gamma^{0}}^{*}$.

In particular it remains to consider the case  $\gamma_{1}<\gamma'_{1}$ and  the
segment crosses the graph of $h$ precisely in two points with
coordinates $\gamma_{1} \leq a < b \leq \gamma'_{1}$.

Then we can apply  Lemma \ref{lem:exterior_tangent} in $[\gamma_{1}-\delta,
b]$ with $c=\gamma_{1}$ and $\delta>0$ small,
to get a tangent to $h$ to the right through $\gamma$. Also we  can
apply  Lemma \ref{lem:exterior_tangent} in $[a, \gamma'_{1}+\delta]$ with $c=\gamma'_{1}$ and $\delta>0$ small,
to get a tangent to $h$ to the left through $\gamma'$.
Then these two tangents will cross at some point in
$\mathbb{J}_{\gamma^{0}}^{*}$. This point and $\gamma, \gamma'$
determine  two segments and along each one  we can do the
bootstrap. So the claim is proved.

\medskip
\noindent {\bf Step 4.}
It remains to describe the set  $\mathbb{J}_{\gamma^{0}}^{*} $ in
terms of the original parameters of the Morrey scale.  For this notice
that from (\ref{eq:parameters_4_Morrey}) we have that $\theta =
1-(\gamma^{0}_{1} \vee \gamma^{1}_{1}) = \frac{1}{p'_{0}\vee p'_{1}}$ and $\gamma^{0}_{2} \wedge
    \gamma^{1}_{2} =\frac{1}{2m\mu} \frac{\ell_{0}}{p_{0}} \wedge
    \frac{\ell_{1}}{p_{1}} $.

    Hence when $\gamma_{1} \leq  \theta$ the region reads
    \begin{displaymath}
      p\geq     p'_{0}\vee p'_{1}, \qquad \ell \leq \ell_{0}
  \end{displaymath}

On the other hand, when $\gamma_{1} > \theta$
    the region $\gamma_{2} \leq  \frac{\gamma_2}{\gamma_1} \theta   +
  (\gamma^{0}_{2} \wedge \gamma^{1}_{2})$ reads
  \begin{displaymath}
     p <  p'_{0}\vee p'_{1} , \qquad \ell \leq \ell_{0}, \qquad \ell
     \big( \frac{1}{p} - \frac{1}{ p'_{0}\vee p'_{1}} \big) \leq \frac{\ell_{0}}{p_{0}} \wedge
    \frac{\ell_{1}}{p_{1}} .
  \end{displaymath}
  Hence both conditions can be summarised as
  (\ref{eq:region_4_continuous_dependence_2perturbations}).
\end{proof}

\begin{remark}
  \begin{enumerate}
  \item
    If    $\gamma \in \mathbb{J}_{\gamma^{0}} \setminus
    \mathbb{J}_{\gamma^{0}}^{*}$ then there is a ``threshold'' in the
    possible jumps from $\gamma$ to $\gamma'$ as
    \begin{displaymath}
      \gamma'_{2} \leq \frac{\gamma_2}{\gamma_1} \theta   +
      (\gamma^{0}_{2} \wedge \gamma^{1}_{2}) < \gamma_{2} .
    \end{displaymath}
    In particular the estimates in the theorem can not be obtained for
    $\gamma'=\gamma$.

  \item
    Condition (\ref{eq:region_4_continuous_dependence_2perturbations})
    is satisfied if in particular
    \begin{displaymath}
      \frac{\ell}{p}\leq \min\big\{\frac{\ell_0}{p_0}, \frac{\ell_1}{p_1}\big\}.
    \end{displaymath}
    or if
      \begin{displaymath}
      p\geq     p'_{0}\vee p'_{1}, \qquad \ell \leq \ell_{0} .
  \end{displaymath}

In both cases the Theorem holds if both $(p,\ell)$ and $(q,s)$ satisfy
either one of them and $0<s\leq \ell$,
$\frac{s}{q}\leq \frac{\ell}{p}$.

\end{enumerate}
\end{remark}

\begin{lemma} [{\bf Exterior tangent lemma}]
  \label{lem:exterior_tangent}

  Let $f:[a,b] \to \R$ a $C^{2}$ convex function with $f''(x)>0$ in
  $[a,b]$ and let $c\in (a,b)$ and $d\in \R$. Then

  \begin{enumerate}
  \item (Tangent to the left)
There exist (a unique) $x_{*} \in [a,c)$ such that the tangent to $f$
through $x_{*}$ passes through $(c,d)$ if and only if $d \leq f(c)$
and $\frac{f(a)-d}{a-c} \geq f'(a)$.

\item
(Tangent to the right)
There exist (a unique) $x_{*} \in (c,d]$ such that the tangent to $f$
through $x_{*}$ passes through $(c,d)$ if and only if $d \leq f(c)$
and $\frac{f(b)-d}{b-c} \leq f'(b)$.

  \end{enumerate}
\end{lemma}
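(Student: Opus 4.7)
The plan is to encode the tangent-through-$(c,d)$ condition as a scalar equation and then reduce everything to monotonicity of a single auxiliary function. The tangent line to $f$ at $x_{*}$ is $y = f(x_{*}) + f'(x_{*})(x - x_{*})$, which passes through $(c,d)$ if and only if $g(x_{*}) = 0$, where
\[
g(x) := f(x) + f'(x)(c-x) - d, \qquad x\in[a,b].
\]
Differentiating gives $g'(x) = f''(x)(c-x)$, and since $f''>0$ on $[a,b]$ the sign of $g'$ coincides with the sign of $c-x$. Hence $g$ is strictly increasing on $[a,c]$ and strictly decreasing on $[c,b]$, sharing the value $g(c) = f(c) - d$.

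For part (i), strict monotonicity on $[a,c]$ together with the intermediate value theorem yields a unique $x_{*} \in [a,c)$ with $g(x_{*}) = 0$ precisely when $g(a) \leq 0 \leq g(c)$. Now $g(c) \geq 0$ is exactly $d \leq f(c)$, while $g(a) \leq 0$ reads $f(a) - d \leq f'(a)(a-c)$; dividing by the negative quantity $a - c$ reverses the inequality to $\frac{f(a)-d}{a-c} \geq f'(a)$, matching the stated hypothesis. Uniqueness is immediate from the strict monotonicity.

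Part (ii) is handled symmetrically on $[c,b]$, where $g$ is strictly decreasing. There is a unique $x_{*} \in (c,b]$ with $g(x_{*}) = 0$ if and only if $g(c) \geq 0 \geq g(b)$. The first condition is again $d \leq f(c)$, and $g(b) = f(b) + f'(b)(c-b) - d \leq 0$ becomes, after dividing by the positive quantity $b - c$, the stated inequality $\frac{f(b)-d}{b-c} \leq f'(b)$.

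The only mild subtlety is checking that the root lies in the correct half-open interval ($[a,c)$ in (i), $(c,b]$ in (ii)) rather than at $c$ itself, but this is automatic from the strict monotonicity of $g$ on each side of $c$ combined with $g'(c) = 0$; in the degenerate equality case $g(c) = 0$, one verifies directly that the root at $c$ does not contradict the claim since the boundary inequality then coincides with the existence condition. I do not anticipate any substantive difficulty: the whole proof is a one-variable monotonicity argument for $g$.
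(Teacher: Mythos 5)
Your proof is correct and follows essentially the same route as the paper: the paper works with $t(x)=f(x)+f'(x)(c-x)$, which is exactly your $g(x)+d$, and both arguments rest on $t'(x)=f''(x)(c-x)$ giving strict monotonicity on either side of $c$, with the endpoint conditions $t(a)\le d\le f(c)$ (resp.\ $t(b)\le d\le f(c)$) rewritten as the stated difference-quotient inequalities. Your closing remark on the degenerate case $d=f(c)$ only reflects a boundary looseness already present in the paper's own statement and proof, so nothing further is needed.
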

\begin{proof}
  For $x\in [a,b]$ the value of the tangent to $f$ at $x$ in  the point $c$ is
  given by $t(x)= f(x) + f'(x)(c-x)$ and $t'(x)= f''(x)(c-x)$.

  Therefore, since $f''>0$ we have that $t' >0$ in $[a,c)$ and $t'<0$
  in $(c,b]$. Hence, $t(x)=d$ for (a unique)  some $x$ if and only
  if either $t(a)\leq d\leq t(c)= f(c)$ or  $t(b) \leq d\leq
  t(c)=f(c)$ which gives the result.
\end{proof}

\bibliographystyle{plain}

\end{document}